\newcommand{\bpr}{\begin{trivlist} \item[]{\bf Proof. }}
\newcommand{\epr}{\hspace*{\fill} $\qed$\end{trivlist}}
\newcommand{\be}{\begin{eqnarray}}
\newcommand{\ee}{\end{eqnarray}}
\newcommand{\ba}{\begin{align}}
\newcommand{\ea}{\end{align}}
\newcommand{\bi}{\begin{itemize}}
\newcommand{\ei}{\end{itemize}}
\newcommand{\secref}[1]{Section~\ref{sec:#1}}
\newcommand{\seclab}[1]{\label{sec:#1}}
\newcommand{\eqlab}[1]{\label{eq:#1}}
\renewcommand{\eqref}[1]{(\ref{eq:#1})}
\newcommand{\figref}[1]{Fig.~\ref{fig:#1}}
\newcommand{\figlab}[1]{\label{fig:#1}}
\newcommand{\lemmaref}[1]{Lemma~\ref{lemma:#1}}
\newcommand{\lemmalab}[1]{\label{lemma:#1}}
\newcommand{\remref}[1]{Remark~\ref{remark:#1}}
\newcommand{\remlab}[1]{\label{remark:#1}}
\newcommand{\corref}[1]{Corollary~\ref{cor:#1}}
\newcommand{\corlab}[1]{\label{cor:#1}}
\newcommand{\thmref}[1]{Theorem~\ref{theorem:#1}}
\newcommand{\thmlab}[1]{\label{theorem:#1}}
\newcommand{\tablab}[1]{\label{tab:#1}}
\newcommand{\tabref}[1]{Table~\ref{tab:#1}}
\newcommand{\defnlab}[1]{\label{defn:#1}}
\newcommand{\defnref}[1]{Definition~\ref{defn:#1}}
\newcommand{\appref}[1]{Appendix~\ref{app:#1}}
\newcommand{\applab}[1]{\label{app:#1}}
\definecolor{green}{rgb}{0.8,0.8,0.8}
\newcommand{\done}{\cellcolor{green}}  
\newtheorem{theorem}{Theorem}[section]
\newtheorem{proposition}[theorem]{Proposition}
\newtheorem{definition}[theorem]{Definition}
\newtheorem{lemma}[theorem]{Lemma}
\newtheorem{cor}[theorem]{Corollary}
\newtheorem{remark}[theorem]{Remark}
\numberwithin{equation}{section}
\begin{document}

\title[A new type of relaxation oscillationin a model with R\&S friction]{A new type of relaxation oscillation in a model with rate-and-state friction}

\author {K. Uldall Kristiansen} 
\date\today
\maketitle

\vspace* {-2em}
\begin{center}
\begin{tabular}{c}
Department of Applied Mathematics and Computer Science, \\
Technical University of Denmark, \\
2800 Kgs. Lyngby, \\
DK
\end{tabular}
\end{center}

 \begin{abstract}
 In this paper we prove the existence of a new type of relaxation oscillation occurring in a one-block Burridge-Knopoff model with Ruina rate-and-state friction law. In the relevant parameter regime, the system is slow-fast with two slow variables and one fast. The oscillation is special for several reasons: Firstly, its singular limit is unbounded, the amplitude of the cycle growing like $\log \epsilon^{-1}$ as $\epsilon\rightarrow 0$. As a consequence of this estimate, the unboundedness of the cycle cannot be captured by a simple $\epsilon$-dependent scaling of the variables, see e.g. \cite{Gucwa2009783}. We therefore obtain its limit on the Poincar\'e sphere. Here we find that the singular limit consists of a slow part on an attracting critical manifold, and a fast part on the equator (i.e. at $\infty$) of the Poincar\'e sphere, which includes motion along a center manifold. The reduced flow on this center manifold runs out along the manifold's boundary, in a special way, leading to a complex return to the slow manifold. We prove the existence of the limit cycle by showing that a return map is a contraction. The main technical difficulty in this part is due to the fact that the critical manifold loses hyperbolicity at an exponential rate at infinity. We therefore use the method in \cite{kristiansen2017a}, applying the standard blowup technique in an extended phase space. In this way we identify a singular cycle, consisting of $12$ pieces, all with desirable hyperbolicity properties, that enables the perturbation into an actual limit cycle for $0<\epsilon\ll 1$.  
 The result proves a conjecture in \cite{bossolini2017a}. The reference \cite{bossolini2017a} also includes a priliminary analysis based on the approach in \cite{kristiansen2017a} but several details were missing. We provide all the details in the present manuscript and lay out the geometry of the problem, detailing all of the many blowup steps. 
 
%
%
 \end{abstract}

\section{Introduction}
Relaxation oscillations are special periodic solutions of singularly perturbed ordinary differential equations. They consist of long periods of ``in-activity'' interspersed with short periods of rapid transitions. Mathematically, they are classically defined for slow-fast systems
\begin{align}
 \epsilon \dot x &= f(x,y,\epsilon),\eqlab{slowfastxy}\\
 \dot y &=g(x,y,\epsilon),\nonumber
\end{align}
as elements $\Gamma_\epsilon$ of a family of periodic orbits $\{\Gamma_\epsilon\vert \epsilon\in (0,\epsilon_0]\}$ whose $\epsilon\rightarrow 0$ limit (in the Hausdorff sense), $\Gamma_0$, is a closed loop consisting of a union of a) \textit{slow} orbits of the reduced problem:
\begin{align*}
 0 &=f(x,y,0),\\
 \dot y &=g(x,y,0),
\end{align*}
and b) fast orbits of the layer problem:
\begin{align*}
 x' &=f(x,y,0),\\
 y' &=0.
\end{align*}
Here $()'=\frac{d}{d\tau}$ and $\dot{()}=\frac{d}{dt}$ are related for $\epsilon>0$ by 
\begin{align*}
 \tau = \epsilon^{-1} t.
\end{align*}
$\tau$ is called the fast time whereas $t$ is called the slow time. Obviously, $\Gamma_0$ should allow for a consisting orientation of positive (slow and fast) time. $\Gamma_0$ is in this case called a singular cycle. 

The prototypical system, where relaxation oscillations occur, is the van der Pol system, see e.g. \cite{krupa_relaxation_2001}. Here the critical manifold $C=\{(x,y)\vert f(x,y,0)=0\}$ is \begin{tikzpicture}
\draw [red,  thick,domain=-0.08:0.08] plot (\x, 0.08333333333-3.125000000*\x+651.0416667*\x^3);
\end{tikzpicture}-shaped and relaxation oscillations $\Gamma_\epsilon$ occur, in generic situations, near a $\Gamma_0$ consisting of the leftmost and rightmost pieces of the \begin{tikzpicture}
\draw [red,  thick,domain=-0.08:0.08] plot (\x, 0.08333333333-3.125000000*\x+651.0416667*\x^3);
\end{tikzpicture}-shaped critical manifold $C$ interspersed by two horizontal lines connecting these branches at the ``folds''. See \figref{relax}(a).

But other types of relaxation oscillations also exist. The simplest examples appear in slow-fast systems in nonstandard form
\begin{align}
 \dot z &=h(z,\epsilon),\eqlab{slowfastz}
\end{align}
where $C=\{z\vert h(z,0)=0\}$ is a critical manifold. Here relaxation oscillations may even be the union of one single fast orbit and a single slow orbit on $S$. See \figref{relax}(b). In \cite{Gucwa2009783}, for example, a planar slow-fast system of the form \eqref{slowfastxy} is considered. Here limit cycles $\Gamma_\epsilon$ exist which also have segments that follow the different time scales, $t$ and $\tau$. But $\Gamma_\epsilon$ grows unboundedly as $\epsilon\rightarrow 0^+$ and the limit $\Gamma_0$ is therefore not a cycle. However, in the model considered by \cite{Gucwa2009783} there exists a scaling of the variables that capture the unboundedness and in these scaled variables the system is transformed into a system of nonstandard form \eqref{slowfastz}. For this system, $\Gamma_0$ becomes a closed cycle, albeit with some degeneracy along a critical manifold. Similar or related relaxation oscillations occur in \cite{bro2,kosiuk2015a,kuehn2015a}.

\begin{figure}[h!]
\begin{center}
\subfigure[]{\includegraphics[width=.495\textwidth]{./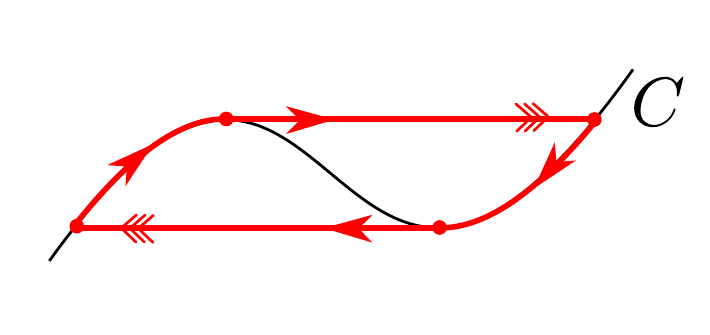}}
\subfigure[]{\includegraphics[width=.495\textwidth]{./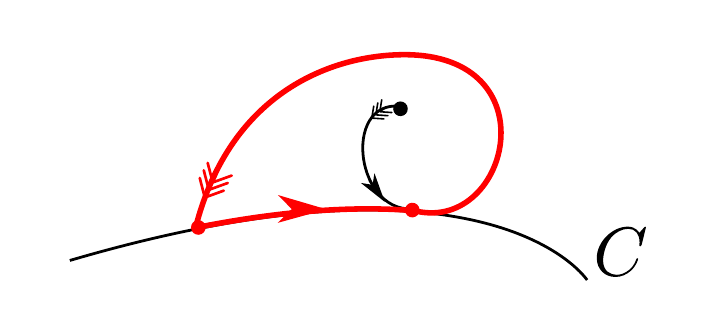}}
\end{center}
\caption{In (a): The prototypical example of a relaxation oscillation in a planar slow-fast system with a folded critical manifold. In (b): Example of a relaxation oscillation in slow-fast system in nonstandard form.}
\figlab{relax}
\end{figure} 

In \cite{kristiansen2019}, see also \cite[Fig. 2(c)]{Tyson2003}, yet another type of relaxation oscillation is described for a planar system which is not slow-fast but still singularly perturbed, like
\begin{align*}
 \dot z = h(z,\epsilon^{-1}).
\end{align*}
In this particular case the singular cycle $\Gamma_0$ is 
\begin{tikzpicture}
\draw [red, thick] (0,-1) circle [radius=0.1];;
\end{tikzpicture}-shaped but it crosses the set, where $$\lim_{\epsilon \rightarrow 0} h(z,\epsilon^{-1}),$$ is undefined, twice, complicating the analysis significantly. As a result, the conditions ensuring that $\Gamma_\epsilon$ exists are also fairly complicated. Related relaxation oscillations occur in similar systems, see \cite{krihog2,kristiansen2018a}.

In the present paper, we will consider the following slow-fast system
\begin{align}
 \dot x &=- e^z \left(x+(1+\alpha)z\right),\eqlab{system}\\
 \dot y &=e^z -1,\nonumber\\
 \epsilon \dot z &=-e^{-z} \left(y+\frac{x+z}{\xi}\right).\nonumber
\end{align}
Here $\alpha>0,\xi>0$ and $0<\epsilon\ll 1$. This is a caricature model of an earthquake fault, see \secref{model} below. Relaxation oscillations in this system therefore models the seismic cycle of earthquakes with years, decades even, of inactivity preceded by sudden dramatic shaking of the ground: the earthquake.

Similar to the case in \cite{Gucwa2009783}, limit cycles of \eqref{system} also grow unboundedly as $\epsilon\rightarrow 0$. But in contrary to \cite{Gucwa2009783}, the right hand side of \eqref{system} does not have polynomial growth, and as a result, the unboundedness of the solutions cannot be captured by a scaling of the variables. As a result, we will in this paper work on the Poincar\'e sphere. Here we then prove the existence of limit cycles $\Gamma_\epsilon$, whose limit $\Gamma_0$ as $\epsilon\rightarrow 0$ consists of a single slow orbit on the $2D$ attracting critical manifold $C=\{(x,y,z)\vert y+(x+z)/\xi=0\}$. The ``fast'' part of $\Gamma_0$ occurs at ``infinity'' (i.e. the equator of the Poincar\'e sphere) and is nontrivial and perhaps even surprising. We uncover this structure by applying the method in \cite{kristiansen2017a} to gain hyperbolicity where this is lost due to exponential decay of eigenvalues. The main theorem, \thmref{mainThm}, proves a conjecture in \cite{bossolini2017a}. 

%
%


\subsection{The model}\seclab{model}
The model we consider, described by the equations \eqref{system}, consists of a single block dragged along a frictional surface by a spring, the end of which moves at a constant velocity. We set this velocity to $1$, without loss of generality. See an illustration in \figref{spring}. Here $v$ is the velocity of the block and $y$ is the relative position, measuring the deformation of the spring. If the moving spring models a sliding fault, then the system becomes a caricature model of an earthquake fault. It is therefore also the extreme case of a single-block version of the Burridge-Knopoff model, which idealizes the earthquake fault as a chain of spring-block systems of the type shown in \figref{spring}. More importantly, the Burridge-Knopoff model has a continuum limit as the distance between the chain blocks vanishes and travelling wave solutions of the resulting PDE system, see \cite[Section 2.1]{bossolini2017a}, are basically solutions of the one-block system. See \cite{putelat08} for a different derivation.

\begin{figure}[h!]
\begin{center}
{\includegraphics[width=.575\textwidth]{./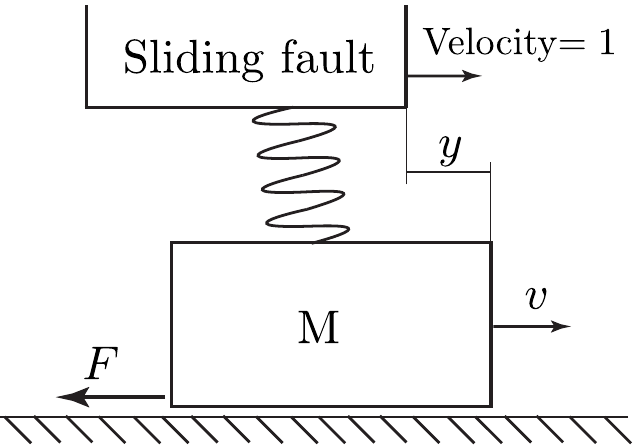}}
\end{center}
\caption{Illustration of model \eqref{system0}.}
\figlab{spring}
\end{figure} 

\subsection{Friction}
The unknown in \figref{spring}, and in earthquake modelling in general, is the friction force $F$. 
Within engineering, friction is frequently modelled using Coulomb's law, the stiction law or the Stribeck law \cite{olsson1998a,feeny1998a}. However, these laws do not account for any of the microscopic processes that are known to occur when surfaces interact in relative motion. Also such models cannot produce phenomena known to occur in earthquakes. 
To capture this, one can use \textit{rate-and-state friction laws}. Such models attempt to account for additional physics, like the condition of the contacting asperities \cite{woodhouse2015a}, by adding additional variables, called ``state variables'', to the problem. The first models of this kind, the Dieterich law \cite{dieterich1978a,dieterich1979a} and the Ruina law \cite{ruina1983a}, were obtained from experiments on rocks. In contrast to e.g. Coulomb's simple model, the friction force  in these models depends logarithmically on the velocity. (It was only later realized that this decay actually agree with theory of Arrhenius processes resulting from breaking bonds at the atomic level \cite{rice2001a}.) 

In this paper, we consider the Ruina friction law. This produces the following equations for the system in \figref{spring}
\begin{align}
 \dot x &= -v \left(x+(1+\alpha)\log v\right),\eqlab{system0}\\
 \dot y &=v-1,\nonumber\\
 \epsilon \dot v &=-y-\frac{x+\log v}{\xi},\nonumber
\end{align}
in its nondimensionalised form. See \cite{bossolini2017a,erickson2008a} for further details on the derivation. The variable $x$ is a single ``state variable''. As in \cite{bossolini2017a} we put $z=\log v$  and arrive at model \eqref{system}, which we shall study in this manuscript as a singular perturbed problem with $0<\epsilon\ll 1$. 

Numerically, existence of relaxation-type oscillations for $\alpha>\xi$ and small values of $\epsilon>0$ is a well-known fact. See also \figref{xyz}, computed in MATLAB using ode23s with tolerances $10^{-12}$. \figref{xyzt} shows $x$, $y$ and $z$ as functions of $t$. But in this paper, we are interested in a rigorous proof of this existence and en-route on how to apply classical methods of singular perturbation theory to \eqref{system0}, or equivalently \eqref{system}, with non-polynomial growth of the right hand side. 

\begin{figure}[h!]
\begin{center}
\subfigure[]{\includegraphics[width=.495\textwidth]{./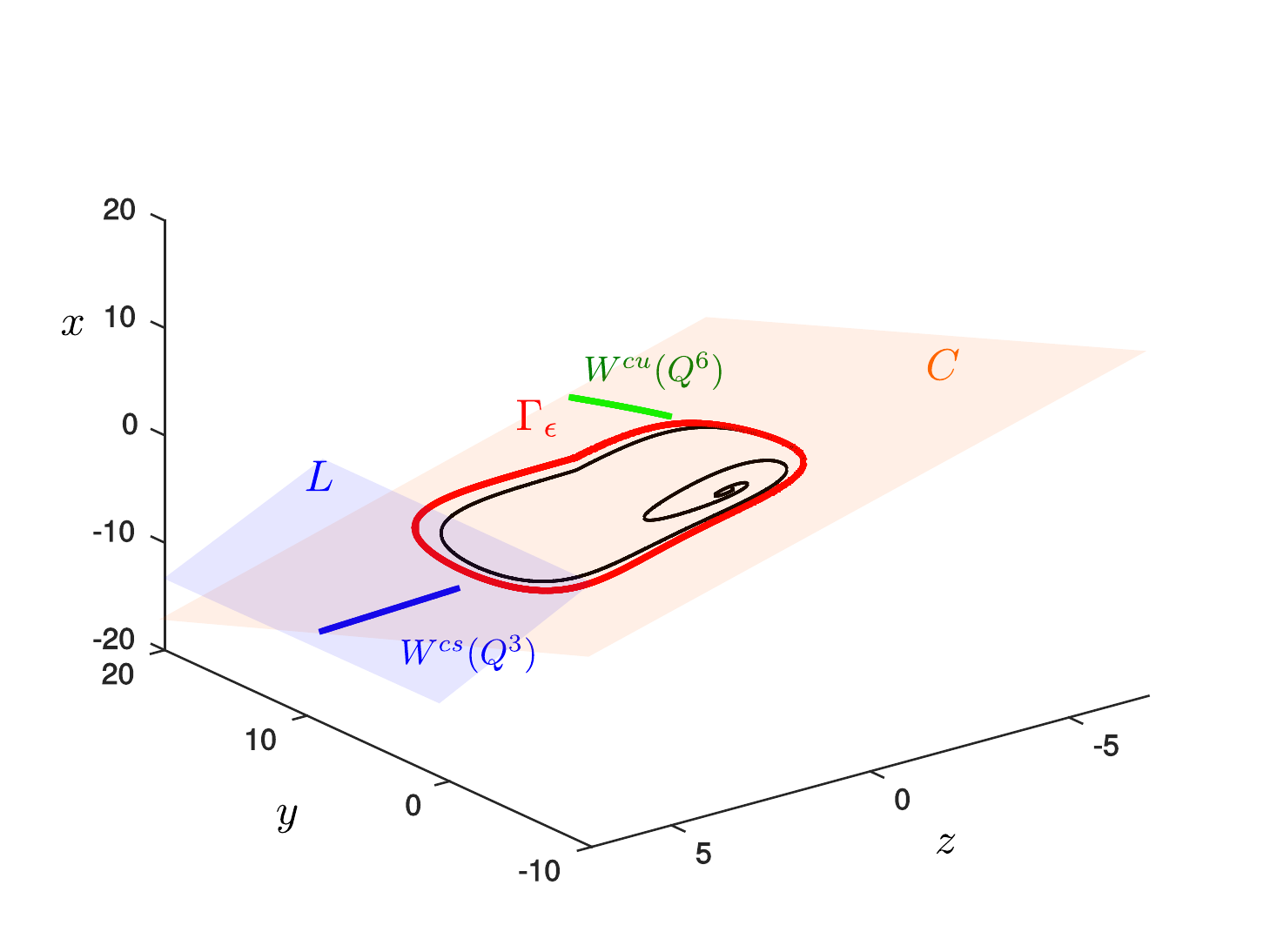}}
\subfigure[]{\includegraphics[width=.495\textwidth]{./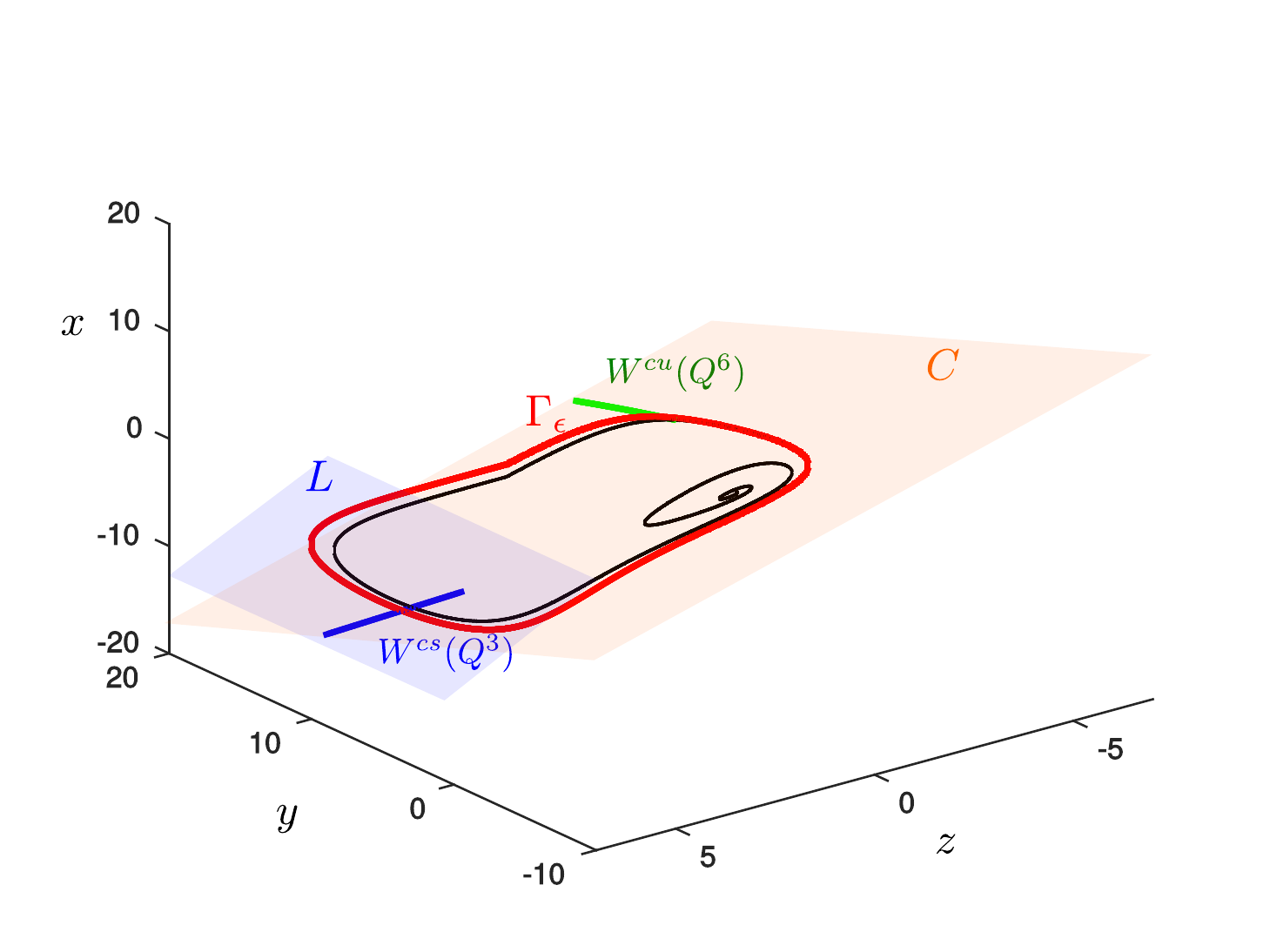}}
\end{center}
\caption{Limit cycles in red for $\alpha=0.8$, $\xi=0.5$. Also in (a): $\epsilon=0.01$ and in (b): $\epsilon=0.001$. The objects $W^{cu}(Q^6)$, $C$ and $L$ are central to the analysis. The motion is clockwise.} 
\figlab{xyz}
\end{figure}
\begin{figure}[h!]
\begin{center}
\subfigure[]{\includegraphics[width=.495\textwidth]{./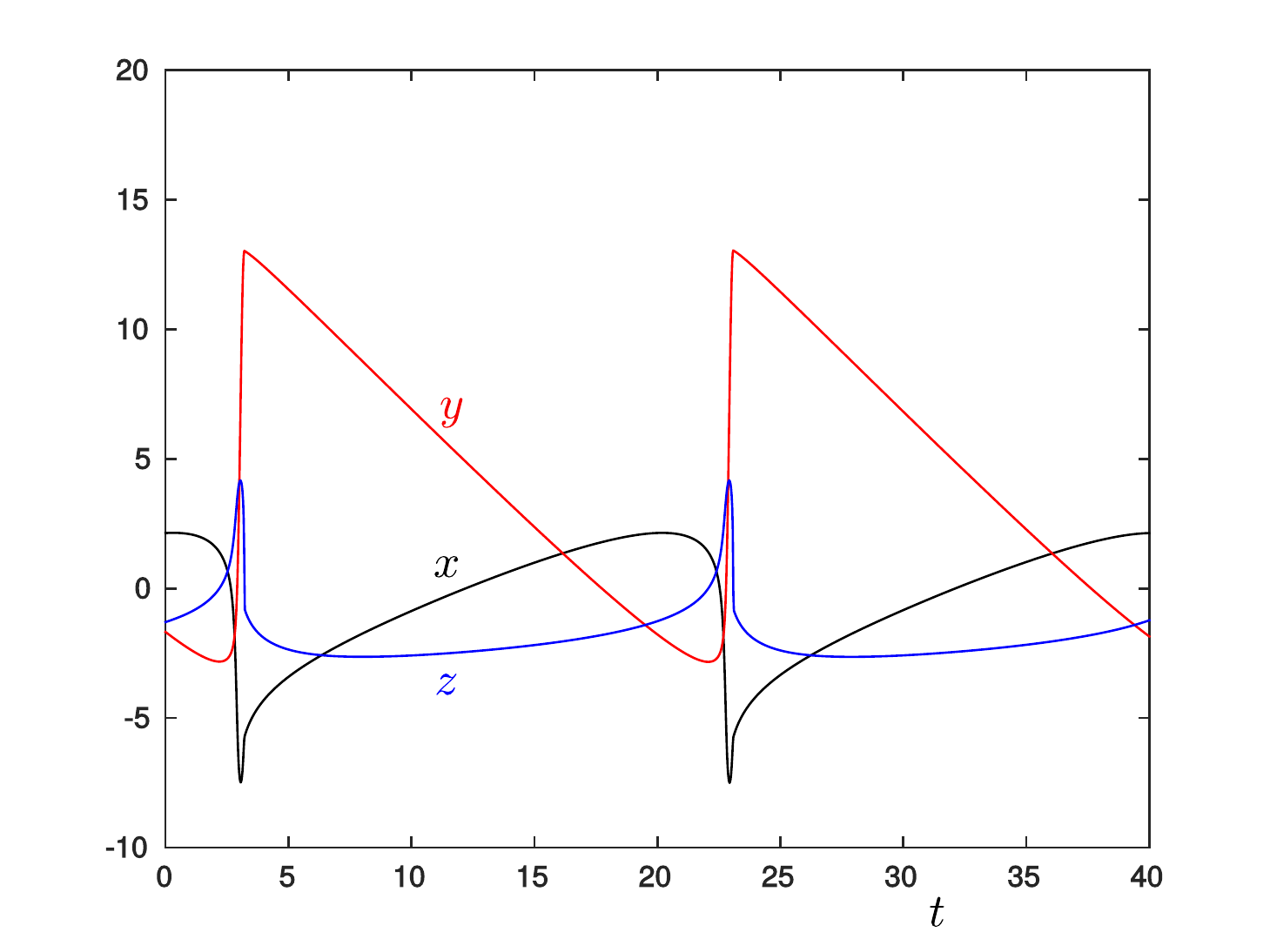}}
\subfigure[]{\includegraphics[width=.495\textwidth]{./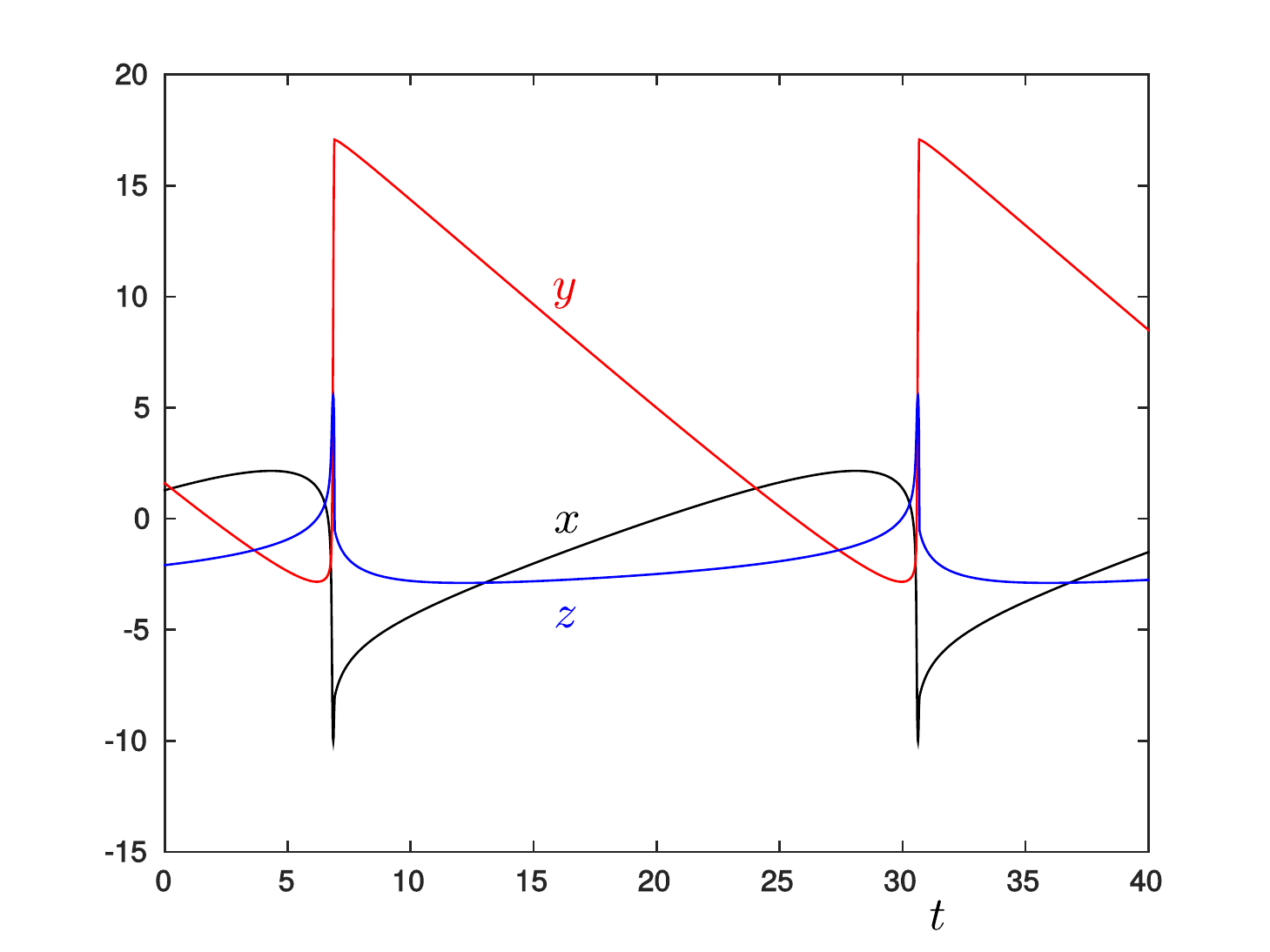}}
\end{center}
\caption{$x$, $y$ and $z$ in \figref{xyz} as functions of $t$ for $\xi=0.5$ and $\alpha=0.8$. In (a): $\epsilon=0.01$. In (b): $\epsilon=0.001$. Notice how the period also depends upon $\epsilon$. } 
\figlab{xyzt}
\end{figure}

The analysis of the Dieterich law
\begin{align*}
 \dot x&=(1+\alpha)(e^{-x/(1+\alpha)}-e^z),\\
 \dot y &=e^z-1,\\
 \epsilon \dot z &=-e^{-z} \left(y+\frac{x+z}{\xi}\right).
\end{align*}
is similar,but slightly more involved, and is therefore postponed to a separate manuscript. Nevertheless, there are known limitations of the Dieterich and Ruina laws. Basically, experiments suggest that friction should be an \begin{tikzpicture}
\draw [red,  thick,domain=-0.08:0.08] plot (\x, 0.08333333333-3.125000000*\x+651.0416667*\x^3);
\end{tikzpicture}-shaped graph of velocity (when the states are in ``quasi-steady states''). Dieterich, for example, only capture the downward diagonal of the \begin{tikzpicture}
\draw [red,  thick,domain=-0.08:0.08] plot (\x, 0.08333333333-3.125000000*\x+651.0416667*\x^3);
\end{tikzpicture}-shaped, see \cite[Fig. 1]{putelat2015a}.

The more recently developed \textit{spinodal rate-and-state friction law}, see \cite{putelat2015a} and references therein, has been developed to capture the missing near-vertical and increasing pieces of the \begin{tikzpicture}
\draw [red,  thick,domain=-0.08:0.08] plot (\x, 0.08333333333-3.125000000*\x+651.0416667*\x^3);
\end{tikzpicture}-profile, producing a potentially widely applicable, yet complicated, friction law. In \cite{putelat2017a}, travelling wave solutions of a simple model for a thin sliding slab with this friction law were analyzed numerically. The results showed a rich bifurcation structure and demonstrated that the spinodal law captures most essential physical phenomena known from friction experiments, also those not produced by the Ruina or the Dieterich law. Ideally, in the future, we hope that our insight into the two simpler models, Ruina and Dieterich, eventually will allow for a detailed analysis of the spinodal law and increase our understanding of the numerical findings in \cite{putelat2017a}. 

\subsection{Singular analysis of \eqref{system}}
In terms of the fast time $\tau= \epsilon^{-1}t$, the (slow) system \eqref{system} becomes the (fast) system
\begin{align}
 x' &=-\epsilon e^z \left(x+(1+\alpha)z\right),\eqlab{systemNew}\\
 y' &=\epsilon (e^z -1),\nonumber\\
 z' &=-e^{-z} \left(y+\frac{x+z}{\xi}\right),\nonumber
\end{align}
Setting $\epsilon=0$ in \eqref{systemNew} then gives the layer problem 
\begin{align*}
 x' &=0,\\
 y'&=0,\\
 z'&=-e^{-z} \left(y+\frac{x+z}{\xi}\right),
\end{align*}
for which the plane
\begin{align}
 C = \left\{(x,y,z)\vert y+\frac{x+z}{\xi}=0\right\}.\eqlab{C2}
\end{align}
is the critical manifold. This manifold is normally hyperbolic and attracting since the linearization about any point $C$ gives 
\begin{align}
-\xi^{-1} e^{-z},\eqlab{eigenvalue}
\end{align}
as a single nonzero eigenvalue. However, $C$ is not compact. 
%

Setting $\epsilon=0$ in \eqref{system}, on the other hand, gives a reduced problem on $C$:
\begin{align*}
 \dot x &=-e^z \left(x+(1+\alpha)z\right),\\
 \dot y &=e^z -1,\nonumber
\end{align*}
with $z=\tilde m(x,y)$ where $$\tilde m(x,y)\equiv -\xi y-x,$$ is obtained from the expression of $C$ in \eqref{C2}. However, there are some advantages in working with the physical meaningful variables $(y,z)$ rather than $(x,y)$. Recall that $x$ is a ``state'' variable describing the friction. (It models a combination of effects and is difficult to measure and observe in practice, see e.g. \cite{woodhouse2015a}.) We therefore write $C$ as a graph 
\begin{align}
x=m(y,z), \eqlab{xmx}
\end{align} over $(y,z)\in \mathbb R^2$ with 
\begin{align}
 m(y,z) = -\xi y-z.\eqlab{mEqn}
\end{align}
Differentiating this then gives following reduced problem
\begin{align}
 \dot y &=e^z-1,\eqlab{reducedC}\\
 \dot z &=\xi+e^z \left(\alpha z-\xi y-\xi\right). \nonumber
\end{align}
on $C$, using the coordinates $(y,z)$. 
In \cite{bossolini2017a} the authors show that \eqref{reducedC} has a degenerate Hopf bifurcation at $\alpha=\xi$, where all periodic orbits emerge at once due to a Hamiltonian structure:
\begin{align}
 \begin{pmatrix}
  \dot y \\
  \dot z
 \end{pmatrix} = J(y,z) \nabla H(y,z),\eqlab{yzHam}
\end{align}
where
\begin{align*}
 J(y,z) &= \begin{pmatrix}
           0&\xi^{-1} e^{\xi y+z}\\
           -\xi^{-1} e^{\xi y+z} & 0
          \end{pmatrix},\\
         H(y,z) &=-\xi e^{-\xi y} (y-z +1-e^{-z})+1-e^{-\xi y}.
\end{align*}
The authors of \cite{bossolini2017a} then put the reduced problem \eqref{reducedC} on the Poincar\'e sphere in the following way: Consider $S^2=\{(\bar y,\bar z,\bar w)\in \mathbb R^3\vert \bar y^2+\bar z^2+\bar w^2=1\}$ and let $\phi:\,S^2\rightarrow \mathbb R^2$ be defined by 
\begin{align}
(\bar y,\bar z,\bar w)\mapsto \left\{\begin{matrix}
y&=& \bar w^{-1} \bar y,\\
z&=& \bar w^{-1} \bar z.
\end{matrix}\right.\eqlab{chartk2}
\end{align}
Then by pull-back, the vector-field \eqref{reducedC} gives a vector-field on $(\bar y,\bar z,\bar w)\in S^2\cap \{\bar w>0\}$. \eqref{chartk2} is then also a chart, obtained by central projection onto the hyperplane $\bar w=1$, parameterizing $\bar w>0$ of $S^2$. To describe $S^2$ near the equator $\bar w=0$ the authors in \cite{bossolini2017a} studied two separate directional charts:
\begin{align*}
 \phi_1:&\,S^2 \rightarrow \mathbb R^2,\\
 \phi_3:&\,S^2 \rightarrow \mathbb R^2,
\end{align*}
defined by
\begin{align}
 (\bar y,\bar z,\bar w)\mapsto \left\{\begin{matrix}
                                              z_1 &=& \bar y^{-1} \bar z,\\
                                              w_1 &=& {\bar y}^{-1}{\bar w},
                                             \end{matrix}\right.\eqlab{phi11}\\
                                             (\bar y,\bar z,\bar w)\mapsto \left\{\begin{matrix}
                                              y_3 &=& \bar z^{-1} \bar y,\\
                                              w_3 &=& {\bar z}^{-1} {\bar w},
                                             \end{matrix}\right.\eqlab{phi31}
\end{align}
respectively. These charts are obtained by central projections onto the planes tangent to $S^2$ at $\bar y=1$ and $\bar z=1$, respectively. See \figref{poincareCompactification1}. 
Using appropriate time transformations (basically slowing time down) near the equator $\bar w=0$, the authors then found three equilibria: $Q^1$ where $\bar y=0$, $\bar z=1$, $Q^3$ where $\bar y^{-1} \bar z = \alpha^{-1}\xi$, $\bar y>0$, and $Q^7$ where $\bar y^{-1}\bar z=-\xi$, $\bar y>0$, and one  singular ``$0/0$'' point: $Q^6$ where $\bar y=1,\bar z=0$. Here $Q^1$ is a stable hyperbolic node while $Q^7$ is an unstable hyperbolic node. The point $Q^3$, on the other hand, is a nonhyperbolic saddle, with a hyperbolic unstable manifold along the equator and a nonhyperbolic stable manifold (a unique center manifold), which we denote by $W^{cs}(Q^3)$. Finally, the point $Q^6$ acts like a saddle, with one ``stable manifold'' along the equator of the sphere, and a unique center-like unstable manifold, which we shall denote $W^{cu}(Q^6)$. See also \figref{poincareCompactification1}. We describe the invariant manifolds of $Q^3$ and $Q^6$ using the original coordinates $(y,z)$ of $C$ in the following lemma.

\begin{figure}[h!]
\begin{center}
{\includegraphics[width=.625\textwidth]{./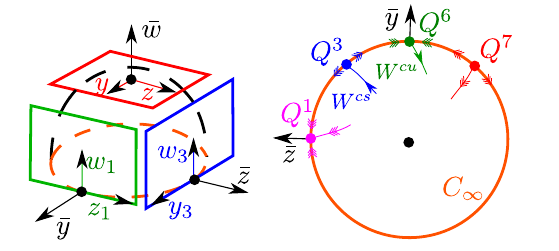}}
\end{center}
\caption{Poincar\'e compactification of the reduced problem.}
\figlab{poincareCompactification1}
\end{figure}
 \begin{lemma}\lemmalab{Wcsu}
 \cite[Proposition 5.1]{bossolini2017a}
Consider any $\alpha>0,\xi>0$. Then there exists two unique one-dimensional invariant manifolds $W^{cu}(Q^6)$ and $W^{cs}(Q^3)$ for the reduced flow on $C$ with the following asymptotics:
 \begin{align}
  z &= -\log (y) \left(1+\frac{\alpha}{\xi y}\right),\eqlab{WcuA}\\
 z &= \frac{\xi}{\alpha}y+\frac{(1+\alpha)\xi}{\alpha^2},\eqlab{WcsA}
 \end{align}
as $y\rightarrow \infty$, respectively. $W^{cu}(Q^6)$ is the set of all trajectories with the asymptotics in \eqref{WcuA} backwards in time (or simply, the unstable set of $Q^6$) whereas $W^{cs}(Q^3)$ is the set of all trajectories with the asymptotics \eqref{WcsA} forward in time (or simply, the stable set of $Q^3$). Moreover, for $\alpha=\xi$, $W^{cs}(Q^3)$ and $W^{cu}(Q^6)$ coincide, such that there exists a unique orbit on $C$ with the asymptotics in \eqref{WcuA}$_{\alpha=\xi}$ in backward time and \eqref{WcsA}$_{\alpha=\xi}$ in forward time, respectively. The intersection is transverse in $(y,z,\alpha)$-space:
\begin{itemize}
 \item[(a)] For $\alpha>\xi$: $W^{cu}(Q^6)$ is contained within the stable set of $Q^1$, in such a way that $z(t)\rightarrow \infty$ and $z(t)^{-1}y(t)\rightarrow 0$ with $y(t)>0$, in forward time, 
while $W^{cs}(Q^3)$ is contained within the unstable set of $(y,z)=(0,0)$. 
\item[(b)] For $\alpha<\xi$: $W^{cu}(Q^6)$ is contained within the stable set of $(y,z)=(0,0)$, while $W^{cs}(Q^3)$ is contained within the unstable set of $Q^7$ with the asymptotics 
\begin{align*}
 z=-\xi y,
\end{align*}
for $y\rightarrow \infty$ in backward time. 
\end{itemize}

%
%
%
%
%
\end{lemma}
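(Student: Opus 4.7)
The plan is to study the reduced flow on $C$ near the equator of the Poincaré sphere using the two directional charts $\phi_1$ (near $Q^6$) and $\phi_3$ (near $Q^3$), desingularizing the exponential $e^z$ by treating it as an auxiliary coordinate in the spirit of \cite{kristiansen2017a}. In chart $\phi_1$ with $(z_1,w_1)=(z/y,1/y)$, the point $Q^6$ sits at the origin but $e^z=e^{z_1/w_1}$ prevents a standard linearization; in chart $\phi_3$ with $(y_3,w_3)=(y/z,1/z)$, the point $Q^3$ sits at $(y_3,w_3)=(\alpha/\xi,0)$ with the same exponential obstruction. I would introduce the extended variable $u=e^z$ (satisfying $\dot u=u\dot z$) and, where necessary, a cylindrical blowup in the $(u,w_\ast)$-plane. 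After desingularization by multiplying the vector field by an appropriate power of $u$ (or $w_\ast$), both equilibria become partially hyperbolic saddles with one-dimensional center direction entering the finite part of $C$. Standard center/stable-manifold theorems then produce the unique invariant manifolds $W^{cu}(Q^6)$ and $W^{cs}(Q^3)$.

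To obtain the asymptotic expansions \eqref{WcuA} and \eqref{WcsA}, I would plug formal ansätze into the reduced ODE \eqref{reducedC}. For $W^{cu}(Q^6)$, writing $z=-\log y+g(y)$ with $g(y)\to 0$, one has $e^z=y^{-1}e^g$ and direct substitution in $dz/dy=\dot z/\dot y$ yields at leading order the balance $\xi g\sim -\alpha\log y/y$, giving $g\sim -\alpha\log y/(\xi y)$, which is precisely \eqref{WcuA}. For $W^{cs}(Q^3)$, writing $z=(\xi/\alpha)y+c+o(1)$ with $c$ to be determined, the constraint $\dot z/\dot y\to\xi/\alpha$ as $y\to\infty$ forces $c=(1+\alpha)\xi/\alpha^2$, reproducing \eqref{WcsA}. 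Uniqueness of $W^{cu}(Q^6)$ (resp.\ $W^{cs}(Q^3)$) as the full unstable (resp.\ stable) set of the desingularized equilibrium follows because the center direction is one-dimensional and the complementary directions are hyperbolic along the equator.

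For the coincidence at $\alpha=\xi$, I would use the Hamiltonian structure \eqref{yzHam}: both invariant manifolds must be contained in a level set of $H$, and evaluating $H$ along the asymptotic expansions \eqref{WcuA} and \eqref{WcsA} (both with $\alpha=\xi$) shows they lie on the same level set, so, since each is a unique one-dimensional invariant curve, they must coincide. Transversality in $(y,z,\alpha)$-space at $\alpha=\xi$ would follow from a Melnikov-type computation: differentiate the location of $W^{cu}(Q^6)\cap\Sigma$ and $W^{cs}(Q^3)\cap\Sigma$ with respect to $\alpha$ on a suitable cross-section $\Sigma$ transverse to the common orbit, expressing the splitting as a convergent integral of $\partial_\alpha$ of the vector field along the heteroclinic, and checking it is nonzero.

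The global statements in (a) and (b) would be obtained by combining the local pictures at $Q^1$, $Q^3$, $Q^6$, $Q^7$ on the Poincaré sphere with the fact that, aside from these equilibria at infinity, the only finite equilibrium of the reduced flow is $(y,z)=(0,0)$. Monotonicity in $\alpha-\xi$ of the splitting produced by the Melnikov integral, together with the stable/unstable set structure of $Q^1$ and $Q^7$ on the equator determined in \cite{bossolini2017a}, then forces the asymptotic destinations described in (a) and (b). The main obstacle, and the step most needing care, is the desingularization at $Q^6$ in Step 1: the exponential decay of $e^z$ produces a non-standard loss of hyperbolicity, and one must verify that after the extended blowup the relevant invariant manifold is indeed unique and smooth enough for the subsequent asymptotic matching and Melnikov arguments to be rigorous.
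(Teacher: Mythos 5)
Your Steps 1--4 (blowup construction of the two manifolds, the formal asymptotics, the coincidence at $\alpha=\xi$ via the Hamiltonian, and Melnikov transversality) essentially re-derive the content of \cite[Proposition 5.1]{bossolini2017a}, which the paper simply cites; those parts are plausible in outline (your formal balances reproducing \eqref{WcuA} and \eqref{WcsA} check out), although the coincidence argument needs the additional observation that both asymptotic branches lie on the level set $\{H=1\}$ together with an argument that they lie on the same connected component of that level set.

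The genuine gap is in your last step, which is precisely the part of the lemma that this paper proves rather than cites: the global statements (a) and (b) for \emph{all} $\alpha>\xi$, respectively all $\alpha<\xi$. A Melnikov computation controls the splitting only to first order in $\alpha-\xi$ and is therefore intrinsically local near $\alpha=\xi$ --- exactly the limitation of \cite{bossolini2017a} that the present lemma is meant to remove. Your appeal to ``monotonicity in $\alpha-\xi$ of the splitting'' is both unproven and insufficient: even if the splitting were monotone, nothing in your argument rules out limit cycles (or polycycles) of \eqref{reducedC} for $\alpha$ far from $\xi$, so Poincar\'e--Bendixson alone still allows $W^{cu}(Q^6)$ to accumulate on such an invariant set rather than on $Q^1$ or the origin. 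The paper closes this with a global Lyapunov-function argument: along solutions of \eqref{reducedC} one has $\frac{dH}{dt}=-\xi e^{-\xi y}(e^z-1)z(\alpha-\xi)$, and since $(e^z-1)z\ge 0$ with equality only at $z=0$, the Hamiltonian $H$ from \eqref{yzHam} is strictly monotone along every nonstationary orbit when $\alpha\ne\xi$. Hence periodic orbits cannot exist, the $\omega$-limit set of $W^{cu}(Q^6)$ on the Poincar\'e disk must consist of equilibria, and the node structure of $Q^1$ (stable) and $Q^7$ (unstable) on the equator then forces the destinations stated in (a) and (b) for every $\alpha>\xi$, respectively $\alpha<\xi$. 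Replace your Melnikov-monotonicity step with this (or an equivalent global) argument; as it stands, your proposal establishes (a) and (b) only in a neighborhood of $\alpha=\xi$.
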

\begin{proof}
 See \cite[Proposition 5.1]{bossolini2017a}. Notice, in \cite{bossolini2017a}, however, the authors use Melnikov theory and only deduce (a) and (b) locally near $\alpha=\xi$. To show that these statements hold for any $\alpha>\xi$ and $\alpha<\xi$, respectively, we simply use that $H$ is a Lyapunov function:
 \begin{align*}
 \frac{dH}{dt}(y,z)=- \xi e^{-\xi y} (e^z-1)z(\alpha-\xi),
\end{align*}
such that $\text{sign}(H(y,z)) = -\text{sign}(\alpha-\xi)$ for all $y,z\ne 0$ and $\alpha\ne \xi$. 
Therefore for $\alpha>\xi$, $H$ increases
monotonically along all orbits ($\ne (y,z)(t)\equiv (0,0)$) of \eqref{reducedC}. Therefore limit cycles cannot exist. Recall that $Q^1$ is a stable node on the Poincar\'e sphere, while $Q^7$ is an unstable node. By Poincar\'e-Bendixson, $W^{cu}(Q^6)$ is asymptotic to $Q^1$ when $\alpha>\xi$. The approach is similar for $\alpha<\xi$.  
\end{proof}

By this lemma, we obtain the global phase portraits in \figref{poincareReduced} for the reduced problem. 
%
\begin{figure}[h!]
\begin{center}
\subfigure[]{\includegraphics[width=.295\textwidth]{./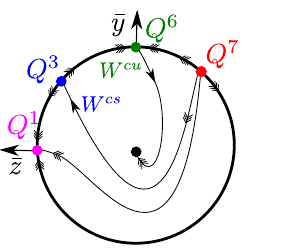}}
\subfigure[]{\includegraphics[width=.295\textwidth]{./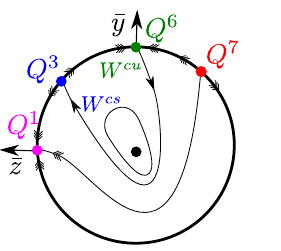}}
\subfigure[]{\includegraphics[width=.295\textwidth]{./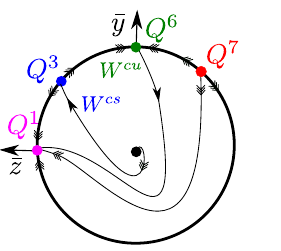}}
\end{center}
\caption{Phase portraits of the reduced problem on the Poinacar\'e sphere. In (a): $\alpha<\xi$, in (b): $\alpha=\xi$ (the Hamiltonian case) and finally in (c): $\alpha>\xi$. }
\figlab{poincareReduced}
\end{figure}
\subsection{Main results}




In this section we now consider the $\epsilon\ll 1$ system. 
In \cite{bossolini2017a}, the authors apply Poincar\'e compactification of the full system \eqref{system} defining $\Phi:\,S^3\rightarrow \mathbb R^3$, $S^3 = \{(\bar x,\bar y,\bar z,\bar w)\vert \bar x^2+\bar y^2+\bar z^2+\bar w^2=1\}$ by
\begin{align}
(\bar x,\bar y,\bar z,\bar w)\mapsto \left\{\begin{matrix}x &=& \bar w^{-1} \bar x,\\
y&=& \bar w^{-1} \bar y,\\
z&=& \bar w^{-1} \bar z.
\end{matrix}\right.\eqlab{chartK2}
\end{align}
%
%
By \eqref{chartk2} and \eqref{xmx}, we obtain $C$ as an ellipsoid (or actually a hemisphere hereof) within $S^3= \{(\bar x,\bar y,\bar z,\bar w)\vert \bar x^2+\bar y^2+\bar z^2+\bar w^2=1\}$, the equator of which, along $\overline w=0$, contains the corresponding points $Q^1$, $Q^3$, $Q^6$ and $Q^7$ along the boundary $C_\infty$ of $C$.
We use the directional charts
\begin{align*}
 \phi_1:&\,S^3 \rightarrow \mathbb R^3,\\
 \phi_3:&\,S^3 \rightarrow \mathbb R^3,
\end{align*}
in the following, 
defined by
\begin{align}
 (\bar x,\bar y,\bar z,\bar w)\mapsto \left\{\begin{matrix}
                                               x_1 &=& \bar y^{-1} \bar x,\\
                                              z_1 &=& \bar y^{-1} \bar z,\\
                                              w_1 &=& {\bar y}^{-1}{\bar w},
                                             \end{matrix}\right.\eqlab{phi1}\\
                                             ( \bar x, \bar y,\bar z,\bar w)\mapsto \left\{\begin{matrix}
                                              x_3 &=&\bar z^{-1} \bar x,\\
                                              y_3 &=& \bar z^{-1} \bar y,\\
                                              w_3 &=& {\bar z}^{-1} {\bar w},
                                             \end{matrix}\right.\eqlab{phi3}
\end{align}
respectively. Here we misuse notation slightly and reuse the symbols in \eqref{phi11} and \eqref{phi31} for the new charts. Notice that the coordinate transformation between $\phi_1$ and $\phi_3$ can be derived from the expressions
\begin{align}
 x_1 &=y_3^{-1} x_3,\eqlab{phi13}\\
 z_1 &=y_3^{-1},\nonumber\\
 w_1 &=y_3^{-1} w_3,\nonumber
\end{align}
for $z_1>0$ and $y_3>0$. 
Furthermore, the coordinates in $\phi_1$ and $\phi_3$ and the original coordinates $(x,y,z)$ are related as follows
\begin{align}
 x &=w_1^{-1} x_1 =w_3^{-1} x_3,\eqlab{phi13xyz}\\
 y&=w_1^{-1}\,\,\,\,\,\, = w_3^{-1} y_3,\nonumber\\
 z&=w_1^{-1} z_1 \,= w_3^{-1},\nonumber
\end{align}
using \eqref{chartK2}. 

Following \cite{bossolini2017a}, we define a ``singular'' cycle as follows:
\begin{definition}\defnlab{Gamma}
 \cite[Definition 1]{bossolini2017a} 
 Let the points $Q^{1,2,4,5,6}$ be given by
\begin{align*}
 Q^1_3 &= (-1,0,0),\\
 Q^2_3 &= (-1- \alpha,0,0),\\
 Q^4_3 &=\left(-1-\alpha,\frac{2\alpha}{\xi},0\right),
\end{align*}
in the coordinates $(x_3,y_3,w_3)$ of chart $\phi_3$,
\begin{align}
Q^5_1 &= \left(-\frac{\xi}{2\alpha}(1+\alpha),\frac{\xi}{2\alpha}(1-\alpha),0\right),\eqlab{Q5expr}\\
 Q^6_1&=(-\xi,0,0),\nonumber
\end{align}
in the coordinates $(x_1,z_1,w_1)$ of chart $\phi_1$.  Then  for any $\alpha>\xi$, we define the (singular) cycle $\Gamma_0$ as follows
    \begin{align}
  \Gamma_0  = \gamma^3 \cup \gamma^4 \cup \gamma^7 \cup \gamma^9\cup W^{cs}(Q^6), \eqlab{Gamma0Eqn}
  \end{align}
where 
 
%
%
 \begin{itemize}
  \item $\gamma^{3}$ connects $Q^1$ and $Q^2$. In the $(x_3,y_3,w_3)$-coordinates it is given as
  \begin{align}
   \gamma^3_3 = \{(x_3,y_3,w_3)\vert x_3\in (-1-\alpha,-1],y_3=w_3=0\}.\eqlab{gamma33}
  \end{align}
\item $\gamma^4$ connects $Q^2$ with $Q^4$. In the $(x_3,y_3,w_3)$-coordinates it is given as
\begin{align*}
 \gamma_3^4 =\{(x_3,y_3,w_3)\vert x_3= -1-\alpha,w_3 = 0,y_3\in [0,2\alpha/\xi)\}.
\end{align*}
\item $\gamma^7$ connects $Q^4$ with $Q^5$. In the $(x_1,z_1,w_1)$-coordinates it is given as
\begin{align}
 \gamma^7_1 = \bigg\{(x_1,z_1,w_1)\vert x_1 &= -\frac{\xi}{2\alpha}(1+\alpha),\\
 z_1&\in \bigg(\frac{\xi}{2\alpha}(1-\alpha),\frac{\xi}{2\alpha}\bigg],w_1=0\bigg\}.\eqlab{gamma71here}
\end{align}
\item $\gamma^9$ connects $Q^5$ with $Q^6$ on $C_\infty$. In the $(x_1,z_1,w_1)$-coordinates it is given as
\begin{align}
 \gamma^9_1 = \bigg\{(x_1,z_1,w_1)\vert x_1& = -\xi-z_1,\\
 z_1&\in \bigg(0,\frac{\xi}{2\alpha}(1-\alpha)\bigg],w_1=0\bigg\},\eqlab{gamma91}
\end{align}
for $0<\alpha<1$. For $\alpha=1$, $\gamma^9_1$ is the empty set, and for $\alpha> 1$ the interval for $z_1$ has to be swapped around such that $z_1\in [{\xi}(1-\alpha)/({2\alpha}),0)$.
\item $W^{cu}(Q^6)$ is the unique center manifold of $Q^6$ for the reduced problem \eqref{reducedC}, described in \lemmaref{Wcsu}, connecting $Q^6$ with $Q^1$ (given that $\alpha>\xi$) in forward slow time. 
 \end{itemize}
 
\end{definition}
\begin{figure}[h!]
\begin{center}
{\includegraphics[width=.695\textwidth]{./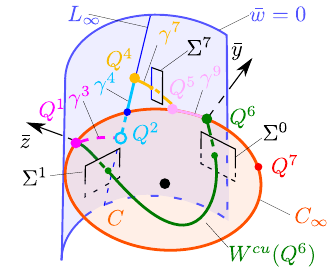}}
\end{center}
\caption{Illustration of the singular cycle $\Gamma_0$ on the Poincar\'e sphere for $\xi<\alpha<1$. For $\alpha=1$, $\gamma^7$ connects directly to $Q^6$ and therefore $\gamma^9$ ``disappears'' whereas for $\alpha>1$, $\gamma^7$ connects to $C_\infty$ with $\bar z<0$ so that $\gamma^9$ gets ``flipped'' on $C_\infty$ relative to $Q^6$. This change in the singular cycle can be observed bifurcation diagrams, see \figref{bif}.  }
\figlab{Gamma0}
\end{figure}
%
%

The segment $\gamma^4$ belongs to a center-like manifold $L_\infty$, that the authors in \cite{bossolini2017a} identified using the blowup method \cite{kristiansen2017a}, also applied in the present paper. 
In the $(x_3,y_3,w_3)$-coordinates it is given as the line
\begin{align}
 L_{\infty,3} = \{(x_3,y_3,w_3)\vert x_3= -1-\alpha,w_3 = 0,y_3\in I\},\eqlab{Lhere}
\end{align}
where $I\subset \mathbb R$ is a large interval. 
$\gamma^3$ is given by the contraction towards this manifold. The segment $\gamma^7$ connects $Q^4$ on $L_\infty$ with a point $Q^5$ on $C_\infty$. The final segment $\gamma^9$ is a segment on $C_\infty$ following the ``desingularized'' reduced slow flow on $C_\infty$ (basically using the time that produces \figref{poincareReduced}). We illustrate $\Gamma_0$ and the segments in \figref{Gamma0}. Here we represent $C$ as a disk and the equator sphere $\bar w=0$ (locally) as a cylindrical object containing $C_\infty$ as a circle.

In \figref{xyz}, we illustrate the set $L$ in the $(x,y,z)$-coordinates obtained by extending \eqref{Lhere} for $w_3>0$ sufficiently small and applying the coordinate change \eqref{phi13xyz}:
\begin{align}
 L =\{(x,y,z)\vert x = (-1-\alpha)z,\,y/z \in I,\,z\gg 0\}.\eqlab{LExpr}
\end{align}
The role of this set (and therefore also the role of $L_\infty$, given that the amplitude increases as $\epsilon\rightarrow 0$) is clearly visible in these diagrams. 
\begin{remark}\remlab{ElenaArgument}
 \cite{bossolini2017a} presents a heuristic argument for how $L$ appears which we for convinience also include here. Divide the right hand side of \eqref{system} by $e^z$ and suppose that $e^{-2z}\gg \epsilon$. Then
 \begin{align}
 \dot x &=- \left(x+(1+\alpha)z\right),\eqlab{xyzHereNew}\\
 \dot y &=1,\nonumber\\
 \dot z &=0.\nonumber
\end{align}
to ``leading order''. The set $x=(-1-\alpha)z$, producing \eqref{LExpr}, is an invariant set of \eqref{xyzHereNew}, along which $y$ increases monotonically. But notice that this naive approach does not explain how orbits leave a neighborhood of $L$. For this we need a more detailed analysis, which we provide in the present paper.  
\end{remark}

%
%
%
In this paper, we prove the following result, conjectured in \cite{bossolini2017a}.
\begin{theorem}\thmlab{mainThm}
 Fix $\xi>0$ and any compact set $K$ in $\mathbb R^3$. Then for all $\alpha>\xi$ the following holds: 
 \begin{itemize}
 \item[(a)] There exists an $\epsilon_0>0$ such that the system has an attracting limit cycle $\Gamma_\epsilon$ for all $0<\epsilon\le \epsilon_0$ which is not contained within $K$. 
 \item[(b)] Moreover, on the Poincar\'e sphere, $\Gamma_\epsilon$ converges in Hausdorff distance to the singular cycle $\Gamma_0$ as $\epsilon\rightarrow 0$. 
 \end{itemize}
\end{theorem}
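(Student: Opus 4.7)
The overall plan is to construct a return map $P_\epsilon$ transverse to the slow portion of $\Gamma_0$ and to show that it is a strong contraction for $0<\epsilon\ll 1$; its unique fixed point then yields $\Gamma_\epsilon$, and Hausdorff convergence follows by tracking a small transverse disk through each piece of $\Gamma_0$. Concretely I would place a section $\Sigma$ on the Fenichel slow manifold $C_\epsilon$ transverse to $W^{cu}(Q^6)$, well away from infinity and from $Q^6$, flow forward through all segments of $\Gamma_0$ back to $\Sigma$, and estimate the linearization of the composed flow map.

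The first task is to identify which pieces of $\Gamma_0$ are resolved by classical methods and which are not. Away from the equator $\bar w=0$ and from $Q^6$, $C$ is normally hyperbolic attracting with eigenvalue $-\xi^{-1}e^{-z}$ by \eqref{eigenvalue}, so Fenichel theory produces an attracting slow manifold $C_\epsilon$ carrying an $O(\epsilon)$-perturbation of \eqref{reducedC}. Hence $W^{cu}(Q^6)$ and a compact portion of $\gamma^3$ near $Q^1$ perturb regularly. Problems occur at three locations on $\Gamma_0$: the line $L_\infty$ containing $\gamma^4$ (between $Q^2$ and $Q^4$), the jump-off point $Q^5$ at infinity, and the nonhyperbolic ``$0/0$'' point $Q^6$. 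Each of these must be resolved by blow-up in the appropriate directional chart.

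Second, I would carry out the blow-ups in the charts $\phi_1, \phi_3$. At $L_\infty$ (the line $x_3=-1-\alpha$, $w_3=0$ in $\phi_3$), a cylindrical blow-up of $(w_3,x_3+1+\alpha,\epsilon)$ resolves the non-hyperbolic line of equilibria of the layer flow; it exposes the hyperbolic contraction that glues $\gamma^3$ to $L_\infty$ and a desingularized drift tracing $\gamma^4$. At $Q^5$, a further blow-up joint with $\epsilon$ renders the fast jump along $\gamma^7\cup\gamma^9$ regular and provides a transverse matching with the desingularized reduced field on $C_\infty$. At $Q^6$ the eigenvalue $-\xi^{-1}e^{-z}$ decays exponentially, so classical algebraic blow-up cannot restore hyperbolicity; instead I would employ the extended–phase-space construction of \cite{kristiansen2017a}, appending an auxiliary variable (essentially $u = \epsilon e^{-z}$) so that the exponential degeneracy becomes polynomial, and then spherically blow up the resulting equilibrium at $u=0,\,\epsilon=0$ in chart $\phi_1$. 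Using the asymptotics of $W^{cu}(Q^6)$ from \lemmaref{Wcsu} to match the incoming and outgoing branches across the blow-up locus produces the intermediate pieces that, together with the five segments in \defnref{Gamma}, comprise the twelve-piece singular cycle promised in the abstract.

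Third, I would concatenate the transition maps over the twelve pieces. Regular slow pieces on $C_\epsilon$ give bounded maps with uniformly bounded derivative; the hyperbolic blow-up charts near $Q^5$ and on the equator give $O(1)$ expansion in center directions and exponential contraction $O(e^{-c/\epsilon})$ in the strong-stable direction; the entry into $L_\infty$ via $\gamma^3$ contributes the dominant contraction factor $e^{-c/\epsilon}$; the center-manifold passage past $Q^6$ in the extended space contributes at most bounded distortion, controlled by the desingularized reduced flow there. Composing these estimates, the derivative of $P_\epsilon$ along $\Sigma$ is $O(e^{-c/\epsilon})$, so $P_\epsilon$ is a uniform contraction of a small disk in $\Sigma$ into itself and the Banach fixed-point theorem yields the unique attracting $\Gamma_\epsilon$. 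Hausdorff convergence to $\Gamma_0$ then follows from uniform convergence on each chart.

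The main obstacle is the passage through $Q^6$. Because the attracting eigenvalue on $C$ decays like $e^{-z}$ and $\Gamma_\epsilon$ lives on the scale $z \sim \log\epsilon^{-1}$, Fenichel theory cannot track $C_\epsilon$ all the way to $Q^6$, and standard polynomial blow-up cannot see this exponential loss. The extended–phase-space device of \cite{kristiansen2017a} is the decisive tool: it polynomializes the degeneracy so that iterated blow-up reveals the hyperbolic and center-manifold structure required to match $W^{cu}(Q^6)$ continuously with $\gamma^7$ on the equator. Executing this matching in every directional chart, verifying transversality of the stable and unstable manifolds of the resulting blow-up equilibria, and bookkeeping the drift along the center manifold so that the orbit exits exactly at the endpoint of $\gamma^7$, is what demands most of the technical work.
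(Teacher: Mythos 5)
Your overall architecture (return map transverse to the slow segment, blowup in the charts $\phi_1,\phi_3$, the extended--phase-space device of \cite{kristiansen2017a}, contraction mapping) is the same as the paper's, but two of your concrete claims would fail as stated. First, you confine the exponential machinery to $Q^6$ and treat the rest by classical means: you assert that ``a compact portion of $\gamma^3$ near $Q^1$ perturbs regularly'' and that $L_\infty$ can be resolved by a cylindrical (polynomial) blowup of $(w_3,x_3+1+\alpha,\epsilon)$. But $Q^1$, $\gamma^3$, $L_\infty$, $Q^4$ and $Q^5$ all lie on the equator $\{w_3=0\}$, where the whole linearization is flat: the normal eigenvalue is $-\xi^{-1}e^{-2/w_3}$, see \eqref{eigValExp}, and every point of $\{w_3=0\}$ is fully nonhyperbolic with only zero eigenvalues. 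No algebraic blowup in $(w_3,x_3+1+\alpha,\epsilon)$ can desingularize these flat terms; this is exactly why the paper introduces the extra variable $q_3=e^{-2/w_3}$, see \eqref{q3K3}, already in chart $\phi_3$ (not only near $Q^6$) and then performs five successive blowups there, producing the hidden segments $\gamma^1,\gamma^2,\gamma^5,\gamma^6$ that your twelve-piece cycle would also need. In particular, the passage from the slow manifold (following $W^{cu}(Q^6)$ toward $Q^1$ at infinity) onto $\gamma^3$ is one of the hardest steps and is not Fenichel territory; Fenichel only gives the map $\Pi^0$ between sections at finite, fixed $y=\delta^{-1}$ and $z=\delta^{-1}$.

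Second, your bookkeeping of the contraction is quantitatively wrong in a way that matters. You attribute the dominant contraction, of size $O(e^{-c/\epsilon})$, to the strong-stable approach to $L_\infty$ along $\gamma^3$. Because the cycle lives at height $z\sim\log\epsilon^{-1}$, the normal rate $e^{-z}$ is only algebraically small in $\epsilon$, and after the logarithmically long passage the composed map is nowhere near $e^{-c/\epsilon}$-contracting: the paper's \lemmaref{Pi17} (built on \lemmaref{Pi111} and \corref{Pi111Cor}) shows the transition is $C^1$-close to a constant only at the rate $\mathcal O(\log^{-1}\epsilon^{-1}\log\log\epsilon^{-1})$, the contraction coming from the node-like convergence of the (desingularized) slow/center flow toward $\gamma^1$ near $Q^1$, not from normal hyperbolicity at $L_\infty$. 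The contraction-mapping conclusion survives because $o(1)$ suffices, but an argument that uses $e^{-c/\epsilon}$ contraction to dominate the distortion of the remaining eleven pieces would have to be redone with these much weaker rates, and the $C^1$ control of each local map (the content of the chain of lemmas in \secref{detailsK3} and \secref{detailsK1}) is precisely where the real work lies. A minor further point: the paper's extra variable is $q=e^{-2z}$ (resp.\ $e^{-2/w_3}$, $e^{-2/w_1}$, $w_3^{-1}e^{-1/w_3}$ in the various charts), blown up jointly with $\epsilon$, rather than your single combination $u=\epsilon e^{-z}$; the latter would still have to be supplemented by blowups in which $\epsilon$ appears separately to recover the structure used here.
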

The main difficulty in proving this result is that $C_0$ loses hyperbolicity at $\bar w=0$. The loss of hyperbolicity is due to the exponential decay of the single non-zero eigenvalue, see \eqref{eigenvalue}.  
To deal with this type of loss of hyperbolicity, we use the method in \cite{kristiansen2017a}, developed by the present author, to gain hyperbolicity in an extended space.

Besides providing all the details of the analysis to obtain a rigorous proof of \thmref{mainThm}, which was only conjectured in \cite{bossolini2017a}, we also provide a better overview of the analysis and the many blowup steps (we count 16 in total!). We lay out the geometry of the blowups and detail the charts and the corresponding coordinate transformations. Also, in the present manuscript we provide a complete analysis of the dynamics near $Q^6$ for $\epsilon>0$, which is missing at any level of formality in \cite{bossolini2017a}. Our blowup approach allows us to identify an improved singular cycle, consisting of $12$ segments, with better hyperbolicity properties. The additional segments $\gamma^{1,2,5,6,8,10,11}$, not visible in the blown down version of $\Gamma_0$ in \figref{Gamma0}, see \defnref{Gamma} also \eqref{Gamma0Eqn}, are described carefully in \secref{detailsK3} and \secref{detailsK1}, see also \figref{gamma} and \figref{gammaK1} from the perspective of $\phi_1$ and $\phi_3$, respectively. 

\subsection{Outline}
In the remainder of this paper we prove \thmref{mainThm}. First, in \secref{proofThm} we present two central lemmas, \lemmaref{Wcsu1} and \lemmaref{kristianLemma}, which in combination proves \thmref{mainThm}. \lemmaref{Wcsu1} is standard whereas \lemmaref{kristianLemma} requires substantial work. We prove \lemmaref{kristianLemma} by splitting it into two parts described in the two charts $\phi_3$ and $\phi_1$. We study these charts in \secref{phi3Section} and \secref{phi1Section}, respectively. Here we apply the method in \cite{kristiansen2017a} and lay out the necessary blowup steps. In each of these sections, we combine the results of blowup analysis, performed in details in \secref{detailsK3} and \secref{detailsK1}, respectively, into two separate lemmas, see \lemmaref{Pi17} and \lemmaref{Pi70}. In \secref{proofofkristianLemma}, we prove \lemmaref{kristianLemma} using \lemmaref{Pi17} and \lemmaref{Pi70}. In \secref{Discussion} we discuss some consequences of \thmref{mainThm} and directions for future work on the topic. 
\section{Proof of \thmref{mainThm}}\seclab{proofThm}
Consider the reduced problem \eqref{reducedC} and $\alpha>\xi$. Then by \lemmaref{Wcsu}, $W^{cu}(Q^6)$ intersects 
 $y = \delta^{-1}$ in a unique point 
 \begin{align}
 q^0=(x^0,\delta^{-1},z^0),\eqlab{q0Here}
 \end{align}
 with $$z^0\approx -\log(\delta^{-1})\left(1+\frac{\alpha \delta}{\xi}\right),$$
cf. \eqref{WcuA}, and $x^0 = m(\delta^{-1},z^0)$, for $\delta>0$ sufficiently small. Let $N^0$ be a small neighborhood of $(x^0,z^0)$ in $\mathbb R^2$. We therefore define a section $\Sigma^0$ as follows
\begin{align}
 \Sigma^0 = \{(x,y,z)\vert y=\delta^{-1},\,(x,z)\in N^0\}.\eqlab{Sigma0}
\end{align}
By \lemmaref{Wcsu} again, $W^{cu}(Q^6)$ also intersects $z = \delta^{-1}$ in a unique point $q^1=(x^1,y^1,\delta^{-1})$ with $y^1>0$ and $x^1=m(y^1,\delta^{-1})$ for $\delta>0$ sufficiently small, recall \eqref{mEqn}. See also \cite[Proposition 5.2]{bossolini2017a}. Then we define a section $\Sigma^1$ as follows
\begin{align}
 \Sigma^1 =  \{(x,y,z)\vert z=\delta^{-1},\,(x,y)\in N^1\},\eqlab{Sigma1}
\end{align}
where $N^1$ is a small neighborhood $(x^1,y^1)$ in $\mathbb R^2$. 
See \figref{Sigma0Sigma1}. Notice that \eqref{systemNew} is transverse to $\Sigma^0$. Also the reduced flow on $C$ is transverse to $\Sigma^1$.
\begin{figure}[h!]
\begin{center}
{\includegraphics[width=.625\textwidth]{./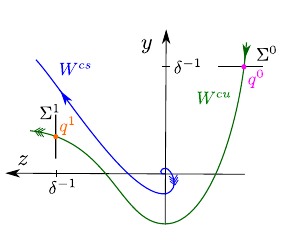}}
\end{center}
\caption{Sections $\Sigma^0$ and $\Sigma^1$.}
\figlab{Sigma0Sigma1}
\end{figure}
Let $\Pi^{0}:\,\Sigma^0\rightarrow \Sigma^1$ be defined for $0<\epsilon\ll 1$ as the transition mapping obtained by the first intersection through the forward flow of \eqref{system}. For $\epsilon=0$, we similarly define $\Pi^0:\Sigma^0\rightarrow \Sigma^1$ as the composition of the following mappings: (a) the projection $\,(x,\delta^{-1},z)\mapsto (x,\delta^{-1},\tilde m(x,\delta^{-1}))$ onto $C$ defined by the stable, critical fibers. (b): The mapping obtained from $(x,\delta^{-1},\tilde m(x,\delta^{-1}))$ by the first intersection with $\Sigma^1$ through the forward flow of the reduced problem on $C$. Hence $\Pi^0$ only depends upon $x$ for $\epsilon=0$:
\begin{align*}
 \Pi^0(x,\delta^{-1},z;0) =\Pi^0(x,\delta^{-1},\tilde m(x,\delta^{-1}))\in C\cap \Sigma^1,
\end{align*}
for all $(x,\delta^{-1},z)\in \Sigma^0$.
Notice, we write $\Pi^0(\cdot;\epsilon)$ to highlight the dependency of $\Pi^0$ on $\epsilon$ (as a parameter). By Fenichel's theory \cite{fen1,fen2,fen3,jones_1995}, we then have the following result.
\begin{lemma}\lemmalab{Wcsu1}
For $N^0$ sufficiently small there exists an $\epsilon_0>0$ such $\Pi^0$ is well-defined and $C^{k\ge 1}$-smooth, even in $\epsilon\in [0,\epsilon_0]$. In particular 
 \begin{align*}
  \Pi^0(x,\delta^{-1},z;\epsilon)= \Pi^0(x,\delta^{-1},\tilde m(x,\delta^{-1});0)+\mathcal O(\epsilon).
 \end{align*}
%
\end{lemma}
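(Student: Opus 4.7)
The plan is to view $\Pi^0$ as a composition of three pieces, each of which is standard once we localize to a compact set on which Fenichel's theorems apply. The critical manifold $C$ is not compact, but by \lemmaref{Wcsu}(a), the orbit of the reduced problem \eqref{reducedC} starting at $q^0 \in \Sigma^0 \cap C$ reaches $\Sigma^1$ in finite slow time and stays in a bounded region of $(x,y,z)$-space (both $y$ and $z$ remain finite, with $y$ bounded above by a constant times $\delta^{-1}$ and $z\leq \delta^{-1}$ by construction of $\Sigma^1$). First, I would fix a small tubular neighborhood $U$ in $\R^3$ containing this orbit segment together with a short stable-fiber extension above it. On the compact set $\overline U \cap C$, the eigenvalue \eqref{eigenvalue} satisfies $-\xi^{-1} e^{-z} \leq -c < 0$ uniformly, so $C\cap \overline U$ is uniformly normally hyperbolic attracting.

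Next, I would invoke Fenichel's theorems \cite{fen1,fen2,fen3,jones_1995} on $U$: for $\epsilon\in [0,\epsilon_0]$ with $\epsilon_0$ small, there exists a locally invariant $C^k$ slow manifold $C_\epsilon \subset U$ that is $\mathcal O(\epsilon)$-close to $C$ in $C^k$, together with a $C^k$ invariant stable foliation $\{W^s(p)\vert p\in C_\epsilon\}$ whose fibers depend $C^k$-smoothly on the basepoint and on $\epsilon$. Shrinking $N^0$ if necessary, every point in $\Sigma^0$ lies on a fiber $W^s(p(x,z;\epsilon))$ for a smooth basepoint map $p$, with $p(x,z;0) = (x,\delta^{-1},\tilde m(x,\delta^{-1}))$. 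The forward flow of \eqref{systemNew} starting at $(x,\delta^{-1},z)\in \Sigma^0$ contracts exponentially (at rate $e^{-c/\epsilon}$) onto $C_\epsilon$ in fast time $\mathcal O(\epsilon\log \epsilon^{-1})$, landing $\mathcal O(\epsilon)$-close to $p(x,z;\epsilon)$.

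Then I would follow the flow on $C_\epsilon$, which is a regular perturbation of the reduced problem \eqref{reducedC} on $C$. Since the reduced orbit through $q^0$ reaches $q^1\in \Sigma^1$ transversely in finite slow time, and the flow on $C_\epsilon$ is $C^k$ close to it, a standard transversality/implicit function argument shows the perturbed orbit meets $\Sigma^1$ transversely at a point $C^k$-close to $q^1$, with smooth dependence on the initial condition and on $\epsilon$. Composing the three maps (fiber projection, slow flow on $C_\epsilon$, intersection with $\Sigma^1$) yields the claimed $\Pi^0$, smooth in $\epsilon\in[0,\epsilon_0]$, and setting $\epsilon=0$ recovers the definition of $\Pi^0(\,\cdot\,;0)$, so the expansion
\[
 \Pi^0(x,\delta^{-1},z;\epsilon) = \Pi^0(x,\delta^{-1},\tilde m(x,\delta^{-1});0) + \mathcal O(\epsilon)
\]
follows from $C^1$ dependence.

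The only real issue is the non-compactness of $C$, but this is not an obstacle here because the relevant orbit segment is bounded and bounded away from the equator $\bar w = 0$ of the Poincaré sphere; the analysis where hyperbolicity is genuinely lost (near $C_\infty$) is the content of the subsequent \lemmaref{kristianLemma} and not of this lemma. Accordingly, I expect the proof of \lemmaref{Wcsu1} itself to be short, amounting to a careful citation of Fenichel's results on the chosen compact neighborhood $U$.
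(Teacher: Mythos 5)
Your argument is correct and is exactly the route the paper intends: the paper offers no written proof of \lemmaref{Wcsu1} beyond the sentence ``By Fenichel's theory \cite{fen1,fen2,fen3,jones_1995}, we then have the following result,'' and your write-up is precisely the standard Fenichel argument that citation stands for, with the compactness localization (restricting to a bounded orbit segment away from $C_\infty$, where the eigenvalue \eqref{eigenvalue} is uniformly bounded away from zero), the slow manifold and stable foliation, and the transversal intersection with $\Sigma^1$ spelled out. No gap; your version simply makes explicit the details the paper leaves to the reader.
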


The main problem of the proof of \thmref{mainThm} is to prove the following result: Let 
\begin{align}
\Pi^{1}:\,\Pi^0(\Sigma^0)\subset \Sigma^1 \rightarrow \Sigma^0,\eqlab{Pi1Eqn}
\end{align}
be the mapping obtained by the first intersection by the forward flow.  Then we have the following:
\begin{lemma}\lemmalab{kristianLemma}
 There exist a $\delta>0$, a sufficiently small set $N^0$, and an $\epsilon_0>0$ such that the mapping $\Pi^{1}(\cdot;\epsilon)$ is well-defined and $C^1$ for all $0<\epsilon\le \epsilon_0$. In particular, $\Pi^{1}(x,y,\delta^{-1};\epsilon)$ is $C^1$ $o(1)$-close to the constant function $q^0$ as $\epsilon\rightarrow 0$. 
\end{lemma}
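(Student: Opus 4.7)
The plan is to decompose $\Pi^1$ as a composition
\[
\Pi^1 = \Pi^{7\to 0}\circ \Pi^{1\to 7},
\]
following the (extended) singular cycle through the two directional charts: $\Pi^{1\to 7}$ tracks the orbit from $\Sigma^1$ through chart $\phi_3$ across the segments $\gamma^{3}$ (a Fenichel-type contraction onto the center-like manifold $L_\infty$) and $\gamma^4$ (slow passage along $L_\infty$ toward $Q^4$), into a common exit section that lies in the overlap of $\phi_3$ and $\phi_1$; then $\Pi^{7\to 0}$ tracks the continuation through chart $\phi_1$ across $\gamma^{7}$ (descent from $L_\infty$ to $C_\infty$), $\gamma^{9}$ (sliding along $C_\infty$ toward the $0/0$-point $Q^6$), and the transition near $Q^6$ back into a neighborhood of $W^{cu}(Q^6)$, terminating at $\Sigma^0$. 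This is exactly the split leading to Lemma \lemmaref{Pi17} and Lemma \lemmaref{Pi70}, whose composition yields Lemma \lemmaref{kristianLemma}.

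First I would fix the sections. In $\phi_3$-coordinates $\Sigma^1$ becomes an entrance section near $\{w_3=\delta\}$ with $(x_3,y_3)$ bounded, and I would choose an intermediate transverse section close to $Q^4$, well inside the overlap of $\phi_3$ and $\phi_1$, so that the coordinate change \eqref{phi13} is smooth. In $\phi_1$, the exit section $\Sigma^0$ corresponds, via \eqref{phi13xyz}, to $\{w_1 = \delta\}$ after following $W^{cu}(Q^6)$ down from infinity. The central technical step is then the blowup analysis. The normal eigenvalue $-\xi^{-1}e^{-z}$ of $C$ in \eqref{eigenvalue} decays exponentially, so normal hyperbolicity of $C$ degenerates at $C_\infty$ at a non-algebraic rate; no polynomial scaling in $\epsilon$ can capture this, and standard Fenichel theory fails uniformly near $C_\infty$, in particular near the $0/0$-singularity $Q^6$. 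To regain hyperbolicity I would follow \cite{kristiansen2017a} and perform a sequence of (cylindrical and spherical) blowups in the \emph{extended} phase space, adjoining $\epsilon$ as an additional variable, centered successively at $L_\infty$, at $Q^4$, at the relevant arcs of $C_\infty$ containing $\gamma^{7}$ and $\gamma^{9}$, and finally at $Q^6$. After each blowup, the desingularized vector field (obtained by dividing out the appropriate power of the blowup parameter) possesses either hyperbolic equilibria or partially hyperbolic slow manifolds, and standard invariant-manifold and Fenichel-type arguments then apply locally and yield $C^1$ transition maps. The quantitative outcome of these charts is precisely Lemma \lemmaref{Pi17} for $\Pi^{1\to 7}$ and Lemma \lemmaref{Pi70} for $\Pi^{7\to 0}$.

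Finally I would compose the two sub-maps and propagate the estimates. $\Pi^{1\to 7}$ inherits from the strong attraction of $C$ and the additional center-like contraction onto $L_\infty$ an exponentially small image diameter, so its image is $C^1$-close to a single point of the extended cycle as $\epsilon\to 0$. The second sub-map $\Pi^{7\to 0}$, analyzed near $Q^6$ via the blowup, selects $W^{cu}(Q^6)$ as the unique forward continuation; all nearby orbits are exponentially slaved to it, so the image in $\Sigma^0$ is $C^1$ $o(1)$-close to the intersection $q^0 = W^{cu}(Q^6)\cap \Sigma^0$ from \eqref{q0Here}, as claimed. The hard part will be the analysis at $Q^6$: here the fast direction on the Poincar\'e sphere and the natural slow time-parametrisation degenerate simultaneously, so neither the reduced flow nor any naive rescaling resolves the picture; only after several blowups does a hyperbolic saddle-type structure emerge, whose stable/unstable manifolds deliver the required transition estimates. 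It is this step that forces the count of $12$ segments in the improved singular cycle and ultimately dictates the $o(1)$-rate of $\Pi^1$ in $\epsilon$.
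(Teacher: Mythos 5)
Your decomposition $\Pi^1=\Pi^{70}\circ\Pi^{17}$ through an intermediate section transverse to $\gamma^7$, with the two sub-maps controlled by \lemmaref{Pi17} in chart $\phi_3$ and \lemmaref{Pi70} in chart $\phi_1$ and then composed, is exactly the paper's proof of \lemmaref{kristianLemma}. One quantitative correction: the image of $\Pi^{17}$ is not exponentially small but only $C^1$ $\mathcal O(\log^{-1}\epsilon^{-1}\log\log\epsilon^{-1})$-close to a point (the slow passage associated with $\gamma^1$ dominates, cf. \corref{Pi111Cor}), which is still $o(1)$ and hence suffices for the stated closeness to $q^0$.
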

Let $\Pi=\Pi^1\circ \Pi^0$. Then by \lemmaref{Wcsu1} and \lemmaref{kristianLemma}, $\Pi$ is a contraction for $\epsilon\ll 1$. The existence of an attracting limit cycle $\Gamma_\epsilon$ in \thmref{mainThm} (a) therefore follows from the contraction mapping theorem - the attracting limit cycle being obtained as the forward flow of the unique fix-point of $\Pi$. The convergence of $\Gamma_\epsilon$ as $\epsilon\rightarrow 0$ in \thmref{mainThm} (b) is a consequence of our approach. We actually ``derive'' $\Gamma_0$ first using successive blowup transformations (working in the charts $\phi_3$ and $\phi_1$) that allow us to prove \lemmaref{kristianLemma} using standard, local, hyperbolic methods of dynamical systems theory obtaining $\Gamma_\epsilon$ as a ``perturbation'' of $\Gamma_0$. 
In more details, we further decompose $\Pi^1$ into two parts $\Pi^{17}$ and $\Pi^{70}$ where $\Pi^{17}:D(\Pi^{17})\subset \Sigma^1\rightarrow \Sigma^7$ and $\Pi^{70}:\Sigma^7\rightarrow \Sigma^0$. Here $\Sigma^7$ is an appropriate $2D$-section, transverse to $\gamma^7$ \eqref{gamma71here}, contained within $y_3=\frac{2\alpha(1+\nu)}{\xi}$ for $\nu>0$ small, see \figref{Gamma0} for an illustration. We describe these mappings in details in the following sections, see \lemmaref{Pi17} and \lemmaref{Pi70}. 

\section{Chart $\phi_3$}\seclab{phi3Section}
In this chart, we obtain the following equations
\begin{align}
 \dot x_3&=-\epsilon (x_3+1+\alpha)+x_3 e^{-2/w_3} \left(y_3+\frac{x_3+1}{\xi}\right),\eqlab{phi3Eqs}\\
 \dot y_3 &=\epsilon w_3(1-e^{-1/w_3})+y_3e^{-2/w_3} \left(y_3+\frac{x_3+1}{\xi}\right),\nonumber\\
 \dot w_3 &=w_3 e^{-2/w_3} \left(y_3+\frac{x_3+1}{\xi}\right),\nonumber
 \end{align}
 using the coordinates $(x_3,y_3,w_3)$, recall \eqref{phi3}. 
Here we cover the part of the critical manifold $C$ \eqref{C2} with $z>0$ as follows
\begin{align*}
 C_3 = \left\{(x_3,y_3,w_3)\vert y_3 + \frac{x_3+1}{\xi}=0,w_3>0\right\}.
\end{align*}
This manifold is still a normally hyperbolic and attracting critical manifold of \eqref{phi3Eqs} in the present chart: The linearization about any point in $C_3$ gives
\begin{align}
 -\xi^{-1} e^{-2/w_3}<0,\eqlab{eigValExp}
\end{align}
for $w_3>0$, 
as a single nonzero eigenvalue. 
But we now also obtain $\{w_3=0\}$, corresponding to the subset of the equator $S^3\cap \{\bar w=0\}$ with $\bar z>0$, as a set of fully nonhyperbolic critical points for $\epsilon=0$. Indeed, the linearization about any point in $\{w_3=0\}$ only has zero eigenvalues.  The intersection $C_3\cap \{w_3=0\}$:
\begin{align*}
 C_{3,\infty} = \left\{(x_3,y_3,w_2)\vert y_3 + \frac{x_3+1}{\xi}=0,w_3=0\right\}.
\end{align*}
is therefore also fully nonhyperbolic for $\epsilon=0$. The exponential decay of \eqref{eigValExp} complicates the blowup analysis and the study of what happens near $\{w_3=0\}$ and $C_{3,\infty}$ for $0<\epsilon\ll 1$. 
%
We follow the blowup approach in \cite{kristiansen2017a}, also used in \cite{bossolini2017a}, and extend the phase space dimension by introducing
\begin{align}
 q_3 = e^{-2/w_3}.\eqlab{q3K3}
\end{align}
By implicit differentiation, we obtain 
\begin{align*}
 \dot q_3 =2w_3^{-1} e^{-2/w_3}\dot w_3 = 2w_3^{-1} q_3^2 \left(y_3+\frac{x_3+1}{\xi}\right).
 \end{align*}
 We therefore consider the extended system
\begin{align}
\dot x &=-\epsilon w (x+1+\alpha)+x w q \left(y+\frac{x+1}{\xi}\right),\eqlab{K3Ext}\\
 \dot y &=\epsilon w^2(1-e^{-1/w})+y w q \left(y+\frac{x+1}{\xi}\right),\nonumber\\
 \dot w &=w^2 q \left(y+\frac{x+1}{\xi}\right),\nonumber\\
\dot q &= 2 q^2 \left(y+\frac{x+1}{\xi}\right),\nonumber\\
\dot \epsilon &=0,\nonumber
\end{align}
having here dropped the subscripts, multiplied the right hand side by $w=w_3$ and finally introduced $\epsilon$ as a dynamic variable. Now by construction, the set
\begin{align}
\{(x,y,w,q,\epsilon)\vert q=e^{-2/w}\}
\end{align}
is an invariant of this system. But this invariance is implicit in the system \eqref{K3Ext} and we shall use it only when needed. Now, we define $C$ by
\begin{align}
C = \left\{(x,y,w,q,\epsilon)\vert y + \frac{x+1}{\xi}=0,w>0,q>0,\epsilon=0\right\},\nonumber
\end{align}
in the extended system, using, for simplicity, the same symbol. It is still a set of normally hyperbolic critical points, now of dimension $3$, since the linearization about any point in $C$ has one single nonzero eigenvalue $-wq/\xi$. Similarly, $\{w=\epsilon=0\}$ and $\{q=\epsilon=0\}$ are fully nonhyperbolic sets of equilibria for \eqref{K3Ext}. The system is therefore very degenerate near
\begin{align}
 C_\infty = \left\{(x,y,w,q,\epsilon)\vert y + \frac{x+1}{\xi}=0,w=q=0,\epsilon=0\right\}.\eqlab{CPhi3ExtInf}
\end{align}
But the system \eqref{K3Ext} is now algebraic to leading order and therefore we can (in principle) apply the classical blowup method of \cite{dumortier_1996,krupa_extending_2001} to study the dynamics near $C_\infty$. We will have to use five separate, successive blowup transformations in the present $\phi_3$-chart. We describe these in the following section.

\subsection{Blowups in chart $\phi_3$}
Let 
\begin{align*}
 P  &= \left\{(x,y,w,q,\epsilon) \in \mathbb R^2\times [0,\infty)^3\right\}.\\
%
 P^1&=\left\{(x,y,w,r,(\bar q,\bar \epsilon))\in \mathbb R^2 \times [0,\infty)^2 \times S^1\right\}.
\end{align*}
Then we first apply a blowup of $\{(x,y,w,q,\epsilon)\in P\vert q=\epsilon=0\}$ to a cylinder through the following blowup transformation $$\Psi^1:P^1\rightarrow P,$$ which fixes $x$, $y$ and $z$ and takes 
\begin{align}
 (r,(\bar q,\bar \epsilon)) \mapsto (q,\epsilon)=r(\bar q,\bar \epsilon),\quad r\ge 0,\,(\bar q,\bar \epsilon)\in S^1.\eqlab{step1}
\end{align}
We illustrate this blowup transformation in \figref{step1}(a). Notice how we artistically combine the $xyw$-space into a single coordinate axis. We use red colours and lines, also in the following, to indicate what variables and coordinate axes that are included in each blowup in \figref{step1}. Points that are blown up are given red dots. 
Clearly, $\Psi^1$ by \eqref{step1} simply corresponds to introducing polar coordinates in the $(q,\epsilon)$-plane. We can therefore study a small neighborhood of $(q,\epsilon)=0$ by studying any $(r,(\bar q,\bar \epsilon))\in [0,\infty)\times S^1$ with $r\ge 0$ small. But the preimage of $\{q=0,\epsilon=0\}$ is a cylinder $(x,y,w,(\bar q,\bar \epsilon))\in \mathbb R^2 \times [0,\infty)\times S^1$. This is in the sense that we understand blowup. Now, the mapping $\Psi^1$ gives rise to a vector-field $\overline X^1$ on $P^1$ by pull-back of the vector-field \eqref{K3Ext} on $P$. Here we shall see that $\overline X^1$ has $r$ as a common factor, so that in particular $\overline X\vert_{r=0}=0$. We will therefore desingularize and study $\widehat X^1 = r^{-1} \overline X^1$ in the following. 

Let
\begin{align*}
 P^2 = \left\{(y,r,\rho,(\bar x,\bar w,\bar {\bar \epsilon}))\in \mathbb R \times [0,\infty)^2 \times S^2\right\}.
\end{align*}
Then in the second step, we blowup $x=-1-\xi y,\,w=0,\,(\bar q,\bar \epsilon)=(1,0)$ for each $y$ within $P^1$ through the blowup transformation $$\Psi^2:P^2\rightarrow P^1,$$ which fixes $y$ and $r$ and takes
\begin{align}
(y,\rho,(\bar x,\bar w,\bar {\bar \epsilon}))\mapsto \left\{\begin{matrix} x &=& -1-\xi y +\rho \bar x,\\
 w&=&\rho \bar w,\\
 \bar q^{-1} \bar \epsilon& =& \rho \bar{\bar \epsilon}.\end{matrix}
\right. \quad \rho\ge 0,(\bar x,\bar w,\bar {\bar \epsilon})\in S^2.\eqlab{step2}
\end{align}
We illustrate this blowup in \figref{step1}(b). Notice that $C_\infty$ in \eqref{CPhi3ExtInf} is the graph $x=-1-\xi y$ over $y$ within $\epsilon=w=q=0$. The second blowup therefore blows up $C_\infty$. 
 
 Clearly, we can study a small neighborhood of $x=-1-\xi y,\,w=0,\,(\bar q,\bar \epsilon)=(1,0)$ by studying $(\rho,(\bar x,\bar w,\bar{\bar \epsilon}))\in [0,\infty)\times S^2$ by taking $\rho\ge 0$ small.  As before, the mapping $\Psi^2$ gives rise to a vector-field $\overline X^2=\Psi^{2*}(\widehat X^1)$ on $P^2$ by pull-back of $\widehat X^1$ on $P^1$. Now, $\overline X^2$ has $\rho$ as a common factor and we therefore study $\widehat X^2=\rho^{-1}\overline X^2$. 
 \begin{remark}Notice that since $(\bar q,\bar \epsilon)\in S^1$, a simple calculation shows that the last equality in \eqref{step2} imply that
\begin{align*}
 (\bar q,\bar \epsilon) = \left(\sqrt{1+\rho^2 \bar{\bar \epsilon}^2},\frac{\rho \bar{\bar \epsilon}}{\sqrt{1+\rho^2 \bar{\bar \epsilon}^2}}\right).
\end{align*}
\end{remark}


Let 
\begin{align*}
 P^3 = \left\{(y,r,\rho,\varrho,(\bar{\bar x},\bar{\bar w}))\in \mathbb R\times [0,\infty)^3\times S^1\right\}.
\end{align*}
Then in the third step, we then proceed to blowup $\bar x=\bar w=0,\bar{\bar \epsilon}=1,\,\rho\ge 0$ for each $y$ within $P^2$ through the blowup transformation $$\Psi^3:P^3\rightarrow P^2$$ which fixes $y$, $r$ and $\rho$ and takes
\begin{align}
(\varrho, (\bar{\bar x},\bar{\bar w}))\mapsto \left\{\begin{matrix}
                                                                                    \bar{\bar \epsilon}^{-1}\bar x &=& \varrho \bar{\bar x},\\
 \bar{\bar \epsilon}^{-1} \bar w &=&\varrho^2 \bar{\bar w},
                                                                                   \end{matrix}\right.\quad \varrho\ge 0,\,(\bar{\bar x},\bar{\bar w})\in S^1.
 \eqlab{step3} 
\end{align}
We illustrate this in \figref{step1}(c).

Clearly, we can study a small neighborhood of $(\bar x,\bar w,,\bar{\bar \epsilon})=(0,0,1)$ by studying $(\varrho,(\bar{\bar{x}},\bar{\bar{w}}))\in [0,\infty)\times S^2$ with $\varrho\ge 0$ small. $\Psi^3$ gives a vector-field $\overline X^3=\Psi^{3*}(\widehat X^2)$ on $P^3$ by pull-back of $\widehat X^2$ on $P^2$. $\overline X^3$ now has $\varrho$ as a common factor and we therefore study $\widehat X^3=\varrho^{-1}\overline X^3$.   
\begin{remark}
Since $(\bar x,\bar w,\bar{\bar \epsilon})\in S^2$ we can also write the right hand side of \eqref{step3} as
\begin{align*}
(\bar x,\bar w,\bar{\bar \epsilon}) = \left(\frac{\varrho \bar{\bar x}}{\sqrt{1+\varrho^2\bar{\bar x}^2+\varrho^4\bar{\bar w}^2}},\frac{\varrho^2 \bar{\bar w}}{\sqrt{1+\varrho^2\bar{\bar x}^2+\varrho^4\bar{\bar w}^2}},\frac{1}{\sqrt{1+\varrho^2\bar{\bar x}^2+\varrho^4\bar{\bar w}^2}}\right).
\end{align*}
This follows from a simple calculation.\end{remark}
%
In the following, we define
\begin{align*}
 \Psi^{12}:P^{2}\rightarrow P
\end{align*}
and 
\begin{align*}
 \Psi^{123}:P^3\rightarrow P
\end{align*}
by the compositions
\begin{align*}
\Psi^{12}=\Psi^1\circ \Psi^2,\quad  \Psi^{123}=\Psi^{12}\circ \Psi^3.
\end{align*}
Therefore by \eqref{step1}, \eqref{step2} and \eqref{step3}
\begin{align}
\Psi^{12}:(y,r,\rho,(\bar x,\bar w,\bar {\bar \epsilon})) &\mapsto \left\{ \begin{matrix}
                                                                                x&=&-1-\xi y +\rho \bar x,\\
                                                                                y&=&y\\
                                                                                w&=&\rho\bar w,\\
                                                                                q&=&\frac{r}{\sqrt{1+\rho^2 \bar{\bar \epsilon}^2}},\\
                                                                                \epsilon&=&\frac{r\rho \bar{\bar \epsilon}}{\sqrt{1+\rho^2 \bar{\bar \epsilon}^2}},
                                                                               \end{matrix}\right.\nonumber\\
 \Psi^{123}:(y,r,\rho,\varrho,(\bar{\bar x},\bar{\bar w}))&\mapsto \left\{\begin{matrix}
                                                                                x&=&-1-\xi y+\frac{\rho \varrho \bar{\bar x}}{\sqrt{1+\varrho^2\bar{\bar x}^2+\varrho^4 \bar{\bar w}^2}},\\
                                                                                y&=&y\\
                                                                                w&=&\frac{\rho \varrho^2 \bar{\bar w}}{\sqrt{1+\varrho^2\bar{\bar x}^2+\varrho^4 \bar{\bar w}^2}},\\
                                                                                q&=&\frac{r}{\sqrt{1+\frac{\rho^2}{1+\varrho^2\bar{\bar x}^2+\varrho^4 \bar{\bar w}^2}}},\\
                                                                                \epsilon&=& \frac{r\rho \frac{1}{\sqrt{1+\varrho^2\bar{\bar x}^2+\varrho^4 \bar{\bar w}^2}}}{\sqrt{1+\frac{\rho^2}{1+\varrho^2\bar{\bar x}^2+\varrho^4 \bar{\bar w}^2}}}.
                                                                               \end{matrix} \right.\eqlab{Psi13}
\end{align}

\begin{figure}[h!]
\begin{center}
\subfigure[]{\includegraphics[width=.695\textwidth]{./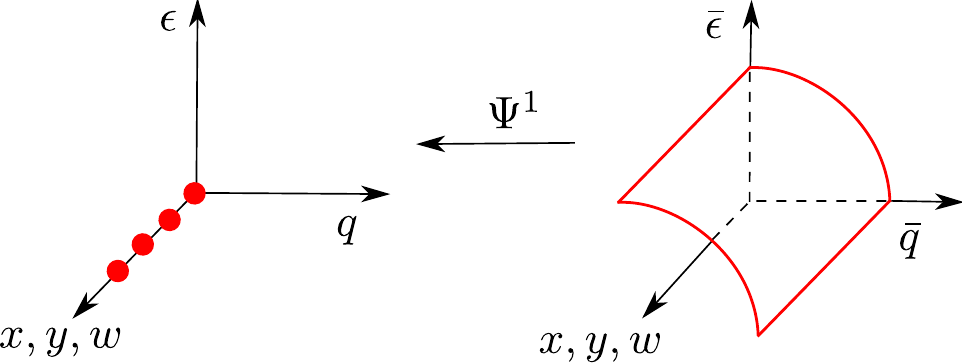}}
\subfigure[]{\includegraphics[width=.995\textwidth]{./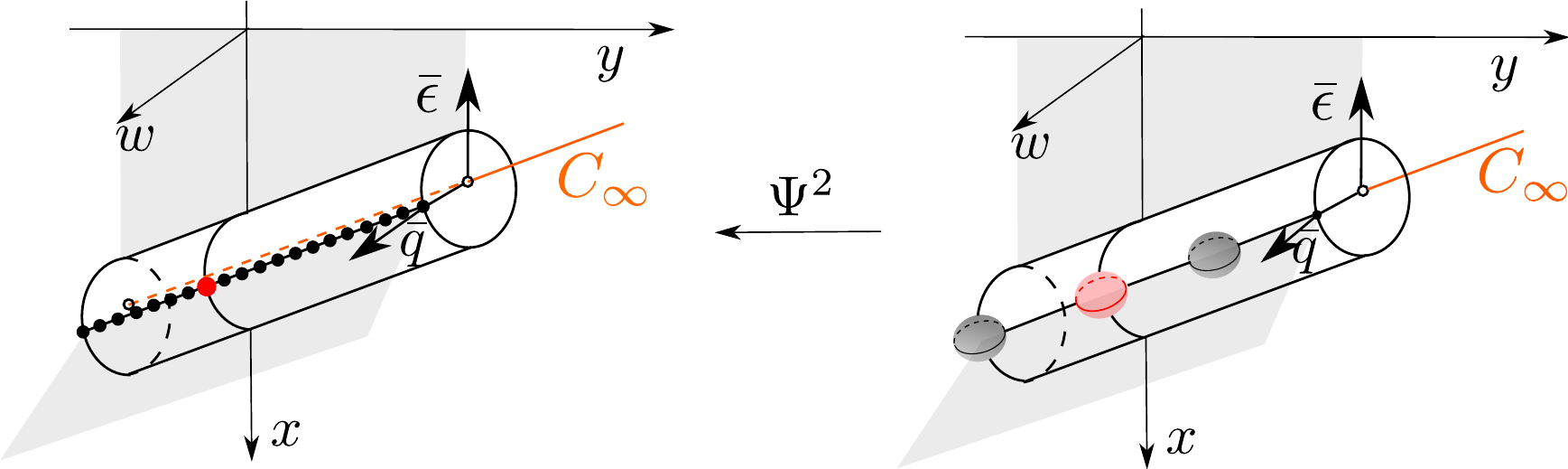}}
\subfigure[]{\includegraphics[width=.995\textwidth]{./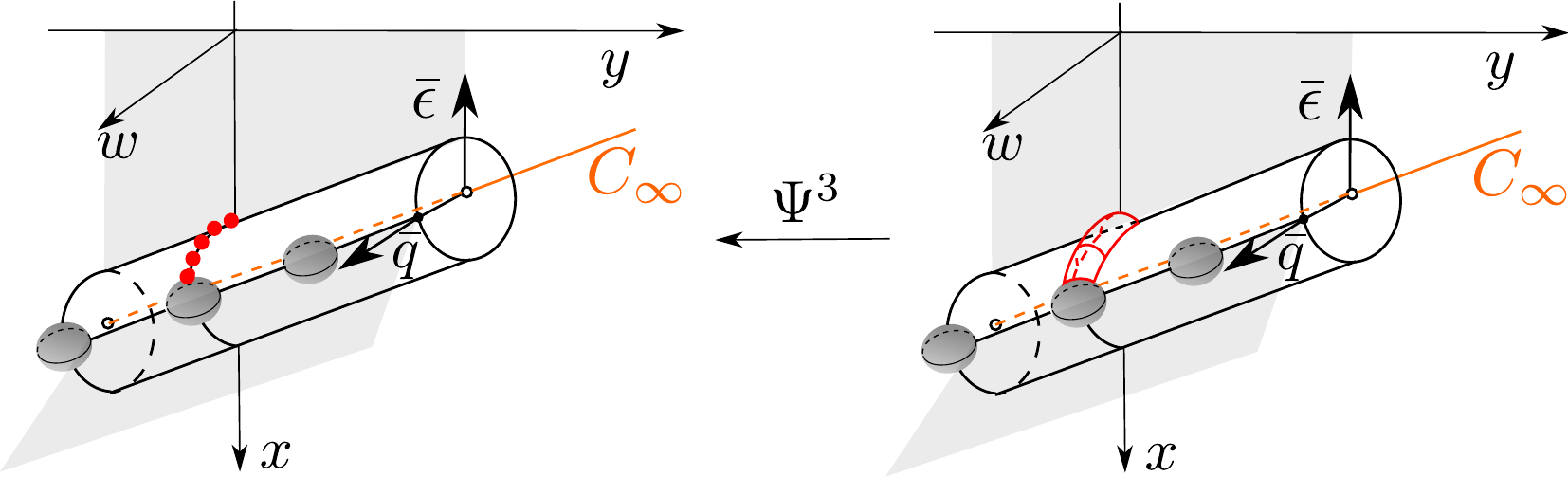}}
\end{center}
\caption{(a) First blowup of $q=\epsilon=0$. (b) Second blowup of $C_\infty$ along $\bar q=1$. We blowup $\bar q=1$, $w=0$, $x=-1-\xi y$ to a line of spheres. (c) Third blowup step. Here we blowup the north pole of the sphere obtained in the second step. This gives rise to a cylinder, its axis being formed by the quarter circle with $\bar q\ge 0,\bar \epsilon\ge 0$.}
\figlab{step1}
\end{figure}

In the fourth step, we work on $P^1$ near $(\bar q,\bar \epsilon)=(0,1)$. Notice that this implies $\rho$ large in \eqref{step2}. We therefore proceed as follows in two steps (enumerated $a$ and $b$). Let
\begin{align*}
 P^{4a} = \left\{(y,r,\sigma,(\tilde x,\tilde w),(\bar q,\bar \epsilon))\in \mathbb R\times [0,\infty)^2 \times S^1\times S^1\right\}.
\end{align*}
Then we first blowup $x=-1-\xi y,\,w=0$ through the blowup transformation $$\Psi^{4a}:P^{4a}\rightarrow P^1,$$ which fixes $r$ and $(\bar q,\bar \epsilon)$ and takes
\begin{align}
 (y,\sigma, (\tilde x,\tilde w))\mapsto \left\{\begin{matrix} x &= &-1-\xi y +\sigma \tilde x,\\
 w &=& \sigma^2 \tilde w.
 \end{matrix}\right.
 \eqlab{step4a}
\end{align}
Crucially, the exponents of $\sigma$ in \eqref{step4a} coincide with the exponents on $\varrho$ in \eqref{step3}. For $r=0$,  \eqref{step4a} is still a blowup of $C_\infty$. We illustrate the blowup in \figref{step2}(a). 
$\Psi^{4a}$ gives a vector-field $\overline X^{4a}=\Psi^{4a*}(\widehat X^1)$ on $P^{4a}$ by pull-back of $\widehat X^1$ on $P^1$. Here $\overline X^{4a}=\sigma \widehat X^{4a}$, with $\widehat X^{4a}$ well-defined. It is $\widehat X^{4a}$ that we shall study. 
%
%
%
%
%
%
%
%
%
%
%
%
%
%
%
%
%

Next, let 
\begin{align*}
 P^{4b}=\left\{(y,r,\sigma,\pi,(\tilde{\tilde w},\tilde{\bar q}))\in \mathbb R\times [0,\infty)^3\times S^1\right\}.
\end{align*}
Then we blowup $\tilde x=-1,\,\tilde w=0,\,\bar \epsilon^{-1}\bar q=0$ within $P^{4a}$ for each $y$ through the blowup transformation $\Psi^{4b}:P^{4b}\rightarrow P^{4a}$ which fixes $y$, $r$ and $\sigma$ and takes
\begin{align}
 (\pi,(\tilde{\tilde w},\tilde{\bar q}))\mapsto \left\{ \begin{matrix} \tilde x^{-2} \tilde w &=& \pi \tilde{\tilde w},\\
 \bar \epsilon^{-1} \bar q &=& \pi \tilde{\bar q},
 \end{matrix}\right. \quad \pi\ge 0, (\tilde{\tilde w},\tilde {\bar q})\in S^1.\eqlab{step4b}
\end{align}
We illustrate the final blowup in \figref{step2}(b) and in \figref{step2}(c) using the viewpoint of \figref{step1}. (See also \lemmaref{MMap} below).

\begin{figure}[h!]
\begin{center}
\subfigure[]{\includegraphics[width=.995\textwidth]{./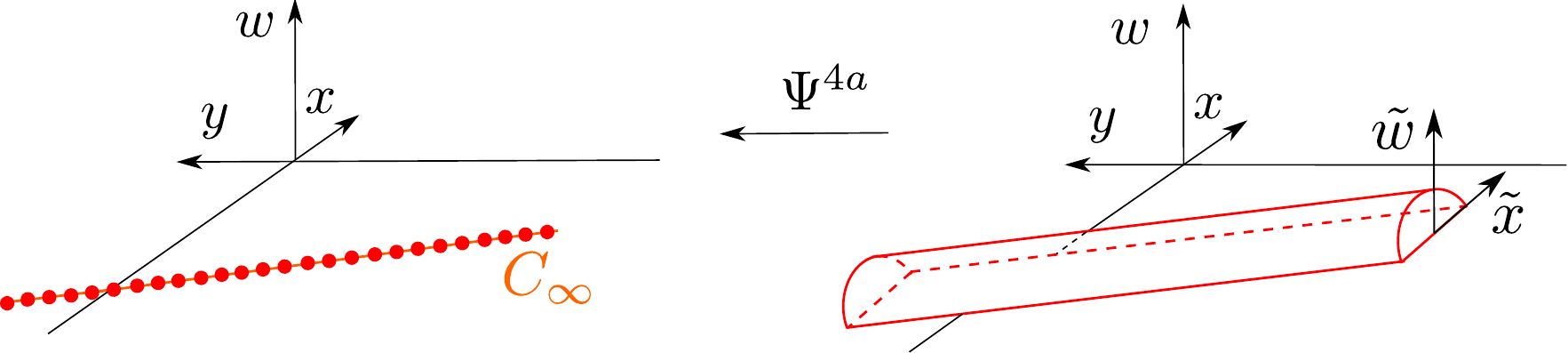}}
\subfigure[]{\includegraphics[width=.995\textwidth]{./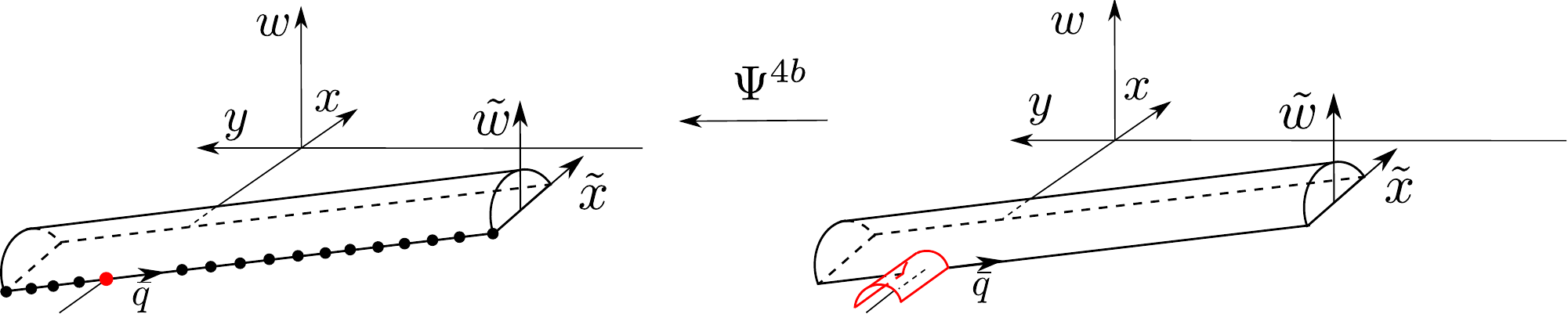}}
\subfigure[]{\includegraphics[width=.995\textwidth]{./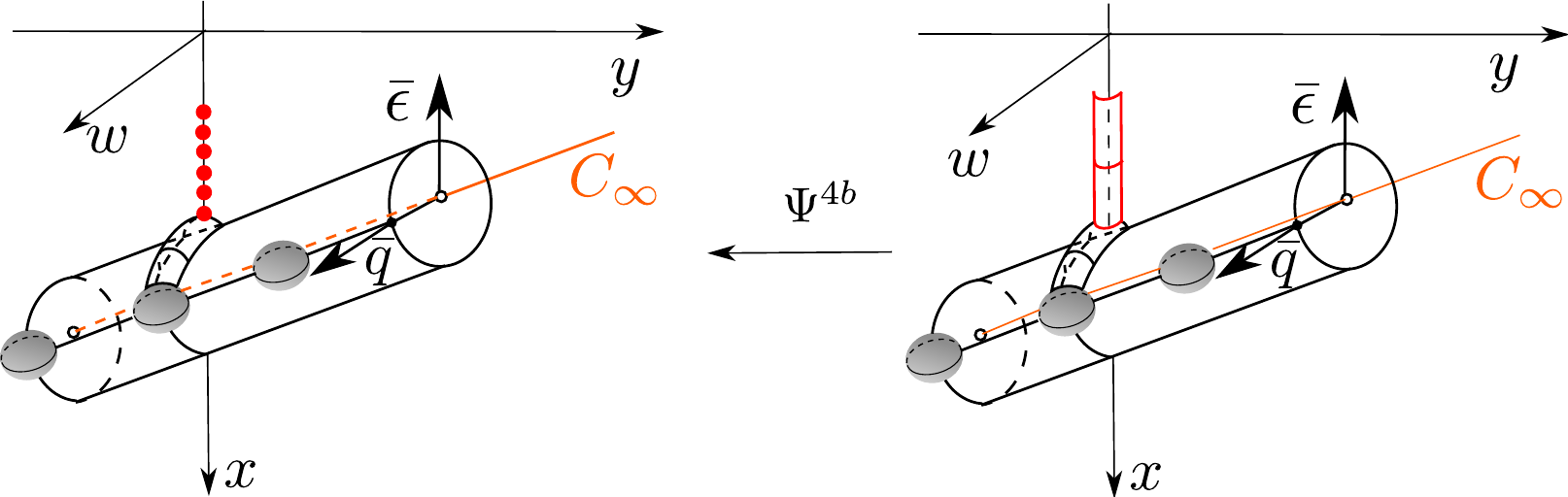}}
\subfigure[]{\includegraphics[width=.995\textwidth]{./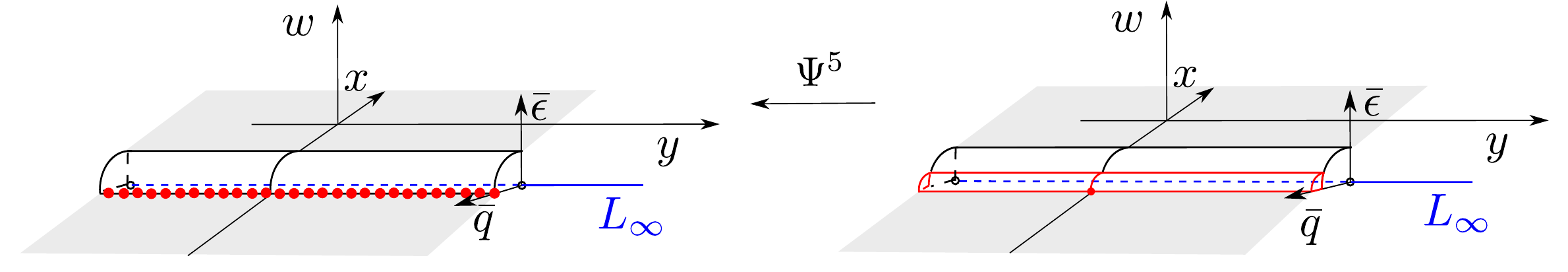}}
\end{center}
\caption{(a) Fourth blowup step, part a, blowing up $C_\infty$ to a cylinder. (b) Fourth blowup step, part b. Here we blow up $\bar q=0$, $\tilde w=0$ to a circle, producing a cylinder along the negative $x$-direction. (c) Similar to (b), but now using the viewpoint in \figref{step1}. (d) Fifth blowup step near $L_\infty$ where this blowup is used. Here we blowup $w=\bar q=0$ to a circle. We indicate two important planes $\bar \epsilon=w=0$ and $\bar q=w=0$ by gray shading. }
\figlab{step2}
\end{figure}
\begin{remark}
Notice that since $(\tilde x,\tilde w)\in S^1$ and $(\bar q,\bar \epsilon)\in S^1$ it follows from simple calculations that the right hand side of \eqref{step4b} can be written as
\begin{align*}
 (\tilde x,\tilde w) &= \left(\chi\left(\pi\tilde{\tilde w}\right),\chi\left(\pi \tilde{\tilde w}\right)^2 \pi \tilde{\tilde w}\right),\\
 (\bar q,\bar \epsilon) &=\left(\frac{\pi \tilde{\bar q}}{\sqrt{1+\pi^2 \tilde{\bar q}^2}},\frac{1}{\sqrt{1+\pi^2 \tilde{\bar q}^2}}\right),
\end{align*}
where $\chi:\mathbb R\rightarrow (-1,0)$ is the unique, negative-valued, smooth function 
\begin{align*}
 \chi: \mathbb R\rightarrow (-1,0),\quad \chi (p) = 
  \left\{\begin{matrix}
                -1& \text{if}\, p=0\\
                 -\frac{\sqrt{\sqrt{4p^2+1}-1}}{\sqrt{2}\vert p\vert} &\text{otherwise}
               \end{matrix}\right.,
\end{align*}
satisfying $\chi(p)^2+\chi(p)^4p^2=1$. 
\end{remark}
$\Psi^{4b}$ gives $\overline X^{4b}=\Psi^{4b*}(\widehat X^{4a})$ on $P^{4b}$ by pull-back of $\widehat X^{4a}$ on $P^{4a}$. Now, $\overline X^{4b}=\pi \widehat X^{4b}$ and it is $\widehat X^{4b}$ that we study.

We now define 
\begin{align*}
\Psi^{14a}:P^{4a}\rightarrow P,\quad 
 \Psi^{14a4b}:P^{4b}\rightarrow P,
\end{align*}
as the compositions
\begin{align*}
\Psi^{14a} &= \Psi^1\circ \Psi^{4a},\quad
 \Psi^{14a4b} = \Psi^1\circ \Psi^{4a}\circ \Psi^{4b}.
\end{align*}
Therefore by \eqref{step1}, \eqref{step4a} and \eqref{step4b}
\begin{align}
 \Psi^{14a}:\, (y,r,\sigma,(\tilde x,\tilde w),(\bar q,\bar \epsilon))&\mapsto  \left\{\begin{matrix}
                                                                                     x&=&-1-\xi y+\sigma \tilde x,\\
                                                                                     y&=&y\\
                                                                                     w&=&\sigma^2\tilde w,\\
                                                                                     q&=&r                                                                            \bar q,\\
                                                                                     \epsilon &=&r\bar \epsilon,
                                                                                    \end{matrix}\right.\nonumber\\
\Psi^{14a4b}:\, (y,r,\sigma,\pi,(\tilde{\tilde w},\tilde{\bar q}))&\mapsto  \left\{\begin{matrix}
                                                                                              x&=&-1-\xi y+\sigma
                                                                                              \chi\left(\pi\tilde{\tilde w}\right),\\
                                                                                              y&=&y\\
                                                                                              w&=&\sigma^2 \chi\left(\pi \tilde{\tilde w}\right)^2 \pi \tilde{\tilde w},\\
                                                                                              q&=&\frac{r\pi \tilde{\bar q}}{\sqrt{1+\pi^2 \tilde{\bar q}^2}},\\
                                                                                              \epsilon &=&\frac{r}{\sqrt{1+\pi^2 \tilde{\bar q}^2}}. \end{matrix}\right.\eqlab{Psi14b}
\end{align}

\begin{lemma}\lemmalab{MMap}
Let $U^3=\{(y,r,\rho,\varrho,(\bar{\bar x},\bar{\bar w}))\in P^3 \vert \rho>0,\varrho>0,\bar{\bar x}<0,\,\bar{\bar w}>0\}$. Then there exists an injective mapping
 $M:U^3\rightarrow P^{4b}$ such that
 \begin{align*}
  \Psi^{13}\vert_{U^3} = \Psi^{14a4b}\circ M.
 \end{align*}
\end{lemma}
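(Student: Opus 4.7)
\bpr
The strategy is to construct $M$ explicitly by inverting the two compositions on the overlap of their images, exploiting that both $\Psi^{123}$ and $\Psi^{14a4b}$ are two different directional charts for the same blown-up space: the composition $\Psi^{123}$ uses the direction $\bar{\bar\epsilon}=1$ (so $\rho$ measures distance from $C_\infty$), while $\Psi^{14a4b}$ uses the direction $\bar\epsilon=1$ (so $\sigma$ measures distance from $C_\infty$). In the open region $U^3$ where neither direction is degenerate, there should be a unique diffeomorphism between the two sets of coordinates. The plan is to equate components of \eqref{Psi13} with those of \eqref{Psi14b} and solve.

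First, the variable $y$ is literally fixed by both mappings, so $M$ sends $y\mapsto y$. Second, I would observe that $q^2+\epsilon^2=r^2$ holds for $\Psi^1$ (hence for both $\Psi^{123}$ and $\Psi^{14a4b}$, as can be checked by a direct calculation using $(\tilde{\tilde w},\tilde{\bar q}),(\bar{\bar x},\bar{\bar w})\in S^1$ and $(\bar q,\bar\epsilon)\in S^1$), so the $r$-coordinate is preserved: $M$ sends $r\mapsto r$. The remaining coordinates are then determined by the invariants obtained from ratios. From the $(x,w,q,\epsilon)$ components, set
\begin{align*}
A := \frac{w}{(x+1+\xi y)^2}, \qquad B := \frac{q}{\epsilon}.
\end{align*}
Computing $A$ in the $\Psi^{123}$-chart gives $A=N\bar{\bar w}/(\rho\bar{\bar x}^2)$ with $N=\sqrt{1+\varrho^2\bar{\bar x}^2+\varrho^4\bar{\bar w}^2}$, while in the $\Psi^{14a4b}$-chart we obtain $A=\pi\tilde{\tilde w}$. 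Likewise $B=N/\rho$ in $\Psi^{123}$ and $B=\pi\tilde{\bar q}$ in $\Psi^{14a4b}$.

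Next I would exploit $(\tilde{\tilde w},\tilde{\bar q})\in S^1$ together with $\pi\ge 0$ to conclude that $\pi,\tilde{\tilde w},\tilde{\bar q}$ are uniquely determined by $A$ and $B$: indeed $\pi=\sqrt{A^2+B^2}$, and on $U^3$ both $A>0$ and $B>0$ (since $\rho>0,\varrho>0,\bar{\bar x}<0,\bar{\bar w}>0$ force $\pi>0$ and both components positive), giving $\tilde{\tilde w}=A/\pi>0$, $\tilde{\bar q}=B/\pi>0$. Finally $\sigma$ is recovered from $x+1+\xi y=\sigma\chi(\pi\tilde{\tilde w})=\rho\varrho\bar{\bar x}/N$; since $\chi<0$ and $\bar{\bar x}<0$ on $U^3$, this yields the unique positive
\begin{align*}
\sigma = \frac{\rho\varrho\bar{\bar x}}{N\,\chi(\pi\tilde{\tilde w})}.
\end{align*}
Collecting these formulas defines $M:U^3\to P^{4b}$. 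By construction $\Psi^{14a4b}\circ M=\Psi^{123}|_{U^3}$, and injectivity is immediate from the explicit expressions (each output coordinate is a smooth invertible function of the inputs in the prescribed order). The main bookkeeping obstacle is simply checking that all the radicals, signs of $\chi$, and positivity conditions are consistent on $U^3$; once this is verified, smoothness and injectivity follow at once.
\epr
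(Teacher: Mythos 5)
Your proposal is correct and follows essentially the same route as the paper: equating the two invariant combinations $w/(x+1+\xi y)^2=\pi\tilde{\tilde w}$ and $q/\epsilon=\pi\tilde{\bar q}$ computed in both charts, then solving for $(\tilde{\tilde w},\tilde{\bar q})$, $\pi$ and finally $\sigma=\rho\varrho\bar{\bar x}/(N\chi(\pi\tilde{\tilde w}))$, with the same sign/positivity checks on $U^3$. The only cosmetic differences are that you recover $\pi$ via $\sqrt{A^2+B^2}$ rather than via the ratio $\tilde{\bar q}^{-1}\tilde{\tilde w}=\bar{\bar x}^{-2}\bar{\bar w}$, and you explicitly justify that $r$ is preserved (via $q^2+\epsilon^2=r^2$), which the paper merely asserts.
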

\begin{proof}
 Clearly, $M$ fixes $y$ and $r$ and takes $$(r,\rho,\varrho,(\bar{\bar x},\bar{\bar w}))\mapsto (\sigma,\pi,(\tilde{\tilde w},\tilde{\bar q})).$$ We solve for $(\sigma,\pi,(\tilde{\tilde w},\tilde{\bar q}))$ directly using \eqref{Psi13} and \eqref{Psi14b}. This gives, 
 \begin{align}
 x^{-2} w &={\sqrt{1+\varrho^2\bar{\bar x}^2+\varrho^4 \bar{\bar w}^2}} \rho^{-1}\bar{\bar x}^{-2}\bar{\bar w} = \pi\tilde{\tilde w},\eqlab{MEqn1}\\
 \epsilon^{-1} q &={\sqrt{1+\varrho^2\bar{\bar x}^2+\varrho^4 \bar{\bar w}^2}} \rho^{-1} \quad \quad \,\,= \tilde{\bar q} \pi ,\nonumber
 \end{align}
 the first set of equalities due to \eqref{Psi13}, the latter ones due to \eqref{Psi14b}. 
  Therefore by division
  \begin{align*}
   \tilde{\bar q}^{-1} \tilde{\tilde w} = \bar{\bar x}^{-2} \bar{\bar w},
  \end{align*}
and hence we obtain a unique $(\tilde{\bar q},\tilde{\tilde w})\in S^1$ with $\tilde{\bar q}>0,\tilde{\tilde w}> 0$ for every $(\bar{\bar x},\bar{\bar w})\in S^1$ with $\bar{\bar x}>0$ and $\bar{\bar w}>0$.
 From here $\pi$ can be determined by
\begin{align*}
 \pi  &=\tilde{\tilde w}^{-1} {\sqrt{1+\varrho^2\bar{\bar x}^2+\varrho^4 \bar{\bar w}^2}} \rho^{-1}\bar{\bar x}^{-2}\bar{\bar w},
\end{align*}
using \eqref{MEqn1}.
Finally,
\begin{align*}
 \sigma = \chi(\pi\tilde{\tilde w})^{-1}{\rho\varrho \bar{\bar x}}/{\sqrt{1+\varrho^2\bar{\bar x}^2+\varrho^4 \bar{\bar w}^2}}.
\end{align*}
Similar calculations gives the inverse of $M$ on $$M(U^3) = \{(y,r,\sigma,\pi,(\tilde{\tilde w},\tilde{\bar q}))\vert \pi>0,\,\tilde{\tilde w}>0,\,\tilde{\tilde q}>0\}.$$
\end{proof}
This result means that the diagram in \figref{MDiagram} commutes and that we can study $\widehat X^3$ on $U^3$ using $\widehat X^{4b}$ on $M(U^3)$ since $\overline X^{4b}=M_*(\overline X^{4})$ there. The latter property is important for connecting results for $\widehat X^3$ on $P^3$ with results for $\widehat X^{4b}$ on $P^{4b}$.

\begin{figure}[h!]
\begin{center}
{\includegraphics[width=.575\textwidth]{./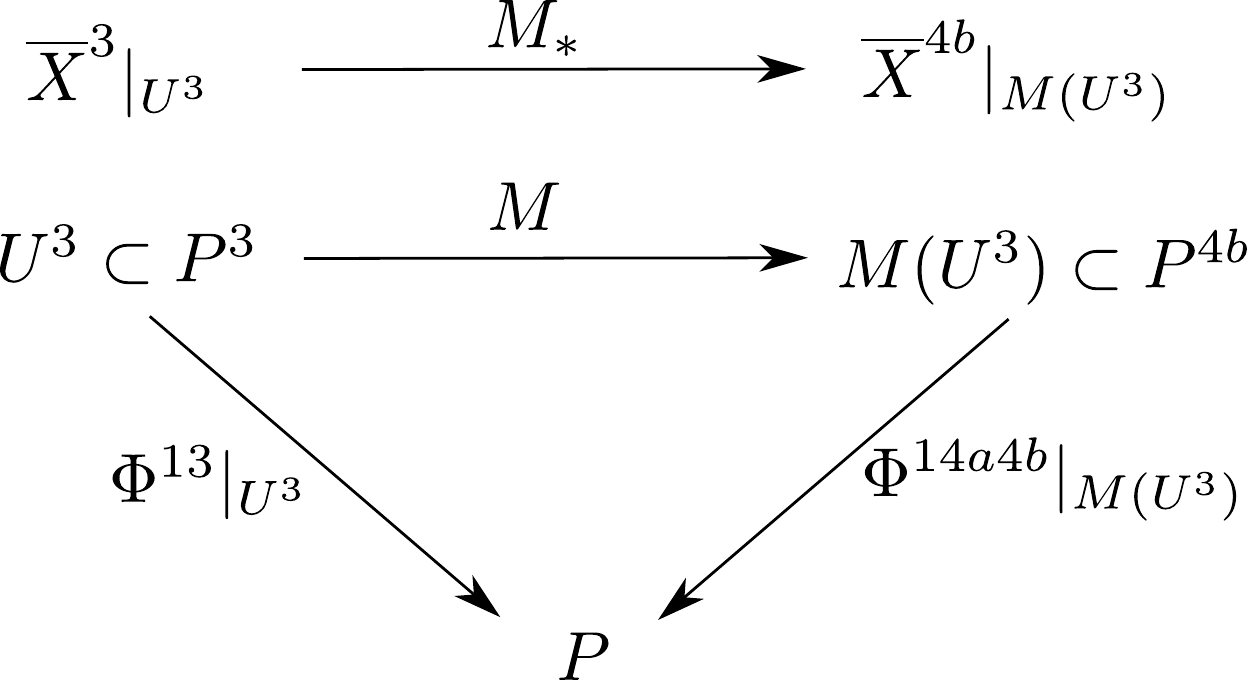}}
\end{center}
\caption{Commutative diagram.}
\figlab{MDiagram}
\end{figure}

In the analysis of the fourth blowup, we will find that $\sigma$ eventually increases while $\pi$ remains small. To cover this part, where $C_\infty$ plays no role, it is easiest to skip the first part of the fourth blowup \eqref{step4a}, see also local form in \eqref{step4aLocal} below, and just do a polar blowup of $w=0,(\bar \epsilon,\bar q)=(1,0)$ as follows: 
\begin{align*}
\Psi^5:\,(\mu,(\tilde w,\tilde{\bar q}))\mapsto (w,\bar \epsilon^{-1}\bar q)=\mu (\tilde w,\tilde{\bar q}), \quad \mu\ge 0, \,(\tilde w,\tilde{\bar q})\in S^1,
\end{align*}
fixing $x$, $y$ and $r_2$. Here $\Psi^5:P^5\rightarrow P^1$ where
\begin{align*}
 P^5 = \{(x,y,r_2,\mu,(\tilde w,\tilde{\bar q}))\in \mathbb R^2 \times [0,\infty]^2 \times S^1\}. 
\end{align*}
We put $\Psi^{15}=\Psi^1\circ \Psi^5$.  We illustrate this final blowup in \figref{step2}(d)  near $L_\infty$. 
%
%
%

\subsection{Charts}\seclab{chartsK3}
We use separate directional charts to describe the blowup transformations defined in the previous section. For the first blowup $\Psi^1$, for example, we will use two separate charts obtained by central projections onto the planes $\bar q=1$ and $\bar \epsilon=1$, respectively. We call these charts $(\bar q=1)_1$ and $(\bar \epsilon=1)_2$, respectively. The mapping from local coordinates to $(q,\epsilon)$ is obtained by setting $\bar q=1$ and $\bar \epsilon=1$, respectively, in \eqref{step1}. These charts therefore give the following local forms of the blowup $\Psi^1$:
\begin{align}
  \Psi^1_{1}: (r_1,\epsilon_1)&\mapsto \left\{\begin{matrix} q&=& r_1,\\\epsilon &=& r_1\epsilon_1,\end{matrix}\right.\eqlab{Psi11}\\
 \Psi^1_{2}: (r_2,q_2)&\mapsto \left\{\begin{matrix}  q&=&r_2q_2,\\ 
 \epsilon&=&r_2.
 \end{matrix}\right.\eqlab{Psi12}
\end{align}
where $(x,y,w,r_1,\epsilon_1)$ and $(x,y,w,r_2,q_2)$ are the local coordinates in the two charts. We can change coordinates between these charts through the following expressions:
\begin{align}
r_2 &= r_1\epsilon_1,\eqlab{firstcc}\\
 q_2 &= \epsilon_1^{-1},\nonumber
\end{align}
for $\epsilon_1>0$.
For the second blowup $\Psi^2$, described by the equations \eqref{step2}, we work in the chart $(\bar q=1)_1$ such that $\bar q^{-1}\bar\epsilon=\epsilon_1\ge 0$. Subsequently we then use local charts to describe $(\bar x,\bar w,\bar{\bar \epsilon})\in S^2$ by setting $\bar w=1$, $\bar{\bar{\epsilon}} =1$ and finally $\bar x=1$. We refer to each of these local charts as $(\bar q=1,\bar w=1)_{11}$, $(\bar q=1,\bar{\bar \epsilon}=1)_{12}$ and $(\bar q=1,\bar x=1)_{13}$, respectively. They produce the following local forms of the second blowup $\Psi^{12}=\Psi^1\circ \Psi^2$:
\begin{align}
 \Psi^{12}_{11}:(y,r_1,\rho_1,x_1,\epsilon_{11})\mapsto &\left\{\begin{matrix}  x &=&-1-\xi y +\rho_1 x_1,\\ 
  w &=&\rho_1\\
  q&=&r_1,\\
                                      \epsilon &=& r_1 \rho_1 \epsilon_{11},\end{matrix}\right.\eqlab{Psi1211}\\
 \Psi^{12}_{12}:(y,r_1,\rho_2,x_2,w_2)\mapsto & \left\{\begin{matrix} 
 x&=&-1-\xi y+\rho_2 x_2,\\
 w&=&\rho_2w_2,\\
q &=&r_1,\\
\epsilon &=& r_1 \rho_2,
\end{matrix}\right.\eqlab{Psi1212},
\end{align}
using $(y,r_1,\rho_1,x_1,\epsilon_{11})$ and $(y,r_1,\rho_2,x_2,w_2)$, as the local coordinates in these charts $(\bar q=1,\bar w=1)_{11}$, $(\bar q=1,\bar{\bar \epsilon}=1)_{12}$, respectively. We can change coordinates between $ (\bar q=1,\bar w=1)_{11}$ and $ (\bar q=1,\bar{\bar \epsilon}=1)_{12}$ through the following expressions:
\begin{align}
 \rho_2 &=\rho_1 \epsilon_{11},\eqlab{ccK31112}\\
 x_2 &=\epsilon_{11}^{-1} x_1,\nonumber\\
 w_2 &=\epsilon_{11}^{-1},\nonumber
\end{align}
for $\epsilon_{11}>0$. We summarize the information about the charts used for the first two blowups in \tabref{tbl1}.
   \begin{table}[h]
    \renewcommand\arraystretch{2}
\begin{tabular}{|c|c|c|c|c|}
\hline
    & \multicolumn{2}{c|}{1st blowup} & \multicolumn{2}{c|}{2nd blowup} 
        \\
    \hline    
        Charts & $(\bar q=1)_1$ & $(\bar \epsilon=1)_2$ & $(\bar q=1,\bar w=1)_{11}$ &  $(\bar q=1,\bar{\bar \epsilon}=1)_{12}$\\
    \hline
Coordinates & $(x,y,w,r_1,\epsilon_1)$ &$(x,y,w,q_2,r_2)$ & $(y,r_1,\rho_1,x_1,\epsilon_{11})$ &$(y,r_1,\rho_2,x_2,w_{2})$\\
\hline 
Local Blowup & $\Psi^1_1$ \eqref{Psi11} &$\Psi^1_2$ \eqref{Psi12} & $\Psi^{12}_{11}$ \eqref{Psi1211}& $\Psi^{12}_{12}$ \eqref{Psi1212}\\
\hline
Equations & \eqref{xyweps1}, \secref{exitPhi3} &   \done &  \eqref{eqn211}, \secref{sec11eqns}&  \eqref{chartBarEpsBarQ}, \secref{sec12eqns}\\
\hline 
Coordinate changes & \multicolumn{2}{c|}{\eqref{firstcc}} & \multicolumn{2}{c|}{\eqref{ccK31112}} \\
\hline
\end{tabular}
\caption{Details about the charts used for the first two blowups. The second to last row (``Equations'') contains the equation numbers of the local forms of the desingularized vector-fields, and the corresponding section numbers where these systems are analyzed. The last row (``Coordinate changes'') contains the equation numbers for the coordinate changes between the corresponding columns.}
\tablab{tbl1}
    \end{table}

For the third blowup $\Psi^{3}$, we work in the chart $(\bar q=1,\bar{\bar \epsilon}=1)_{12}$ where 
\begin{align*}
 \bar{\bar \epsilon}^{-1} \bar x=x_2,\quad  \bar{\bar \epsilon}^{-1} \bar w=w_2.
\end{align*}
Then we plug in $\bar{\bar w}=1$ into \eqref{step3} and obtain the chart $(\bar q=1,\bar{\bar \epsilon}=1,\bar{\bar w}=1)_{122}$, respectively. Within this charts we obtain the following local form of the blowup $\Psi^{123}=\Psi^1\circ \Psi^2\circ \Psi^3$
\begin{align}
 \Psi^{123}_{122}:\,(y,r_1,\rho_2,\varrho_2,x_{22})\mapsto &\left\{\begin{matrix} x&=&-1-\xi y +\rho_2 \varrho_2 x_{22},\\w&=&\rho_2 \varrho_2^2\\
%
q&=&r_1,\\
\epsilon&=&r_1\rho_2,\end{matrix}\right.\eqlab{Psi13Charts}
\end{align}
using $(y,r_1,\rho_2,\varrho_2,x_{22})$ as local coordinates. 

For the fourth blowup $\Psi^4$, we first work in the chart $(\bar \epsilon=1)_2$. Then we plug in $\tilde x=-1$ into \eqref{step4a} to obtain a chart for the description of $(\tilde x,\tilde w)\in S^1$ in a neighborhood of $(\tilde x,\tilde w)=(-1,0)$. This produces the local chart $(\bar \epsilon=1,\tilde x=-1)_{21}$ in which $\Psi^{14a}=\Psi^1\circ \Psi^{4a}$ takes the following local form
\begin{align}
\Psi_{21}^{14a}:\,(y,r_2,\sigma_1,w_1,q_2)\mapsto &\left\{\begin{matrix} 
%
x&=&-1-\xi y -\sigma_1,\\
w&=&\sigma_1^2w_1,\\
q&=&r_2q_2 ,\\
\epsilon&=&r_2,\end{matrix}\right. \eqlab{step4aLocal}
\end{align}
using $(y,r_2,\sigma_1,w_1,q_2)$ as coordinates in this chart. Within $(\bar \epsilon=1,\tilde x=-1)_{21}$ we have
\begin{align*}
\tilde x^{-2} \tilde w&=w_1,\\
\bar \epsilon^{-1}\bar q&=q_2
\end{align*}
and therefore \eqref{step4b} becomes 
\begin{align*}
 w_1 &= \pi \tilde{\tilde w},\\
 q_2 &=\pi \tilde{\bar q}.
\end{align*}
We therefore plug in $\tilde{\bar q}=1$ and obtain the chart $(\bar \epsilon=1,\tilde x=-1,\tilde{\bar q}=1)_{211}$ and the following local form of $\Psi^{14a4b}=\Psi^1\circ \Psi^{4a}\circ \Psi^{4b}$:
\begin{align}
 \Psi_{211}^{14a4b}:\,(y,r_2,\sigma_1,\pi_1,w_{11})\mapsto&\left\{\begin{matrix}x&=&-1-\xi y -\sigma_1,\\ w&=&\sigma_1^2\pi_1 w_{11},\\ q&=&r_2\pi_1 ,\\ \epsilon&=&r_2,\end{matrix}\right.\eqlab{Psi21114a14b}
\end{align}
using $(y,r_2,\sigma_1,\pi_1,w_{11})$ as local coordinates.

Following \lemmaref{MMap}, we can change coordinates between 
$(\bar \epsilon =1,\tilde x=-1,\tilde{\bar q}=1)_{211}$ and $(\bar q=1,\bar{\bar \epsilon}=1,\bar{\bar{w}} = 1)_{122}$ through the following expressions:
\begin{align}
\pi_1&=\rho_2^{-1},\eqlab{cc3bto4}\\
r_2 &=r_1\rho_2,\nonumber\\
\sigma_1 &=\rho_2\varrho_2x_{22},\nonumber\\
w_{11}&=x_{22}^{-1}.\nonumber
\end{align}
for $\rho_2>0$ and $x_{22}>0$.

We describe the fifth blowup transformation $\Psi^5$ using the chart $(\bar \epsilon=1,\tilde w=1)_{21}$ and $(\bar \epsilon=1,\tilde{\bar q}=1)_{22}$ such that $\Psi^{15}=\Psi^1\circ \Psi^5$ becomes
\begin{align}
\Psi_{21}^{15}:(r_2,\mu_1,q_{21})&\mapsto \left\{\begin{matrix} q&=&r_2\mu_1q_{21},\\ \epsilon&=&r_2,\\ w&=&\mu_1,\end{matrix}\right.\eqlab{step5}\\
\Psi_{22}^{15}:(r_2,\mu_2,w_{2})&\mapsto \left\{\begin{matrix} q&=&r_2\mu_2,\\ \epsilon&=&r_2,\\ w&=&\mu_2w_2,\end{matrix}\right.\eqlab{step5b}
\end{align}
in the local coordinates $(x,y,r_2,\mu_1,q_{21})$ and $(x,y,r_2,\mu_2,w_{2})$, respectively.  Notice, that we can change coordinates between $(\bar \epsilon=1,\tilde w=1)_{21}$ and $(\bar \epsilon =1,\tilde x=-1,\tilde{\bar q}=1)_{211}$ through the following expressions
\begin{align}
\mu_1 &= \sigma_1^2\pi_1 w_{11},\eqlab{cc4to5}\\
q_{21}&=\sigma_1^{-2} w_{11}^{-1},\nonumber\\
x &=-1-\xi y-\sigma_1.\nonumber
\end{align}
Also, between $(\bar \epsilon=1,\tilde w=1)_{21}$ and $(\bar \epsilon=1,\tilde{\bar q}=1)_{22}$ we have the following equations
\begin{align}
\mu_2 &=\mu_1 q_{21},\eqlab{fifthcc}\\
 w_2 &=q_{21}^{-1}. \nonumber
\end{align}

We summarize the information about the charts used for the third, fourth and fifth blowup in \tabref{tbl2}.
   \begin{table}[h]
    \renewcommand\arraystretch{2}
\begin{tabular}{|c|c|c|c|}
\hline
  {3rd blowup} & 4th blowup, part b & \multicolumn{2}{c|}{5th blowup}
        \\
    \hline    
     $(\bar q=1,\bar{\bar \epsilon}=1,\bar{\bar x}=-1)_{122}$ & $(\bar \epsilon=1,\tilde x=-1,\tilde{\bar q}=1)_{211}$ &  $(\bar \epsilon=1,\tilde w=1)_{21}$ &  $(\bar \epsilon=1,\tilde{\bar q}=1)_{22}$\\
    \hline
$(y,r_1,\rho_2,\varrho_2,x_{22})$ & $(y,r_2,\sigma_1,\pi_1,w_{11})$ &$(x,y,r_2,\mu_1,q_{21})$ & $(x,y,r_2,\mu_2,w_2)$\\
\hline 
$\Psi^{13}_{122}$ \eqref{Psi13Charts} & $\Psi^{14a14b}_{211}$ \eqref{Psi21114a14b}& $\Psi^{15}_{21}$ \eqref{step5} &$\Psi^{15}_{22}$ \eqref{step5b} \\
\hline 
\eqref{eqn122Here}, \secref{sec122eqns} & \eqref{eqeqns211}, \secref{seceqns211} & \eqref{eqeqns21}, \secref{seceqns21} &\eqref{eqeqns22}, \secref{seceqns22} \\
\hline
 \multicolumn{2}{|c|}{\eqref{cc3bto4}} & \multicolumn{2}{c|}{\eqref{fifthcc}} \\
\hline
 \done  & \multicolumn{2}{c|}{\eqref{cc4to5}} & \done\\
\hline
\end{tabular}
\caption{Details about the charts used for the third, fourth and fifth blowup. The rows have the same meaning as in \tabref{tbl1}. In particular, the last two rows contain the equation numbers for the coordinate changes between the corresponding columns. }
\tablab{tbl2}
    \end{table}
 
%

\subsection{Summary of results in $\phi_3$}
The full details of the analysis of the blowup systems in chart $\phi_3$ is available in \secref{detailsK3}. Here we will try to summarize the findings. 

By our blowup approach we obtain improved hyperbolicity properties of parts of the singular cycle visible in the chart $\phi_3$. In doing so, we also identify segments that are only visible upon blowup. We illustrate all the segments in \figref{gamma} using the viewpoint in \figref{step2} (c) and \figref{step2}(d). In particular, $\gamma^1$ is a heteroclinic connection on the sphere $(\bar x,\bar w,\bar{\bar \epsilon})\in S^2$ obtained from the second blowup $\Psi^2$, see also \figref{step1}(b). By standard hyperbolic methods, we can guide a neighborhood of $W^c(Q^6)$ close to $\gamma^1$. In fact, we show that the contraction of the slow flow on $C$ towards $Q^1$ produces a contraction towards $\gamma^1$ for $\epsilon\ll 1$. In turn, this provides the contraction of the return mapping $\Pi=\Pi^1\circ \Pi^0$, which we use to prove the existence of an attracting limit cycle. 

By the third blowup, we gain hyperbolicity of the forward limit point of $\gamma^1$ and subsequently follow a $1D$ unstable manifold $\gamma^2$ towards $(\bar q,\bar \epsilon)=(0,1)$. We gain hyperbolicity of the forward limit point of $\gamma^2$ by the fourth blowup transformation and follow an unstable manifold $\gamma^3$. Along $\gamma^3$, $x$ is decreasing towards the center-like manifold $L_\infty$ at $x=-1-\alpha$, $w=0,y\in I$, recall \eqref{Lhere}. On this center manifold, we desingularize the slow flow and follow $\gamma^4$. Along $\gamma^4$, $y$ is increasing, recall also \remref{ElenaArgument}. At $y=2\alpha/\xi$, $\gamma^4$ ends along a line of equilibria of saddle-structure. We subsequently follow the unstable manifold $\gamma^5$, along which $\bar q$ is increasing. By the fifth blowup, we gain hyperbolicity of the forward limit point of $\gamma^5$ and subsequently follow an unstable manifold $\gamma^6$. $\gamma^6$ is asymptotic to a center-like manifold. Upon desingularization we obtain a slow flow which produces $\gamma^7$. $\gamma^7$ is asymptotic to $C_\infty$, but this part is better described in chart $\phi_1$, see \secref{phi1Section}.

In conclusion, we obtain the following: let $\Pi^{17}$ be the mapping obtained by the first intersection of the forward flow from 
\begin{align}
\Sigma^1=\{(x,y,w)\vert w = \delta,\,x-1-\xi y\in [-\beta_1,0),y\in [-\beta_2,\beta_2]\},\eqlab{Sigma1Here}
\end{align}
to 
\begin{align}
 \Sigma^{7}_1 = \{(x,y,w,\epsilon_1)\vert y = 2\alpha(1+ \nu)/\xi ,\,x+1+\alpha \in [-\beta_3,\beta_3],\, w\in [0,\beta_4],\,\epsilon_1\in[0,\beta_5]\}.\eqlab{Sigma7}
\end{align}
Here $\nu>0$. Notice the following:
\begin{itemize}
 \item We restrict $\Sigma^1$ to $x-1-\xi y\in [-\beta_1,0)$ so that the flow is transverse to the section, see \eqref{phi3Eqs}. This is clearly a subset containing $\Pi^0(\Sigma^0)$, recall \eqref{Pi1Eqn}.
  \item We use the coordinates $(x,y,w,\epsilon_1)$ in chart $(\bar q=1)_1$ to describe the image of $\Pi^{17}$ in $\Sigma^7_1$. Using \eqref{q3K3} and \eqref{Psi11} we have
\begin{align*}
 \epsilon_1  = \epsilon e^{2w^{-1}}.
\end{align*}
By describing the image in these variables, we therefore at the same time keep track of how small $w$ is. If $w$ were to be too small then $\epsilon_1$ would not be be small enough for us to compose it with the subsequent mapping $\Pi^{71}$, see \lemmaref{Pi70}. 
\end{itemize}
We then have
\begin{lemma}\lemmalab{Pi17}
 The mapping $\Pi^{17}$ is well-defined for appropriately small $\delta>0$, $\nu>0$ and $\beta_i>0$, $i=1,\ldots, 5$ and all $0< \epsilon\ll 1$. In particular,
 \begin{align*}
  \Pi^{17}(x,y,\delta;\epsilon) = (x_+(x,y,\epsilon),2\alpha(1+\nu)/\xi,w_+(x,y;\epsilon),\epsilon_{1+}(x,y;\epsilon)),
 \end{align*}
where $x_+$, $w_+$ and $\epsilon_{1+}(x,y;\epsilon)$ are $C^1$-functions in $x$ and $y$, satisfying the following $C^1$-estimates
\begin{align*}
 x_+(x,y;\epsilon) &= -(1+\alpha)(1+\nu)+\mathcal O(\log^{-1} \epsilon^{-1} \log \log \epsilon^{-1}),\\
 w_+(x,y;\epsilon) &=\mathcal O(\log^{-1} \epsilon^{-1} \log \log \epsilon^{-1}),\\
 \epsilon_{1+}(x,y;\epsilon) &=\mathcal O(e^{-c\log \epsilon^{-1}}),
\end{align*}
for $c>0$ sufficiently small, 
as $\epsilon\rightarrow 0$. 
\end{lemma}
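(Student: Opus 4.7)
The plan is to realize $\Pi^{17}$ as a finite composition of local transition maps along the successive singular segments $\gamma^1,\gamma^2,\gamma^3,\gamma^4,\gamma^5,\gamma^6$ identified by the blowup construction in \secref{phi3Section}, treating each segment in whichever directional chart of \tabref{tbl1}--\tabref{tbl2} renders its endpoints hyperbolic after desingularisation. Concretely, I would place small $2$-dimensional sections transverse to each $\gamma^i$ between consecutive segments, glue the resulting local maps using the coordinate changes \eqref{eq:ccK31112}, \eqref{eq:cc3bto4}, \eqref{eq:cc4to5}, and read the entry point at $\Sigma^1$ as initial data $(r_1,\epsilon_1)=(e^{-2/\delta},\epsilon\, e^{2/\delta})$ in chart $(\bar q=1)_1$, with output expressed in the $(x,y,w,\epsilon_1)$-coordinates of $\Sigma^7_1$.

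For the ``hyperbolic'' segments $\gamma^1,\gamma^2,\gamma^3$ (the descent toward $L_\infty$ through blowups $\Psi^2,\Psi^3,\Psi^{4b}$) and for $\gamma^5,\gamma^6$ (the exit through $\Psi^5$), each segment joins two hyperbolic equilibria of the corresponding desingularised blowup vector field, so standard local invariant-manifold arguments (stable/unstable manifold theorem, $\lambda$-lemma, Fenichel theory where appropriate) yield smooth and uniformly controlled transition maps on small transverse tubes. A crucial point following \cite{kristiansen2017a}: the slow contraction on $C$ toward $Q^1$ lifts through the second blowup $\Psi^2$ to a uniform transverse contraction along $\gamma^1$, so that all dependence on the $(x,y)$ entry data is killed to leading order as $\epsilon\to 0$. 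This is what forces $x_+$ to converge to a constant independent of $(x,y)$ and ultimately supplies the contraction of the full return map in \lemmaref{kristianLemma}.

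The principal obstacle, and the source of the $\log^{-1}\epsilon^{-1}\log\log\epsilon^{-1}$ correction, is the passage along $\gamma^4$ on the center-like manifold $L_\infty$, handled in chart $(\bar q=1,\bar{\bar\epsilon}=1,\bar{\bar x}=-1)_{122}$ via the system \eqref{eq:eqn122Here}. There $L_\infty$ is a normally attracting slow manifold of the blown-up/extended system whose transverse eigenvalue is only weakly decaying, so I would construct a Fenichel-graph perturbation $L_{\infty,\epsilon}$ and show that the orbit entering from $\gamma^3$ is strongly contracted to $L_{\infty,\epsilon}$ and then tracks the reduced slow flow on $L_{\infty,\epsilon}$ up to $y=2\alpha(1+\nu)/\xi$. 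To leading order the reduced flow is $\dot y\approx 1$ with $x\approx -1-\alpha$ (cf.\ \remref{ElenaArgument}), giving $x_+=-(1+\alpha)(1+\nu)$ as the leading value; the $\log^{-1}\epsilon^{-1}\log\log\epsilon^{-1}$ correction arises because the passage takes a logarithmic amount of time in the desingularised flow and the transverse contraction, while uniform, is weaker than algebraic. $C^1$ dependence on the entry data follows from $C^1$-smoothness of each local map combined with a standard derivative bound for the slow-manifold passage.

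The final estimate on $\epsilon_{1+}$ is a direct consequence of the defining relation $\epsilon_1=\epsilon\, e^{2/w}$ from \eqref{eq:q3K3} and \eqref{eq:Psi11}: substituting the bound $w_+=\mathcal O(\log^{-1}\epsilon^{-1}\log\log\epsilon^{-1})$ yields $\epsilon_{1+}\le \epsilon\exp\bigl(C\log\epsilon^{-1}/\log\log\epsilon^{-1}\bigr) = \mathcal O(e^{-c\log\epsilon^{-1}})$ for any $c\in(0,1)$, which is precisely the smallness required to make \lemmaref{Pi70} applicable in the subsequent $\phi_1$-chart analysis.
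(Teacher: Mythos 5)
Your overall architecture (realising $\Pi^{17}$ as a composition of local maps along the blown-up segments, gluing with the chart changes, and using the contraction toward $\gamma^1$ to kill the dependence on the $(x,y)$ entry data) is indeed the paper's strategy, and your treatment of $\gamma^1$--$\gamma^3$ and of the entry data in chart $(\bar q=1)_1$ is fine in spirit. The genuine gap is in the passage near $L_\infty$ and in how the orbit reaches $\Sigma^7_1$. In chart $(\bar \epsilon=1,\tilde w=1)_{21}$ the manifold $N_{21}$ (whose $\mu_1=0$ slice blows down to $L_\infty$) is hyperbolically attracting only in the $x$-direction; the reduced flow on $N_{21}$ has the line $\mu_1=q_{21}=0$, $y\in I$, as equilibria with \emph{only zero eigenvalues}, and after the extra blowup $\mu_1=q_{21}\mu_{11}$ the transverse ($q_{21}$-) direction is attracting for $y<\alpha/\xi$ but \emph{repelling} for $y>\alpha/\xi$. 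So there is no normally hyperbolic Fenichel slow manifold whose reduced flow you can ``track up to $y=2\alpha(1+\nu)/\xi$'': normal attractivity is lost already at $y=\alpha/\xi$, and the orbit stays near $L_\infty$ only until $y\approx 2\alpha/\xi$ by the entry--exit (heteroclinic) mechanism of \lemmaref{Pi2145}; it then departs along $\gamma^5$, passes the fifth blowup ($\gamma^6$, \lemmaref{Pi226}), and reaches $\Sigma^7_1$ by following the desingularized flow \eqref{xyReduced} along $\gamma^7$ (\lemmaref{Pi17New}). This is precisely the point of \remref{ElenaArgument}. The omission shows up in your conclusion: if the orbit slid along $L_\infty$ with $x\approx -1-\alpha$ until the section, the leading value would be $-(1+\alpha)$; the factor $(1+\nu)$ in $x_+=-(1+\alpha)(1+\nu)$ comes from the conservation of $x/y$ along $\gamma^7$ after departure from $Q^4=(-1-\alpha,2\alpha/\xi,0)$, a segment your proposal never treats.

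Two further points. Your derivation of the $\epsilon_{1+}$ estimate is inverted: from $\epsilon_1=\epsilon e^{2/w}$, an \emph{upper} bound on $w_+$ gives a \emph{lower} bound on $\epsilon_{1+}$, not an upper one. Smallness of $\epsilon_{1+}$ encodes a lower bound on $w_+$, i.e.\ the fact that $w\approx 2/\log\epsilon^{-1}$ on exit from the first passage is multiplied by a definite factor $\approx y_1/y=1+\nu>1$ along $\gamma^7$; in the paper the bound is obtained correctly oriented as the exponential contraction $\epsilon_{1+}=\mathcal O(e^{-c/w})$ of \lemmaref{Pi17New} composed with $w=\mathcal O(\log^{-1}\epsilon^{-1})$. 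Finally, the dominant $\mathcal O(\log^{-1}\epsilon^{-1}\log\log\epsilon^{-1})$ error is generated already in the first passage near $Q^1$ (\lemmaref{Pi111}, \corref{Pi111Cor}), where the contraction over the time $T\sim\tfrac12\log\epsilon^{-1}$ is only algebraic in $T$ and is then propagated by order-preserving maps; attributing it primarily to the $\gamma^4$ passage misplaces the source, although terms of the same order (e.g.\ $\mathcal O(\mu_1\log\mu_1)$ with $\mu_1\sim 1/\log\epsilon^{-1}$ in \lemmaref{Pi2145}) do also arise there.
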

\begin{proof}
 The result follows from a series of lemmas (\lemmaref{Pi111}, see also \corref{Pi111Cor}, \lemmaref{Pi22}, \lemmaref{Pi2113}, \lemmaref{Pi2145}, \lemmaref{Pi226}, \lemmaref{Pi17New}) working in the local charts described in \secref{chartsK3}, and applying standard hyperbolic methods to follow the segments $\gamma^1-\gamma^7$ described above. Notice that the mappings between the different local sections are diffeomorphism that do not change the order See details in \secref{detailsK3}.
\end{proof}


\begin{figure}[h!]
\begin{center}
{\includegraphics[width=.995\textwidth]{./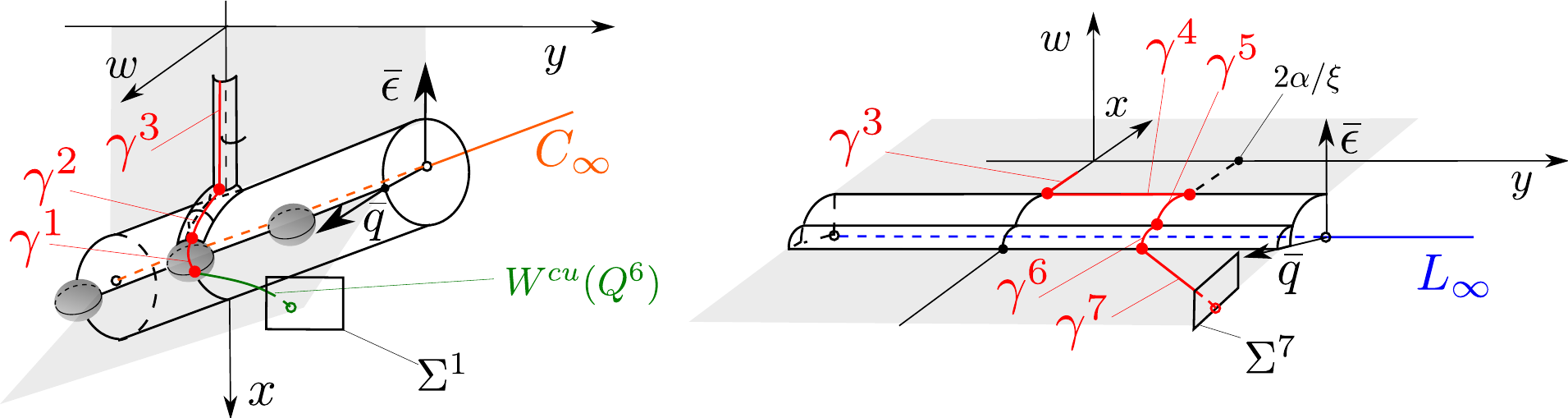}}
\end{center}
 \caption{Parts of the blown up singular cycle visible in chart $\phi_3$. The improved hyperbolicity properties allow us to prove \lemmaref{Pi17}. Notice that, following the viewpoint in \figref{step2}(d), $\gamma^3$ and $\gamma^4$ are contained within the plane $\bar q=w=0$, whereas $\gamma^7$ is contained within $\bar \epsilon=w=0$. $\gamma^5$ and $\gamma^6$ connect these orbit segments. Along these orbits, $\bar \epsilon$ is therefore decreasing. See further details in \secref{detailsK3}.}
\figlab{gamma}
\end{figure}


\section{Chart $\phi_1$}\seclab{phi1Section}
In this chart we obtain the following equations
\begin{align}
 \dot w_1 &=-\epsilon w_1^2 F(z_1 w_1^{-1}),\eqlab{phi1Eqns}\\
 \dot x_1 &=-\epsilon \left( w_1x_1 F(z_1 w_1^{-1})+\left(x_1+(1+\alpha)z_1 \right)\right),\nonumber\\
 \dot z_1 &=-\epsilon w_1z_1 F(z_1 w_1^{-1})-e^{-2z_1 w_1^{-1}} \left(1+\frac{x_1+z_1}{\xi}\right),\nonumber\\
 \dot \epsilon &=0,\nonumber
\end{align}
where $F(s) = 1-e^{-s}$. Therefore also 
\begin{align*}
 F(-s) = -e^s F(s).
 \end{align*}
 Henceforth we drop the subscripts. 
In this chart, we then have
\begin{align*}
 C = \{(w,x,z,\epsilon) \vert x = -\xi - z,\,w>0,\,\epsilon=0\},
\end{align*}
and 
\begin{align}
 C_\infty  =\{(w,x,z,\epsilon)\vert x=-\xi-z,w=0,\,\epsilon=0\}.\eqlab{CInftyK1}
\end{align}

We first notice that $e^{-zw^{-1}}$ and $e^{-2zw^{-1}}$ appearing in \eqref{phi1Eqns} are not defined along $w=0$ for $z\le 0$. We shall therefore introduce a new system by blowing up $w=z=0$ by the polar blowup transformation
\begin{align}
 (w,z) = \theta (\bar w,\bar z),\,\theta \ge 0,\,(\bar w,\bar z)\in S^1,\eqlab{blowupBadPoint}
\end{align}
and apply appropriate desingularization of the transformed vector-field to have a well-defined vector-field within $\theta=0$. In particular, we will divide the right hand side by $e^{-2\bar z\bar w^{-1}}$ whenever $\bar z<0$. 

We will use three separate charts $(\bar z=1)_1$, $(\bar w=1)_2$ and $(\bar z=-1)_3$ obtained by setting $\bar z=1$, $\bar w=1$ and $\bar z=-1$, respectively, so that we have the following local forms of \eqref{blowupBadPoint}:
\begin{align}
 w &= \theta_1 w_1,\,z= \theta_1,\eqlab{theta1Eqn}\\
 w &= \theta_2,\,z= \theta_2z_2,\nonumber\\
 w&=\theta_3w_3,\,z=-\theta_3,\eqlab{theta3Eqn}
\end{align}
where $(\theta_1,w_1) \in [0,\infty)^2$, $(\theta_2,z_2)\in [0,\infty)\times \mathbb R$, and $(\theta_3,w_3)\in [0,\infty)^2$ are the local coordinates, 
respectively. We consider each of these charts in the following.
\subsection{Chart $(\bar z=1)_1$}
Working in the chart $(\bar z=1)_1$ is similar to the analysis of \eqref{K3Ext} in chart $\phi_3$. 
Indeed, here we have $e^{-2zw^{-1}}=e^{-2w_1^{-1}}$, and as in chart $\phi_3$, we therefore put
\begin{align}
 q_1 = e^{-2w_1^{-1}}.\eqlab{q1app}
\end{align}
This gives the following equations
\begin{align}
\dot x &=- \epsilon \theta_1w_1\left(\theta_1w_1 x F(w_1^{-1})+(x+(1+\alpha)\theta_1)\right),\eqlab{K1BarZ1Ext}\\
\dot \theta_1 &=-\theta_1 w_1 \left(\epsilon \theta_1^2w_1 F(w_1^{-1})+q \left(1+\frac{x+\theta_1}{\xi}\right)\right),\nonumber\\
\dot w_1 &=w_1^2 q \left(1+\frac{x+\theta_1}{\xi}\right),\nonumber\\
\dot q &=2q^2 \left(1+\frac{x+\theta_1}{\xi}\right),\nonumber\\
\dot \epsilon &=0,\nonumber
\end{align}
by implicit differentiation and dropping the subscript on $q$. 
Here we have multiplied the right hand side by $\theta_1w_1$ to desingularize along $\theta_1=0$ and $w_1=0$. 
For this system,
\begin{align*}
C = \left\{(x,\theta_1,w,q,\epsilon)\vert 1 + \frac{x+\theta_1}{\xi}=0,w>0,q>0,\epsilon=0\right\},
\end{align*}
is a normally hyperbolic set of equilibria, but still not compact. As in chart $\phi_3$, the system is very degenerate near
\begin{align*}
 C_\infty = \left\{(x,\theta_1,w,q,\epsilon)\vert 1 + \frac{x+\theta_1}{\xi}=0,w_1=q=0,\epsilon=0,\theta_1\ge 0\right\}.
\end{align*} 
Then we proceed as in chart $\phi_3$: Let $P_1= \{(x,\theta_1,w,q,\epsilon)\in \mathbb R\times [0,\infty)^4\}$, $P_1^1 = \{(x,\theta_1,w,r,(\bar q,\bar \epsilon))\in \mathbb R\times [0,\infty)^3 \times S^1\}$ and blowup $q=\epsilon=0$ through the blowup transformation $$\Psi^1_1:P_1^1\rightarrow P_1,$$ which fixes $x$, $\theta_1$ and $w$ and takes
\begin{align*}
 (r,(\bar q,\bar \epsilon))\mapsto \left\{\begin{matrix} q &=& r\bar q,\\
 \epsilon &=& r\bar \epsilon,
 \end{matrix}\right.\quad r\ge 0,\,(\bar q,\bar \epsilon)\in S^1.
\end{align*}
Notice that we use a notion for the blowups (i.e. $\Psi_j^i$) that is similar to the one used in \secref{phi3Section}. However, we believe it is be clear from the context what blowup we are referring to. 
In the second blowup step, we set $P^2_1 = \{(\theta_1,r,\rho,(\bar x,\bar w,\bar{\bar \epsilon}))\in \mathbb R\times [0,\infty)^3 \times S^2\}$ and blowup $C_\infty$ through the transformation $$\Psi_1^2:P_1^2\rightarrow P_1^1,$$ which fixes $\theta_1$ and $r$ and takes
\begin{align}
(\theta_1,\rho,(\bar x,\bar w,\bar{\bar \epsilon})) \mapsto \left\{\begin{matrix}
 x& =& -\xi -\theta_1 + \rho \bar x,\\
 w_1 &=&\rho \bar w_1,\\
 \bar q^{-1}\bar \epsilon &=& \rho \bar{\bar{\epsilon}},
 \end{matrix}\right. \quad \rho \ge 0, (\bar x,\bar w_1,\bar{\bar \epsilon})\in S^2.\eqlab{Psi12here}
\end{align}
See \figref{step1K1} (a).
Since $(\bar q,\bar \epsilon)\in S^1$ we can write the last equality as 
\begin{align*}
 (\bar q,\bar \epsilon) = \left(\frac{1}{\sqrt{1+\rho^2 \bar{\bar \epsilon}^2}},\frac{\rho \bar{\bar \epsilon}}{\sqrt{1+\rho^2 \bar{\bar \epsilon}^2}}\right).
\end{align*}
Let $\Psi_1^{12}=\Psi_1^{1}\circ \Psi_1^2$.

Due to the multiplication by $\theta_1$ on the right hand side in the derivation of \eqref{K1BarZ1Ext}, the resulting system is still degenerate near $(\bar x,\bar{\bar \epsilon},\bar w_1)=(0,0,1),\,\theta_1=0$. Therefore let $P_1^3=\{(r,\rho,\varrho,(\bar{\bar x},\bar \theta_1,\bar{\bar{\bar \epsilon}}))\in [0,\infty)^3 \times S^2\}$. Then we apply a final blowup transformation $$\Psi_1^3:P_1^3\rightarrow P_1^2,$$ which fixes $r$ and $\rho$ and takes
\begin{align}
(\varrho,(\bar{\bar x},\bar \theta_1,\bar{\bar{\bar \epsilon}}))\mapsto \left\{\begin{matrix}
 \bar w_1^{-1} \bar x&=& \varrho \bar{\bar{x}},\\
 \theta_1 &= &\varrho \bar \theta_1,\\
 \bar w_1^{-1} \bar{\bar \epsilon} &=&\varrho \bar{\bar{\bar{\epsilon}}},
 \end{matrix}\right. \quad \varrho\ge 0, (\bar{\bar{x}},\bar \theta_1,\bar{\bar{\bar \epsilon}}) \in S^2.\eqlab{Psi13here}
\end{align}
See \figref{step1K1} (b). Since $(\bar x,\bar w_1,\bar{\bar \epsilon})\in S^2$ we can write the right hand side as
\begin{align*}
 (\bar x,\bar w_1,\bar{\bar \epsilon})=\left(\frac{ \varrho \bar{\bar x}}{\sqrt{1+\varrho^2 \bar{\bar x}^2+\varrho^2 \bar{\bar{\bar \epsilon}}^2}},\frac{1}{\sqrt{1+\varrho^2 \bar{\bar x}^2+\varrho^2 \bar{\bar{\bar \epsilon}}^2}},\frac{ \varrho \bar{\bar{\bar \epsilon}}}{\sqrt{1+\varrho^2 \bar{\bar x}^2+\varrho^2 \bar{\bar{\bar \epsilon}}^2}}\right).
\end{align*}
Let $\Psi_1^{123}=\Psi_1^{12}\circ \Psi_1^3$. 

\begin{figure}[h!]
\begin{center}
\subfigure[]{\includegraphics[width=.995\textwidth]{./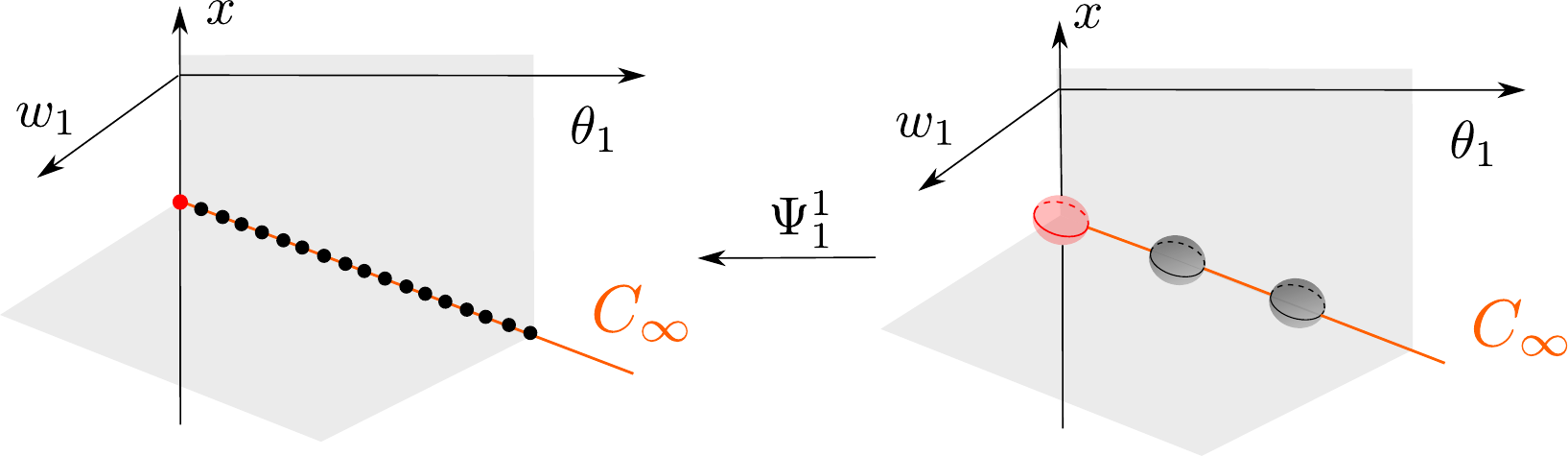}}
\subfigure[]{\includegraphics[width=.995\textwidth]{./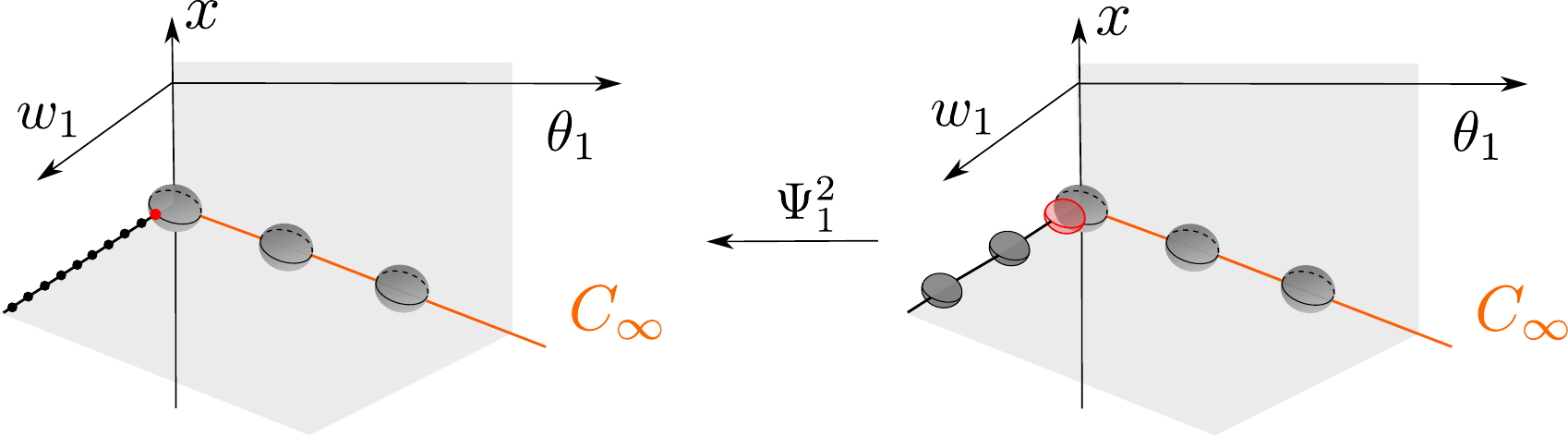}}
\end{center}
\caption{Blowup in chart $\bar z=1$.}
\figlab{step1K1}
\end{figure}
\subsection{Charts}
To describe the blowups $\Psi_1^1$, $\Psi_1^{12}$ and $\Psi_1^{123}$ we again use local directional charts. For $\Psi_1^1$ we will only work in the chart $(\bar z=1,\bar q=1)_{11}$ obtained by setting $\bar q=1$ so that 
\begin{align}
\Psi_{11}^1:\,(r_1,\epsilon_1)\mapsto \left\{\begin{matrix} q&=& r_1,\\
 \epsilon &=&r_1\epsilon_1,
 \end{matrix}\right.\eqlab{Psi111app}
\end{align}
in the local coordinates $(r_1,\epsilon_1)\in [0,\infty)^2$. Then to describe $\Psi_1^{12}$, we use two separate charts $(\bar z=1,\bar q=1,\bar x=1)_{111}$ and $(\bar z=1,\bar q=1,\bar w=1)_{112}$ obtained by setting $\bar x=1$ and $\bar w_1 = 1$ in \eqref{Psi12here}:
\begin{align}
 \Psi_{111}^{12}:\,(r_1,\theta_1,\rho_1,w_{11},\epsilon_{11})&\mapsto \left\{\begin{matrix} x &=& -\xi -\theta_1+\rho_1,\\
 w_1 &=&\rho_1 w_{11},\\
 q&=&r_1,\\
 \epsilon &=&r_1 \rho_1 \epsilon_{11},\end{matrix}\right..\eqlab{Psi11211}\\
 \Psi_{112}^{12}:\,(r_1,\theta_1,\rho_2,x_{2},\epsilon_{12})&\mapsto \left\{\begin{matrix}
 x &=& -\xi -\theta_1+\rho_2x_2,\\
 w_1 &=&\rho_2,\\
 q&=&r_1,\\
 \epsilon&=&r_1 \rho_2 \epsilon_{12}.
 \end{matrix}\right.\eqlab{Psi11212}
\end{align}
The coordinate changes between these two charts are given by 
\begin{align}
 \rho_1 &= \rho_2 x_2,\eqlab{Psi11212cc}\\
 w_{11}&=x_2^{-1},\nonumber\\
 \epsilon_{11}&=\epsilon_{12} x_2^{-1}.\nonumber
\end{align}

For $\Psi_1^{123}$, we will only consider the chart $(\bar z=1,\bar q=1,\bar w_1=1,\bar \theta_1=1)_{1121}$ obtained by setting $\bar \theta_1=1$ in \eqref{Psi13here}:
\begin{align}
\Psi_{1121}^{123}:\,(r_1,\rho_2,\varrho_1,x_{21},\epsilon_{121})\mapsto \left\{\begin{matrix}
x&=&-\xi-\varrho_1+\rho_2\varrho_1 x_{21},\\
\theta_1 &=&\varrho_1,\\
w_1 &=&\varrho_1 \rho_2,\\
q&=&r_1,\\
\epsilon &=&r_1 \rho_2 \varrho_1 \epsilon_{121},
\end{matrix}\right. \eqlab{finalChartBarZ1K1}
\end{align}
such that 
\begin{align}
\theta_1 &= \varrho_1,\eqlab{finalChartBarZ1K1cc}\\
 x_2 &= \varrho_1 x_{21},\nonumber\\
 \epsilon_{12} &=\varrho_1 \epsilon_{121}.\nonumber
\end{align}
See \tabref{tbl3} for a summary.

\subsection{Chart $(\bar w=1)_2$}
The analysis in this chart is more standard because here $e^{-zw^{-1}} = e^{-z_2}$ is regular. We have
\begin{align}
  \dot x &=-\epsilon \theta_2 \left(\theta_2 x F(z_2)+\left(x_1+(1+\alpha)\theta_2 z_2\right)\right),\eqlab{K1Barw1}\\
    \dot \theta_2 &=-\epsilon \theta_2^3 F(z_2),\nonumber\\
  \dot z_2 &=-e^{-2z_2} \left(1+\frac{x+\theta_2 z_2}{\xi}\right),\nonumber\\
  \dot \epsilon &=0,\nonumber
\end{align}
after multiplication of the right hand side by $\theta_2$.
Here $\rho_2=\epsilon=0,\,x=-\xi, \, z_2\in \mathbb R$ is a line of equilibria. For obvious reasons, we shall abuse notation slightly and call this line $C_\infty$ also. It is fully nonhyperbolic and we therefore blowup this set through the following blowup transformation $\Psi_2^1:P_2^1\rightarrow P_2$, where $P_2 = \{(x,\theta_2,z_2,\epsilon)\in \mathbb R\times [0,\infty)\times \mathbb R\times [0,\infty)\}$, $P_2^1=\{(z_2,\sigma,(\bar x,\bar \theta_2,\bar \epsilon))\in \mathbb R\times [0,\infty)\times S^2\}$, which fixes $z_2$ and $\sigma$ and takes
\begin{align*}
(z_2,\sigma, (\bar x,\bar \theta_2,\bar \epsilon))\mapsto \left\{\begin{matrix}
 x &=&-\xi - \sigma\bar \theta_2 z_2 +\sigma \bar x,\\
 \theta_2 &=& \sigma\bar \theta_2,\\
 \epsilon &=& \sigma \bar \epsilon, 
 \end{matrix}\right.\quad \sigma\ge 0,(\bar x,\bar \theta_2,\bar \epsilon)\in S^2
\end{align*}
See \figref{step2K1}. 
\begin{figure}[h!]
\begin{center}
{\includegraphics[width=.995\textwidth]{./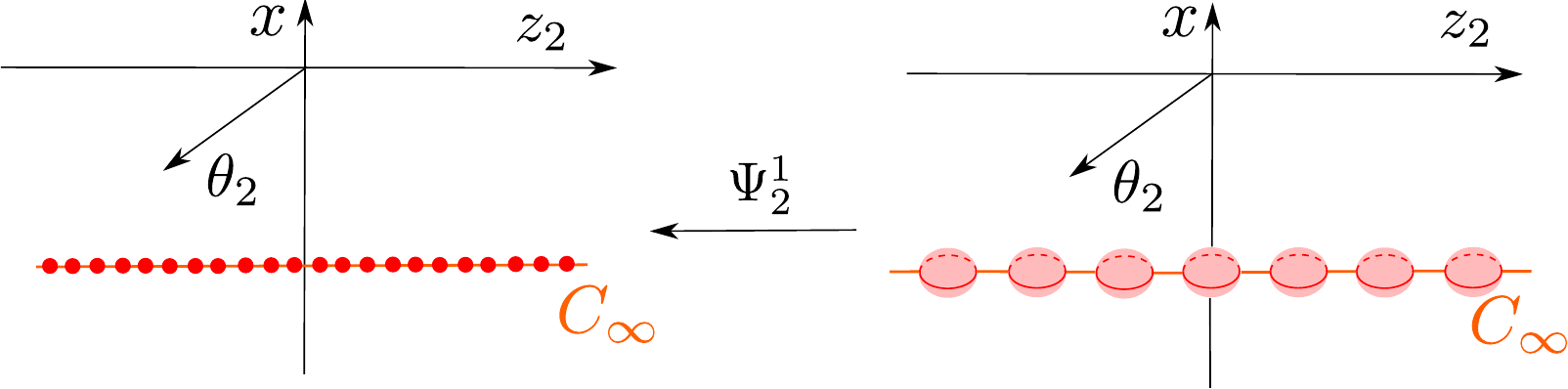}}
\end{center}
\caption{Blowup in chart $\bar w=1$.}
\figlab{step2K1}
\end{figure}
\subsection{Chart}
We only need to consider the single chart $(\bar w=1,\bar \theta_2=1)_{21}$ obtained by setting $\bar \theta_2=1$. This gives the following local form of $\Psi_2^1$
\begin{align}
 \Psi_{21}^1:\,(z_2,\sigma_1,x_{1},\epsilon_{1})\mapsto \left\{\begin{matrix}
                                                                  x &=& -\xi-\sigma_1 z_2+\sigma_1 x_{1}\\
                                                                  \theta_2 &=& \sigma_1,\\
                                                                  \epsilon &= &\sigma_1 \epsilon_{1}.
                                                                 \end{matrix}\right.\eqlab{Psi211}
\end{align}
Notice, that we can change coordinates between the chart $(\bar z=1,\bar q=1,\bar x=1,\bar \theta_1=1)_{1121}$ and $(\bar w=1,\bar \theta_2=1)_{21}$, using  \eqref{finalChartBarZ1K1}, as follows
\begin{align}
 \epsilon_{1} &=\exp(-2\rho_2^{-1})\epsilon_{121},\eqlab{ccBarWE1}\\
 \sigma_1 &=\varrho_1 \rho_2,\nonumber\\
 z_2 &=\rho_2^{-1},\nonumber\\
 x_{1} &=x_{21},\nonumber
\end{align}
for $\rho_2>0$. See \tabref{tbl3} for a summary.
\subsection{Chart $(\bar z=-1)_3$}
In this chart we have
\begin{align}
\dot x &= \epsilon \theta_3 e^{-w_3^{-1}}\left(\theta_3w_3 x F(w_3^{-1})-e^{-w_3^{-1}}(x-(1+\alpha)\theta_3)\right),\eqlab{K3BarZN1}\\
\dot \theta_3 &=\theta_3 \left(\epsilon \theta_3^2 e^{-w_3^{-1}} F(w_1^{-1})+1+\frac{x-\theta_3}{\xi}\right),\nonumber\\
\dot w_3 &=-w_3  \left(1+\frac{x-\theta_3}{\xi}\right),\nonumber\\
\dot \epsilon &=0,\nonumber
\end{align}
after division of the right hand side by $\theta_3^{-1} e^{2w_3^{-1}}$. For the analysis in this chart, we will have to keep track of exponentially small remainders in center manifold calculations. Standard power series expansion will therefore have to be adapted. For this purpose it is therefore useful to again introduce a flat function $q(w_3)$ as follows
\begin{align}
 q_3=w_3^{-1} e^{-w_3^{-1}}.\eqlab{q3}
\end{align}
It is also possible to use the seemingly more natural choice $q_3=e^{-w_3^{-1}}$ but the calculations are slightly simpler with \eqref{q3}. Implicit differentiation of \eqref{q3} gives the following system after multiplication by $w_3$ on the right hand side:
\begin{align*}
 \dot x &= \epsilon \theta_3^2 w_3^2 q\left(\theta_3w_3 x (1-w_3 q)-w_3 q(x-(1+\alpha)\theta_3)\right),\nonumber\\
\dot \theta_3 &=\theta_3 w_3 \left(\epsilon \theta_3^2 w_3 q (1-w_3 q)+1+\frac{x-\theta_3}{\xi}\right),\nonumber\\
\dot w_3 &=-w_3^3  \left(1+\frac{x-\theta_3}{\xi}\right),\nonumber\\
\dot q &= -q \left(1+\frac{x-\theta_3}{\xi}\right)(1-w_3),\nonumber\\
\dot \epsilon &=0,\nonumber
\end{align*}
Here we have dropped the subscript on $q$ and used \eqref{q3} to write $e^{-w_3^{-1}}=w_3 q$.  Let $P_3 =\{(x,\theta_3,w_3,q,\epsilon)\in \mathbb R\times [0,\infty)^4\}$, $P_3^1=\{(\pi,w_3,q,(\bar x,\bar \theta_3,\bar \epsilon))\in [0,\infty)^2 \times S^2\}$. Then we perform the following blowup transformation $$\Psi_3^1: P_3^1\rightarrow P_3,$$ of $x=-\xi,\,\theta_3=0,\,\epsilon=0$ which fixes $w_3$ and $q$ and takes
\begin{align*}
(\pi,(\bar x,\bar \theta,\bar \epsilon))&\mapsto \left\{\begin{matrix}
 x &=& -\xi +\pi \bar \theta +\pi \bar x,\\
 \theta_3 &=&\pi \bar \theta,\\
 \epsilon &=&\pi \bar \epsilon.
 \end{matrix}\right.
\end{align*}
Subsequently, we blowup $\bar\theta_3^{-1} \bar x=0,\,w_3=0,\,\bar \theta_3^{-1} \bar \epsilon=0$ through the blowup transformation $$\Psi_3^2:P_3^2\rightarrow P_3^1,$$ where $P_3^2 = \{(\pi,\mu,q,(\bar{\bar x},{\bar w}_3,\bar{\bar \epsilon}))\in [0,\infty)^2 \times S^2\}$, which fixes $\pi$ and $q$ and takes 
\begin{align*}
(\mu,(\bar{\bar x},\bar{\bar w},\bar{\bar \epsilon}))\mapsto \left\{\begin{matrix}
\bar \theta_3^{-1} \bar x &=&\mu \bar{\bar x},\\
 w_3 &=&\mu {\bar w}_3,\\
 \bar \theta_3^{-1} \bar \epsilon &=&\mu \bar{\bar \epsilon},
 \end{matrix}
 \right.\quad \mu\ge 0,(\bar x,\bar \theta,\bar \epsilon)\in S^2
\end{align*}
We illustrate the blowup in \figref{step3K1}. 
Let $\Psi_3^{12}=\Psi_3^1\circ \Psi_3^2$. 
\begin{figure}[h!]
\begin{center}
{\includegraphics[width=.995\textwidth]{./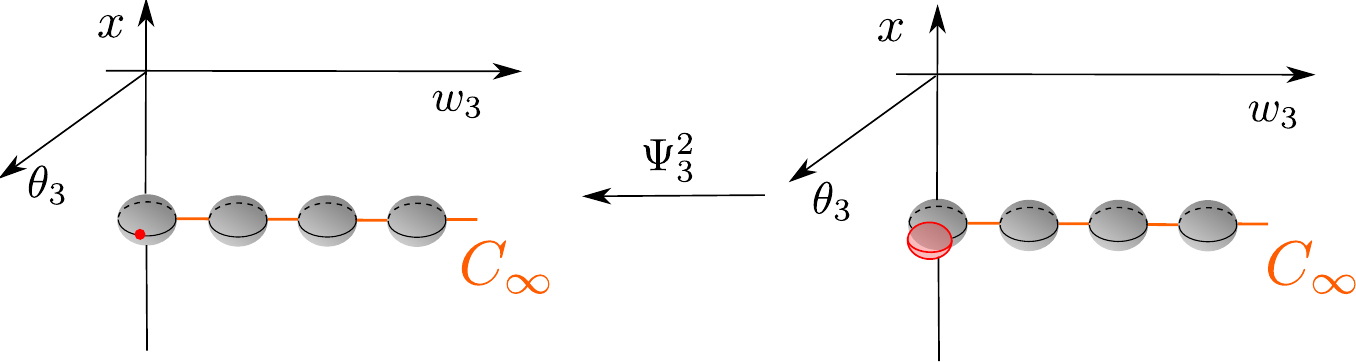}}
\end{center}
\caption{Blowup in chart $\bar z=-1$.}
\figlab{step3K1}
\end{figure}

\subsection{Chart}
To describe the blowup $\Psi_3^1$ we will only consider the following chart $(\bar z=-1,\bar \theta_3=1)_{31}$ obtained by setting $\bar \theta=1$:
\begin{align*}
\Psi_{31}^1:\,(\pi_1,x_1,\epsilon_1) \mapsto \left\{\begin{matrix} x &=&-\xi + \pi_1 +\pi_1 x_1,\\
 \theta_3 &=& \pi_1,\\
 \epsilon &=&\pi_1 \epsilon_1,
 \end{matrix}\right.
\end{align*}
using the local coordinates $\pi_1\ge 0,x_1\in \mathbb R,\epsilon_1\ge 0$. 

For $\Psi_{3}^{12}$ we use the single chart $(\bar z=-1,\bar \theta_3=1,\bar w_3=1)_{311}$ obtained by setting $\bar w_3=1$:
\begin{align}
\Psi_{311}^{12}:\,(\pi_1,\mu_1,x_{11},\epsilon_{11})\mapsto \left\{\begin{matrix}
 x &=& -\mu +\pi_1 + \pi_1 \mu_1 x_{11},\\
 \theta_3&=&\pi_1,\\
 w_3 &=&\mu_1,\\
 \epsilon &=&\pi_1 \mu_1\epsilon_{11}.
 \end{matrix}\right.\eqlab{Psi31112}
\end{align}
using the local coordinates $\pi_1\ge 0,\mu_1\ge 0,x_{11}\in \mathbb R,\epsilon_{11}\ge 0$. 
Notice that we can write the right hand side of \eqref{Psi31112} as
\begin{align*}
 x &=-\xi-z+wx_{11},\\
  \epsilon &=w\epsilon_{11},
\end{align*}
after eliminating $\pi_1$ and $\mu_1$ and using that $\theta_3=-z$.
Notice also that we can change coordinates between $(\bar w=1,\bar \theta_2=1)_{21}$ and $(\bar z=-1,\bar \theta_3=1,\bar w_3=1)_{311}$ as follows
\begin{align}
\pi_1 &=-\sigma_1 z_2,\eqlab{finalCC}\\
\mu_1 &=-1/z_2,\nonumber\\
x_{11}&=x_{1},\nonumber\\
\epsilon_{11} &=\epsilon_{1},\nonumber
\end{align}
for $z_2<0$. We summarize the results on the charts in \tabref{tbl3} below.
   \begin{table}[h]
    \renewcommand\arraystretch{2}
\begin{tabular}{|c|c|c|c|c|}
\hline
    \multicolumn{3}{|c|}{$(\bar z=1)_1$} & $(\bar w=1)_2$ & $(\bar z=-1)_3$
        \\
    \hline    
        $(\cdot,\bar q=1,\bar x=1)_{111}$ & $(\cdot,\bar q=1,\bar x=1)_{112}$ & $(\cdot,\bar q=1,\bar w_1=1,\bar \theta_1=1)_{1121}$ & $(\cdot,\bar \theta_2=1)_{21}$ &  $(\cdot,\bar \theta_3=1,\bar{w}_3=1)_{311}$\\
    \hline
$(r_1,\theta_1,\rho_1,w_{11},\epsilon_{11})$ & $(r_1,\theta_1, \rho_2,x_2,\epsilon_{12})$ &$(r_1,\rho_2,\varrho_1,x_{21},\epsilon_{121})$ & $(z_2,\sigma_1,x_1,\epsilon_{1})$ &$(\pi_1,\mu_1,x_{11},\epsilon_{11})$\\
\hline 
$\Psi^{12}_{111}$ \eqref{Psi11211}  & $\Psi^{12}_{112}$ \eqref{Psi11212} &$\Psi^{123}_{1121}$\eqref{finalChartBarZ1K1} & $\Psi^{1}_{21}$ \eqref{Psi211}& $\Psi^{12}_{311}$ \eqref{Psi31112}\\
\hline
\eqref{eqeqns12111}, \secref{seceqns12111} & \eqref{eqns112}, \secref{seceqns12112} &  \eqref{eqeqns121121}, \secref{seceqns121121} &  \eqref{eqhere21}, \secref{sechere21}&  \eqref{eqnsBlowupK1}, \secref{sechere311}\\
\hline 
 \multicolumn{2}{|c|}{\eqref{Psi11212cc}} &  \done & \multicolumn{2}{c|}{\eqref{finalCC}} \\
\hline
\done & \multicolumn{2}{c|}{\eqref{finalChartBarZ1K1cc}} & \done & \done\\
\hline 
\end{tabular}
\caption{Details about the charts used to describe the blowups in $\phi_1$. The first row divide the table into three parts, corresponding to the three directional charts associated with the initial blowup \eqref{blowupBadPoint}. The remaining rows have the same meaning as in \tabref{tbl1}. In particular, the last two rows provide the equation numbers for the equations describing the coordinate changes between the charts in the corresponding columns. }
\tablab{tbl3}
    \end{table}
\subsection{Summary of results in $\phi_1$}
The full details of the analysis of the blowup systems in chart $\phi_1$ is available in \secref{detailsK1}. Here we will try to summarize the findings. For simplicity, we restrict to the case where 
\begin{align}
\alpha<1.\eqlab{alpha1}
\end{align}
$\alpha>1$ is easier, while $\alpha=1$ is a special case, see \appref{alphaGE1}.

The blowup approach provides improved hyperbolicity properties of parts of the singular cycle visible in the chart $\phi_1$. We illustrate all the segments, including new segments only visible upon blowup, in \figref{gammaK1} using the viewpoint in \figref{step1K1}(b), \figref{step2K1} and \figref{step3K1}. In \figref{gammaK1}(a) we illustrate the parts visible in the chart $(\bar z=1)_1$. All orbits are contained within the subset $(\bar q,\bar \epsilon)=(1,0)$. $\gamma^7$ is asymptotic to a partially hyperbolic point $(1,0,0)$ on the sphere $(\bar x,\bar w,\bar{\bar \epsilon})\in S^2$. From here $\gamma^8$ is an unstable manifold which is asymptotic to a point  within $\bar{w}=1$ on a center manifold. This center manifold provides an extension of the slow manifold in the usual way, see e.g. \cite{krupa_extending_2001}. By desingularization of the slow flow on this center manifold we obtain an orbit $\gamma^9$ which is asymptotic to a partially hyperbolic equilibrium on $\theta_1=0$. From here $\gamma^{10}$ is an unstable manifold that we follow forward into $(\bar w=1)_2$, see \figref{gammaK1}(b), by following the slow flow on the center manifold. This orbit eventually brings us into $(\bar z=-1)_3$ where we finally obtain a heteroclinic $\gamma^{11}$ connecting the end of $\gamma^{10}$ with $W^{cu}(Q^6)$, obtained as a center submanifold of the reduced problem on the larger center manifold that extends the slow manifold in the usual way. In fact, $\gamma^{11}$ is only visible upon further use of a blowup involving exponentially small terms. The illustration in \figref{gammaK1}(c) is therefore (extra) caricatured. We combine the information in each of the charts into a single figure in \figref{gammaK1}(d).


In conclusion, we obtain the following: let 
$$\Pi^{70}=\Sigma^7_1\rightarrow \Sigma^0,$$ be the mapping obtained by the first intersection of the forward flow where
\begin{align*}
\Sigma^{7}_1=\left\{(x,z,w)\vert z = \frac{\xi}{2\alpha(1+ \nu)},\,x+\frac{(1+\alpha)\xi}{2 \alpha} \in [-\beta_3,\beta_3],\, w\in [0,\beta_4],\epsilon_1\in [0,\beta_5]\right\},
\end{align*}
recall \eqref{Sigma7} and \eqref{phi13xyz}, and
\begin{align}
 \Sigma^{0} = \{(x,z,w)\vert w = \delta, (\delta x,\delta z) \in N^0\}.\eqlab{Sigma0Here}
\end{align}
Here $N^0$ is the small neighborhood of $(x^0, z^0)$ in \eqref{Sigma0}. Notice also that $q^0 = (\delta x^0,\delta z^0,\delta)$ in our present $(x,z,w)=(x_1,z_1,w_1)$-coordinates, see \eqref{q0Here} and \eqref{phi13xyz}.
Then we have
\begin{lemma}\lemmalab{Pi70}
 The mapping $\Pi^{70}$ is well-defined for appropriately small $\delta>0$, $\nu>0$ and $\beta_i>0$, $i=1,\ldots, 5$ and all $0< \epsilon\ll 1$. In particular,
 \begin{align*}
  \Pi^{70}(x,w,\delta,\epsilon_1) = (x_+(x,w,\epsilon_1),z_+(x,w,\epsilon_1),\delta),
 \end{align*}
where $x_+$ and $z_+$ are $C^1$-functions in $x$ and $w$, satisfying
\begin{align*}
 x_+(x,w,\epsilon_1) &= \delta x^0+\mathcal O(\epsilon_1+w),\\
 z_+(x,w,\epsilon_1) &=\delta z^0+\mathcal O(\epsilon_1+w).
\end{align*}
\end{lemma}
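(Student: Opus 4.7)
The plan is to decompose $\Pi^{70}$ as a composition of local transition maps, one per segment $\gamma^7,\gamma^8,\gamma^9,\gamma^{10},\gamma^{11}$ (followed by a standard Fenichel passage along $W^{cu}(Q^6)$), constructing each local map in the appropriate directional chart of \tabref{tbl3}, and exploiting the improved hyperbolicity produced by the blowups \eqref{blowupBadPoint}, \eqref{Psi12here}, \eqref{Psi13here} (and the analogous blowups in the other base charts). For each segment I would (i) place small transverse sections at its endpoints in the extended blown-up phase space, (ii) obtain a $C^1$ local transition by standard hyperbolic methods (stable/unstable manifold theorem, Fenichel slow manifold, and, where needed, the Exchange Lemma), and (iii) keep track of how the small parameters $\epsilon_1$ and $w$ propagate.

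Starting from $\Sigma^7_1$ I would enter the chart $(\bar z=1,\bar q=1,\bar w_1=1,\bar \theta_1=1)_{1121}$, in which the final blowup \eqref{Psi13here} has promoted the endpoint of $\gamma^7$ to a partially hyperbolic equilibrium. The stable/unstable manifold theorem gives a $C^1$ transition onto a center submanifold carrying $\gamma^8$; the reduced slow flow on the Fenichel-like extension of $C$ then transports the orbit along $\gamma^9$ to another partially hyperbolic point on $\{\theta_1=0\}$, whose hyperbolicity in the $\theta_1$-direction is again gained by the blowup. A symmetric application of the stable manifold theorem launches the orbit along $\gamma^{10}$ into the chart $(\bar w=1,\bar\theta_2=1)_{21}$, in which system \eqref{K1Barw1} is polynomial-algebraic in $z_2$ and a standard Fenichel manifold carries the reduced flow to the junction with $(\bar z=-1)_3$, entered via the coordinate change \eqref{finalCC}.

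The main technical obstacle is the segment $\gamma^{11}$ inside $(\bar z=-1,\bar\theta_3=1,\bar w_3=1)_{311}$, because the dynamics one has to capture there contains the exponentially flat contraction associated with $W^{cu}(Q^6)$. This is precisely where I would use the device \eqref{q3} promoting $q_3=w_3^{-1}e^{-w_3^{-1}}$ to an independent dynamic variable: in the extended system the relevant center manifold is a bona fide smooth object, and inside it $W^{cu}(Q^6)$ is a center submanifold identified by the asymptotics \eqref{WcuA}, with $\gamma^{11}$ a transverse heteroclinic to it. The transition between sections at the two endpoints of $\gamma^{11}$ is then obtained from the implicit function theorem in the style of \cite{kristiansen2017a}, transversality supplying $C^1$-smoothness and the correct leading behavior.

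Once the flow has reached a transverse section to $W^{cu}(Q^6)$ on (the blow-down of) $C$ near $Q^6$, standard Fenichel theory, as used in \lemmaref{Wcsu1}, extends the critical manifold past the blowdown, and for $\epsilon=0$ the reduced flow runs along $W^{cu}(Q^6)$ to hit $\Sigma^0$ (the section $\{w=\delta\}$ in chart $\phi_1$) transversally at the point $q^0$ of \eqref{q0Here}. Composing all the local transitions and chasing the size estimates through the coordinate changes \eqref{Psi11212cc}, \eqref{finalChartBarZ1K1cc}, \eqref{ccBarWE1}, \eqref{finalCC}, one finds that the $\epsilon_1$- and $w$-dependent corrections pull back to an $\mathcal O(\epsilon_1+w)$ perturbation of the singular composition in $C^1$, which yields the claimed formulas for $x_+$ and $z_+$.
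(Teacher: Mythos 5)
Your overall route is the paper's route: $\Pi^{70}$ is assembled from local transition maps along $\gamma^7$--$\gamma^{11}$ and $W^{cu}(Q^6)$, constructed in the directional charts of \tabref{tbl3} with hyperbolicity gained by blowup and with the flat variable \eqref{q3} promoted to a dynamic variable; this is precisely the chain \lemmaref{Pi1118}, \lemmaref{Pi1129}, \lemmaref{Pi112110}, \lemmaref{Pi31111} in \secref{detailsK1}. The problem is that the step you treat as a routine application of center-manifold theory plus transversality --- the passage along $\gamma^{11}$ and the hand-off to $W^{cu}(Q^6)$ in the chart $(\bar z=-1,\bar \theta_3=1,\bar w_3=1)_{311}$ --- is where the actual difficulty sits, and your argument for it has a gap. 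After introducing $q_3$ and restricting to the center manifold \eqref{M1ManifoldK1}, the reduced system \eqref{ReducedM1ManifoldK1} is still \emph{fully nonhyperbolic} at $\mu_1=q=\pi_1=\epsilon_{11}=0$: at this stage there is no heteroclinic $\gamma^{11}$, no transverse intersection with $W^{cu}(Q^6)$, and nothing for the implicit function theorem to act on. One needs the further blowup \eqref{finalBlowupK1}, $\pi_1=q\,\pi_{11}$, i.e. a blowup weighted by the exponentially small quantity $q$, before $\gamma^{11}$ becomes visible at all; only then do the partially hyperbolic points $\pi_{11}=0$ and $\pi_{11}=1$ appear, and $W^{cu}(Q^6)$ is identified as the $\{\epsilon_{11}=0,\,q=\mu_1^{-1}e^{-\mu_1^{-1}}\}$-restriction of the attracting center manifold $K_{311}$ (\lemmaref{Wcu311Q6}). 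The quantitative claims of the lemma (well-definedness for all $0<\epsilon\ll1$ and the $C^1$ estimates) then come from the bookkeeping in the proof of \lemmaref{Pi31111}: straightening of fibers, $C^1$-linearizations, doubly exponentially small contraction rates such as $\mathcal O(e^{-c\,e^{1/(2\nu_{10})}})$, and the conservation of $\epsilon=e^{-\mu_1^{-1}}\pi_{11}\epsilon_{11}$ solved for $\epsilon_{11+}$ by the implicit function theorem. Your phrase ``transversality supplying $C^1$-smoothness'' presupposes structure that simply is not present before \eqref{finalBlowupK1}.

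Two smaller routing inaccuracies: the forward endpoint of $\gamma^7$ (the point $Q^5$) is not resolved by the third blowup \eqref{Psi13here} in the chart $(\bar z=1,\bar q=1,\bar w_1=1,\bar\theta_1=1)_{1121}$ --- it sits at $\theta_1=\xi(1-\alpha)/(2\alpha)>0$, away from the blown-up locus, and is handled in $(\bar z=1,\bar q=1,\bar x=1)_{111}$ after desingularization by $w_{11}$ (\lemmaref{Pi1118}); chart $1121$ instead resolves the corner where $\gamma^9$ meets $\gamma^{10}$. And your closing step, ``blow down and apply standard Fenichel theory along $W^{cu}(Q^6)$,'' is legitimate only on the compact piece of $W^{cu}(Q^6)$ with $y$ between two fixed values; the paper instead remains in chart $311$ and integrates the reduced flow on $K_{311}$ up to $\mu_1=\delta$, which is what produces the uniform $\mathcal O(\epsilon_1+w)$ control at $\Sigma^0$. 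These last points are fixable presentation issues, but the omission of the $q$-weighted blowup is a genuine gap in the proposal.
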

\begin{proof}
 The result follows from a series of lemmas (see \lemmaref{Pi1118}, \lemmaref{Pi1129}, \lemmaref{Pi112110} and \lemmaref{Pi31111}) working in the local charts described above, and applying standard hyperbolic methods to follow the segments $\gamma^7-\gamma^{11}$, described above, and $W^{cu}(Q^6)$. Notice that the mappings between the different local sections are diffeomorphism that do not change the order. See details in \secref{detailsK1}. 
\end{proof}


\begin{figure}[h!]
\begin{center}
\subfigure[]{\includegraphics[width=.495\textwidth]{./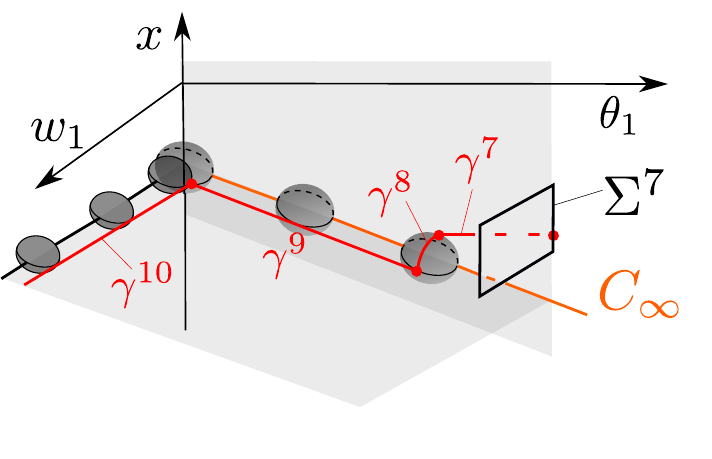}}
\subfigure[]{\includegraphics[width=.495\textwidth]{./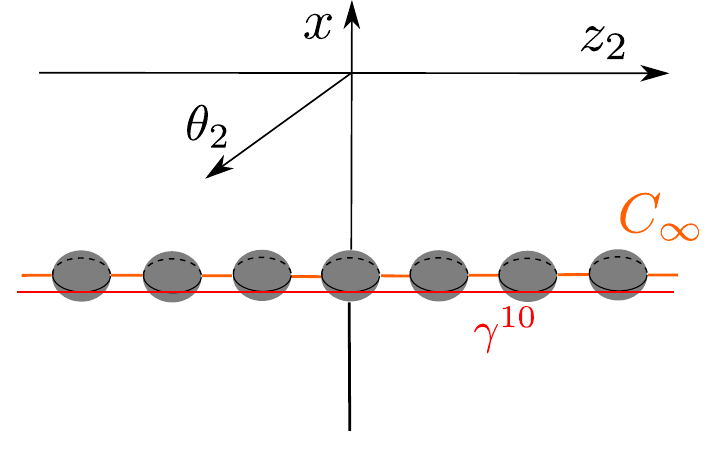}}
\subfigure[]{\includegraphics[width=.495\textwidth]{./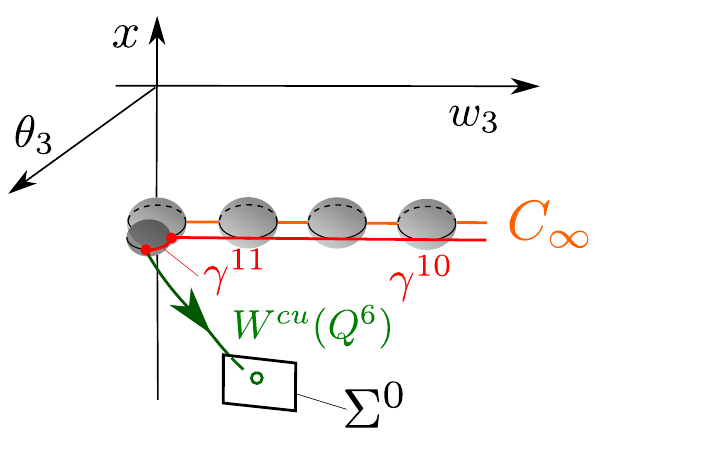}}
\subfigure[]{\includegraphics[width=.495\textwidth]{./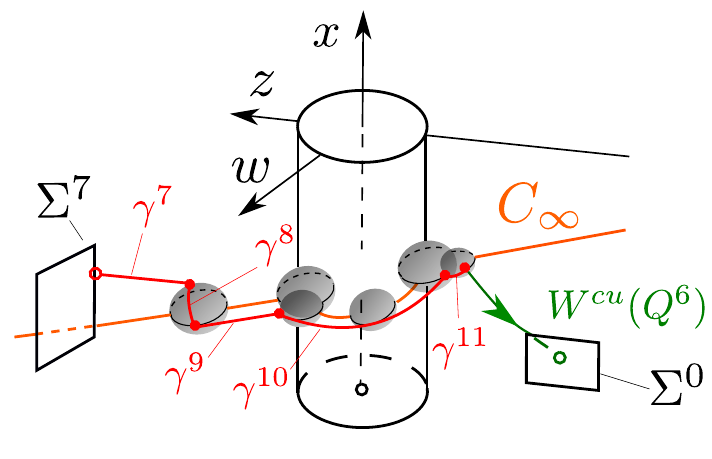}}
\end{center}
\caption{Singular orbit segments of the blowup in $\phi_1$. In (a), (b), (c) from the viewpoint of the three direction charts associated with \eqref{blowupBadPoint}. In (d) we collect (a), (b) and (c) into a global picture.} 
\figlab{gammaK1}
\end{figure}

\section{Proof of \lemmaref{kristianLemma}}\seclab{proofofkristianLemma}
We prove \lemmaref{kristianLemma} by first writing $\Pi^1 = \Pi^{70}\circ \Pi^{17}:\Sigma^1\rightarrow \Sigma^0$, where $\Sigma^1$ is defined in $\phi_3$, see \eqref{Sigma1Here}, and $\Sigma^0$ is defined in $\phi_1$, see \eqref{Sigma0Here}. Then by \lemmaref{Pi17} and \lemmaref{Pi70}
\begin{align*}
\Pi^1(x,y,\delta) = \left(\delta x^0+\mathcal O(\log^{-1} \epsilon^{-1} \log \log \epsilon^{-1}),\delta z^0+\mathcal O(\log^{-1} \epsilon^{-1} \log \log \epsilon^{-1}),\delta\right),
\end{align*}
the estimates being $C^1$-small with respect to $x$ and $y$. Transforming the result back to the original variables $(x,y,z)$ completes the proof of \lemmaref{kristianLemma}. 


\section{Analysis in chart $\phi_3$}\seclab{detailsK3}
In this section we describe the dynamics in chart $\phi_3$ using the blowup and the charts presented in \secref{phi3Section}.

\subsection{Chart $(\overline q=1,\overline w=1)_{11}$}\seclab{sec11eqns}
In this chart, we obtain the following equations
\begin{align}
\dot y &=\rho_1 \left(\epsilon_{11} \rho_1+y\frac{x_1}{\xi}\right),\eqlab{eqn211}\\
\dot r_1 &=2r_1\frac{x_1}{\xi},\nonumber\\
 \dot \rho_1 &=\rho_1^2 \frac{x_1}{\xi},\nonumber\\
 \dot x_1 &=-\frac{x_1}{\xi}-\epsilon_{11}(\rho_1x_1-\xi y+\alpha)+\rho_1\epsilon_{11}\xi F(\rho_1),\nonumber\\
  \dot \epsilon_{11} &=-\epsilon_{11} \frac{x_1}{\xi}(2+\rho_1).\nonumber
\end{align}
by \eqref{K3Ext} using $\Psi^{12}_{11}$, see \eqref{Psi1211}.
Notice that $r_1$-decouples. At this stage, we therefore proceed with the $(y,\rho_1,x_1,\epsilon_{11})$-subsystem only.  We notice that $(y,\rho_1,x_1,\epsilon_{11})=0$ is an equilibrium of the system with $-\xi^{-1}$ as a single non-zero eigenvalue. We therefore obtain an extension of the slow manifold as a center manifold using standard center manifold theory:
\begin{proposition}
Fix $\eta\in (0,1)$. Then there exists a $\delta>0$ and a small neighborhood $\mathcal U_{11}$ of $(y,\rho_1,\epsilon_{11})=0$ in $\mathbb R^3$ such that the following holds. There exists a locally invariant center manifold $M_{11}$ as a graph $$x_1 = -\epsilon_{11} \xi h_{11}(y,\rho_1,\epsilon_{11}),$$ over $(y,\rho_1,\epsilon_{11})\in \mathcal U_{11}$. Here $h_{11}$ is a smooth function of the following form
\begin{align}
 h_{11}(y,\rho_1,\epsilon_{11}) = \alpha-\xi y+\mathcal O(\rho_1+\epsilon_{11}).\eqlab{h11}
\end{align}
Furthermore, there exists a smooth stable foliation with base $M_{11}$ and one-dimensional fibers as leaves of the foliation. Within $x_1\in [-\delta,\delta], (\rho_1,y,\epsilon_{11})\in \mathcal U_{11}$, the contraction along any of these fibers is at least $e^{-\eta \xi^{-1} t}$. 
\end{proposition}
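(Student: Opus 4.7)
The plan is to apply center manifold theory directly to the $(y,\rho_1,x_1,\epsilon_{11})$-subsystem of \eqref{eqn211}, since $r_1$ decouples. A direct inspection of the right-hand side of \eqref{eqn211} shows that at the origin every partial derivative vanishes except $\partial_{x_1}\dot x_1|_0 = -\xi^{-1}$ and $\partial_{\epsilon_{11}}\dot x_1|_0 = -\alpha$. The linearization therefore has spectrum $\{0,0,0,-\xi^{-1}\}$, with the strongly stable direction aligned with $x_1$ and a three-dimensional center eigenspace spanned by $\partial_y$, $\partial_{\rho_1}$, and $-\alpha\xi\,\partial_{x_1}+\partial_{\epsilon_{11}}$. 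The classical center manifold theorem then yields the $C^k$-smooth locally invariant manifold $M_{11}$ tangent at $0$ to this center subspace.

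To obtain the graph form $x_1 = -\epsilon_{11}\xi h_{11}(y,\rho_1,\epsilon_{11})$, I would exploit that $\{\epsilon_{11}=0\}$ is invariant (since $\dot\epsilon_{11}$ contains $\epsilon_{11}$ as a factor) and that $\dot x_1|_{\epsilon_{11}=0} = -x_1/\xi$. In this invariant slice, the unique locally invariant submanifold transverse to the strong stable direction is $\{x_1=0\}$, so $M_{11}\cap\{\epsilon_{11}=0\}\subset\{x_1=0\}$; hence $x_1$ vanishes to first order in $\epsilon_{11}$ on $M_{11}$ and the ansatz with smooth $h_{11}$ is justified. Substituting this ansatz into the invariance identity
\[
\dot x_1 = -\xi\dot\epsilon_{11}\,h_{11} - \epsilon_{11}\xi\bigl(h_{11,y}\dot y + h_{11,\rho_1}\dot\rho_1 + h_{11,\epsilon_{11}}\dot\epsilon_{11}\bigr),
\]
and using that $\dot y, \dot\rho_1, \dot\epsilon_{11}$ are each $\mathcal O(\epsilon_{11})$ on $M_{11}$ (direct from \eqref{eqn211} after inserting $x_1 = -\epsilon_{11}\xi h_{11}$), division by $\epsilon_{11}$ reduces the identity to an $\mathcal O(\epsilon_{11})$ left-hand side and a right-hand side equal to $h_{11}+\xi y-\alpha + \mathcal O(\rho_1+\epsilon_{11})$. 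Solving for $h_{11}$ yields the asymptotic \eqref{h11}.

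The smooth stable foliation of the full flow with $M_{11}$ as base and one-dimensional fibers, together with the claimed contraction rate, follows from the standard invariant foliation theorem (Fenichel; Hirsch--Pugh--Shub) applied to the attracting normally hyperbolic manifold $M_{11}$. Since the center spectrum is $\{0\}$ and the only stable eigenvalue is $-\xi^{-1}$, the spectral gap is arbitrarily large; continuity of the right-hand side of \eqref{eqn211} therefore ensures, for any prescribed $\eta\in(0,1)$, a contraction rate of at least $e^{-\eta\xi^{-1}t}$ along fibers once $\delta$ and $\mathcal U_{11}$ are chosen sufficiently small. The only mild obstacle is uniformizing these choices across the coordinate patch $|x_1|\le\delta$, $(y,\rho_1,\epsilon_{11})\in\mathcal U_{11}$, which is a routine continuity argument once the $C^1$-norm of the right-hand side is controlled on a compact neighborhood.
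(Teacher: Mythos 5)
Your proposal is correct and follows essentially the same route as the paper, which simply invokes standard center manifold theory after noting that the origin of the $(y,\rho_1,x_1,\epsilon_{11})$-subsystem has $-\xi^{-1}$ as its only nonzero eigenvalue; your extra details (the invariance of $\{\epsilon_{11}=0\}$ forcing divisibility of the graph by $\epsilon_{11}$, and the invariance equation yielding \eqref{h11}) are exactly the computations implicit in the paper's statement. The only quibble is the phrase ``the spectral gap is arbitrarily large'' (it equals $\xi^{-1}$); what you actually use — that the center eigenvalues are zero, so any rate $\eta\xi^{-1}$ with $\eta<1$ is achieved on a small enough neighborhood — is the correct and sufficient statement.
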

Now, consider the following sections:
\begin{align*}
 \Sigma_{11}^1 &= \{(y,\rho_1,x_1,\epsilon_{11})\vert \rho_1 = \delta,\,x_1\in [-\beta_1,0),\,y \in [-\beta_2,\beta_2],\,\epsilon_{11} \in (0,\beta_3]\},\\
 \Sigma_{11}^{1,\textnormal{out}} &= \{(y,\rho_1,x_1,\epsilon_{11})\vert \epsilon_{11} = \nu,\,\rho_1 \in [0,\beta_4],\,x_1\in [-\beta_1,0),\,y \in [-\beta_2,\beta_2]\},
\end{align*}
transverse to the flow. Notice that $\rho_1=\delta$ in $\Sigma_{11}^{1}$ becomes $z=1/\delta$ in the original variables using \eqref{Psi1211}, in agreement with $\Sigma^1$, see \eqref{Sigma1Here}. 

Now, the $1D$ manifold $$\gamma_{11,\text{loc}}^1 \equiv M_{11}\cap \{\rho_1=y=0\},$$ is invariant and by inserting $x_1=-\epsilon_{11}\xi h_{11}(0,0,\epsilon_{11})$ into \eqref{eqn211}, using \eqref{h11}, it follows that $\epsilon_{11}$ is increasing along this set for $\epsilon_{11}\ne 0$. $\gamma_{11,\text{loc}}^1$ intersects $\Sigma_{11}^{1,\textnormal{out}}$ in a point $q_{11}^{1,\textnormal{out}}$ with coordinates $(y,\rho_1,x_1,\epsilon_{11})=(0,0,-\nu \xi h_{11}(0,0,\nu),\nu)$. 
We now consider the mapping $\Pi_{11}^1:\,\Sigma_{11}^1\rightarrow \Sigma_{11}^{1,\textnormal{out}}$ defined as the first intersection by the forward flow.  See \figref{gamma1}. 
We have the following.
\begin{lemma}\lemmalab{Pi111}
The mapping $\Pi_{11}^1$ is well-defined for appropriately small $\delta$, $\nu$ and $\beta_i>0$, $i=1,4$. In particular,
\begin{align*}
 \Pi_{11}^1(y,x_1,\delta,\epsilon_{11}) &= \bigg(y_+(y,x_1,T(y,x_1,\epsilon_{11})),\rho_{1+}(y,x_1,T(y,x_1,\epsilon_{11})),\\
 &x_{1+}(y,x_1,T(x_1,\epsilon_{11})),\nu\bigg),
\end{align*}
with  $y_+$, $\rho_{1+}$, and $x_{1+}$ being $C^1$ in each of their arguments:
\begin{align*}
 \rho_{1+}(y,x_1,T)&=\frac{\delta}{1+\delta T}(1+\mathcal O(\delta)),\\
 y_+(y,x_1,T) &=\frac{y}{1+\delta T}+\frac{\delta \log\left(1+\delta T\right)}{\alpha (1+\delta T)}+\mathcal O\left(\frac{\delta}{1+\delta T}+e^{-ce^{2T}}\right),\\
 x_{1+}(y,x_1,T) &= -\nu \xi  h_{11}(\rho_{1+}(y,x_1,T),y_+(y,x_1,T),\nu)+\mathcal O(e^{-ce^{2T}}),
 \end{align*}
for some $c(\delta,\nu)>0$ sufficiently small and where $T(y,x_1,\epsilon_{11})>0$ is the unique solution of the following equation
\begin{align}
 \tilde \epsilon_{11}(\rho_{1+},y_+,x_{1+},\nu) = \tilde \epsilon_{11}(\delta,y,x_1,\epsilon_{11}) e^{2T(y,x_1,\epsilon_{11})} (1+\delta T(y,x_1,\epsilon_{11})). \eqlab{Teqn}
\end{align}
Here $\tilde \epsilon_{11}(\rho, y,x_1,\epsilon_{11})=\epsilon_{11} (1+\mathcal O(x_1+\epsilon_{11} \xi h_{11}(\delta,y,\epsilon_{11})))$ is a smooth function. 
\end{lemma}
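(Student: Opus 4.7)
The plan is to reduce the four-dimensional $(y,\rho_1,x_1,\epsilon_{11})$-subsystem (since $r_1$ decouples) to the slow flow on the center manifold $M_{11}$, desingularize time, and integrate the resulting reduced system in closed form. First I would invoke the stable foliation: every initial datum on $\Sigma_{11}^1$ lies on a unique fiber whose basepoint on $M_{11}$ is tracked by the flow up to an error contracting at rate at least $e^{-\eta\xi^{-1}t}$. Substituting the graph $x_1=-\epsilon_{11}\xi h_{11}$, with $h_{11}=\alpha-\xi y+\mathcal O(\rho_1+\epsilon_{11})$, yields
\begin{align*}
\dot\rho_1=-\epsilon_{11}\rho_1^2 h_{11},\qquad \dot\epsilon_{11}=\epsilon_{11}^2 h_{11}(2+\rho_1),\qquad \dot y=\rho_1^2\epsilon_{11}-\rho_1 y\epsilon_{11}h_{11}.
\end{align*}

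Next I would introduce the desingularized slow time $d\tau=\epsilon_{11}h_{11}\,dt$, which transforms the system into
\[
\rho_1'=-\rho_1^2,\qquad \frac{\epsilon_{11}'}{\epsilon_{11}}=2+\rho_1,\qquad y'+\rho_1 y=\frac{\rho_1^2}{h_{11}},
\]
modulo higher-order corrections absorbed into the remainder. The $\rho_1$-equation integrates explicitly to $\rho_1(\tau)=\delta/(1+\delta\tau)$, giving $\rho_{1+}=\delta/(1+\delta T)$; the $\epsilon_{11}$-equation then yields $\epsilon_{11}(\tau)=\epsilon_{11}(0)\,e^{2\tau}(1+\delta\tau)$, and imposing $\epsilon_{11}(T)=\nu$ produces precisely \eqref{eq:Teqn}. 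The same relation follows algebraically from the exact invariance $\epsilon=r_1\rho_1\epsilon_{11}$ (with $r_1=e^{-2/\rho_1}$ on the physical manifold): since $\epsilon$ is a parameter, $\epsilon_{11+}\rho_{1+}e^{-2/\rho_{1+}}=\epsilon_{11}\delta e^{-2/\delta}$, which rearranges to the same equation. Strict monotonicity of $T\mapsto e^{2T}(1+\delta T)$ and the implicit function theorem then yield a smooth $T(y,x_1,\epsilon_{11})$.

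The $y$-equation is linear in $y$ with integrating factor $\mu(\tau)=1+\delta\tau$, so
\[
(1+\delta T)\,y_+ = y + \int_0^T (1+\delta\tau)\,\frac{\rho_1(\tau)^2}{h_{11}}\,d\tau;
\]
at leading order $(h_{11}\approx\alpha)$ the integrand equals $\delta^2/(\alpha(1+\delta\tau))$, giving the closed-form $\delta\log(1+\delta T)/\alpha$ term, with higher-order contributions from $h_{11}=\alpha+\mathcal O(\rho_1+\epsilon_{11}+y)$ producing the $\mathcal O(\delta/(1+\delta T))$ remainder. The expression for $x_{1+}$ is then the center-manifold graph evaluated at the exit point, plus the fiber correction. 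The double-exponential bound $e^{-ce^{2T}}$ comes from returning to real time: $t=\int_0^T d\tau/(\epsilon_{11}(\tau)h_{11})$ is dominated by the small-$\tau$ regime (where $\epsilon_{11}$ is smallest), so $t\sim 1/\epsilon_{11}(0)\sim e^{2T}/\nu$, and the stable-fiber contraction $e^{-\eta\xi^{-1}t}$ translates into $e^{-ce^{2T}}$ with $c(\delta,\nu)>0$ small.

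The main obstacle will be obtaining the $C^1$ estimates uniformly in the exponentially long transition time, and in particular ensuring that differentiating with respect to the incoming variables $(y,x_1,\epsilon_{11})$ does not amplify the exponentially small corrections. The desingularization does most of the work by reducing the relevant time $T$ to $\mathcal O(\log\epsilon^{-1})$, and then explicit integrability of the $\rho_1$- and $y$-equations, smoothness of $M_{11}$ and of its stable foliation, and the implicit function theorem applied to \eqref{eq:Teqn} together deliver the claimed $C^1$ form of $\Pi_{11}^1$.
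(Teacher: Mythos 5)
Your proposal is correct and follows the same overall architecture as the paper's proof: reduction to $M_{11}$ with its stable foliation, desingularization of time by $\epsilon_{11}h_{11}$ (giving exactly the paper's system \eqref{reducedM11}), explicit integration of the decoupled $(\rho_1,\epsilon_{11})$-equations, definition of $T$ through the exit condition (the paper's \eqref{Teqn}), and conversion of the fiber contraction back to the original, exponentially long time to produce the $e^{-ce^{2T}}$ terms. The one place you genuinely diverge is the $y$-equation. The paper inserts an extra normal-form step: it first straightens the unstable fibers of $\{\epsilon_{11}=0\}$ and then $C^1$-linearizes the planar $(\rho_1,y)$-subsystem (a stable node in the time $d\tau=\rho_1\,dt$), which replaces $h_{11}$ by the constant $\alpha$ and makes \eqref{normalForm} exactly integrable; the $C^1$-estimates then come for free from the explicit flow and the near-identity $C^1$ conjugacies. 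You instead keep $h_{11}$ in the equation, call it ``linear in $y$ with integrating factor'' and expand $h_{11}=\alpha+\mathcal O(\rho_1+\epsilon_{11}+y)$. Strictly speaking the equation is not linear, since by \eqref{h11} the forcing $\rho_1^2/h_{11}$ depends on $y$; your route therefore needs a Gronwall/variational-equation argument, both for the solution and for its derivatives with respect to $(y,x_1,\epsilon_{11})$, uniform over the $\mathcal O(\log\epsilon_{11}^{-1})$ desingularized transition time, to justify that the remainder is $\mathcal O(\delta/(1+\delta T))$ in the $C^1$ sense. This is doable (the integrand estimates you sketch do close), but it is precisely the bookkeeping the paper's linearization lemma is designed to avoid, and your closing assertion that explicit integrability plus the implicit function theorem ``deliver the claimed $C^1$ form'' leaves that step implicit. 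One further small caveat: your derivation of \eqref{Teqn} from the conserved quantity $\epsilon=e^{-2/\rho_1}\rho_1\epsilon_{11}$ is a valid consistency check, but it holds in the original variables, whereas the solution formulas for $\rho_1$ and $y$ are in the fiber-straightened (tilded) variables; the two agree only up to the near-identity factors, which is exactly why the paper states \eqref{Teqn} in terms of $\tilde\epsilon_{11}=\epsilon_{11}(1+\mathcal O(\cdots))$ rather than $\epsilon_{11}$ itself.
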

\begin{proof}
First, we straighten out the stable fibers of $M_{11}$ by a smooth transformation of the form $(y,\rho_1,x_1,\epsilon_{11})\mapsto (\tilde \rho_1,\tilde y,\tilde \epsilon_{11})$ with
\begin{align*}
 \tilde \rho_1 &= \rho_1(1+\mathcal O(\rho_1)),\\
 \tilde y &=y+\mathcal O(\rho_1),\\
 \tilde \epsilon_{11}&=\epsilon_{11}(1+\mathcal O(x_1+\epsilon_{11} \xi h_{11}(\rho_1,y,\epsilon_{11}))).
\end{align*}
The transformation is close to the identity for $\rho_1,x_1$ and $\epsilon_{11}$ sufficiently small and hence invertible by the inverse function theorem. 
Then the dynamics of $(\tilde \rho_1,\tilde y,\tilde \epsilon_{11})$ becomes independent of $x_1$. We drop the tildes henceforth and obtain the following equations
\begin{align}
  \dot \epsilon_{11} &=\epsilon_{11}(2+\rho_1),\eqlab{reducedM11}\\
 \dot \rho_1 &=-\rho_1^2,\nonumber\\
 \dot  y &= -\rho_1\left(y-\frac{\rho_1}{h_{11}(\rho_1,y,\epsilon_{11})}\right).\nonumber
\end{align}
after division of the right hand side by $\epsilon_{11} h_{11}(\rho_1,y,\epsilon_{11})$.
We then use the following lemma.
\begin{lemma}
  There exists two $C^1$, locally defined functions $H_{11}$ and $\tilde  H_{11}$ such that
\begin{align}
 y\mapsto \tilde  y = H_{11}(y,\rho_1,\epsilon_{11}) = y+\mathcal O(\rho_1), \eqlab{H11}
\end{align}
with inverse
\begin{align*}
 \tilde y\mapsto y = \tilde  H_{11}(\tilde  y,\rho_1,\epsilon_{11}) = \tilde  y+\mathcal O(\rho_1), 
\end{align*}
transforms the system \eqref{reducedM11} into
\begin{align}
 \dot \epsilon_{11} &=\epsilon_{11}(2+\rho_1),\eqlab{normalForm}\\
 \dot \rho_1 &=-\rho_1^2,\nonumber\\
 \dot{\tilde  y} &= -\rho_1\left(\tilde  y-\frac{\rho_1}{\alpha}\right).\nonumber
\end{align}
\end{lemma}
\begin{proof}
 The transformation \eqref{H11} is composed of two steps. First we notice that $\epsilon_{11}=0$ is a normally hyperbolic set with smooth unstable fibers. We can straighten out these fibers through a smooth transformation of the form $(\rho_1,y,\epsilon_{11})\mapsto y_1$. Then the $y_1$ equation is independent of $\epsilon_{11}$:
 \begin{align*}
  \dot y_1 &=-\rho_1 \left(y_1-\frac{\rho_1}{ h_{11}(\rho_1,y,0)}\right).
 \end{align*}
The $(\rho_1,y_1)$-system therefore decouples, and with respect to the time $\tau$ defined by
\begin{align*}
 \frac{d\tau}{dt} = \rho_1
\end{align*}
this planar systems has a stable, hyperbolic node at the origin. Therefore we can linearize this system by a $C^1$ transformation. This gives the desired result. 
\end{proof}
Now we integrate \eqref{normalForm}. This gives
\begin{align*}
  \rho_1(T) &=\frac{\delta}{1+\delta T},\\
 \tilde y(T) &=\frac{\tilde y_0}{1+\delta T}+\frac{\delta \ln (1+\delta T)}{\alpha (1+\delta T)},
\end{align*}
where $T$ is defined by $\epsilon_{11}(T)=\tilde \nu$:
\begin{align}
 \tilde \nu = \tilde \epsilon_0 e^{2T} (1+\delta T).\eqlab{Teqn2}
\end{align}
We now transform $T$ back to the original time. This gives the duration of the transition in terms of this time. Using the contraction along the stable fibers, then gives the desired result. 
\end{proof}
Now, the function $T$, given implicitly by \eqref{Teqn2}, can be expressed in terms of the smooth Lambert W function $W:(-e^{-1},\infty)\rightarrow (-1,\infty)$, defined by $z=W(ze^z)$ for all $z\in (-1,\infty)$, as follows
\begin{align*}
 T(y,x_1,\epsilon_{11}) = \frac{1}{2}\left(W\left(2\tilde \nu e^{2\delta^{-1}}/\tilde \epsilon_{0}\right) -2\delta^{-1}\right).
\end{align*}
Using the asymptotics:  $$W(w) = \log w (1+o(1)),$$ of $W$ for $w\rightarrow \infty$, we obtain the following asymptotics of $T$ in \eqref{Teqn} as $\epsilon_{11}\rightarrow 0$:
\begin{align*}
 T(y,x_1,\epsilon_{11}) = \frac12 \log \epsilon_{11}^{-1} (1+o(1)),
\end{align*}
after substituting $\tilde \nu = \tilde \epsilon_{11}(\rho_{1+},y_+,x_{1+},\nu)$, $\tilde \epsilon_0 =  \tilde \epsilon_{11}(\delta,y,x_{1},\epsilon_{11})$. 
In fact, the partial derivatives of $T$ with respect to $y$ and $x_1$ satisfy an identical estimate. 
We therefore have the following corollary:
\begin{cor}\corlab{Pi111Cor}
 The mapping $(y,x_1)\mapsto \Pi_{11}^1(y,x_1,\delta,\epsilon_{11})$ is $C^1$ $\mathcal O\left(\frac{\log \log \epsilon_{11}}{\log \epsilon_{11}}\right)$-close to the constant mapping $(y,x_1)\mapsto q_{11}^{1,\textnormal{out}}$ as $\epsilon_{11}\rightarrow 0$. 
\end{cor}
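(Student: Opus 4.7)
\textbf{Proof proposal for \corref{Pi111Cor}.} The plan is to read off the estimates directly from the explicit formulas for $\Pi_{11}^1$ given in \lemmaref{Pi111} and the asymptotic $T(y,x_1,\epsilon_{11})=\tfrac12\log\epsilon_{11}^{-1}(1+o(1))$ established just above the corollary, then verify that differentiation in $(y,x_1)$ preserves the rate. In particular, I would first plug $T\sim \tfrac12\log\epsilon_{11}^{-1}$ into the formulas for $\rho_{1+},y_+$ and $x_{1+}$: since $1+\delta T\sim \tfrac{\delta}{2}\log\epsilon_{11}^{-1}$, one obtains
\begin{align*}
\rho_{1+}(y,x_1,T)&=\mathcal O\!\left(\tfrac{1}{\log\epsilon_{11}^{-1}}\right),\\
y_+(y,x_1,T)&=\tfrac{y}{1+\delta T}+\tfrac{\delta\log(1+\delta T)}{\alpha(1+\delta T)}+\mathcal O(\cdots)=\mathcal O\!\left(\tfrac{\log\log\epsilon_{11}^{-1}}{\log\epsilon_{11}^{-1}}\right),
\end{align*}
where the logarithm-of-logarithm factor arises solely from the $\delta\log(1+\delta T)/((1+\delta T)\alpha)$ term (the other contributions being $\mathcal O(1/\log\epsilon_{11}^{-1})$ and super-polynomially small). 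For the last component, smoothness of $h_{11}$ (see \eqref{h11}) yields $x_{1+}=-\nu\xi h_{11}(0,0,\nu)+\mathcal O(\rho_{1+}+y_+)$, which is again $\mathcal O(\log\log\epsilon_{11}^{-1}/\log\epsilon_{11}^{-1})$-close to the target coordinate of $q_{11}^{1,\text{out}}$.

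For the $C^1$ part I would differentiate the formulas in $(y,x_1)$. The direct partials in $(y,x_1)$ produce terms of order $1/(1+\delta T)=\mathcal O(1/\log\epsilon_{11}^{-1})$, which is already better than the claimed rate. The only subtlety is the indirect dependence through $T(y,x_1,\epsilon_{11})$, but implicit differentiation of the defining equation \eqref{Teqn} gives
\begin{align*}
\frac{\partial T}{\partial y}=-\frac{(1+\delta T)}{\tilde\epsilon_0\bigl(2(1+\delta T)+\delta\bigr)}\,\frac{\partial \tilde\epsilon_0}{\partial y},
\end{align*}
and since $\tilde\epsilon_0=\epsilon_{11}(1+\mathcal O(x_1+\epsilon_{11}\xi h_{11}))$ the factor $\partial\tilde\epsilon_0/\partial y$ is $\mathcal O(\epsilon_{11}^2)$, so $\partial T/\partial y=\mathcal O(\epsilon_{11}\log\epsilon_{11}^{-1})$ and similarly for $\partial T/\partial x_1$. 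Chain-rule contributions therefore decay super-polynomially in $\epsilon_{11}$ and are absorbed into the exponentially small remainders already present in \lemmaref{Pi111}.

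Combining these two steps yields the desired $C^1$ closeness with rate $\log\log\epsilon_{11}^{-1}/\log\epsilon_{11}^{-1}$. The main (mild) obstacle is bookkeeping the $h_{11}$-dependence in the $x_{1+}$ formula: one must use that $h_{11}$ is smooth and Lipschitz in its first two arguments, so the composition with $(\rho_{1+},y_+)$ inherits the $\log\log/\log$ rate both in value and derivative, rather than degrading it. Once this is confirmed, the result follows directly.
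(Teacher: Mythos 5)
Your proposal takes essentially the same route as the paper: the paper's own proof of \corref{Pi111Cor} is precisely to substitute the Lambert--W asymptotics $T=\tfrac12\log\epsilon_{11}^{-1}(1+o(1))$ into the explicit formulas of \lemmaref{Pi111} and to note that the $(y,x_1)$-partials of $T$ obey a compatible bound, so your identification of $\delta\log(1+\delta T)/(\alpha(1+\delta T))$ as the source of the $\log\log\epsilon_{11}^{-1}/\log\epsilon_{11}^{-1}$ rate, and the treatment of $x_{1+}$ via smoothness of $h_{11}$, is exactly the intended argument. One quantitative slip: from $\tilde\epsilon_0=\epsilon_{11}(1+\mathcal O(x_1+\epsilon_{11}\xi h_{11}))$ you only get $\partial\tilde\epsilon_0/\partial x_1=\mathcal O(\epsilon_{11})$, hence $\partial T/\partial x_1=\mathcal O(1)$ rather than $\mathcal O(\epsilon_{11}\log\epsilon_{11}^{-1})$ (and $\epsilon_{11}\log\epsilon_{11}^{-1}$ would in any case not be super-polynomially small); moreover the left-hand side $\tilde\nu$ of \eqref{Teqn} also depends weakly on $(y,x_1)$ through the exit data, so \eqref{Teqn} is implicit on both sides. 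None of this harms the conclusion: the chain-rule prefactors multiplying $\partial T/\partial(y,x_1)$ in $\rho_{1+}$, $y_+$, $x_{1+}$ are $\mathcal O(\log T/T^2)$, so any bound $\partial T=\mathcal O(\log\epsilon_{11}^{-1})$ (which is what the paper asserts) already gives the claimed $C^1$ closeness.
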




\begin{figure}[h!]
\begin{center}
{\includegraphics[width=.525\textwidth]{./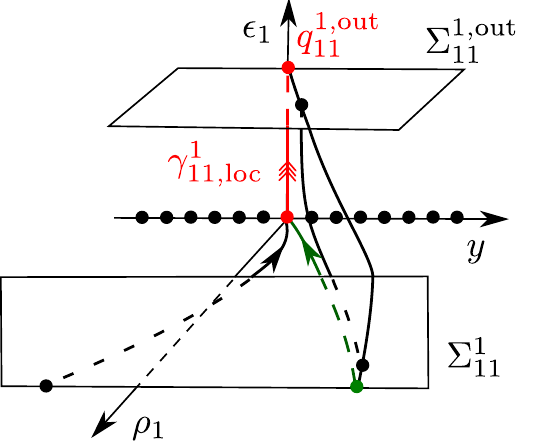}}
\end{center}
\caption{Illustration of the result in \lemmaref{Pi111}. $\gamma_{11,loc}^1$ is a local unstable manifold of the point $(y,\rho_1,\epsilon_{11}) =(0,0,0)$ within the center manifold $M_{11}$.}
\figlab{gamma1}
\end{figure}
\subsection{Chart $(\overline q=1,\bar{\bar \epsilon}=1)_{12}$}\seclab{sec12eqns}
In this chart we obtain the following equations
\begin{align}
  \dot y &=\rho_2 \left(\rho_2w_2^2+yw_2\frac{x_2}{\xi}\right),
\eqlab{chartBarEpsBarQ}\\
\dot \rho_2 &=-2\rho_2 \frac{x_2}{\xi},\nonumber\\
 \dot x_2 &=\frac{2x_2^2}{\xi}-w_2\left(\alpha-\xi y+\rho_2x_2\right)+w_2\frac{x_2}{\xi}\left(\rho_2x_1-1\right)\nonumber\\
  &+\xi\rho_2w_2^2F(\rho_2w_2), \nonumber\\
 \dot w_2 &=w_2 \frac{x_2}{\xi}(2+\rho_2 w_2),\nonumber
 \end{align}
from \eqref{K3Ext} using \eqref{Psi1212}.
We then transform $q_{11}^{1,\textnormal{out}}$ from above to this chart and obtain $q_{12}^{1,\textnormal{out}}$ with coordinates
\begin{align*}
 (y,\rho_2,x_2,w_2) = (0,0, -\xi h_{11}(0,0,\nu),\nu^{-1}),
\end{align*}
cf. \eqref{ccK31112}.
Setting $y=\rho_2=0$ in \eqref{chartBarEpsBarQ} gives
\begin{align*}
 \dot x_2 &=\frac{2x_2^2}{\xi}-w_2\left(\alpha-\xi y\right)-w_2\frac{x_2}{\xi},\\
 \dot w_2 &=2w_2 \frac{x_2}{\xi}. 
\end{align*}
In \cite{bossolini2017a}, it was shown, using a simple phase portrait analysis that $\gamma_{12}^{1}$, which is $\gamma_{11}^1$ in the present coordinates, is asymptotic to the nonhyperbolic equilibrium $x_2=w_2=0$ within the invariant subset $y=\rho_2=0$. Fix a large $T>0$. Then by regular perturbation theory, we can map a sufficiently small neighborhood of $q_{12}^{1,\textnormal{out}}$ diffeomorphically onto a neighborhood of $\phi_T(q_{12}^{1,\textnormal{out}})$ using the flow $\phi_t$. Due to the loss of hyperbolicity we apply the third blowup transformation, see \eqref{step3} and \eqref{Psi13}, of $x_2=w_2=0$. We describe this in the following using the chart $(\bar q=1,\bar{\bar \epsilon}=1,\bar{\bar w}=1)_{122}$ and the local form of the blowup \eqref{Psi13Charts}. 

\subsection{Chart $(\bar q=1,\bar{\bar \epsilon}=1,\bar{\bar w}=1)_{122}$}\seclab{sec122eqns}
In this chart, we obtain the following equations:
\begin{align}
 \dot y &=\rho_2 \varrho_2^2 \left(\rho_2\varrho_2+y\frac{x_{22}}{\xi}\right),\eqlab{eqn122Here}\\
 \dot \rho_2 &=-2\rho_2 \frac{x_{22}}{\xi},\nonumber\\
 \dot \varrho_2 &=\frac12 \varrho_2 \frac{x_{22}}{\xi}\left(2+\rho_2\varrho_2^2\right),\nonumber\\
 \dot x_{22} &= \frac{x_{22}^2}{\xi}-\left(\alpha-\xi y+\rho_2\varrho_2 x_{22}\right)\nonumber\\
 &+\varrho_2 \frac{x_{22}}{\xi}\left(\frac12 \rho_2 \varrho_2 x_{22}-1\right)+\xi \rho_2 \varrho_2 F(\rho_2 \varrho_2^2).\nonumber
\end{align}
Now, $\gamma_{12}^{1}$ in these coordinates become $\gamma_{122}^1$ which is asymptotic to the equilibrium $x_{22} = -\sqrt{\alpha \xi}$, $y=\rho_2=\varrho_2=0$. This point is a stable node within the invariant $(\varrho_2,x_{22})$-plane. We therefore work in a neighborhood of this equilibrium and consider the sections
\begin{align*}
\Sigma_{122}^{2,\text{in}} &= \{(y,\rho_2,\varrho_2,x_{22})\vert \varrho_2=\delta,\,\rho_2 \in [0,\beta_1],\,x_{22}-\sqrt{\alpha \xi} \in [-\beta_2,\beta_2],\,y\in [-\beta_3,\beta_3]\},
\end{align*}
and
\begin{align*}
\Sigma_{122}^{2,\textnormal{out}} &=\{(y,\rho_2,\varrho_2,x_{22})\vert \rho_2 = \nu,\,\varrho_2\in [0,\beta_4],\,\,x_{22}-\sqrt{\alpha \xi} \in [-\beta_2,\beta_2],\,y\in [-\beta_5,\beta_5]\}.
\end{align*}
Notice the graph $x_{22} = -\sqrt{\xi( \alpha-\xi y)}$, $y<\frac{\alpha}{\xi}$, $\rho_2=\varrho_2=0$ is a curve of equilibria of \eqref{eqn122Here}. It is normally hyperbolic with a $3D$ stable manifold within $\rho_2=0$ and a $2D$ unstable manifold $x_{22} = -\sqrt{\xi( \alpha-\xi y)},\,\varrho_2=0,\,\rho_2\ge 0$, $y<\frac{\alpha}{\xi}$. In particular, 
\begin{align*}
\gamma_{122}^2 = \{(y,\rho_2,\varrho_2,x_{22})\vert y=\varrho_2=0,\,x_{22} = -\sqrt{\alpha\xi},\,\rho_2\ge 0\},
\end{align*}
is contained within the unstable manifold and is invariant. See \figref{gamma2}. 

Consider the local mapping $\Pi_{122}^2$ from $\Sigma_{122}^{2,\text{in}}$ to $\Sigma_{122}^{2,\textnormal{out}}$ obtained from the first intersection by following the forward flow. 

\begin{lemma}\lemmalab{Pi22}
$\Pi_{122}^2$ is well-defined for appropriately small $\delta>0$, $\nu>0$ and $\beta_i>0$, $i=1,\ldots,5$. In particular,
\begin{align*}
 \Pi_{122}^2(y,\rho_2,\delta,x_{22}) = (y_+(y,\rho_2,x_{22}),\nu,\varrho_{2+}(\sqrt{\rho_2}),x_{22+}(y,\sqrt{\rho_2},x_{22})),
\end{align*}
with  $\varrho_{2+}$ a $C^1$-function, 
\begin{align*}
 x_{22+}(y,\sqrt{\rho_2},x_{22}) = H_{122}(y,\rho_2)+\mathcal O(\sqrt{\rho_2}),
\end{align*}
with $H_{122}$ smooth satisfying $H_{122}(0,0)=-\sqrt{\alpha \xi}$. Also 
\begin{align*}
 y_+(y,\rho_2,x_{22}) =y+\mathcal  O(\ln(\rho_2)\rho_{2})
\end{align*}
Furthermore, the remainder terms in $x_{22+}$ and $y_+$ are $C^1$ with respect to $x_{22}$ and $y$ and the orders of these terms as $\rho_2\rightarrow 0$ do not change upon differentiation.  

%
\end{lemma}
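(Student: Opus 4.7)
The strategy is to treat the curve
\begin{equation*}
\mathcal{N}=\{(y,\rho_2,\varrho_2,x_{22})\,:\,x_{22}=-\sqrt{\xi(\alpha-\xi y)},\ \rho_2=\varrho_2=0,\ y<\alpha/\xi\}
\end{equation*}
as a normally hyperbolic invariant manifold for \eqref{eqn122Here}, decouple the fast $x_{22}$-direction by Fenichel reduction, and then integrate the resulting three-dimensional flow using the resonance present in its linear spectrum. Linearising \eqref{eqn122Here} at the anchoring point $(0,0,0,-\sqrt{\alpha\xi})$ yields the eigenvalues $0,\ 2\sqrt{\alpha/\xi},\ -\sqrt{\alpha/\xi},\ -2\sqrt{\alpha/\xi}$, the zero eigenvalue being tangent to $\mathcal{N}$, with a single unstable direction along $\rho_2$, a slow stable direction along $\varrho_2$, and a strong stable direction along $x_{22}$. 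This identifies the $3$D stable and $2$D unstable manifolds asserted just before the statement.

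First, I would apply Fenichel's theorems to obtain $C^{k}$ local (strong) stable and unstable manifolds of $\mathcal{N}$ together with their smooth foliations. Straightening the strong stable foliation by a smooth diffeomorphism makes $x_{22}$ relax exponentially fast onto a smooth graph $x_{22}=H_{122}(y,\rho_2,\varrho_2)$ with $H_{122}(0,0,0)=-\sqrt{\alpha\xi}$, so that after this change of variables the essential dynamics is three-dimensional in $(y,\rho_2,\varrho_2)$. In this reduced system the eigenvalues $0,\ 2\sqrt{\alpha/\xi},\ -\sqrt{\alpha/\xi}$ satisfy the resonance $\lambda_{\rho_2}+2\lambda_{\varrho_2}=0$, and a direct differentiation of \eqref{eqn122Here} gives
\begin{equation*}
\frac{d}{dt}\bigl(\rho_2\varrho_2^2\bigr)=\frac{x_{22}}{\xi}\rho_2^2\varrho_2^4,
\end{equation*}
so $u:=\rho_2\varrho_2^2$ is nearly conserved along orbits of bounded length.

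Second, since $x_{22}\approx -\sqrt{\alpha\xi}$ throughout the transition, the $\rho_2$-equation integrates to $\rho_2(t)\approx \rho_2(0)\exp(2\sqrt{\alpha/\xi}\,t)$ and the flight time to $\{\rho_2=\nu\}$ is $T=\tfrac12\sqrt{\xi/\alpha}\log(\nu/\rho_2)(1+o(1))$. Combining this with the near-conservation of $u$ gives $\varrho_{2+}=\delta\sqrt{\rho_2/\nu}\,(1+o(1))$, which is smooth in $\sqrt{\rho_2}$, as claimed. The right-hand side of the $y$-equation factors as $\rho_2\varrho_2^2(\rho_2\varrho_2+yx_{22}/\xi)$, whose dominant part is of size $u=\mathcal{O}(\rho_2)$; therefore $y_+-y=\mathcal{O}(uT)=\mathcal{O}(\rho_2\log\rho_2)$. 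Finally, $x_{22+}$ sits on the Fenichel slow graph evaluated at the exit, so $x_{22+}=H_{122}(y,\rho_2)+\mathcal{O}(\sqrt{\rho_2})$, the $\mathcal{O}(\sqrt{\rho_2})$ correction measuring the transverse displacement $\varrho_{2+}=\mathcal{O}(\sqrt{\rho_2})$ from $\mathcal{N}$.

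The main obstacle will be the non-smooth $\sqrt{\rho_2}$ dependence and the requirement that the remainder orders survive differentiation in $y$ and $x_{22}$. I would handle this by introducing $u$ (equivalently $\sqrt{\rho_2}$) as a coordinate so that the resonant monomial becomes an explicit smooth variable, and then applying Gronwall estimates to the variational equations: since $T=\mathcal{O}(\log\rho_2^{-1})$ is short compared with the reciprocal of the fast contraction rate and $u$ is small, the variational flow remains uniformly bounded and the claimed $C^1$ regularity, with the remainder orders preserved under $\partial_y$ and $\partial_{x_{22}}$, follows. This is essentially the same resonant desingularisation idea used elsewhere in the paper, cf.\ \cite{krupa_extending_2001}.
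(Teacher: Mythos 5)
Your proposal is correct in its asymptotics but follows a genuinely different route from the paper. The paper first divides the right-hand side of \eqref{eqn122Here} by the common factor $-x_{22}/\xi$; this makes the $(\varrho_2,\rho_2)$-subsystem \emph{exactly} autonomous and decoupled from $(y,x_{22})$, after which it $C^1$-linearizes that planar saddle, then $C^1$-linearizes the stable node in the $(\varrho_2,x_{22})$-plane inside $\{\rho_2=0\}$, and finally integrates the resulting triangular system (the eigenvalue ratios $2:-1$ and $-1:-2$ are resonant, which is exactly why only $C^1$ conjugacies are invoked — consistent with your observation $\lambda_{\rho_2}+2\lambda_{\varrho_2}=0$). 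You instead split off only the strong-stable direction via a Fenichel-type fiber straightening and exploit the exact identity $\frac{d}{dt}(\rho_2\varrho_2^2)=\frac{x_{22}}{\xi}\rho_2^2\varrho_2^4$; note that $u=\rho_2\varrho_2^2$ is nothing but the blown-down variable $w$, so this ``adiabatic invariant'' is a natural and correct substitute for the paper's linearization of the saddle, and it reproduces $\varrho_{2+}\sim\delta\sqrt{\rho_2/\nu}$, $y_+-y=\mathcal O(\rho_2\log\rho_2)$ and $x_{22+}=H_{122}+\mathcal O(\sqrt{\rho_2})$ with the same bookkeeping. What the paper's time-desingularization buys, and what your argument as written does not deliver, is the precise structural claim in the statement that $\varrho_{2+}$ is a function of $\sqrt{\rho_2}$ \emph{alone}: in the paper this is automatic because the transition map of the decoupled $(\varrho_2,\rho_2)$ flow between $\{\varrho_2=\delta\}$ and $\{\rho_2=\nu\}$ cannot see $y$ or $x_{22}$, whereas your derivation leaves $(1+o(1))$ corrections that a priori depend on the $x_{22}$-history. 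The fix is one line — observe that both the $\rho_2$- and $\varrho_2$-equations carry the exact factor $x_{22}/\xi$, so their orbits (hence the exit value of $\varrho_2$ at $\rho_2=\nu$) are independent of the time parametrization — but you should include it.

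Two smaller caveats. The codimension-one graph $x_{22}=H_{122}(y,\rho_2,\varrho_2)$ you obtain by removing only the strong-stable direction is a non-unique weak-stable/center/unstable manifold whose smoothness is limited by the spectral ratio ($C^r$ for $r<2$ here), so call it $C^1$ rather than smooth; this suffices for the lemma, whose $C^1$ conclusions are all that are used downstream. And for the $C^1$ estimates with preserved orders, your Gronwall-on-variational-equations plan must account for the expansion in the $\rho_2$-direction feeding back through the $(y,x_{22})$-dependence of the exit time; the paper sidesteps this by the $C^1$ conjugacies that make the relevant subsystems exactly integrable, and doing the same (or linearizing your reduced $3$D system in the same spirit) is the cleanest way to make that step airtight.
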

\begin{proof}
 We divide the right hand side by $-x_{22}/\xi$. This gives
  \begin{align*}
 \dot \varrho_2 &=-\frac12 \varrho_2 \left(2+\rho_2\varrho_2^2\right),\\
 \dot \rho_2 &=2\rho_2,
 \end{align*}
 and new equations for $x_{21}$ and $y$. It is possible to $C^1$ linearize the $(\varrho_2,\rho_2)$-subsystem by a transformation of the form $(\varrho_2,\rho_2)\mapsto \tilde \varrho_2=\varrho_2(1+\mathcal O(\rho_2))$ with $(\varrho_2,0)\mapsto \tilde \varrho_2=\varrho_2$. 
 Now, for the $\rho_2=0$ subsystem $y$ is constant and $x_{22}=-\sqrt{\xi(\alpha-\xi y)},\,\tilde \varrho_2=0$ is a hyperbolic stable node for any $y<\frac{\alpha}{\xi}$ sufficiently small. We can therefore linearize this subsystem by a $C^1$ transformation $(\tilde \varrho_2,x_{22})\mapsto \tilde x_{22}$. Applying these transformations to the full system produces
 \begin{align*}
 \dot y &=\mathcal O(\rho_2\tilde \varrho_2^2),\\
  \dot \rho_2 &=2\rho_2,\\
  \dot{\tilde{\varrho}}_2 &=-\tilde \varrho_2 ,\\
  \dot{\tilde x}_{22} &=-2\tilde x_{22}+\mathcal O(\rho_2\tilde \varrho_2),
   \end{align*}
Integrating these equations gives
\begin{align*}
 \tilde x_{22}(T) &= e^{-2T}\tilde x_{22}(0) + \int_0^T\mathcal O( e^{-2(T-s)}e^{2s}\rho_{20} e^{-s}\varrho_{20})ds \\
 &=\frac{\rho_{20}}{\nu}\tilde x_{22}(0)+\mathcal O(\sqrt{\rho_{20}}\varrho_{20}) = \mathcal O(\sqrt{\rho_{20}}),\\
 y(T) &= y(0)+\mathcal  O(\ln(\rho_{20}^{-1}\rho_2)\rho_{20} \varrho_{20}^2),
 \end{align*}
 using that $e^{2T} = \nu \rho_{20}^{-1}$ and hence $\rho_{20} e^T \sim \sqrt{\rho_{20}}$. 
We obtain similar estimates for the derivatives. 
\end{proof}
Notice that $\Pi_{122}^2(0,\delta,x_{22},0)=\gamma_{122}^2\cap \Sigma_{122}^{2,\textnormal{out}}$ for every $x_{22}-\sqrt{\alpha \xi} \in [-\beta_2,\beta_2]$. Now, along $\gamma_{122}^2$, $\rho_2$ is increasing for $\rho_2\ne 0$. We therefore study the dynamics in a neighborhood of this orbit moving to chart $(\bar \epsilon =1,\tilde x=-1,\tilde{\bar q}=1)_{211}$.

\begin{figure}[h!]
\begin{center}
{\includegraphics[width=.525\textwidth]{./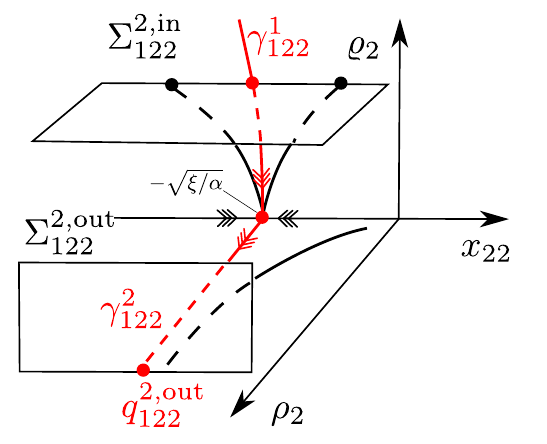}}
\end{center}
\caption{Illustration of the result in \lemmaref{Pi22}. $\gamma_{122}^2$ is the unstable manifold of the point $x_{22} = -\sqrt{\xi \alpha},\,\varrho_2=\rho_2=y=0$.}
\figlab{gamma2}
\end{figure}

\subsection{Chart $(\bar \epsilon =1,\tilde x=-1,\tilde{\bar q}=1)_{211}$}\seclab{seceqns211}
In this chart, we obtain the following equations
\begin{align}
 \dot y &=\sigma_1^2 \pi_1 w_{11} \left(-\frac{y}{\xi}+\sigma_1 w_{11}\right),\eqlab{eqeqns211}\\
 \dot \pi_1 &=-\frac{2\pi_1}{\xi},\nonumber\\
  \dot \sigma_1 &=\sigma_1 w_{11} G_{211}(y,\pi_{1},\sigma_1,w_{11}),\nonumber\\
 \dot w_{11} &=w_{11}\left(\frac{2}{\xi}-w_{11} \left(2G_{211}(y,\pi_{1},\sigma_1,w_{11})+\sigma_1^2 \frac{\pi_1}{\xi}\right)\right),\nonumber
\end{align}
and $\dot r_2=0$. Notice that $r_2\ge 0$ decouples and we shall therefore work within the $(y,\pi_{1},\sigma_1,w_{11})$-space. Here
\begin{align*}
 G_{211}(y,\pi_{1},\sigma_1,w_{11}) = \alpha-\sigma_1-\xi y-\sigma_1\pi_1 (\sigma_1+1)-\sigma_1^2\xi \pi_1 w_{11}F(\sigma_1^2\rho_1w_{11}).
\end{align*}
Also $\gamma_{122}^2$ from chart $(\bar q=1,\bar{\bar{\epsilon}}=1,\bar{\bar{w}} = 1)_{122}$ becomes
\begin{align*}
 \gamma_{211}^2 =\{(y,\pi_1,\sigma_1,w_{11})\vert \pi_1>0,w_{11} = 1/(\alpha \xi),\,\sigma_1 = 0,\,y=0\},
\end{align*}
using \eqref{cc3bto4}. It is contained within the invariant set $\sigma_1=0$ where
\begin{align*} \dot y &=0,\\
 \dot \pi_1 &=-\frac{2\pi_1}{\xi},\\
 \dot w_{11} &=2w_{11}\left(\frac{1}{\xi}-w_{11} (\alpha-\xi y) \right).
\end{align*}
Here the $2D$ graph $w_{11} = 1/(\xi (\alpha-\xi y))$, over $y<\frac{\alpha}{\xi}$, $\pi_1\ge 0$, is invariant. This set is foliated by $1D$ stable manifolds $w_{11}=1/(\xi (\alpha-\xi y)),y=\text{const}$, $\pi_1\ge 0$ of points on the line of equilibria $w_{11}= 1/(\xi (\alpha-\xi y))$, $y<\frac{\alpha}{\xi}$, $\pi_1=0$. In particular, $\gamma_1^2$ is contained within the stable manifold with $y=0$, being asymptotic under the forward flow to $w_{11} = 1/(\alpha \xi)$, $y=0$, $\pi_1=0$ within this line. 

Next, within the invariant set $\pi_1=0$ we have
\begin{align*}
\dot y&=0,\\
\dot \sigma_1 &=\sigma_1 w_{11} \left(\alpha-\sigma_1-\xi y\right),\\
 \dot w_{11} &=w_{11}\left(\frac{2}{\xi}-2w_{11} \left(\alpha-\sigma_1-\xi y\right)\right).
\end{align*}
For this subsystem, the line of equilibria $w_{11}= 1/(\xi (\alpha-\xi y))$, $y<\frac{\alpha}{\xi}$, $\sigma_1=0$ is of saddle type. Indeed, the linearization about any point in this set, gives $-2/\xi$ and $1/\xi$ as eigenvalues with stable space purely in the $w_{11}$-direction and unstable space contained in the $(\sigma_1,w_{11})$-plane. It is possible to write the individual unstable manifolds as graphs:
\begin{align*}
 w_{11} = H_{211}(y,\sigma_1),
\end{align*}
with $H_{211}$ smooth, such that $H_{211}(y,0)=1/(\xi(\alpha-\xi y)$, for $\sigma_1\le \nu$ with $\nu>0$ sufficiently small. Let $\gamma_{211}^3$ be the unstable manifold of $w_{11}=1/(\alpha \xi)$, $y=0$. Locally it is given as
\begin{align*}
\gamma_{211,\text{loc}}^{3} = \{(y,\pi_{1},\sigma_1,w_{11})\vert w_{11}=H(0,\sigma_1),0\le \sigma_1\le \nu,\,y=0,\,\pi_1=0\}.
\end{align*}

Therefore, we consider the following sections transverse to the flow:
\begin{align*}
 \Sigma_{211}^{3,\text{in}} &= \{(y,\pi_{1},\sigma_1,w_{11})\vert \pi_1=\delta,\,w_{11}-1/(\alpha \xi) \in [-\beta_1,\beta_1],\,\sigma_1 \in[0,\beta_2],\,y\in [-\beta_3,\beta_3]\},\\
 \Sigma_{211}^{3,\textnormal{out}} &=\{(y,\pi_{1},\sigma_1,w_{11})\vert \pi_1\in [0,\beta_4], \sigma_1=\nu,\,w_{11}-1/(\alpha \xi) \in [-\beta_1,\beta_1],\,y\in [-\beta_5,\beta_5] \}.
\end{align*}
Let $\Pi_{211}^3:\Sigma_{211}^{3,\text{in}}\rightarrow \Sigma_2^{3,\textnormal{out}}$ be the associated map obtained by the first intersection by applying the forward flow.

\begin{lemma}\lemmalab{Pi2113}
 $\Pi_{211}^3$ is well-defined for appropriately small $\delta>0$, $\nu>0$ and $\beta_i>0$, $i=1,\ldots,5$. In particular
 \begin{align*}
  \Pi_{211}^3(y,\delta,\sigma_1,w_{11}) = (y_+(y,\sigma_1,w_{11}),\pi_{1+}(y,\sigma_1,w_{11}),\nu,w_{11+}(y,\sigma_1,w_{11})),
 \end{align*}
with 
\begin{align*}
 \pi_{1+}(y,\sigma_1,w_{11}) &= \mathcal O(\sigma_1^2),\\
 w_{11+}(y,\sigma_1,w_{11}) &=H_{211}(y, \nu)+\mathcal O(\sigma_1^2),\\
 y_{11+}(y,\sigma_1,w_{11})&=y+\mathcal O(\ln (\sigma^{-1}) \sigma_1^2).
\end{align*}
Furthermore, the remainder terms in $\pi_{1+}$, $w_{11+}$ and $y_{11+}$ are $C^1$ with respect to $y$ and $w_{11}$ and the orders of these terms as $\sigma_1\rightarrow 0$ do not change upon differentiation.  
\end{lemma}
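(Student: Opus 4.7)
The plan is to adapt the strategy used in the proof of \lemmaref{Pi22} to the partially hyperbolic equilibrium $e_\ast = (y,\pi_1,\sigma_1,w_{11})=(0,0,0,1/(\alpha\xi))$ of \eqref{eqeqns211}, whose linearization has eigenvalues $0$ (neutral, along $y$), $1/\xi$ (unstable, along $\sigma_1$), and $-2/\xi$ with multiplicity two (strong stable, spanned by $\pi_1$ and by the contraction of $w_{11}$ towards the equilibrium curve). The point $e_\ast$ lies on the normally hyperbolic line of equilibria $\{w_{11} = 1/(\xi(\alpha-\xi y)),\,\pi_1=\sigma_1=0\}$, and it is the intersection of this line with the invariant plane $y=0$ along which $\gamma_{211}^2$ and $\gamma_{211}^3$ live.

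First I would straighten the two invariant objects already identified in the text: the $2$-dimensional strong stable foliation of the equilibrium line (with leaves in the $(\pi_1,w_{11})$-directions) and the unstable manifold graph $w_{11}=H_{211}(y,\sigma_1)$ inside $\pi_1=0$. Introducing the new coordinate $\tilde w_{11}=w_{11}-H_{211}(y,\sigma_1)$ makes $\{\tilde w_{11}=0,\pi_1=0\}$ invariant, and a subsequent $C^1$-linearization of the remaining hyperbolic saddle in $(\pi_1,\sigma_1,\tilde w_{11})$, justified by Belitskii's theorem since the hyperbolic eigenvalues are bounded away from $0$, reduces \eqref{eqeqns211} to the model
\begin{align*}
\dot y &= \mathcal O(\sigma_1^2 \pi_1), & \dot \pi_1 &= -\tfrac{2}{\xi}\pi_1,\\
\dot \sigma_1 &= \tfrac{1}{\xi}\sigma_1, & \dot{\tilde w}_{11} &= -\tfrac{2}{\xi}\tilde w_{11}+\mathcal O(\sigma_1\pi_1),
\end{align*}
uniformly on the relevant neighborhood, where higher-order corrections in $\tilde w_{11}$ and $y$ do not affect the leading-order estimates.

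From here the estimates follow from explicit integration. The transition time $T$ is determined by $\sigma_1(T)=\nu$ starting from $\sigma_1(0)=\sigma_1$, giving $T = \xi\log(\nu/\sigma_1)+\mathcal O(1)$ and hence $e^{-2T/\xi} = \mathcal O(\sigma_1^2)$. Then $\pi_{1+}=\delta\, e^{-2T/\xi}=\mathcal O(\sigma_1^2)$; a variation-of-constants computation for $\tilde w_{11}$ together with the identity $\sigma_1(t)\pi_1(t)\equiv \sigma_1\delta$ (constant along the linear flow) yields $\tilde w_{11}(T)=\mathcal O(\sigma_1^2)$, so that $w_{11+}=H_{211}(y_+,\nu)+\mathcal O(\sigma_1^2)=H_{211}(y,\nu)+\mathcal O(\sigma_1^2)$ after absorbing the small change in $y$. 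Finally, since $\sigma_1(t)^2\pi_1(t)\equiv \delta\sigma_1^2$ is also constant along the linear flow,
\begin{align*}
y_+ - y = \int_0^T \mathcal O(\sigma_1^2)\,dt = \mathcal O(\log(\sigma_1^{-1})\sigma_1^2),
\end{align*}
as claimed. The $C^1$-dependence of the remainder terms on the initial data, with the stated orders preserved under differentiation, follows from the smoothness of the coordinate changes together with standard Gr\"onwall estimates applied to the variational equation over the logarithmically long time interval $[0,T]$.

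The main obstacle I anticipate is the loss of smoothness at the linearization step: the hyperbolic eigenvalues $\{1/\xi,-2/\xi,-2/\xi\}$ exhibit both a repeated eigenvalue and the resonance $-2/\xi = (-2)\cdot(1/\xi)$, which generically obstructs $C^k$ linearization for $k\ge 2$. Only $C^1$ linearization is guaranteed; fortunately only $C^1$ regularity of $\Pi_{211}^3$ is required here. An equally valid alternative, paralleling \lemmaref{Pi22} more closely, is to avoid the full $C^1$ linearization and instead straighten only the invariant manifolds (smoothly), treating the remaining nonlinear terms as perturbations and controlling them by variation of constants — the logarithmic growth of $T$ as $\sigma_1\to 0$ is easily absorbed by the exponential contraction rates, and this cleaner route is the one I would ultimately carry out.
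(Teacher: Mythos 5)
Your proposal follows essentially the same route as the paper, which simply refers back to the proof of \lemmaref{Pi22}: straighten the invariant manifolds/fibers, partially $C^1$-linearize the hyperbolic $(\pi_1,\sigma_1,w_{11})$-part with $y$ carried along as a neutral variable, integrate the model system over the time $T\sim \xi\log(\nu/\sigma_1)$, and control remainders by variation of constants/Gronwall; your identification of the spectrum $\{0,1/\xi,-2/\xi,-2/\xi\}$ and the resulting orders $\pi_{1+}=\mathcal O(\sigma_1^2)$, $w_{11+}-H_{211}(y,\nu)=\mathcal O(\sigma_1^2)$, $y_+-y=\mathcal O(\sigma_1^2\log\sigma_1^{-1})$ are all consistent with the statement. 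One small slip: $\sigma_1(t)\pi_1(t)$ is \emph{not} constant along the linear flow (it decays like $\sigma_1\delta e^{-t/\xi}$; the conserved quantity is $\sigma_1^2\pi_1$, as you use correctly for the $y$-estimate), and it is precisely this decay, inserted into the variation-of-constants integral, that yields $\tilde w_{11}(T)=\mathcal O(\sigma_1^2)$ — taking the product to be constant would only give $\mathcal O(\sigma_1)$ — so the conclusion stands once that parenthetical is corrected.
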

\begin{proof}
The proof is similar to the proof of \lemmaref{Pi22}, using partial linearization and Gronwall-like estimation of the remainder. We leave out the details.
\end{proof}

Notice that $\Pi_{211}^3(\delta,w_{11},0,0)=\gamma_{211}^3\cap \Sigma_{211}^{3,\textnormal{out}}$. See \figref{gamma3}. Notice also that $\gamma_{211,\text{loc}}^3$ in the $(x,y,w)$-variables becomes:
\begin{align*}
 \gamma_{\text{loc}}^3 = \{(x,y,w)\vert x\in [-1-\nu,-1],y=w=0\},
\end{align*}
using \eqref{Psi21114a14b},
in agreement with \eqref{gamma33}. ($\gamma^{1}$ and $\gamma^2$, on the other hand, both ``collapse'' to $Q^1$ at $(x,y,w)=(0,0,0)$ upon blowing down. See also \figref{gamma}.) To follow $\gamma_{211}^3$ forward, we move to chart $(\bar \epsilon=1,\tilde w=1)_{21}$, see \eqref{step5}.

\begin{figure}[h!]
\begin{center}
{\includegraphics[width=.525\textwidth]{./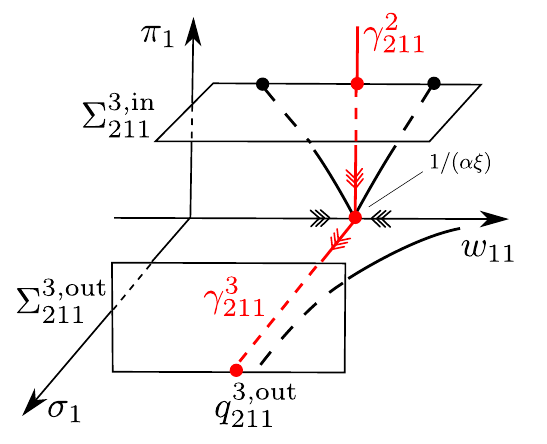}}
\end{center}
\caption{Illustration of the result \lemmaref{Pi2113}. $\gamma^3_{211}$ is the unstable manifold of $(y,\pi_{1},\sigma_1,w_{11})=(0,0,0,1/(\alpha\xi))$.}
\figlab{gamma3}
\end{figure}
\subsection{Chart $(\bar \epsilon=1,\tilde w=1)_{21}$}\seclab{seceqns21}
 In this chart we obtain the following equations
\begin{align}
\dot x &=-(x+1+\alpha)+x \mu_1 q_{21} \left(y+\frac{x+1}{\xi}\right),\eqlab{eqeqns21}\\
 \dot y &=\mu_1 F(\mu_1)+y \mu_1 q_{21} \left(y+\frac{x+1}{\xi}\right),\nonumber\\
 \dot \mu_1 &=\mu_1^2 q_{21} \left(y+\frac{x+1}{\xi}\right),\nonumber\\
\dot q_{21} &= 2 q_{21}^{2} \left(y+\frac{x+1}{\xi}\right)(2-\mu_1),\nonumber
\end{align}
and $\dot r_2=0$. Again, $r_2$ decouples and we shall therefore only work with the $(x,y,\mu_1,q_{21})$-system. 
Also $\gamma_{211,\text{loc}}^{3}$ becomes
\begin{align*}
 \gamma_{21,\text{loc}}^{3} = \bigg\{(x,y,\mu_1,q_{21})\vert &q_{21} = \sigma_1^{-2} H(\sigma,0),\,x=-1-\sigma_1,\, \sigma_1\in (0, \nu),\\
 &y=0,\,\mu_1=0\bigg\},
\end{align*}
using \eqref{cc4to5}, in the present chart. It is therefore contained within the invariant set $\mu_1=y=0$ where
\begin{align*}
\dot x &=-(x+1+\alpha),\\
\dot q_{21} &= 2 q_{21}^{2} \frac{x+1}{\xi},
\end{align*}
Notice, that starting from $x=-1-\nu$ with $\nu>0$ small, $x$ and $q_{21}$ are both monotonically decreasing towards their steady-state values $(x,q_{21})=(-1-\alpha,0)$.  Therefore, by extending 
$\gamma_{2,\text{loc}}^{3}$ by the forward flow, we obtain an orbit that is asymptotic to $(x,q_{21})=(-1-\alpha,0)$. Since the $x$-direction is a stable space and the $q$-direction is a center space, the orbit $\gamma_2^{3}$ approaches the steady state as a center manifold $x=h(q)$ over $0\le q\le \delta$ which is flat at $q=0$: $h^{(i)}(0)=0$ for all $i\in \mathbb N$. In particular, by center manifold theory, we have the following:
\begin{lemma}
Fix $\eta\in (0,1)$. Then there exists a $\delta>0$ and a neighborhood $\mathcal U_{21}$ of $(\mu_1,q_{21},y)=0$ in $\mathbb R^3$ such that the following holds. There exists a locally invariant center manifold $N_{21}$ as a graph $$x = -1-\alpha+\mu_1 h_{21}(\mu_1,q_{21},y),$$ over $(\mu_1,q_{21},y)\in \mathcal U_{21}$. Here $h_{21}$ is a smooth function.
Furthermore, there exists a smooth stable foliation with base $N_{21}$ and one-dimensional fibers as leaves of the foliation. Within $x+1+\alpha \in [-\delta,\delta], (\mu_1,q_{21},y)\in \mathcal U_{21}$, the contraction along any of these fibers is at least $e^{-\eta t}$. 
\end{lemma}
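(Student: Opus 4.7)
The plan is to apply the standard center manifold theorem at the equilibrium $p^\ast=(x,y,\mu_1,q_{21})=(-1-\alpha,0,0,0)$ and then use an invariance argument to factor $\mu_1$ out of the graph. Linearizing \eqref{eqeqns21} at $p^\ast$, only the $x$-equation contributes linearly, since every right-hand side of the $y$, $\mu_1$, $q_{21}$ equations is at least quadratic in $(\mu_1,q_{21})$. The spectrum at $p^\ast$ is therefore $\{-1,0,0,0\}$, with strong-stable direction along $x$ and three-dimensional center subspace $(\mu_1,q_{21},y)$. Since the vector field is $C^\infty$, for any prescribed $k\in\mathbb N$ the center manifold theorem (as in, e.g., Carr or Vanderbauwhede) yields a locally invariant $C^k$ manifold of the form $x=-1-\alpha+G(\mu_1,q_{21},y)$ on a neighborhood $\mathcal U_{21}$ of the origin, with $G(0,0,0)=0$ and $DG(0,0,0)=0$.

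The main extra ingredient is to choose this manifold so that $G(0,q_{21},y)\equiv 0$, yielding the desired factorization $G=\mu_1 h_{21}$. The hyperplane $\{\mu_1=0\}$ is invariant because $\dot\mu_1=\mu_1^2 q_{21}(y+(x+1)/\xi)$, and by picking the cut-off in the center manifold construction to depend only on $|\mu_1|,|q_{21}|,|y|$ (not on $|x+1+\alpha|$) one can arrange that $\{\mu_1=0\}$ remains invariant after truncation. On this invariant hyperplane the $x$-equation reduces to $\dot x=-(x+1+\alpha)$, so any orbit with $x(0)\ne -1-\alpha$ leaves every small neighborhood of $p^\ast$ in backward time at rate $e^{t}$; hence the set of orbits remaining bounded for all $t\in\mathbb R$ within $\{\mu_1=0\}$ is exactly $S:=\{\mu_1=0,\,x=-1-\alpha\}$. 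Since $N_{21}$ must contain every such bounded orbit, $N_{21}\cap\{\mu_1=0\}=S$ and therefore $G(0,q_{21},y)\equiv 0$. A Hadamard-lemma factorization then produces $h_{21}\in C^{k-1}$ with $G=\mu_1 h_{21}(\mu_1,q_{21},y)$.

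Finally, the stable foliation and its contraction estimate are a direct application of the standard stable foliation theorem for non-hyperbolic locally invariant manifolds (see Hirsch-Pugh-Shub, or the adaptations in Fenichel's papers \cite{fen1,fen2,fen3}). The spectral gap between the stable eigenvalue $-1$ and the center eigenvalues $0$ equals $1$, so for each fixed $\eta\in(0,1)$ and each prescribed $k$ one obtains, after possibly shrinking $\mathcal U_{21}$ and $\delta$, a $C^k$ invariant foliation of $\{\,|x+1+\alpha|\le\delta,\,(\mu_1,q_{21},y)\in\mathcal U_{21}\,\}$ by one-dimensional leaves, each tangent at $N_{21}$ to the strong-stable line and contracting under the forward flow at rate no slower than $e^{-\eta t}$. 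The familiar $C^k$-section trade-off between $\eta$ close to $1$ and admissible $k$ is the only technical subtlety, and it does not affect the subsequent analysis; the genuinely new point of the proof is the $\mu_1$-factorization of the previous paragraph, while the rest is a direct invocation of the standard theorems.
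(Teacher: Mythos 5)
Your overall route is the same as the paper's: there the lemma is stated as a direct consequence of standard center manifold theory (the linearization of \eqref{eqeqns21} at $(x,y,\mu_1,q_{21})=(-1-\alpha,0,0,0)$ has spectrum $\{-1,0,0,0\}$, since $\dot y,\dot\mu_1,\dot q_{21}$ are at least quadratic in $(\mu_1,q_{21})$), and the stable foliation with rate $e^{-\eta t}$ is the standard Fenichel/Hirsch--Pugh--Shub statement; the paper gives no further proof. So the only content beyond the textbook theorems is the factorization $x=-1-\alpha+\mu_1h_{21}$, which you rightly single out.

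One step of your factorization argument needs tightening as written. For the original system \eqref{eqeqns21}, the set of orbits in $\{\mu_1=0\}$ that remain in a small neighborhood of $p^\ast$ for all $t\in\mathbb R$ is not $S=\{\mu_1=0,\,x=-1-\alpha\}$ but only the equilibrium line $\{\mu_1=q_{21}=0,\,x=-1-\alpha\}$: on $S$ one has $\dot q_{21}=4q_{21}^2\left(y-\alpha/\xi\right)<0$ for $q_{21}>0$ and $y$ near $0$, so backward in time $q_{21}$ grows and the orbit leaves every small neighborhood (indeed blows up in finite backward time). Hence the standard fact that a local center manifold contains every orbit staying in the neighborhood for all time only forces $G(0,0,y)\equiv0$, not $G(0,q_{21},y)\equiv0$. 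The repair is exactly the cutoff you mention, but it has to carry the argument: truncate the nonlinearity in $(\mu_1,q_{21},y)$ only, so that $\{\mu_1=0\}$ remains invariant, the $x$-equation is exactly $\dot x=-(x+1+\alpha)$ there, and the center variables of the modified system stay globally bounded; then the Lyapunov--Perron (or graph-transform) \emph{global} center manifold of the modified system contains every orbit with bounded (subexponential) backward growth, in particular all of $S$, and restricting to the region where the cutoff is inactive gives a local center manifold of \eqref{eqeqns21} with $G(0,q_{21},y)\equiv0$; Hadamard then yields $G=\mu_1h_{21}$ with the usual finite-$k$ smoothness. Equivalently, one can observe that the graph transform of the truncated system maps graphs vanishing on $\{\mu_1=0\}$ to graphs vanishing on $\{\mu_1=0\}$, avoiding the bounded-orbit discussion altogether. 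With this reading your proof is correct and fills in precisely what the paper leaves to ``center manifold theory''.
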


Notice that $N_{21}\cap \{\mu_1=0\}$ becomes $L_\infty:\,x=-1-\alpha,\,w=0,y\in I$ upon blowing down using \eqref{step5}. 
%
Next, consider the following sections
\begin{align*}
 \Sigma_{21}^{4,\text{in}} &= \{(x,y,\mu_1,q_{21})\vert q_{21} = \delta,\, x+1+\alpha \in [-\beta_1,\beta_1],\,\mu_1 \in [0,\beta_2],\,y\in [-\beta_3,\beta_3]\},\\
 \Sigma_{21}^{5,\textnormal{out}} &= \{(x,y,\mu_1,q_{21})\vert q_{21} = \delta,\, x+1+\alpha \in [-\beta_1,\beta_1],\,\mu_1 \in [0,\beta_4],\,y-\frac{2\alpha}{\xi }\in [-\beta_5,\beta_5]\},
\end{align*}
and let $\Pi_{21}^{45}:\,\Sigma_{21}^{4,\text{in}}\rightarrow \Sigma_{21}^{5,\textnormal{out}}$ be the associated mapping obtained by the first intersection of the forward flow. By reducing the dynamics to the center manifold $N_{21}$ (and applying a subsequent blowup) we will then show that we can guide the forward flow along the following lines
\begin{align}
 \gamma_{21}^4 &= \{(x,y,\mu_1,q_{21}) \vert x=-1-\alpha,\,y\in [0,2\alpha/\xi),\mu_1=q_{21}=0\},\eqlab{gamma214}\\
 \gamma_{21}^5 &=\{(x,y,\mu_1,q_{21}) \vert x=-1-\alpha,\,y=2\alpha/\xi,\mu_1=0,q_{21}\ge 0\}.\nonumber
\end{align}
See \figref{gamma45}.
\begin{figure}[h!]
\begin{center}
{\includegraphics[width=.855\textwidth]{./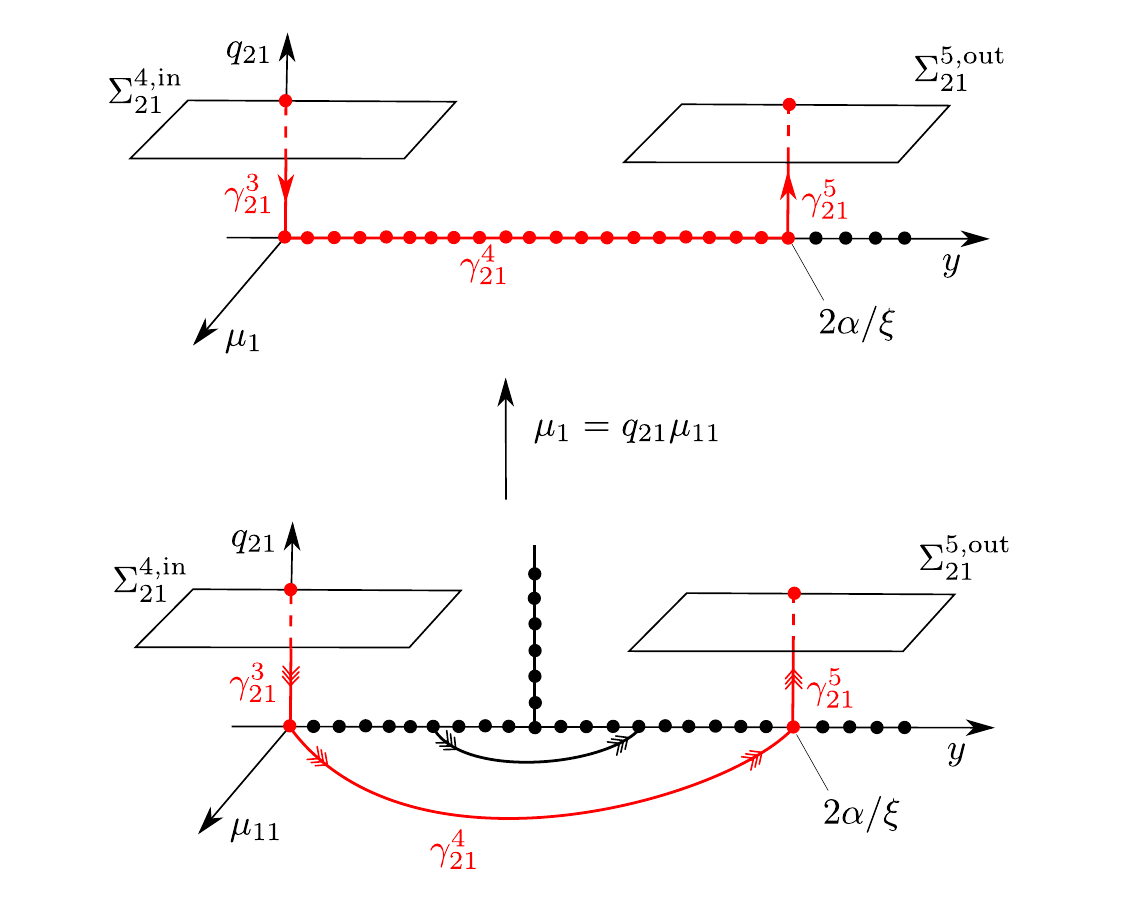}}
\end{center}
\caption{Illustration of the result in \lemmaref{Pi2145}. In the $(y,\mu_1,q_{21})$-variables, the $y$-axis is a line of equilibria. Upon the blowup $(q_{21},\mu_{11})\mapsto \mu_1 = q_{21}\mu_{11}$, this line has improved hyperbolicity properties, in particular we obtain a heteroclinic orbit $\gamma_{21}^4$ (which blows down to the expression \eqref{gamma214}) between equilibria $(0,0,0)$ and $(2\alpha/\xi,0,0)$ on this axis. It connects $\gamma_{21}^3$ with $\gamma_{21}^5$, the former being the stable manifold of $(0,0,0)$ while the latter is the unstable manifold of $(2\alpha/\xi,0,0)$. For simplicity, we use the same symbols in the two figures (although the axes are different).}
\figlab{gamma45}
\end{figure}
Then we have
\begin{lemma}\lemmalab{Pi2145}
 $\Pi_{21}^{45}$ is well-defined for appropriately small $\delta>0$, $\nu>0$ and $\beta_i>0$, $i=1,\ldots,5$. In particular,
\begin{align*}
 \Pi_{21}^{45}(x,y,\mu_1,\delta) = (x_{+}(x,y,\mu_1),y_+(x,y,\mu_1),\mu_{1+}(x,y,\mu_1),\delta),
\end{align*}
with $\mu_{1+}$ a $C^1$-function with $\mu_{1+}=\mu_1(1+o(1))$, 
\begin{align*}
x_+ (x,y,\mu_1)  &=-1-\alpha+\mu_{1+}(x,y,\mu_1) \nu h_{21}(\mu_{1+}(x,y,\mu_1),\delta,y_+(x,y,\mu_1))+\mathcal O(e^{-\eta/\mu_1}),\\
y_+(x,y,\mu_1) &= \frac{2\alpha}{\xi}-y+\mathcal O(\mu_1 \log \mu_1),
\end{align*}
as $\mu_1\rightarrow 0$. 
Furthermore, the remainder terms in $x_+$ and $y_+$ are $C^1$ with respect to $x$ and $y$ and the orders of these terms as $\mu_1\rightarrow 0$ do not change upon differentiation. 
\end{lemma}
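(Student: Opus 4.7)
The plan is to reduce the analysis to the center manifold $N_{21}$ via the preceding lemma and then perform a further blowup of the line of equilibria $\{\mu_1=q_{21}=0\}$ inside $N_{21}$. Substituting the graph $x=-1-\alpha+\mu_1 h_{21}(\mu_1,q_{21},y)$ into \eqref{eqeqns21} yields a reduced $3D$ system in $(y,\mu_1,q_{21})$ on $N_{21}$. By using the smooth stable foliation of $N_{21}$ provided by the preceding lemma, one can change coordinates so that the dynamics of $(y,\mu_1,q_{21})$ decouples from the $x$-component, the latter contracting to $N_{21}$ at rate at least $e^{-\eta t}$. Since the transition time between $\Sigma_{21}^{4,\text{in}}$ and $\Sigma_{21}^{5,\text{out}}$ is of order $1/\mu_1$, driven by $\dot y\sim \mu_1 F(\mu_1)$, this gives the exponentially small remainder $\mathcal O(e^{-\eta/\mu_1})$ in $x_+$ as a direct consequence of the Fenichel contraction.

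On $N_{21}$, the set $\ell=\{\mu_1=q_{21}=0\}$ is a line of equilibria parameterized by $y$. I would desingularize it by the spherical blowup $(\mu_1,q_{21})=r(\bar\mu_1,\bar q_{21})$ with $(\bar\mu_1,\bar q_{21})\in S^1$, $r\ge 0$, fixing $y$, and work in the directional chart $\bar q_{21}=1$, i.e.\ $\mu_1=q_{21}\mu_{11}$, followed by a division of the vector-field by the appropriate power of $q_{21}$. A direct computation then shows that the slice $\{q_{21}=0\}$ carries a non-trivial flow in the $(y,\mu_{11})$-plane and that the points $(y,\mu_{11})=(0,0)$ and $(y,\mu_{11})=(2\alpha/\xi,0)$ are partially hyperbolic. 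The former inherits an incoming one-dimensional stable manifold matching the outgoing piece of $\gamma_{211}^3$ through the coordinate change \eqref{cc4to5}, while the latter carries an outgoing one-dimensional unstable manifold which becomes $\gamma_{21}^5$. The segment $\gamma_{21}^4$ appears as the unique heteroclinic connection between the two endpoints along $\mu_{11}=0$ (equivalently $\mu_1=0$), along which $y$ is driven monotonically by the non-trivial reduced flow.

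Given this structure, I would decompose $\Pi_{21}^{45}$ as the composition of five maps: (i) a regular near-identity entry map from $\Sigma_{21}^{4,\text{in}}$ into a small fixed neighborhood of $(y,\mu_{11})=(0,0)$; (ii) a passage through that neighborhood, controlled by $C^1$-linearization at the partially hyperbolic entry point; (iii) a bounded-time regular-perturbation tracking of $\gamma_{21}^4$; (iv) a passage through a fixed neighborhood of $(y,\mu_{11})=(2\alpha/\xi,0)$, again handled by $C^1$-linearization; and (v) a regular exit map into $\Sigma_{21}^{5,\text{out}}$. The logarithmic correction $\mathcal O(\mu_1\log\mu_1)$ in $y_+$ originates from step (iv): the time of passage through the neighborhood of the exit point is of order $\log\mu_1^{-1}$ while the $y$-equation is driven by a term of order $\mu_1$, producing an accumulated shift of order $\mu_1\log\mu_1$. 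The $C^1$ character of the remainders with preserved order under differentiation then follows by the $C^1$-linearization argument already used in the proofs of \lemmaref{Pi22} and \lemmaref{Pi2113}. The hardest step will be verifying the partially hyperbolic structure at the blown-up endpoints and controlling the regularity of the reduced vector-field along $\gamma_{21}^4$, since this regularity is governed by the (in general only $C^k$) center-manifold function $h_{21}$; additional care is needed so that the composition preserves $C^1$ asymptotics rather than merely $C^0$ ones.
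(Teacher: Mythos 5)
Your proposal follows essentially the same route as the paper: reduction to the center manifold $N_{21}$ via its stable foliation (which is indeed what produces the $\mathcal O(e^{-\eta/\mu_1})$ remainder in $x_+$, since $y$ must traverse an $\mathcal O(1)$ distance with $\dot y=\mathcal O(\mu_1)$ while $\mu_1$ stays essentially constant), then the directional blowup $\mu_1=q_{21}\mu_{11}$ of the degenerate line $\{\mu_1=q_{21}=0\}$, followed by two corner passages handled by $C^1$-linearization at the partially hyperbolic points $(y,\mu_{11},q_{21})=(0,0,0)$ and $(2\alpha/\xi,0,0)$ glued by a bounded-time tracking of the heteroclinic; this is exactly the paper's three-stage decomposition, and the $C^1$ control of the remainders is obtained the same way as in \lemmaref{Pi22} and \lemmaref{Pi2113}.

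One geometric misstatement should be corrected: in the blown-up coordinates $\gamma_{21}^4$ is \emph{not} "along $\mu_{11}=0$" — the set $\{\mu_{11}=q_{21}=0\}$ is precisely the line of equilibria, so it cannot carry a connection with monotone $y$. The heteroclinic lies in the invariant slice $\{q_{21}=0\}$ with $\mu_{11}>0$ in its interior (a parabola-like arc $\mu_{11}=y(2\alpha-\xi y)/\xi$ joining the two endpoints, cf. \figref{gamma45}); only upon blowing down ($\mu_1=q_{21}\mu_{11}=0$) does it collapse onto the segment \eqref{gamma214}. Your own description of the nontrivial flow on $\{q_{21}=0\}$ and the bounded-time tracking step presuppose this correct picture, so this is a labelling slip rather than a structural gap. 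Also, the $\mathcal O(\mu_1\log\mu_1)$ shift in $y_+$ is not produced only at the exit corner: in the paper's computation it already appears in the entry passage (time $\sim\log\mu_{11}^{-1}$ against a drift $\mathcal O(\mu_{11}q_{21})$), and the exit passage contributes a term of the same order, so both must be accounted for; the resulting estimate is unchanged.
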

\begin{proof}
 Working in a small neighborhood of $N_{21}$, we can straighten out the stable fibers by a smooth transformation of the form $(x,y,\mu_1,q_{21})\mapsto (\tilde y,\tilde \mu_1,\tilde q_{21})$ satisfying
 \begin{align*}
   \tilde y &=y+\mathcal O(\mu_1),\\
  \tilde \mu_1 &=\mu_1+\mathcal O(\mu_1^2q_{21}),\\
  \tilde q_{21} &= q_{21}+\mathcal O(q_{21}^{2}).
 \end{align*}
We drop the tildes henceforth and therefore consider the following reduced system on $N_{21}$.
\begin{align*}
 \dot y &=\mu_1 \left(F(\mu_1)+y q_{21} \left(\xi y-\alpha +\mu_1 h_2(\mu_1,q_{21},y)\right)/\xi\right),\\
 \dot \mu_1 &= \mu_1^2q_{21} \left(\xi y-\alpha +\mu_1 h_2(\mu_1,q_{21},y)\right)/\xi,\\
 \dot q_{21} &=q_{21}^{2} \left(\xi y-\alpha +\mu_1 h_2(\mu_1,q_{21},y)\right)/\xi (2-\mu_1).
 \end{align*}
Here $\mu_1=q_{21}=0$, $y\in I$, where $I$ is some appropriate interval, is a line of equilibria. It is not normally hyperbolic since the linearization about any point only has zero as an eigenvalue. We can gain hyperbolicity by applying the directional blowup, setting:
\begin{align*}
 \mu_1 = q_{21} \mu_{11}.
\end{align*}
Inserting this into the reduced equations we obtain 
\begin{align}
 \dot y &=\mu_{11} \left(F(q_{21}\mu_{11})+yq_{21} \left(\xi y-\alpha +q_{21} \mu_{11} h_2(q_{21} \mu_{11},q_{21},y)\right)/\xi\right),\eqlab{ReducedOnM211}\\
 \dot \mu_{11} &=2\mu_{11} \left(\xi y-\alpha +q_{21} \mu_{11} h_2(q_{21} \mu_{11},q_{21},y)\right)(-1+q_{21} \mu_{11})/\xi, \nonumber\\
 \dot q_{21} &= q_{21} \left(\xi y-\alpha +q_{21} \mu_{11} h_2(q_{21} \mu_{11},q_{21},y)\right)(2-\mu_1)/\xi ,\nonumber
\end{align}
after division of the right hand side by $q_{21}$. Now the line $\mu_1=q_{21}=0$, $y\in I$, is normally hyperbolic for any $y\ne \alpha/\xi$: The linearization about any point gives $\pm 2 (\xi y-\alpha)$ as nonzero eigenvalues. Within the invariant set $\mu_1 = 0$ we obtain
\begin{align*}
 \dot y &=0,\\
 \dot q_{21} &=q_{21} \left(\xi y-\alpha\right)/\xi. 
\end{align*}
Along $y=\alpha/\xi,q_{21}\ge 0$ every point is an equilibrium. For $y<\alpha/\xi$, $q_{21}$ contracts exponentially towards $q_{21}=0$. On the hand, for $y>\alpha/\xi$, $q_{21}$ expands exponentially. 
Next, within $q_{21}=0$ we obtain from \eqref{ReducedOnM211}
\begin{align*}
 \dot y &= \mu_{11},\\
 \dot \mu_{11} &= 2\mu_{11} \left(\xi y-\alpha\right)/\xi.
 \end{align*}
Writing 
\begin{align*}
 \frac{d\mu_{11}}{dy} = 2\left(\xi y-\alpha\right)/\xi,
\end{align*}
we realise that every point $\mu_{11}=0,y=y_0<\alpha/\xi$, is heteroclinic with $\mu_{11}=0, y=y_1>\alpha/\xi$ where $y_1=2\alpha/\xi-y_0$. See \figref{gamma45}. 

Now, to describe the mapping $\Pi_{21}^{45}$, we proceed as follows. We first work locally near $y=0$ and consider a mapping from $q_{21}=\delta$ to $\mu_{11}=\nu$. From there we then apply a finite time flow map by following the heteroclinic orbits within $\mu_{11}=0$ up to a neighborhood of $\mu_{11} = 0$, $q_{21}=0$, $y= 2\alpha/\xi$. From here, we then consider a mapping $\mu_{11}=\nu$ to $q_{21} = \delta$ working near the normally hyperbolic line $\mu_{11} = q_{21}=0,\,y\approx 2\alpha/\xi $. 

For the first part, near $y=0$, we divide the right hand side by $$\left(\xi y-\alpha +q_{21} \mu_{11} h_2(q_{21} \mu_{11},q_{21},y)\right) (-1+q_{21} \mu_{11})/\xi>0.$$ This gives
\begin{align*}
 \dot y &=\mu_{11}(-1+q_{21} \mu_{11})^{-1}  \left(\left(\xi y-\alpha +q_{21} \mu_{11} h_2(q_{21} \mu_{11},q_{21},y)\right)^{-1}  \xi F(q_{21}\mu_{11})+yq_{21}\right),\\
 \dot \mu_{11} &=2\mu_{11} ,\\
 \dot q_{21} &= q_{21} (1-q_{21} \mu_{11})^{-1} (-2+\mu_1).
\end{align*}
Now we straighten out the unstable fibers within the unstable manifold $q_{21}=0$ by performing a transformation of the form $(y,\mu_{11})\mapsto \tilde y$ such that 
\begin{align*}
 \dot{\tilde y} &=\mathcal O(\mu_{11} q_{21}),\\
 \dot \mu_{11} &=2\mu_{11} ,\\
 \dot q_{21} &= q_{21} (1-q_{21} \mu_{11})^{-1} (-2+\mu_1).
 \end{align*}
 The $y$-variables decouples and the $(\mu_{11},q_{21})$-subsystem has a saddle at $p_{21}=q_{21}=0$. We can therefore linearize this subsystem through a $C^1$-transformation of the form $(\mu_{11},q_{21})\mapsto \tilde q_{21}=q_{21}(1+\mathcal O(\mu_{11}))$ such that 
\begin{align*}
 \dot{\tilde y} &=\mathcal O(\mu_{11} \tilde q_{21}),\\
 \dot \mu_{11} &=2\mu_{11} ,\\
 \dot{\tilde q}_{21} &= -2\tilde q_{21}.
\end{align*}
 We then integrate this system from $\tilde q_{21} = \tilde \delta$ to $\mu_{11}=\nu$. This gives 
 \begin{align*}
 (\tilde y,\mu_{11},\tilde \delta) \mapsto (y+\mathcal O(\mu_{11}\log \mu_{11}),\nu,\mu_{11}\nu^{-1} \tilde \delta).
\end{align*}
We then return to $(y,\mu_{11},q_{21})$, by applying the $C^1$-inverses, and proceed with the second and third step. In the third, final step, we proceed using a similar approach to the first part, now working near $y=2\alpha/\xi,\,\mu_{11} = q_{21}=0$. We leave out the details, but in combination, this gives the desired result. 
\end{proof}



\subsection{Chart $(\bar \epsilon=1,\tilde{\bar q}=1)_{22}$}\seclab{seceqns22}
In this chart, we obtain 
\begin{align}
\dot x &= w_2\left(-(x+1+\alpha)+x\mu_2 \left(y+\frac{x+1}{\xi}\right)\right),\eqlab{eqeqns22}\\
 \dot y&=\mu_2w_2 \left(w_2 F(\mu_2 w_2)+y\left(y+\frac{x+1}{\xi}\right)\right),\nonumber\\
 \dot \mu_2 &=2\mu_2 \left(y+\frac{x+1}{\xi}\right),\nonumber\\
  \dot w_2 &=-w_2 \left(y+\frac{x+1}{\xi}\right)\left(2-\mu_2w_2\right),\nonumber
\end{align}
and $\dot r_2=0$. 
In this chart $\gamma_{21}^5$ becomes
 \begin{align*}
 \gamma_{22}^5 = \left\{(x,y,\mu_2,w_2)\vert w_2>0,x=-1-\alpha,y=2\alpha/\xi,\mu_2=0\right\},
\end{align*}
contained within the invariant set $x=-1-\alpha,\mu_2=0$ where
\begin{align*}
 \dot y &=0,\\
 \dot w_2 &= - 2w_2 \left(y-\frac{\alpha}{\xi}\right).
\end{align*}
$\gamma_{22}^5$ is asymptotic to the point $(x,y,\mu_2,w_2)=(-1-\alpha,2\alpha/\xi,0,0)$ within the set of equilibria $\mu_2=w_2=0$, $(x,y)$ in a neighborhood of $(-1-\alpha,2\alpha/\xi)$. Any point within this ``plane'' of equilibria has $\pm 2$ as non-zero eigenvalues. Indeed, within $w_2=0$, we have 
\begin{align*}
 \dot x &=0,\\
 \dot y &=0,\\
 \dot \mu_2 &= 2\mu_2 \left(y+\frac{x+1}{\xi}\right).
\end{align*}
In particular, 
\begin{align*}
 \gamma_{22}^6 = \left\{(x,y,\mu_2,w_2)\vert w_2=0,x=-1-\alpha,y=2\alpha/\xi,\mu_2\ge 0\right\},
\end{align*}
is contained within the unstable manifold of the set of equilibria $\mu_2=w_2=0$, $(x,y)$ in a neighborhood of $(-1-\alpha,2\alpha/\xi)$, as the individual unstable manifold of the base point of $\gamma_{22}^5$, $(x,y,\mu_2,w_2)=(-1-\alpha,2\alpha/\xi,0,0)$. 
We therefore consider the following sections
\begin{align*}
 \Sigma_{22}^{6,\text{in}} &= \{(x,y,\mu_2,w_2)\vert w_2 = \nu,\,x+1+\alpha\in [-\beta_1,\beta_1],\,y-2\alpha/\xi \in [-\beta_2,\beta_2],\,\mu_2 \in [0,\beta_2]\},\\
 \Sigma_{22}^{6,\textnormal{out}}&=\{(x,y,\mu_2,w_2)\vert \mu_2 = \delta ,\,x+1+\alpha\in [-\beta_3,\beta_3],\,y-2\alpha/\xi \in [-\beta_4,\beta_4],\,w_2 \in [0,\beta_5]\},
\end{align*}
and let $\Pi_{22}^6:\Sigma_{22}^{6,\text{in}}\rightarrow \Sigma_{22}^{6,\textnormal{out}}$ the associated local mapping obtained by the forward flow.
We then have
\begin{lemma}\lemmalab{Pi226}
 The mapping $\Pi_{22}^6$ is well-defined for appropriately small $\nu>0,\delta>0$ and $\beta_i>0$, $i=1,\ldots,5$. In particular,
 \begin{align*}
  \Pi_{22}^6(x,y,\mu_2,\nu) =\left(x_+(x,y,\mu_2),y_+(x,y,\mu_2),\delta,w_{2+}(\mu_2)\right),
 \end{align*}
where $w_{2+}$ is $C^1$ satisfying $w_{2+}(\mu_2) = \mu_2(1+o(1))$ and
\begin{align*}
 x_+(x,y,\mu_2) &=H_{22}(x,\delta)+\mathcal O(\mu_2 \ln \mu_2^{-1}),\\
 y_+(x,y,\mu_2)&=y+\mathcal O(\mu_2 \ln \mu_2^{-1}),
\end{align*}
 as $\mu_2\rightarrow 0$. Here $H_{22}$ is smooth and satisfy $H_{22}(-1-\alpha,\delta)=-1-\alpha$. 
 
 Furthermore, the remainder terms in $x_+$ and $y_+$ are $C^1$ with respect to $x$ and $y$ and the orders of these terms as $\mu_2\rightarrow 0$ do not change upon differentiation. 

\end{lemma}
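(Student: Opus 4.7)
My plan is to prove this lemma in direct analogy with the proof of \lemmaref{Pi2145}, since the local structure near the base point $(x,y,\mu_2,w_2)=(-1-\alpha,2\alpha/\xi,0,0)$ is essentially the same: a 2D plane of equilibria $\{\mu_2=w_2=0\}$ transverse to a hyperbolic saddle in the $(\mu_2,w_2)$-directions, with $(x,y)$ playing the role of slow parameters that only drift a little during the transition. The key observation is that $y+(x+1)/\xi=\alpha/\xi>0$ at the base point, so after shrinking the $\beta_i$ we may assume this quantity stays bounded away from zero on the rectangle of initial data; in particular the invariant sets $\{w_2=0\}$ and $\{\mu_2=0\}$ form smooth 3D center-stable and center-unstable manifolds of the equilibrium plane, carrying smooth invariant foliations that can be straightened.

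The first technical step would be to divide the right-hand side of \eqref{eqeqns22} by the positive factor $(y+(x+1)/\xi)(2-\mu_2 w_2)$, so that the $(\mu_2,w_2)$-block acquires the hyperbolic-saddle form $\dot\mu_2=2\mu_2/(2-\mu_2 w_2)$, $\dot w_2=-w_2$ at leading order, with the $x$- and $y$-equations retaining factors of $w_2$ and $\mu_2 w_2$ respectively. Following the straighten-and-linearize scheme used in \lemmaref{Pi2145}, I would then apply successive near-identity smooth coordinate changes to straighten the two invariant foliations, and subsequently invoke Hartman's planar $C^1$-linearization theorem---uniformly in the slow parameters $(x,y)$---to bring the saddle block into the decoupled normal form $\dot{\tilde\mu}_2=c\tilde\mu_2$, $\dot{\tilde w}_2=-c\tilde w_2$ with $c=y+(x+1)/\xi$ evaluated at the slow variables. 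The equal-and-opposite eigenvalues at every point of the equilibrium plane provide the hyperbolicity needed for this step.

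Having set up this normal form, the transition time from $\tilde w_2=\nu$ to $\tilde\mu_2=\delta$ becomes $T=\tfrac{1}{2c}\log(\delta/\mu_2)=\mathcal O(\log\mu_2^{-1})$, and $\tilde w_{2+}=(\nu/\delta)\mu_2$ yields the stated asymptotics of $w_{2+}$ after absorbing the fixed constant $\nu/\delta$ into the $1+o(1)$ factor. For the drift of $y$, the essential ingredient is that $\tilde\mu_2\tilde w_2$ is exactly conserved along the linearized flow, so $\mu_2 w_2$ stays of size $\mathcal O(\mu_2\nu)$ throughout the transition; integrating the $y$-equation then yields $y_+-y=\int_0^T\mathcal O(\mu_2 w_2)\,ds=\mathcal O(\mu_2\nu T)=\mathcal O(\mu_2\log\mu_2^{-1})$. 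For $x$, the cleanest approach is to view $w_2$ as the independent variable: the leading-order equation $dx/dw_2=(x+1+\alpha)/(2(y+(x+1)/\xi))+\mathcal O(\mu_2)$ is linear in $x+1+\alpha$ and integrates from $w_2=\nu$ down to $w_2=w_{2+}$ to produce a smooth flow map $H_{22}(x,\delta)$ satisfying $H_{22}(-1-\alpha,\delta)=-1-\alpha$, while the $\mathcal O(\mu_2)$ perturbation integrated over the logarithmic time interval contributes precisely the stated $\mathcal O(\mu_2\log\mu_2^{-1})$ remainder.

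The main obstacle, as in the neighbouring lemmas, is controlling the logarithmic factors generated by the diverging transition time $T\sim\log\mu_2^{-1}$ so that they do not inflate the power of $\mu_2$ in the remainders beyond what the lemma permits; this is exactly what the $C^1$-linearization of the planar saddle buys us, since it makes the conservation of $\tilde\mu_2\tilde w_2$ exact and lets the slow-direction drifts be bounded directly rather than through a potentially lossy Gronwall argument. The $C^1$-dependence on $(x,y)$ of the remainder terms, and the claim that differentiation does not change their orders, would then be obtained by running the same straighten-linearize-integrate procedure on the variational equations, whose structure mirrors that of the original system to leading order and so admits identical bounds.
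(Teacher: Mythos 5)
Your proposal is correct and follows essentially the same route as the paper: exploit the hyperbolic saddle passage in the $(\mu_2,w_2)$-directions (transition time $\mathcal O(\log \mu_2^{-1})$, the product $\mu_2w_2$ controlled by its initial value $\mathcal O(\mu_2\nu)$), identify the leading-order map as the flow within the invariant set $\{\mu_2=0\}$ --- which fixes $x=-1-\alpha$ by invariance of $\gamma_{22}^5$ and produces $H_{22}$ --- and bound the drifts of $x$ and $y$ by $\mathcal O(\mu_2\log\mu_2^{-1})$. The paper's own proof is terser, merely straightening the stable fibers within $\mu_2=0$ via $\tilde x=H_{22}(x,w_2)$ so that $\dot{\tilde x}=\mathcal O(w_2\mu_2)$ and then estimating directly; your additional $C^1$ linearization of the $(\mu_2,w_2)$-block (which, after dividing out $(y+(x+1)/\xi)(2-\mu_2w_2)$, is in fact parameter-free, so the uniformity in $(x,y)$ you worry about is automatic) is the same device used in the neighbouring lemmas and changes nothing essential.
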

\begin{proof}
We straighten out the individual stable manifolds within $\mu_2=0$ by a transformation of the form $(w_2,x)\mapsto \tilde x = H_{22}(x,w_2)$. Here by the invariance of $\gamma_{22}^5$ we have $H_{22}(-1-\alpha,w_2)=-1-\alpha$ for any $w_2$. Then
\begin{align*}
 \dot{\tilde x}_2 &=\mathcal O(w_2\mu_2).
\end{align*}
Straightforward estimation gives the desired result. 
\end{proof}
See \figref{gamma} for illustration of $\gamma^6$.  
\subsection{Exit of chart $\phi_3$}\seclab{exitPhi3}
To follow $\gamma_{22}^6$ forward, we return to the chart $(\overline \epsilon=1)_1$ and the coordinates $(x,y,w,\epsilon_1,r_1)$.
 In this chart, we obtain the following equations
\begin{align}
 \dot x &=w\left(-\epsilon_1 \left(x+1+\alpha\right)+x\left(y+\frac{x+1}{\xi}\right)\right),\eqlab{xyweps1}\\
 \dot y &=w\left(\epsilon_1 w F(w)+y\left(y+\frac{x+1}{\xi}\right)\right),\nonumber\\
 \dot w &=w^2 \left(y+\frac{x+1}{\xi}\right),\nonumber\\
 \dot \epsilon_1 &=-2\epsilon_1 \left(y+\frac{x+1}{\xi}\right).\nonumber
\end{align}
Also $\dot r_1  = 2\epsilon_1 \left(y+\frac{x+1}{\xi}\right)$ but this decouples and we shall therefore (again) just work with the $(x,y,w,\epsilon_1)$-subsystem.
In these coordinates $\gamma_{22}^6$ becomes
\begin{align*}
 \gamma_{1}^6 = \left\{(x,y,w,\epsilon_1)\vert \epsilon_1>0,\,x=-1-\alpha,y=\frac{2\alpha}{\xi},w=0\right\}.
\end{align*}
It is asymptotic to the point $q_{1}^7$ with coordinates
\begin{align}
 (x,y,w,\epsilon_1)=(-1-\alpha,\frac{2\alpha}{\xi},0,0). \eqlab{q26}
\end{align}
We work in a neighborhood of this point where $$y+\frac{x+1}{\xi}\approx \alpha/\xi>0.$$ We therefore divide the right hand side of \eqref{xyweps1} by this quantity and consider the following system
\begin{align*}
 \dot x &=w\left(- \frac{\epsilon_1(x+1+\alpha)}{y+\frac{x+1}{\xi}}+x\right),\\
 \dot y &=w\left( \frac{\epsilon_1 wF(w)}{y+\frac{x+1}{\xi}}+y\right),\\
 \dot w &=w^2 ,\\
 \dot \epsilon_1 &=-2\epsilon_1 .
\end{align*}

Notice that $\epsilon_1=w=0$ is invariant. Also the linearization about any point in this set gives $-2$ as a single zero eigenvalue. Therefore $\epsilon_1=0, w\in [0,\beta]$ and $(x,y)$ in a neighborhood of $(-1-\alpha,\frac{2\alpha}{\xi})$ is a local center manifold with smooth foliation by $1D$ fibers, along which orbits contract towards the center manifold with $e^{-2 t}$. Therefore there exists a smooth, local transformation $(x,y,w,\epsilon_1)\mapsto (\tilde x,\tilde y)=(x,y)+\mathcal O(w\epsilon_1)$ such that 
\begin{align*}
 \dot{\tilde x} &= w\tilde x,\\
 \dot{\tilde y}&=w\tilde y. 
\end{align*}
In the following, fix $y_1>\frac{2\alpha}{\xi}$ and consider the following sections:
\begin{align*}
\Sigma_1^{7,\text{in}} &= \{(x,y,w,\epsilon_1)\vert \epsilon_1=\delta,\,x+1+\alpha \in [-\beta_1,\beta_1],\,y-2\alpha/\xi \in [-\beta_2,\beta_2],\,w\in [0,\beta_3]\},\\
\Sigma_1^{7} &=\{(x,y,w,\epsilon_1)\vert \epsilon_1\in [0,\beta_4],\,x+1+\alpha \in [-\beta_5,\beta_5],\,y = y_1,\,w\in [0,\beta_6]\},
\end{align*}
Let $\Pi_1^7:\,\Sigma_1^{7,\text{in}}\rightarrow \Sigma_1^{7}$. Then, by integrating the $(\tilde x,\tilde y,w,\epsilon_1)$-system and transforming the result back to the $(x,y,w,\epsilon_1)$-system using the implicit function theorem, we obtain the following:
\begin{lemma}\lemmalab{Pi17New}
 $\Pi_1^7$ is well-defined for appropriately small $y_1-\frac{2\alpha}{\xi}$, $\delta>0$ and $\beta_i>0$, $i=1,\ldots, 6$. In particular,
 \begin{align*}
  \Pi_1^7(x,y,w,\delta) = (x_+(x,y,w),y_1,w_+(x,y,w),\epsilon_{1+}(x,y,w)),
 \end{align*}
with $x_+$, $w_+$ and $\epsilon_{1+}$ all $C^1$ satisfying
\begin{align*}
 x_+(x,y,w) &= \frac{xy_1}{y}(1+\mathcal O(w)),\\
 w_+(x,y,w) &= \frac{wy_1}{y}(1+\mathcal O(w)),\\
 \epsilon_{1+}(x,y,w)&=\mathcal O(e^{-c/w}),
\end{align*}
for some sufficiently small $c>0$. 
\end{lemma}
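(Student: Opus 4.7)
The plan is to exploit the normal-form transformation already constructed immediately before the lemma: after the $C^k$ change of variables $(x,y,w,\epsilon_1)\mapsto(\tilde x,\tilde y,w,\epsilon_1)$ with $\tilde x=x+\mathcal O(w\epsilon_1)$, $\tilde y=y+\mathcal O(w\epsilon_1)$, the flow decouples as
\begin{align*}
 \dot{\tilde x}=w\tilde x,\qquad \dot{\tilde y}=w\tilde y,\qquad \dot w=w^2,\qquad \dot \epsilon_1=-2\epsilon_1,
\end{align*}
which I can solve explicitly. Integrating from $t=0$ with initial data on $\Sigma_1^{7,\text{in}}$ (so $\epsilon_1(0)=\delta$) gives $w(t)=w_0/(1-w_0 t)$, $\tilde x(t)=\tilde x_0/(1-w_0 t)$, $\tilde y(t)=\tilde y_0/(1-w_0 t)$, and $\epsilon_1(t)=\delta e^{-2t}$.

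Next I would determine the transition time $T=T(\tilde x_0,\tilde y_0,w_0)$ by solving $\tilde y(T)=y_1$, which gives
\begin{align*}
 T=\frac{y_1-\tilde y_0}{w_0\,y_1}.
\end{align*}
For $\tilde y_0$ in a small neighborhood of $2\alpha/\xi<y_1$ and $w_0\in(0,\beta_3]$ this $T$ is positive and finite, proving well-definedness, and $T$ is $C^1$ by the implicit function theorem (with $\partial_{w_0}T<0$ the only delicate sign, handled by choosing $\beta_3$ small). Substituting yields $\tilde x(T)=\tilde x_0 y_1/\tilde y_0$, $w(T)=w_0 y_1/\tilde y_0$, and crucially
\begin{align*}
 \epsilon_1(T)=\delta\exp\!\left(-\frac{2(y_1-\tilde y_0)}{w_0 y_1}\right)=\mathcal O(e^{-c/w_0})
\end{align*}
for some $c>0$, since $y_1-\tilde y_0$ is bounded below by a positive constant.

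Finally I would invert the normalizing transformation on $\Sigma_1^7$ to recover $(x_+,y_1,w_+,\epsilon_{1+})$ in the original coordinates. Because $\epsilon_{1+}=\mathcal O(e^{-c/w_0})$ is exponentially flat while $\tilde x_+,\tilde y_+=x_+,y_+$ only up to $\mathcal O(w_+\epsilon_{1+})$ corrections, these corrections are absorbed in the stated remainders, producing
\begin{align*}
 x_+=\frac{x y_1}{y}(1+\mathcal O(w)),\qquad w_+=\frac{w y_1}{y}(1+\mathcal O(w)),
\end{align*}
after also accounting for the $\mathcal O(w\epsilon_1)$ difference between $\tilde y_0$ and $y$ (harmless since $\epsilon_1(0)=\delta$ is fixed and the resulting perturbation of $T$ is smooth in the initial data). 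The $C^1$ dependence on $(x,y,w)$ and the bound on the remainders follow from differentiating the explicit formulas and using that $\partial_{w_0} T\cdot w_0^2$ and analogous quantities are uniformly bounded on the relevant compact set, while exponentially small quantities remain exponentially small under one differentiation with respect to the initial data.

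The main technical obstacle is the interplay between the unbounded transition time $T\sim 1/w_0$ and the $C^1$ closeness claimed in the estimates: one must check that differentiating the exponentially small $\epsilon_1(T)$ with respect to $\tilde y_0$ or $w_0$ (which brings down a factor of $T$ or $T/w_0$) still gives a quantity dominated by $e^{-c/w}$ for a slightly smaller $c$; this is the standard trick of shrinking the constant in the exponential, available because $e^{-c/w}\cdot w^{-k}=\mathcal O(e^{-c'/w})$ for any $c'<c$. With this observation the inversion via the implicit function theorem goes through uniformly, completing the proof.
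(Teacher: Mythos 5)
Your proposal is correct and follows essentially the same route as the paper, which proves the lemma exactly by integrating the straightened $(\tilde x,\tilde y,w,\epsilon_1)$-system explicitly and transforming back to the original coordinates via the implicit function theorem. Your version simply spells out the explicit solutions, the transition time $T=(y_1-\tilde y_0)/(w_0y_1)$, and the standard absorption of powers of $1/w$ into the exponential constant, which is consistent with (and a fleshed-out account of) the paper's argument.
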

Define $\gamma_{1,\text{loc}}^7$ by
\begin{align}
 \gamma_{1,\text{loc}}^7 = \left\{(x,y,w,\epsilon_1) \vert \epsilon_1 = w=0,\,x = -\frac{\xi(1+\alpha)}{2\alpha} y,\,y\in \left[\frac{2\alpha}{\xi},\frac{2\alpha(1+\nu)}{\xi}\right]\right\}.\eqlab{gamma17loc}
\end{align}
It is obtained from the reduced problem of \eqref{xyweps1} within $\epsilon_1=0$:
\begin{align}
 \dot x &=x\left(y+\frac{x+1}{\xi}\right),\eqlab{xyReduced}\\
 \dot y&=y\left(y+\frac{x+1}{\xi}\right),\nonumber
\end{align}
using $x(0)=-1-\alpha,y(0)=2\alpha/\xi$, see \eqref{q26}, 
upon desingularization through division by $w$, and subsequently letting $w=0$. See \figref{gamma67}. Then it follows that 
\begin{align*}
 \Pi_1^7(x,y,0,\delta) = \gamma_{1,\text{loc}}^7\cap \Sigma_1^{7}.
\end{align*}
We can extend $\gamma_{1,\text{loc}}^7$ by the forward flow of \eqref{xyReduced} within $\epsilon_1=w=0$. We then have
\begin{lemma}
 Under the forward flow of \eqref{xyReduced} within $\epsilon_1=w=0$, $\gamma_1^7$ is bounded if and only if $\alpha<1$. In the affirmative case, $\gamma_1^7$ is asymptotic to the point $Q^5\in C_\infty$ with
 \begin{align*}
  x=-\frac{1+\alpha}{1-\alpha},\,y=\frac{2\alpha}{\xi (1-\alpha)}.
 \end{align*}
\end{lemma}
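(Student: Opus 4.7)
The plan is to exploit the fact that the planar system \eqref{xyReduced} is a multiple of a linear system: both components of the vector field share the common factor $y + (x+1)/\xi$. Dividing the $\dot y$ equation by the $\dot x$ equation gives $dy/dx = y/x$, so every orbit not meeting the axes lies on a straight line through the origin $y = c\, x$. Using the initial condition $(x,y)(0) = (-1-\alpha,\, 2\alpha/\xi)$ at $Q^4$, the constant is fixed as $c = -2\alpha/(\xi(1+\alpha))$, which recovers the graph parameterization of $\gamma_{1,\mathrm{loc}}^7$ in \eqref{gamma17loc}. Thus the extended $\gamma_1^7$ is confined to the line $x = -\xi(1+\alpha)y/(2\alpha)$, and its boundedness is entirely determined by a single scalar ODE along this line.

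Substituting the relation $y = c x$ into $y + (x+1)/\xi$ yields
\begin{align*}
 y + \tfrac{x+1}{\xi} \;=\; \tfrac{1}{\xi}\!\left(\tfrac{1-\alpha}{1+\alpha}\, x + 1\right),
\end{align*}
so the dynamics along the invariant line reduces to
\begin{align*}
 \dot x \;=\; \tfrac{x}{\xi}\!\left(\tfrac{1-\alpha}{1+\alpha}\, x + 1\right),
\end{align*}
with $x(0) = -1-\alpha < 0$. I would then simply inspect the equilibria and sign of the right-hand side, splitting into the three cases $\alpha<1$, $\alpha = 1$, $\alpha>1$. For $\alpha < 1$ the nonzero equilibrium sits at $x^\ast = -(1+\alpha)/(1-\alpha)$, and since $|x^\ast| > |x(0)|$ and the vector field is strictly negative on $(x^\ast, 0)$, the orbit converges monotonically to $x^\ast$ as $t\to\infty$. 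Translating back via $y = cx$ yields the announced limit $(x,y)\to(-(1+\alpha)/(1-\alpha),\, 2\alpha/(\xi(1-\alpha)))$, which is precisely the point $Q^5$ recorded in \eqref{Q5expr} after applying the coordinate transformation \eqref{phi13xyz} between charts $\phi_3$ and $\phi_1$.

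For $\alpha = 1$ the reduced equation collapses to $\dot x = x/\xi$, giving exponential (hence unbounded) growth of $|x|$. For $\alpha > 1$, one rewrites the equation in terms of $u = |x|$ as $\dot u = u(\tfrac{\alpha-1}{\xi(1+\alpha)} u + \tfrac{1}{\xi})$; the right-hand side is strictly positive and grows quadratically in $u$, so $u$ blows up in finite time by a standard comparison with a Riccati equation. In both of these cases $\gamma_1^7$ is unbounded, which together with the $\alpha<1$ analysis establishes the stated dichotomy.

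There is really no hard step: once the conservation law $y/x = $ const is spotted, everything reduces to a one-dimensional autonomous ODE on a line, whose phase portrait is elementary. The only mild care needed is in matching the limiting endpoint with $Q^5$ as defined in chart $\phi_1$, but this is a direct substitution using \eqref{phi13xyz}.
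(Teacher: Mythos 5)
Your proof is correct, and it supplies exactly the elementary argument the paper leaves implicit (the lemma is stated without proof there): \eqref{xyReduced} is a time-reparametrization of $\dot x = x$, $\dot y = y$, so $y/x$ is conserved, the orbit stays on the ray through $Q^4$ given by \eqref{gamma17loc}, and the dichotomy plus the limit point follow from the sign analysis of the scalar equation along that line, the limit being precisely where the factor $y+(x+1)/\xi$ vanishes, i.e.\ on $C_\infty$. One cosmetic slip: the identification of the limit point with $Q^5$ of \eqref{Q5expr} uses the chart-to-chart change \eqref{phi13} between $\phi_3$ and $\phi_1$, not \eqref{phi13xyz} (which relates the chart coordinates to the original variables); the substitution itself checks out.
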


\begin{figure}[h!]
\begin{center}
{\includegraphics[width=.725\textwidth]{./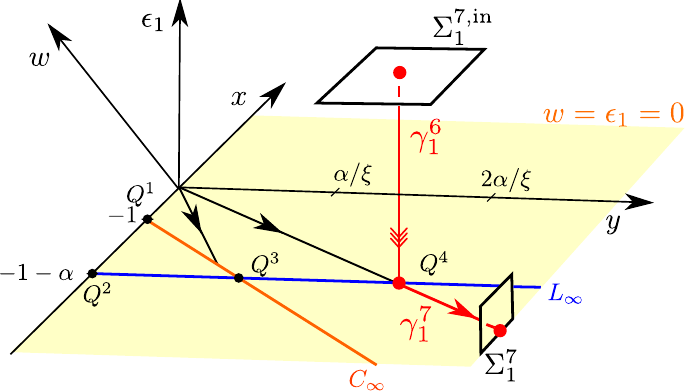}}
\end{center}
\caption{Illustration of the result in \lemmaref{Pi2145}. The set $\{\epsilon_1=0\}$ is normally hyperbolic and attracting within the region $y+(x+1)/\xi>0$. We can desingularize the flow within $\epsilon_1=w=0$ by division by $w$. This produces \eqref{xyReduced} and $\gamma^7_1$ (in red) as the flow of the base point $(x,y,w,\epsilon_1)=(-1-\alpha,2\alpha/\xi,0,0)$ of $\gamma^6_1$ (also in red). The projection of this point onto $(x,y,w)$ is $Q^4$.  }
\figlab{gamma67}
\end{figure}


\section{Analysis in chart $\phi_1$}\seclab{detailsK1}
In this section we describe the dynamics in chart $\phi_1$ using the blowup and the charts presented in \secref{phi1Section}.
\subsection{Chart $(\overline z=1,\overline q= 1,\overline x=1)_{111}$}\seclab{seceqns12111}
In this chart we obtain, using \eqref{Psi11211}, the following equations
\begin{align}
\dot \theta_1 &=\rho_1 w_{11} \theta_1 \left(-1/\xi -\epsilon_{11}\theta_1^2 \rho_1  w_{11} F(\rho_1^{-1} w_{11}^{-1})\right),\eqlab{eqeqns12111}\\
 \dot \rho_1 &=\rho_1 w_{11} G_{111}(\theta_1,\epsilon_{11},\rho_1, w_{11}),\nonumber\\
 \dot w_{11} &=-w_{11}^2 G_{111}(\theta_1,\epsilon_{11},\rho_1, w_{11})+\rho_1 w_{11}^2/\xi,\nonumber\\
 \dot \epsilon_{11} &=\epsilon_{11} \left(-2/\xi -G_{111}(\theta_1,\epsilon_{11},\rho_1 ,w_{11}) w_{11}\right), \nonumber
\end{align}
where
\begin{align*}
 G_{111}(\theta_1,\rho_1, w_{11},\epsilon_{11}) = -\theta_1/\xi -\epsilon_{11} \theta_1 \left(-\xi +\rho_1+\alpha \theta_1\right)+\epsilon_{11} \rho_1 w_{11} F(\rho_1^{-1} w_{11}^{-1})(\xi-\rho_1).
\end{align*}
In these coordinates, $\gamma_1^7$ becomes
\begin{align*}
 \gamma_{111}^7 = \left\{(\theta_1,\rho_1,w_{11},\epsilon_{11}) \vert \rho_1 = \frac{\xi(\alpha-1)}{2\alpha}+\theta_1,\,\theta_1 \in \left(\frac{\xi (1-\alpha)}{2\alpha},\frac{\xi}{2\alpha}\right),\,w_{11}=\epsilon_{11}=0\right\},
\end{align*}
for $\alpha<1$, recall the assumption \eqref{alpha1}. It is asymptotic to the point $q_{11}^8$ with coordinates
\begin{align}
(\theta_1,\rho_1,w_{11},\epsilon_{11}) = \left(\frac{\xi (1-\alpha)}{2\alpha},0,0,0\right).\eqlab{q118}
\end{align}
 Now, we notice that $\{\epsilon_{11}=0,w_{11}\in [0,\beta_5]\}$, with $\beta_5>0$ sufficiently small, is an attracting center manifold. The (center-)stable manifold has a smooth foliation by stable fibers as leaves of the foliation. We can straighten out these fibers through a transformation of the form $(\theta_1,\rho_1,w_{11},\epsilon_{11})\mapsto (\tilde \theta_1,\tilde \rho_1,\tilde w_{11}) = (\theta_1,\rho_1,w_{11}) + \mathcal O(w_{11} \epsilon_{11})$. This gives
 \begin{align}
 \dot \theta_1 &=-\rho_1 w_{11}\theta_1/\xi,\eqlab{Pi7Eqns}\\
\dot \rho_1 &=-\rho_1 w_{11} \theta_1/\xi,\nonumber\\
\dot w_{11} &=w_{11}^2 \left( \theta_1+\rho_1 \right)/\xi,\nonumber
\end{align}
upon dropping the tildes. We see that $w_{11}$ is a common factor and therefore divide this out on the right hand side. This gives
\begin{align*}
 \dot \rho_1 &= -\rho_1 \theta_1/\xi,\\
 \dot w_{11} &= w_{11} \left(\theta_1+\rho_1\right)/\xi,\\
 \dot \theta_1 &=-\rho_1 \theta_1/\xi,
\end{align*}
with respect to the new time. Now, $\rho_1=0$, $w_{11}=0$, $\theta_1>0$ is a line of equilibria. It is normally hyperbolic, being of saddle type. $\gamma_{111}^7$ is contained in the stable manifold within $w_{11}=0$, being asymptotic to the base point $q_{111}^8$ with $\rho_1=0$, $\theta_1 = \frac{\xi (1-\alpha)}{2\alpha}$, recall \eqref{q118}. From this point, there is also a unstable manifold
\begin{align*}
\gamma_{111}^8 = \left\{(\theta_1,\rho_1,w_{11},\epsilon_{11})\vert \rho_1=\epsilon_{11}=0,\,\theta_1= \frac{\xi(1-\alpha)}{2\alpha},\,w_{11}\ge 0\right\}.
\end{align*}
In the following, we work in a neighborhood of the point $q_{111}^8$. Let 
\begin{align*}
 \Sigma_{111}^{8,\text{\text{in}}} &=\left\{(\theta_1,\rho_1,w_{11},\epsilon_{11}) \vert \rho_1 = \delta,\,\theta_1-\frac{\xi (1-\alpha)}{2\alpha}\in [-\beta_1,\beta_1],w_{11} \in [0,\beta_3],\,\epsilon_{11}\in [0,\beta_4]\right\},\\
 \Sigma_{111}^{8,\text{\text{out}}} &=\left\{(\theta_1,\rho_1,w_{11},\epsilon_{11}) \vert w_{11} = \nu,\,\rho_1 \in [0,\beta_5],\,\theta_1-\frac{\xi (1-\alpha)}{2\alpha}\in [-\beta_6,\beta_6],\,\epsilon_{11}\in [0,\beta_4]\right\}, 
\end{align*}
transverse to the flow and $\Pi_{111}^8:\Sigma_{111}^{8,\text{\text{in}}} \rightarrow \Sigma_{111}^{8,\text{\text{out}}}$ the associated mapping obtained by the first intersection of the forward flow. Then we have 
\begin{lemma}\lemmalab{Pi1118}
 $\Pi_{111}^8$ is well-defined for appropriately small $\delta$, $\nu$ and $\beta_i>0$, $i=1,5$. In particular, 
 \begin{align*}
 \Pi_{111}^8(\theta_1,\delta,w_{11},\epsilon_{11}) = (\theta_{1+}(\theta_1,w_{11},\epsilon_{11}),\rho_{1+}(\theta_1,w_{11},\epsilon_{11}),\nu,\epsilon_{11+}(\theta_1,w_{11},\epsilon_{11})),
\end{align*}
where $\rho_{1+}$, $\epsilon_{11+}$ and $\theta_{1+}$ are $C^1$ and satisfy
\begin{align*}
 \theta_{1+}(\theta_1,w_{11},\epsilon_{11}) &=\theta_1-\delta+\mathcal O(w_{11}),\\
 \rho_{1+} (\theta_1,w_{11},\epsilon_{11}) &= \mathcal O(w_{11}),\\
 \epsilon_{11+}(\theta_1,w_{11},\epsilon_{11}) &=\mathcal O(\epsilon_{11}e^{-cw_{11}^{-1}}),
\end{align*}
for some $c>0$ sufficiently small. 
\end{lemma}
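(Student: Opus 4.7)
The plan is to exploit two features of the partially normalized system \eqref{Pi7Eqns}: a hidden conservation law in the desingularized $(\theta_1,\rho_1,w_{11})$-subsystem and a normally hyperbolic saddle structure at $q_{111}^8$. Since the straightening of the stable fibers has already decoupled $\epsilon_{11}$ from the $(\theta_1,\rho_1,w_{11})$-equations at leading order, the $\epsilon_{11}$-component can be tracked separately along the orbits computed from the $(\theta_1,\rho_1,w_{11})$-flow.

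First I would observe that $\dot\theta_1-\dot\rho_1\equiv 0$ in the desingularized $(\theta_1,\rho_1,w_{11})$-subsystem obtained by dividing \eqref{Pi7Eqns} by $w_{11}$, so $\theta_1-\rho_1$ is a conserved quantity regardless of the time parametrization. Along every orbit from $\Sigma_{111}^{8,\text{in}}$ to $\Sigma_{111}^{8,\text{out}}$ this yields $\theta_{1+}=\theta_1-\delta+\rho_{1+}$, reducing the estimate on $\theta_{1+}$ to that on $\rho_{1+}$. The estimate for $\rho_{1+}$ then follows from standard saddle-passage arguments applied to the normally hyperbolic line $\{\rho_1=w_{11}=0,\,\theta_1>0\}$: at $q_{111}^8$ the desingularized linearization has eigenvalues $\pm\theta_1^*/\xi$ with $\theta_1^*:=\xi(1-\alpha)/(2\alpha)>0$, and a zero eigenvalue along the line. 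Approximate integration with Gronwall control of the variational errors (avoiding full smooth linearization on account of the $1{:}{-}1$ resonance) gives $\rho_1(s)\approx \delta e^{-\theta_1^* s/\xi}$ and $w_{11}(s)\approx w_{11}\,e^{\theta_1^* s/\xi}$, an exit time $T=(\xi/\theta_1^*)\log(\nu/w_{11})(1+o(1))$, and hence $\rho_{1+}=\delta w_{11}/\nu\cdot(1+o(1))=\mathcal O(w_{11})$ with $C^1$ dependence on the input data.

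For the $\epsilon_{11}$-estimate, the straightening leaves the $\epsilon_{11}$-equation in the form $\dot\epsilon_{11}=\epsilon_{11}(-2/\xi+\mathcal O(w_{11}))$, which after division by $w_{11}$ becomes $\epsilon_{11}'=\epsilon_{11}(-2/(w_{11}\xi)+\mathcal O(1))$. Integrating along the above trajectory yields
\begin{equation*}
\log\frac{\epsilon_{11+}}{\epsilon_{11}}\;=\;-\frac{2}{\theta_1^*\,w_{11}}\Bigl(1-\frac{w_{11}}{\nu}\Bigr)+\mathcal O(T),
\end{equation*}
and since $\mathcal O(T)=\mathcal O(\log w_{11}^{-1})$ is dominated by $1/w_{11}$, the estimate $\epsilon_{11+}=\mathcal O(\epsilon_{11}\,e^{-c/w_{11}})$ follows for any $0<c<2/\theta_1^*$.

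The main obstacle is precisely this last step: producing $C^1$-sharp estimates across a saddle passage whose transition time diverges like $\log w_{11}^{-1}$ and along which the $\epsilon_{11}$-direction is squeezed at an exponential rate in $w_{11}^{-1}$. I would handle this in the spirit of \cite{kristiansen2017a}, working directly with the partially linearized equations and controlling the variational equations by Gronwall inequalities along the transition, rather than attempting a full smooth linearization that would be obstructed by the $1{:}{-}1$ resonance. The exponential smallness of $\epsilon_{11+}$ in $w_{11}^{-1}$ absorbs the algebraic growth of the variational equations, so that $C^1$ dependence on all three inputs follows by differentiating under the integral.
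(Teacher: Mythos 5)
Your argument is correct and follows essentially the same route as the paper: both work with the fiber-straightened system \eqref{Pi7Eqns}, desingularize by dividing out $w_{11}$, exploit that $\theta_1$ and $\rho_1$ obey the same equation (so $\theta_1-\rho_1$ is conserved), and obtain the $\epsilon_{11}$-estimate from the contraction at rate $\approx -2/\xi$ acting over a passage time of order $w_{11}^{-1}$. The only difference is that the paper integrates the desingularized $(\theta_1,\rho_1,w_{11})$-system exactly, yielding a closed-form implicit transition time $T$, whereas you replace this by Gronwall-controlled saddle asymptotics near the normally hyperbolic line of equilibria; both deliver the stated orders (your intermediate constants, e.g. $\rho_{1+}=\delta w_{11}/\nu(1+o(1))$, differ from the exact ones only by $1+\mathcal O(\delta)$ factors, which is immaterial for the lemma).
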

\begin{proof}
We integrate \eqref{Pi7Eqns} from $\rho_1(0) = \delta$ to $w_{11}(T)=\nu$. This gives
\begin{align*}
 \theta_1(T(\theta_1(0),w_{11}(0))) &=\theta_1(0)-T(\delta_1(0),w_{11}(0)),\\
 \rho_1(T(\theta_1(0),w_{11}(0))) &= \delta-T(\theta_1(0),w_{11}(0)),
\end{align*}
where 
\begin{align*}
 T(\theta_1,w_{11})=\frac{1}{2}(\delta+\theta_1)-\frac12 (\theta_1-\delta)\sqrt{1+\frac{4\delta \theta_1w_{11}}{\nu(\theta_1^2-\delta^2)}}.
\end{align*}
Notice that $T(\theta_1,0) = \delta$. 
Returning to the original variables gives the desired result upon using the exponential contraction towards $\epsilon_{11}=0$. 
\end{proof}
It follows that the image of $\gamma_{111}^7\cap \Sigma_{111}^{8,\text{\text{in}}}$ under $\Pi_{111}^8$ is $\gamma_1^7\cap \Sigma_{111}^{8,\text{\text{out}}}$. See \figref{gamma8}.
\begin{figure}[h!]
\begin{center}
{\includegraphics[width=.525\textwidth]{./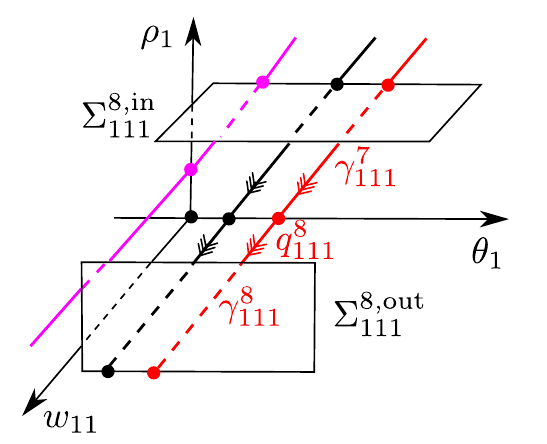}}
\end{center}
\caption{Illustration of the result in \lemmaref{Pi1118} within $\epsilon_{11}=0$. $\rho_1=w_{11}=0$ is a line of equilibria. The purple orbit is for $\alpha>1$, see \appref{alphaGE1}.}
\figlab{gamma8}
\end{figure}
\subsection{Chart $(\overline z=1,\overline q= 1,\overline w_1=1)_{112}$}\seclab{seceqns12112}
In this chart, we obtain the following:
\begin{align}
 \dot \theta_1 &=\rho_2 \theta_1 \left(\rho_2 \theta_1^2 \epsilon_{12} F(\rho_2^{-1})+x_2/\xi\right),\eqlab{eqns112}\\
 \dot \rho_2 &=\rho_2^2 \frac{x_2}{\xi},\nonumber\\
 \dot x_2 &=\rho_2 \epsilon_{12} \theta_1^2 F(\rho_2^{-1})(\xi-\rho_2 x_2)-\theta_1 \epsilon_{12} \left(-\xi +\rho_2 x_2 + \alpha \theta_1\right)-x_2\left(\theta_1 +\rho_2 x_2\right)/\xi,\nonumber\\
 \dot \epsilon_{12} &= -\epsilon_{12} x_2/\xi \left(2+\rho_2\right). \nonumber
\end{align}
In these coordinates, $\gamma_{111}^8$ takes the following form:
\begin{align*}
 \gamma_{112}^8= \left\{(\theta_1,\rho_2,x_2,\epsilon_{12})\vert x_2>0,\,\rho_2=0,\,\theta_1 = \frac{\xi(1-\alpha)}{2\alpha},\,\epsilon_{12}=0\right\},
\end{align*}
using the coordinate change $x_2=w_{11}^{-1}$ between the charts, recall \eqref{Psi11212cc}. The dynamics on $\gamma_{112}^8$ is asymptotic to the point
\begin{align*}
 (\theta_1,\rho_2,x_2,\epsilon_{12}) = \left(\frac{\xi(1-\alpha)}{2\alpha},0,0,0\right).
\end{align*}
This point becomes $Q^5$ upon blowing down, see \eqref{Q5expr}. 
The set defined by $\rho_2 = \epsilon_2 = x_2 = 0,\,\theta_1 \in [0,\infty)$ is a line of equilibria for \eqref{eqns112}. Upon blowing down, using \eqref{Psi11212} and \eqref{theta1Eqn}, it becomes the subset of $C_\infty$, see \eqref{CInftyK1}, with $z\ge 0$.  But within this blowup chart, the linearization about any point on this line now has one single non-zero eigenvalue $-\theta_1/\xi$ for $\theta_1>0$. This produces an extension of the slow manifold as a center manifold by standard center manifold theory in the usual way: 
\begin{proposition}
 Fix a closed interval $I\subset (0,\infty)$. Then there exists a $\delta>0$ and a neighborhood $\mathcal U_{112}$ of $(\rho_2,\epsilon_{12})=0$ in $\mathbb R^2$ such that the following holds. There exists a locally invariant center manifold $M_{112}$ as a graph $$x_2 = \epsilon_{12}\xi^2 \left(1-\xi^{-1} \alpha \theta_1+\epsilon_{12} h_{112}(\epsilon_{12},\rho_2,\theta_1)\right),$$ over $(\theta_1,\rho_2,\epsilon_{12})\in I\times \mathcal U_{112}$. Here $h_{112}$ is a smooth function.
Furthermore, there exists a smooth stable foliation with base $M_{112}$ and one-dimensional fibers as leaves of the foliation. Within $x_2 \in [-\delta,\delta], (\theta_1,\rho_2,\epsilon_{12})\in I\times \mathcal U_{112}$, the contraction along any of these fibers is at least $e^{-c t}$ with $c(I)>0$.
\end{proposition}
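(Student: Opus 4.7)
The plan is to invoke the standard parametrized center-manifold theorem along the line of equilibria identified below, read off the leading order of the invariant graph from the invariance equation, and obtain the stable foliation and contraction estimate from the companion invariant-fibration theorem. Before any of that can be applied I first have to make sure the vector field is actually smooth in a full neighborhood of the equilibrium line. The only potentially singular ingredient in \eqref{eqns112} is $F(\rho_2^{-1}) = 1 - e^{-1/\rho_2}$; however $e^{-1/\rho_2}$ is $C^\infty$-flat at $\rho_2 = 0$, so $F(\rho_2^{-1})$ extends smoothly with value $1$ and each combination $\rho_2^j F(\rho_2^{-1})$ appearing in \eqref{eqns112} is $C^\infty$ across $\rho_2 = 0$. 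Every right-hand side then carries an overall factor of $\rho_2$, $\rho_2^2$ or $\epsilon_{12} x_2$, so the segment $\{(\theta_1, 0, 0, 0) : \theta_1 \in I\}$ is indeed a line of equilibria of the smooth extended system.

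Next I compute the Jacobian at a point $(\theta_1, 0, 0, 0)$ with $\theta_1 \in I$. All partials of $\dot\theta_1$, $\dot\rho_2$ and $\dot\epsilon_{12}$ vanish thanks to the overall factors, and in the $x_2$-row only $\partial_{x_2}\dot x_2 = -\theta_1/\xi$ and $\partial_{\epsilon_{12}}\dot x_2 = \theta_1(\xi - \alpha\theta_1)$ survive. The spectrum is therefore $(0, 0, 0, -\theta_1/\xi)$ with $-\theta_1/\xi \leq -c(I) < 0$ uniformly over the compact interval $I$. The parametrized center-manifold theorem (treating $\theta_1 \in I$ as a parameter, or equivalently viewing the equilibrium segment as a normally hyperbolic invariant manifold) produces a $C^k$-smooth locally invariant graph $x_2 = \phi(\theta_1, \rho_2, \epsilon_{12})$ with $\phi|_{\rho_2 = \epsilon_{12} = 0} = 0$ on some $I \times \mathcal U_{112}$, and the standard invariant-fibration theorem \cite{fen1,jones_1995} supplies the $C^k$ one-dimensional strongly stable foliation with the stated $e^{-ct}$ contraction.

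To extract the leading form of $\phi$, I plug $x_2 = \phi$ into the invariance relation
\begin{align*}
\dot x_2\bigl|_{x_2 = \phi} = \partial_{\theta_1}\phi \cdot \dot\theta_1 + \partial_{\rho_2}\phi \cdot \dot\rho_2 + \partial_{\epsilon_{12}}\phi \cdot \dot\epsilon_{12}.
\end{align*}
Since $\phi$ vanishes on $\{\rho_2 = \epsilon_{12} = 0\}$ and each of $\dot\rho_2$, $\dot\epsilon_{12}$, and (via $x_2 = \phi$) $\dot\theta_1$ carries an extra power of $\epsilon_{12}$ on top of the $O(\epsilon_{12})$ coming from the corresponding partial of $\phi$, the right-hand side is $O(\epsilon_{12}^2)$. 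Expanding the left-hand side to first order in $\epsilon_{12}$ gives $\theta_1(\xi - \alpha\theta_1)\epsilon_{12} - \phi\,\theta_1/\xi = O(\epsilon_{12}^2)$, whence $\phi = \epsilon_{12}\xi^2(1 - \xi^{-1}\alpha\theta_1) + O(\epsilon_{12}^2)$; writing the $O(\epsilon_{12}^2)$ remainder as $\epsilon_{12}^2\xi^2 h_{112}(\epsilon_{12}, \rho_2, \theta_1)$ with $h_{112}$ smooth (by the $C^k$-smoothness of the center manifold in all its arguments) yields the stated expression. The only place requiring real care in the proof is the preliminary smoothness check for $F(\rho_2^{-1})$ through the boundary $\rho_2 = 0$; once that is in hand the statement reduces to standard center-manifold machinery.
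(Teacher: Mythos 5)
Your overall route coincides with the paper's: the proposition is obtained there by exactly the computation you perform --- the set $\{x_2=\epsilon_{12}=0\}$ consists of equilibria of \eqref{eqns112}, the linearization has the single nonzero eigenvalue $-\theta_1/\xi$, which is bounded away from zero for $\theta_1$ in the compact interval $I$, and the graph, the stable foliation and the uniform rate $c(I)$ then follow from standard center-manifold and invariant-foliation theory with constants uniform over $I$ by compactness. Your preliminary check that $F(\rho_2^{-1})$ extends smoothly across $\rho_2=0$ is a worthwhile addition. (Your phrasing ``treating $\theta_1$ as a parameter, or equivalently viewing the equilibrium segment as a normally hyperbolic invariant manifold'' is loose --- $\theta_1$ is dynamic and the segment is not normally hyperbolic, since the transverse directions $\rho_2,\epsilon_{12}$ carry zero eigenvalues --- but the intended application of the center-manifold theorem along a compact curve of partially hyperbolic equilibria is the standard one and is what the paper uses.)

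The step that extracts the coefficient, however, is incorrect as written. In $\dot x_2$ of \eqref{eqns112} the term $\rho_2\epsilon_{12}\theta_1^2F(\rho_2^{-1})(\xi-\rho_2x_2)$ is of \emph{first} order in $\epsilon_{12}$: it is small only in $\rho_2$, and $\rho_2$ ranges over a full neighborhood of $0$, so it cannot be discarded into the $\mathcal O(\epsilon_{12}^2)$ remainder. Keeping it, the order-$\epsilon_{12}$ balance of the invariance equation gives $\partial_{\epsilon_{12}}\phi(\theta_1,\rho_2,0)=\xi^2\bigl(1-\xi^{-1}\alpha\theta_1+\rho_2\theta_1F(\rho_2^{-1})\bigr)$, and the same value is forced by tangency of the center manifold to the center eigenspace at the equilibria $(\theta_1,\rho_2,0,0)$ with $\rho_2>0$, where $\partial_{\epsilon_{12}}\dot x_2=\xi\rho_2\theta_1^2F(\rho_2^{-1})+\theta_1(\xi-\alpha\theta_1)$. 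Hence the first-order coefficient is not independent of $\rho_2$, and the correction cannot be absorbed into a term of the form $\epsilon_{12}^2\xi^2h_{112}$ as you claim; one must either display the extra $\rho_2\theta_1F(\rho_2^{-1})$ inside the bracket or weaken the remainder there to $\mathcal O(\rho_2+\epsilon_{12})$. (This is harmless downstream: the $F$-contribution is precisely the term the paper carries explicitly in the reduced flow \eqref{reduced8}, so no later estimate changes, but a proof of the stated expansion must account for it rather than silently drop it.) Relatedly, your justification that the fibre-derivative terms on the right-hand side of the invariance relation are $\mathcal O(\epsilon_{12}^2)$ uses only that $\phi$ vanishes on the line $\{\rho_2=\epsilon_{12}=0\}$, which yields merely $\phi=\mathcal O(\rho_2+\epsilon_{12})$ and is too weak for your bookkeeping; what you need --- and what is true, because the whole plane $\{x_2=\epsilon_{12}=0\}$ consists of equilibria and therefore lies in the center manifold --- is that $\phi\equiv0$ on $\{\epsilon_{12}=0\}$, so that $\phi=\epsilon_{12}\psi$ with $\psi$ smooth.
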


The reduced problem on $M_{112}$ is 
\begin{align}
\dot \theta_1 &=-\rho_2 \theta_1 \left(1+\frac{\rho_2\theta_1^2 F(\rho_2^{-1})}{\xi (1-\xi^{-1} \alpha \theta_1+\epsilon_{12} h_{112}(\epsilon_{12},\rho_2,\theta_1))}\right),\eqlab{reduced8}\\
\dot \rho_2 &= \rho_2^2,\nonumber\\
\dot \epsilon_{12} &=-\epsilon_{12} \left(2+\rho_2\right),\nonumber
\end{align}
after division of the right hand side by $x_2/\xi$. Notice that this quantity is positive for $\theta_1$ sufficiently small and $\epsilon_{12}>0$ sufficiently small. For $\epsilon_{12}=0$, \eqref{reduced8}, after division by $\rho_2$ on the right hand side, therefore provides a desingularized system on the center manifold, w we shall study in the following.  Within $\epsilon_{12}=\rho_2=0$, we therefore see that $\theta_1$ is decreasing and hence we put
\begin{align*}
 \gamma_{112,\textnormal{loc}}^9 = \left\{(\theta_1,\rho_2,x_2,\epsilon_{12})\vert \theta_1 \in \left[\nu, \frac{\xi(1-\alpha)}{2\alpha}\right],x_1 = \epsilon_{12}=\rho_2=0\right\},
\end{align*}
and consider the sections
\begin{align*}
\Sigma_{112}^{9,\text{in}} &= \left\{(\theta_1,\rho_2,x_2,\epsilon_{12})\vert x_2 = \delta,\, \rho_2 \in[0,\beta_1],\,\epsilon_{12} \in [0,\beta_2],\theta_1-\frac{\xi(1-\alpha)}{2\alpha}\in [-\beta_3,\beta_3]\right\},\\
\Sigma_{112}^{9,\text{out}} &= \left\{(\theta_1,\rho_2,x_2,\epsilon_{12})\vert \theta_1 = \nu,\, \rho_2 \in[0,\beta_4],\,\epsilon_{12} \in [0,\beta_2],x_2\in [-\beta_5,\beta_5]\right\},
\end{align*}
and let $\Pi_{112}^9:\Sigma_{112}^{9,\text{in}}\rightarrow \Sigma_{112}^{9,\text{out}}$ be the associated mapping obtained by the first intersection of the forward flow. We then have
\begin{lemma}\lemmalab{Pi1129}
 The mapping $\Pi_{112}^9$ is well-defined for appropriately small $\delta>0,\,\nu>0$ and $\beta_i>0$, $i=1,\ldots,5$. In particular,
 \begin{align*}
  \Pi_{112}^9(\theta_1,\rho_2,\nu,\epsilon_{12})=\left(\nu,\rho_{2+}(\rho_2,\theta_1,\epsilon_{12}),x_{2+}(\rho_2,\theta_1,\epsilon_{12}),\epsilon_{12+}(\rho_2,\theta_1,\epsilon_{12})\right),
 \end{align*}
with each $\rho_{2+},\,x_{2+},\,\epsilon_{12+}$ being smooth and satisfying
\begin{align*}
 \rho_{2+}(\rho_2,\theta_1,\epsilon_{12})& = \mathcal O(\rho_2),\\
 x_{2+}(\rho_2,\theta_1,\epsilon_{12}) &=\mathcal O(\epsilon_{12} e^{-c\rho_2^{-1}}),\\
 \epsilon_{1+}(\rho_2,\theta_1,\epsilon_{12}) &=\mathcal O(\epsilon_{12} e^{-c\rho_2^{-1}}),
\end{align*}
for $c>0$ sufficiently small. 
\end{lemma}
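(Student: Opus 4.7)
The plan is to reduce the dynamics in a small neighborhood of $M_{112}$ to the slow flow on $M_{112}$ modulo an exponentially small off-manifold contraction, and then to integrate the resulting desingularized system \eqref{reduced8} in a further time rescaling where every estimate becomes elementary.

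First, using the smooth stable foliation of $M_{112}$ with uniformly contracting one-dimensional fibers, introduce local coordinates $(\theta_1,\rho_2,\tilde x_2,\epsilon_{12})$ with $\tilde x_2 = x_2 - \epsilon_{12}\xi^2(1-\xi^{-1}\alpha\theta_1+\epsilon_{12}h_{112}(\epsilon_{12},\rho_2,\theta_1))$ measuring the signed distance to $M_{112}$ along its stable fibers. In these coordinates the $(\theta_1,\rho_2,\epsilon_{12})$-equations decouple from $\tilde x_2$ and reduce, after dividing by the positive factor $x_2/\xi$, to \eqref{reduced8}; $\tilde x_2$ satisfies a scalar linear ODE whose eigenvalue is at most a negative constant times $x_2/\xi$ in the $\tau$-time of \eqref{reduced8}. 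This step is the standard Fenichel straightening and costs only smoothness.

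Second, analyze \eqref{reduced8} via a further time rescaling $ds=\rho_2\,d\tau$. In the $s$-time the system reads
\begin{align*}
\frac{d\theta_1}{ds} &= -\theta_1\bigl(1+O(\rho_2)\bigr),\\
\frac{d\rho_2}{ds} &= \rho_2,\\
\frac{d\epsilon_{12}}{ds} &= -\epsilon_{12}\bigl(2\rho_2^{-1}+1\bigr).
\end{align*}
The $\rho_2$-equation integrates to $\rho_2(s)=\rho_2(0)e^{s}$, and the $\theta_1$-equation is then a scalar ODE with leading coefficient $-1$, so that $\theta_1$ reaches $\nu$ at a uniformly bounded $s$-time $S=\log(\theta_1(0)/\nu)+O(\rho_2(0))$. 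This immediately yields $\rho_{2+}=\rho_2(0)e^{S}=O(\rho_2(0))$. Integrating the $\epsilon_{12}$-equation gives
\[
\epsilon_{12}(S)=\epsilon_{12}(0)\exp\!\left(-\frac{2}{\rho_2(0)}\bigl(1-e^{-S}\bigr)-S\right)=O\!\left(\epsilon_{12}(0)e^{-c/\rho_2(0)}\right)
\]
for any $0<c<2(1-\nu/\theta_1(0))$. Because $x_2 = \epsilon_{12}\xi^2(1-\xi^{-1}\alpha\theta_1+O(\epsilon_{12}))$ on $M_{112}$, the same exponentially small bound transfers to $x_{2+}$.

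Finally, the off-manifold correction $\tilde x_2$ contracts at a uniform exponential rate in the original time $t$, and the total original time over the transition satisfies $dt=d\tau/(x_2/\xi)=ds/(\rho_2\,x_2/\xi)$ with $x_2/\xi=O(\epsilon_{12})$, so that $t$ is at least of order $\rho_2(0)^{-1}\epsilon_{12}(0)^{-1}$. Hence $\tilde x_2$ decays strictly faster than $e^{-c/\rho_2(0)}$ and can be absorbed into the stated $x_{2+}$-estimate. Smoothness of $\Pi_{112}^9$ follows from smoothness of the flow together with transversality of the vector field to $\{\theta_1=\nu\}$ along $\gamma_{112,\mathrm{loc}}^9$, which holds because $d\theta_1/ds$ is bounded away from zero by the first equation above. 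The principal technical delicacy is to verify that the two sources of exponential smallness, namely the on-manifold decay of $\epsilon_{12}$ and the Fenichel off-manifold contraction, admit a common uniform rate $c>0$; this follows from comparing the two time rescalings explicitly and noting that the flat term $F(\rho_2^{-1})=1-e^{-\rho_2^{-1}}$ in \eqref{reduced8} contributes only harmless exponentially small corrections as $\rho_2\to 0$.
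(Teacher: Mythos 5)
Your proposal is correct and follows essentially the same route as the paper: reduction to the center manifold $M_{112}$ via its stable foliation, desingularization of \eqref{reduced8} by the further rescaling $ds=\rho_2\,d\tau$ (the paper's division by $\rho_2$), an $\mathcal O(1)$ transition time for $\theta_1$ to reach $\nu$, and explicit integration of the $\epsilon_{12}$-equation yielding the $e^{-c\rho_2^{-1}}$ factors, with the off-manifold Fenichel contraction absorbed at the end — exactly the computation the paper compresses into ``we now work our way backwards.'' The only nitpick is the phrasing about $F(\rho_2^{-1})=1-e^{-\rho_2^{-1}}$: it tends to $1$ (its deviation from $1$ is the exponentially small part), but since the whole term it multiplies is uniformly $\mathcal O(\rho_2)$ this does not affect your argument.
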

\begin{proof}
 We consider the reduced problem \eqref{reduced8}. Here $\epsilon_2=0,\rho_2\in [0,\beta_1]$ is an attracting center manifold with smooth foliation by stable fibers. We straighten out the fibers $(\theta_1,\rho_2,\epsilon_{12})\mapsto \tilde \theta_1 = \theta_1+ \mathcal O(\rho_1^2 \theta_1^3\epsilon_{12})$ and obtain the following
 \begin{align*}
  \dot \rho_2 &= \rho_2,\nonumber\\
\dot \theta_1 &=-\theta_1 \left(1+\frac{\rho_2\theta_1^2 F(\rho_2^{-1})}{\xi (1-\xi^{-1} \alpha \theta_1)}\right),\nonumber
 \end{align*}
after dropping the tildes. On this time scale, the mapping from $\Sigma_{112}^{9,\text{in}}$ to $\Sigma_{112}^{9,\text{out}}$ takes $\mathcal O(1)$ time. We now work our way backwards and obtain the desired result. 
\end{proof}
See illustration in \figref{gamma9}.

\begin{figure}[h!]
\begin{center}
{\includegraphics[width=.5 \textwidth]{./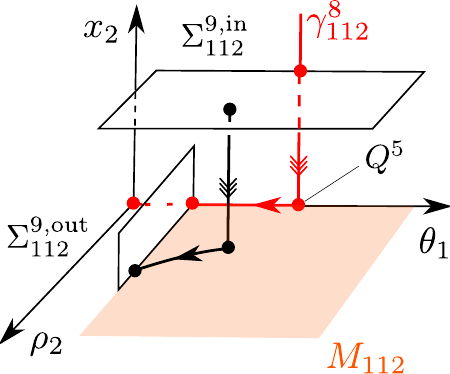}}
\end{center}
\caption{Illustration of the result in \lemmaref{Pi1129} within $\epsilon_{11}=0$. }
\figlab{gamma9}
\end{figure}
%

\subsection{Chart $(\overline z=1,\overline q= 1,\overline w_1=1,\overline \theta_1 = 1)_{1121}$}\seclab{seceqns121121}
In this chart we obtain
\begin{align}
\dot \rho_2 &=\rho_2^2 \frac{x_{21}}{\xi},\eqlab{eqeqns121121}\\
 \dot \varrho_1 &=-\varrho_1^2 \left(\rho_2 x_{21}/\xi+\varrho_1^2 \rho_2^2 \epsilon_{121}F(\rho_2^{-1})\right),\nonumber\\
 \dot x_{21} &=\rho_2 \varrho_1 \epsilon_{121}F(\rho_2^{-1})\xi - \epsilon_{121}\left(-\xi+\rho_2 \varrho_1x_{21} +\alpha \theta_1\right)-x_{21}/\xi,\nonumber\\
 \dot \epsilon_{121}&=-\epsilon_{121}\left(2x_{21}/\xi -\varrho_1^2\rho_2 \epsilon_{12}F(\rho_2^{-1})\right),\nonumber
\end{align}
from \eqref{K1BarZ1Ext} using \eqref{finalChartBarZ1K1}. 
Here $x_{21}=\epsilon_{121}=\rho_2=0,\varrho_1\in [0,\delta]$ is a line of equilibria but now the linearization gives on single non-zero eigenvalue $-1/\xi$ for any $\varrho_1\ge 0$. This provides an extension of the center manifold $M_{112}$ as follows.
\begin{proposition}
 Fix $\eta\in (0,1)$ and a  neighborhood $\mathcal U_{1121}$ of $(\varrho_1, \rho_2,\epsilon_{121})=0$ in $\mathbb R^3$ such that the following holds. There exists a locally invariant center manifold $M_{1121}$ as a graph $$x_{21} = \epsilon_{121}\xi^2 \left(1-\xi^{-1} \alpha \varrho_1+\epsilon_{121} h_{21}(\epsilon_{121},\rho_2,\varrho_1)\right),$$ over $(\varrho_1, \rho_2,\epsilon_{121})\in \mathcal U_{1121}$. Here $h_{21}$ is a smooth function.
Furthermore, there exists a smooth stable foliation with base $M_{1121}$ and one-dimensional fibers as leaves of the foliation. Within $x_{21}\in [-\delta,\delta],\,(\varrho_1, \rho_2,\epsilon_{121})\in \mathcal U_{1121}$, the contraction along any of these fibers is at least $e^{-\eta/\xi t}$.
\end{proposition}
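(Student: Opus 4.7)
The plan is to apply the standard center manifold theorem pointwise along the line $\mathcal{L} = \{(\varrho_1,\rho_2,x_{21},\epsilon_{121}) : \rho_2 = x_{21} = \epsilon_{121} = 0,\; \varrho_1 \ge 0\}$ of equilibria of \eqref{eqeqns121121}. First I would linearize at an arbitrary base point $(\varrho_1^*,0,0,0) \in \mathcal{L}$. A direct computation of the Jacobian of \eqref{eqeqns121121} shows that the $x_{21}$-equation contributes a single nonzero eigenvalue $-1/\xi$, while the three directions $(\varrho_1,\rho_2,\epsilon_{121})$ carry only zero eigenvalues. Since the nonzero eigenvalue is strictly negative and uniform in $\varrho_1^*$, the standard center manifold theorem (as in the Fenichel formulation used elsewhere in the paper) yields a locally invariant, $C^k$-smooth graph $x_{21} = H(\varrho_1,\rho_2,\epsilon_{121})$ on a neighborhood $\mathcal U_{1121}$, for any fixed $k \ge 1$.

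To obtain the stated asymptotic form, I would substitute $x_{21} = H$ into the invariance equation $\dot x_{21} = H_{\varrho_1}\dot \varrho_1 + H_{\rho_2}\dot \rho_2 + H_{\epsilon_{121}}\dot \epsilon_{121}$ and solve order by order in $\epsilon_{121}$. The $\epsilon_{121}^0$ coefficient is forced to vanish since $\mathcal L$ lies in $\{x_{21}=0\}$. At $\rho_2 = 0$ the $\epsilon_{121}^1$ coefficient satisfies the algebraic balance
\begin{equation*}
\xi^{-1}\,\partial_{\epsilon_{121}} H\big|_0 = \xi - \alpha \varrho_1,
\end{equation*}
which gives $\partial_{\epsilon_{121}} H|_0 = \xi^2(1 - \xi^{-1}\alpha \varrho_1)$, in agreement with the stated formula. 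The $\rho_2$-dependence and higher-order contributions in $\epsilon_{121}$ are absorbed into the smooth remainder $\epsilon_{121}^2 h_{21}(\epsilon_{121},\rho_2,\varrho_1)$. The only subtlety here, compared with earlier center-manifold computations in the paper, is the presence of the term $\rho_2\varrho_1\epsilon_{121}F(\rho_2^{-1})\xi$ in $\dot x_{21}$; but since $F(\rho_2^{-1}) = 1 - e^{-1/\rho_2}$ is bounded and smooth for $\rho_2 > 0$ and extends continuously by $F(\rho_2^{-1}) \to 1$ as $\rho_2 \to 0^+$, its contribution is $C^k$-bounded and can be absorbed into $h_{21}$ after a routine smoothing argument.

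The smooth stable foliation then follows from the standard invariant foliation theorem: the spectral gap between $-1/\xi$ and the three zero center eigenvalues is uniform along $\mathcal L$, so for any $\eta \in (0,1)$ one obtains, after possibly shrinking $\mathcal U_{1121}$, a $C^k$-smooth foliation of a neighborhood of $M_{1121}$ by one-dimensional strong-stable fibers contracting at rate at least $e^{-\eta t/\xi}$. Finally, I would observe that on the overlap with chart $(\bar z=1,\bar q=1,\bar w_1 = 1)_{112}$, both $M_{112}$ and $M_{1121}$ satisfy the same invariance equation in their respective coordinates, and the coordinate change \eqref{finalChartBarZ1K1cc} carries the expansion of $M_{112}$ into that of $M_{1121}$ term by term; hence $M_{1121}$ may be chosen as an extension of $M_{112}$, as asserted. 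No step in this argument is a serious obstacle — the main item to verify is just that the exponentially small $F(\rho_2^{-1})$ term does not interfere with the smoothness of $h_{21}$, which follows from its flatness at $\rho_2 = 0$.
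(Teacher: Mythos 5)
Your overall strategy is the same as the paper's: the proposition is not given a separate proof there, but is asserted as an application of standard center manifold theory to the set of equilibria of \eqref{eqeqns121121} with the single nonzero eigenvalue $-1/\xi$, and your linearization, your use of the invariance equation to identify the leading term, and your appeal to the standard invariant foliation theorem (with the uniform spectral gap giving the rate $e^{-\eta t/\xi}$) is exactly the intended argument. One small imprecision: the vanishing of the graph at $\epsilon_{121}=0$ is not forced by $\mathcal L\subset\{x_{21}=0\}$ alone (that only controls $\rho_2=0$); what forces $H(\varrho_1,\rho_2,0)\equiv 0$ for every choice of local center manifold is that the whole plane $\{x_{21}=\epsilon_{121}=0\}$ consists of equilibria of \eqref{eqeqns121121} and therefore lies in every local center manifold.

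The substantive problem is your handling of the term $\rho_2\varrho_1\epsilon_{121}F(\rho_2^{-1})\xi$. Its contribution is \emph{not} of higher order in $\epsilon_{121}$ and cannot be absorbed into $\epsilon_{121}^{2}h_{21}$: carrying out your own order-$\epsilon_{121}$ balance at general small $\rho_2$ (equivalently, computing the generalized kernel of the linearization at an equilibrium $(\varrho_1,\rho_2,0,0)$, which pins down the tangent plane of every center manifold there) gives
\[
\partial_{\epsilon_{121}}H(\varrho_1,\rho_2,0)=\xi^{2}\left(1-\xi^{-1}\alpha\varrho_1+\rho_2\varrho_1F(\rho_2^{-1})\right),
\]
so the $F$-term enters the graph at order $\epsilon_{121}\rho_2\varrho_1$. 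Flatness or boundedness of $F(\rho_2^{-1})$ at $\rho_2=0$ is beside the point: the obstruction is the order in $\epsilon_{121}$, not smoothness, so the proposed ``routine smoothing argument'' would fail. The correct conclusion is either to include $\rho_2\varrho_1F(\rho_2^{-1})$ explicitly inside the bracket or to state the remainder as $\mathcal O\bigl(\epsilon_{121}(\rho_2+\epsilon_{121})\bigr)$. This looseness is in fact already present in the statement you were asked to prove (and in its analogue for $M_{112}$), and it is harmless for everything downstream, since the graph only enters the reduced flow \eqref{reduced821} through factors that are $1+o(1)$ for $(\varrho_1,\rho_2,\epsilon_{121})$ small; but as a proof step your absorption claim is wrong as written and needs one of these two fixes.
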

The reduced problem on $M_{1121}$ is
\begin{align}
\dot \rho_2 &= \rho_2^2,\eqlab{reduced821}\\
\dot \varrho_1 &=-\rho_2 \varrho_1 \left(1+\frac{\rho_2\varrho_1^2 F(\rho_2^{-1})}{\xi (1-\xi^{-1} \alpha \varrho_1+\epsilon_{121} h_{21}(\epsilon_{121},\rho_2,\varrho_1))}\right),\nonumber\\
\dot \epsilon_{121} &=-\epsilon_{121} \left(2-\frac{\varrho_1^2 \rho_2 F(\rho_2^{-1})}{\xi (1-\xi^{-1} \alpha \varrho_1+\epsilon_{121} h_{21}(\epsilon_{121},\rho_2,\varrho_1))}\right).\nonumber
\end{align}
Here $\epsilon_{121}=0,\,\rho_2\in [0,\beta_1]$ is a center manifold with smooth foliation by stable fibers. We straighten out these fibers by a transformation of the form $(\rho_2,\varrho_1,\epsilon_{121})\mapsto \tilde \varrho_1 = \varrho_1+\mathcal O(\rho_2^2 \varrho_1^3\epsilon_{121})$ such that 
\begin{align}
\dot \rho_2 &= \rho_2,\eqlab{reduced821New}\\
\dot \varrho_1 &=- \varrho_1 \left(1+\frac{\rho_2\varrho_1^2 F(\rho_2^{-1})}{\xi (1-\xi^{-1} \alpha \varrho_1)}\right),\nonumber
\end{align}
after dropping the tilde, and dividing the right hand side by $\rho_2$. 
In these coordinates, $\gamma_{112,\textnormal{loc}}^9$ therefore becomes
\begin{align*}
 \gamma_{1121}^9 = \left\{(\rho_2,\varrho_1,x_{21},\epsilon_{121})\vert x_{21}= \epsilon_{121}=\rho_2=0,\,\varrho_1\in \left(0, \frac{\xi(1-\alpha)}{2\alpha}\right]\right\},
\end{align*}
upon using the flow of \eqref{reduced821New} to extend the forward orbit. It is asymptotic to $x_{21} =\epsilon_{121}=\rho_2=\varrho_1=0$ and becomes $\gamma^9$ in \eqref{gamma91} upon blowing down using \eqref{finalChartBarZ1K1}. From \eqref{reduced821New}, we have an unstable manifold
\begin{align*}
 \gamma_{1121,\textnormal{loc}}^{10} =\left\{(\rho_2,\varrho_1,x_{21},\epsilon_{121})\vert x_{21}= \epsilon_{121}=\varrho_1=0,\,\rho_2 \in [0,\nu]\right\},
\end{align*}
with $\nu>0$ sufficiently small. We therefore consider the following sections 
\begin{align*}
 \Sigma_{1121}^{10,\text{in}} &=\left\{(\rho_2,\varrho_1,x_{21},\epsilon_{121})\vert \varrho_1 = \delta,\rho_2\in [0,\beta_1],\,x_{21} \in (\beta_2],\,\epsilon_{121}\in [0,\beta_3]\right\},\\
 \Sigma_{1121}^{10,\text{out}} &=\left\{(\rho_2,\varrho_1,x_{21},\epsilon_{121})\vert \rho_2 = \nu,\,\varrho_1 \in [0,\beta_4],\rho_2\in [0,\beta_1],\,x_{21} \in [-\beta_2,\beta_2],\,\epsilon_{121}\in [0,\beta_3]\right\}.
\end{align*}
transverse to $\gamma_{1121}^9$ and $\gamma_{1121}^{10}$, respectively.
We let $\Pi_{1121}^{10}$ be the associated mapping obtained by the first intersection of the forward flow.
\begin{lemma}\lemmalab{Pi112110}
 $\Pi_{1121}^{10}$ is well-defined for appropriately small $\delta>0$, $\nu>0$ and $\beta_i>0$, $i=1,\ldots,4$. In particular, 
\begin{align*}
  \Pi_{1121}^{10}(\rho_2,x_{21},\nu,\epsilon_{121})=\left(\nu,x_{21+}(\rho_2,x_{21},\epsilon_{121}),\varrho_{1+}(\rho_2,x_{21},\epsilon_{121}),\epsilon_{121+}(\rho_2,\theta_1,\epsilon_{12})\right),
 \end{align*}
with each $x_{21+},\,\varrho_{1+},\,\epsilon_{121+}$ being $C^1$ and satisfying
\begin{align*}
 x_{21+}(\rho_2,\theta_1,\epsilon_{12}) &=\mathcal O(\epsilon_{121} e^{-c\rho_2^{-1}}),\\
 \varrho_{1+}(\rho_2,x_{21},\epsilon_{121})&=\mathcal O(\rho_2),\\
 \epsilon_{1+}(\rho_2,\theta_1,\epsilon_{12}) &=\mathcal O(\epsilon_{121} e^{-c\rho_2^{-1}}),
\end{align*}
for some $c>0$ sufficiently small.
\end{lemma}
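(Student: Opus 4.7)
\bigskip
\noindent\textbf{Proof plan.}
The approach will mirror the proof of \lemmaref{Pi1129}, since the local structure in this chart is essentially a ``next-order'' refinement of the same picture. I will first use the center manifold $M_{1121}$ to reduce the problem to $2+1$ slow/center variables, and then use a decoupling argument together with an explicit integration of the reduced system. The key invariant is that $x_{21}$ equilibrates exponentially fast to the graph $x_{21}=\epsilon_{121}\xi^2(1-\xi^{-1}\alpha\varrho_1+\epsilon_{121}h_{21})$, while on $M_{1121}$ the triple $(\rho_2,\varrho_1,\epsilon_{121})$ evolves according to \eqref{reduced821}.

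The first step is to straighten the strong-stable foliation of $M_{1121}$ by a smooth near-identity transformation of the form $(\varrho_1,\rho_2,x_{21},\epsilon_{121})\mapsto(\varrho_1,\rho_2,\tilde x_{21},\epsilon_{121})$ with $\tilde x_{21}=x_{21}-\epsilon_{121}\xi^2(1-\xi^{-1}\alpha\varrho_1+\mathcal O(\epsilon_{121}))$. In the new variable one has $\dot{\tilde x}_{21}=-\tilde x_{21}/\xi+$ terms vanishing on $M_{1121}$, so that $\tilde x_{21}(T)=\tilde x_{21}(0)\exp(-T/\xi)(1+o(1))$ along any orbit of duration $T$. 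The second step is to repeat the trick of \lemmaref{Pi1129} on $M_{1121}$ itself: the set $\{\epsilon_{121}=0\}$ is an attracting center submanifold with smooth stable foliation, and a further near-identity transformation $(\rho_2,\varrho_1,\epsilon_{121})\mapsto \tilde\varrho_1=\varrho_1+\mathcal O(\rho_2^2\varrho_1^3\epsilon_{121})$ removes the $\epsilon_{121}$-dependence from the $(\rho_2,\varrho_1)$-equations, leaving exactly \eqref{reduced821New} after division by $\rho_2$.

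The third step is to integrate the decoupled normal form explicitly. From $\dot\rho_2=\rho_2^2$ the transition time from $\rho_2(0)=\rho_{2,0}$ to $\rho_2(T)=\nu$ is $T=1/\rho_{2,0}-1/\nu$, which is of order $1/\rho_{2,0}$. The $\tilde\varrho_1$-equation yields $\tilde\varrho_1(T)=\delta(1-\rho_{2,0}T)(1+o(1))=\delta\rho_{2,0}/\nu\cdot(1+o(1))=\mathcal O(\rho_{2,0})$ as claimed, while the $\epsilon_{121}$-equation $\dot\epsilon_{121}=-\epsilon_{121}(2+\mathcal O(\rho_2))$ integrates to $\epsilon_{121}(T)=\epsilon_{121,0}\exp(-2T)(1+o(1))=\mathcal O(\epsilon_{121,0}\exp(-c/\rho_{2,0}))$ for any $c<2$. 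Inserting these into the center-manifold expression for $x_{21}$ and adding the off-manifold contribution $\mathcal O(\exp(-\eta T/\xi))$ from the strong-stable contraction gives the required estimate for $x_{21+}$ with $c>0$ chosen sufficiently small to absorb both contributions.

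The last step is to transform the estimates back to the original $(\varrho_1,\rho_2,x_{21},\epsilon_{121})$-coordinates using the inverses of the two near-identity transformations, both of which are $C^1$-close to the identity and so preserve the orders of all remainder terms (and their partial derivatives) by the chain rule. The main technical bookkeeping — and what I expect to be the only genuine obstacle — is verifying that the error terms in both normal-form reductions remain $C^1$-small along orbits of length $T\sim 1/\rho_{2,0}$, which requires a Gronwall argument using that the bad terms carry a factor $\rho_2^2\varrho_1^3\epsilon_{121}$ that is itself exponentially small in the variation-of-constants integral, exactly as in the analogous step of \lemmaref{Pi1129}.
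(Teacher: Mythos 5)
Your reduction (restrict to the center manifold $M_{1121}$, straighten its stable foliation and the $\epsilon_{121}$-fibers, then treat the resulting planar passage) parallels the paper's argument, which differs only in that it $C^1$-linearizes the $(\rho_2,\varrho_1)$-saddle of \eqref{reduced821New} instead of integrating with Gronwall; the estimates for $\varrho_{1+}$ and $\epsilon_{121+}$ come out the same either way. The genuine problem is in your estimate of the off-manifold contribution to $x_{21+}$. The strong-stable contraction rate $e^{-\eta t/\xi}$ toward $M_{1121}$ is a statement in the time of \eqref{eqeqns121121}, whereas your transition time $T=1/\rho_{2,0}-1/\nu$ is measured in the desingularized time of \eqref{reduced821}, obtained after dividing the right-hand side by $x_{21}/\xi\approx \xi\epsilon_{121}$. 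These are not the same clock: one unit of desingularized time corresponds to roughly $1/(\xi^2\epsilon_{121})$ units of the time in which the fiber contraction is valid. Plugging $T\sim 1/\rho_{2,0}$ into $e^{-\eta T/\xi}$ therefore gives a bound $\mathcal O(e^{-\eta/(\xi\rho_{2})})$ for the deviation, which carries no factor of $\epsilon_{121}$ (the entry section allows $|x_{21}|\le\beta_2$ independently of $\epsilon_{121}$), and your final claim that this can be ``absorbed'' into $\mathcal O(\epsilon_{121}e^{-c\rho_2^{-1}})$ by shrinking $c$ is false: $\epsilon_{121}$ and $\rho_2$ are independent on the section, and $\epsilon_{121}e^{-c/\rho_2}$ can be far smaller than $e^{-\eta/(\xi\rho_2)}$.

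The stated product bound is nevertheless true, but to get it you must use the correct clock. Near $M_{1121}$ one has $x_{21}=\mathcal O(\epsilon_{121})$, so from $\dot\rho_2=\rho_2^2x_{21}/\xi$ in \eqref{eqeqns121121} the passage from $\rho_{2,0}$ to $\nu$ takes chart time at least of order $1/(\rho_{2,0}\epsilon_{121,0})$ (in fact of order $e^{2/\rho_{2,0}}/\epsilon_{121,0}$, since $\epsilon_{121}$ itself decays during the passage). The fiber contraction then bounds the deviation at exit by $\mathcal O\bigl(e^{-c/(\rho_{2}\epsilon_{121})}\bigr)$, and an elementary inequality of the form $e^{-a/(xy)}\le C\,y\,e^{-a/(2x)}$ for $x,y\in(0,1]$ converts this into $\mathcal O(\epsilon_{121}e^{-c'\rho_2^{-1}})$, which together with the center-manifold value $x_{21}=\epsilon_{121}\xi^2(1+\cdots)$ evaluated at $\epsilon_{121+}=\mathcal O(\epsilon_{121}e^{-c\rho_2^{-1}})$ yields the claimed estimate. (Note also that on $\{\epsilon_{121}=0\}$ orbits with small $\rho_{2,0}$ never reach $\rho_2=\nu$ at all, so you cannot shortcut this by restricting to the invariant set; the time-scale argument, or an equivalent bookkeeping of the contraction in the desingularized time where its rate is $\sim 1/(\xi^2\epsilon_{121})$, is really needed.) With this repaired, the remaining steps of your proposal go through.
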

\begin{proof}
 Similar to previous results in the manuscript, we perform a $C^1$-linearization of the $(\rho_1,\varrho_1)$-subsystem in \eqref{reduced821New}. Working backwards we then obtain the result. 
\end{proof}
Notice that the image of $\gamma_{1121}^9\cap  \Sigma_{1121}^{10,\text{in}}$ under $\Pi_{1121}^{10}$ is $\gamma_{1121,\textnormal{loc}}^{10}\cap  \Sigma_{1121}^{10,\text{out}}$, as desired.

\subsection{Chart $(\overline w=1,\overline \theta_2=1)_{21}$ }\seclab{sechere21}
In this chart, we obtain the following equations:
\begin{align}
\dot \sigma_1 &=-\sigma_1^3 \epsilon_1 F(z_2),\eqlab{eqhere21}\\
\dot z_2 &=-e^{-2z_2} x_1/\xi,\nonumber\\
\dot x_1&=\sigma_1 \epsilon_1 \xi-\epsilon_1 \left(-\xi +\sigma_1 x_1+\alpha \sigma_1 z_2\right)\nonumber\\
&- e^{-2z_2} x_1/\xi+\sigma_1^2\epsilon_1 x_1F(z_2),\nonumber\\
\dot \epsilon_1 &=\sigma_1^2 \epsilon_1^2 F(z_2)\nonumber
\end{align}
from \eqref{K1Barw1} using \eqref{Psi211}. 
Let $I=[-c_1,c_1]\subset \mathbb R$ be a fixed, large interval. Then there is a sufficiently small neighborhood $\mathcal U_{21}$ of $(0,0)$ in $\mathbb R^2$ such that there exists a center manifold $M_{21}$ as a graph
\begin{align*}
 x_1 =\epsilon_1 e^{2z_2} \xi^2 (1+\sigma_1 F(z_2)-\xi^{-1} \alpha \sigma_1 z_2+ \epsilon_1 h_{21}(\sigma_1,z_2,\epsilon_1)),
\end{align*}
over $(z_2,\sigma_1,\epsilon_1)\in I\times \mathcal U_{21}$. This is an extension of the slow manifold into this chart. On this center manifold, we obtain the following reduced problem 
\begin{align}
 \dot \sigma_1 &=-\frac{\sigma_1^3 e^{-2z_2} F(z_2)}{\xi (1+\sigma_1-\xi^{-1} \alpha \sigma_1 z_2 + \epsilon_1 h_{21}(\sigma_1,z_2,\epsilon_1))},\eqlab{reducedM3}\\
 \dot z_2 &=-e^{-2z_2},\nonumber\\
 \dot \epsilon_1 &=\frac{\sigma_1^2e^{-2z_2}  \epsilon_1 F(z_2)}{\xi (1+\sigma_1-\xi^{-1} \alpha \sigma_1 z_2 + \epsilon_1 h_{21}(\sigma_1,z_2,\epsilon_1))}.\nonumber
\end{align}
Clearly, $z_2$ is decreasing. We then get a mapping from $\{z_2=\nu^{-1}\}$ to $\{z_2=- \nu^{-1}\}$ using regular perturbation theory. In particular, we notice that by using \eqref{ccBarWE1}, $\gamma_{1121}^{10}$ becomes
\begin{align}
 \gamma_{21}^{10} = \left\{(\sigma_1,z_2,x_1,\epsilon_1)\vert \sigma_1=x_1=\epsilon_1=0,\,z_2\in \mathbb R\right\},\eqlab{gamma2110}
\end{align}
upon extension by the forward flow of \eqref{reducedM3}.
\subsection{Chart $(\overline z=-1,\overline \theta=1,\overline w_3=1)_{311}$}\seclab{sechere311}
In this chart, we obtain the following
\begin{align}
 \dot x_{11} &=-x_{11}/\xi+\epsilon_{11}\mu_1^2 q\left(-\xi \pi_1 (1-\mu_1q)-q(-\xi +\pi_1 \mu_1 x_{11}-\alpha \pi_1)\right),\eqlab{eqnsBlowupK1}\\
 \dot \pi_1 &= \pi_1 \mu_1 \left(\pi_1^2 \epsilon_{11}\mu_1^2 q(1-\mu_1 q)+x_{11}/\xi\right),\nonumber\\
 \dot \mu_1 &=-\mu_1^2 x_{11}/\xi,\nonumber\\
 \dot q &=-qx_{11}(1-\mu_1)/\xi,\nonumber\\
 \dot \epsilon_{11}&=-\pi_1^2 \mu_1^3 q \epsilon_{11}^2 (1-\mu_1 q).\nonumber
\end{align}
Recall that 
\begin{align}
q=\mu_1^{-1} e^{-\mu_1^{-1}},\eqlab{q3New}
\end{align}
see \eqref{q3} and \eqref{Psi31112}, but we shall use this only when required. Then $\gamma_{21}^{10}$ becomes 
\begin{align*}
 \gamma_{311}^{10} =\left\{(x_{11},\pi_1,\mu_1,\epsilon_{11})\vert \pi_1=x_{11}=\epsilon_{11}=0,\,\mu_1>0\right\},
\end{align*}
in this chart,
using \eqref{gamma2110} and the coordinate change described by \eqref{finalCC}.
Now, $x_{11}=0,\,\pi_1 = 0,\epsilon_{11}=0, \mu_1=0$ is an equilibrium. The linearization has $-1/\xi$ as a single non-zero eigenvalue. Therefore there exists a small neighborhood $\mathcal U_{311}$ of $(\pi_1,\mu_1,q,\epsilon_{11})=0$ in $\mathbb R^4$ such that there exists a center manifold $M_{311}$ as a graph
\begin{align}
 x_{11} =\xi \epsilon_{11} \mu_1^2 qH_{311}(\pi_1,\mu_1, q,\epsilon_{11}) ,\eqlab{M1ManifoldK1}
\end{align}
over $(\pi_1,\mu_1,q,\epsilon_{11})\in \mathcal U_{311}$. Here
 \begin{align}
 H_{311}(\pi_1,\mu_1, q,\epsilon_{11})=-\pi_1 \xi (1-\mu_1 q) + q(\xi+\alpha \pi_1)+\epsilon_{11}\mu_1^2 q h_{311}(\pi_1,\mu_1,q,\epsilon_{11}),\eqlab{H311Function}
 \end{align}
 with $h_{311}$ smooth. 
 On this center manifold we obtain the following reduced problem
\begin{align}
 \dot \mu_1 &=-\mu_1^2  H_{311}(\pi_1,\mu_1,q,\epsilon_{11}),\eqlab{ReducedM1ManifoldK1}\\
 \dot q &=-q H_{311}(\pi_1,\mu_1,q,\epsilon_{11})(1-\mu_1),\nonumber\\
 \dot \pi_1 &=\pi_1 \mu_1 \left(\pi_1^2(1-\mu_1 q)+H_{311}(\pi_1,\mu_1,q,\epsilon_{11}) \right),\nonumber\\
 \dot \epsilon_{11}&=-\pi_1^2 \mu_1\epsilon_{11}(1-\mu_1 q),\nonumber
 \end{align}
 after division on the right hand side by $\epsilon_{11}\mu_1^2 q$. 
 For this system, $\mu_1=q=\pi_1=\epsilon_{11}=0$ is fully nonhyperbolic. We therefore apply a subsequent blowup transformation, setting 
\begin{align}
 \pi_1 &= q \pi_{11}.\eqlab{finalBlowupK1}
\end{align}
This gives
\begin{align*}
H_{311}(q\pi_{11},\mu_1, q,\epsilon_{11}) = q \tilde H_{311}(\pi_{11},\mu_1,q,\epsilon_{11}),
\end{align*}
with 
\begin{align*}
 \tilde H_{311}(\pi_{11},\mu_1,q,\epsilon_{11}) = -\pi_{11} \xi (1-\mu_1 q) + \xi+\alpha q\pi_{11}+\epsilon_{11}\mu_1^2 h_{311}(q\pi_{11},\mu_1,q,\epsilon_{11}).
\end{align*}
See \eqref{H311Function}. 
Therefore
\begin{align}
 \dot \mu_1 &=-\mu_1^2 \tilde H_{311}(\pi_{11},\mu_1,q,\epsilon_{111}) ,\eqlab{ReducedM1ManifoldK1Pi11}\\
 \dot q &=- q \tilde H_{311}(\pi_{11},\mu_1,q,\epsilon_{111})(1-\mu_1),\nonumber\\
 \dot \pi_{11}&=\pi_{11}\left(\tilde H_{311}(\pi_{11},\mu_1,q,\epsilon_{11})+\pi_1^2 q\mu_1(1-\mu_1 q)\right),\nonumber\\
 \dot{\epsilon}_{11}&=-\pi_{11}^2 \mu_1 q\epsilon_{11}(1-\mu_1 q),\nonumber
\end{align}
after division of the right hand side by $q$. Now, we have gained hyperbolicity. In particular, $\pi_{11}=\mu_1 =q=\epsilon_{11}=0$ is partially hyperbolic, the linearization having a single non-zero eigenvalue $\xi>0$ with corresponding unstable eigenspace along the invariant $\pi_{11}$-axis. Also, $\mu_1 =q=\epsilon_{11}=0,\pi_{11}= 1$ is a partially hyperbolic equilibrium and therefore we have the following by standard center manifold theory.
\begin{lemma}\lemmalab{Wcu311Q6}
There exists a center manifold $K_{311}$ as a graph
\begin{align}
 \pi_{11} = G_{311}(\mu_1,q,\epsilon_{11}),\eqlab{M11ManifoldK1}
\end{align}
over $(\mu_1,q,\epsilon_{11})\in \mathcal V_{311}$, where $\mathcal V_{311}$ is a small neighborhood of $(0,0,0)$ in $\mathbb R^3$. Here
\begin{align*}
G_{311}(\mu_1,q,\epsilon_{11})=1+\epsilon_{11} \mu_1^2 h_{311}(0,\mu_1,0,\epsilon_{11})+ q\left(\frac{\alpha}{\xi}+\frac{\xi+1}{\xi}\mu_1 +\mathcal O(\mu_1^2\epsilon_{11}+\mu_1^2+q)\right),
\end{align*}
is smooth. $h_{311}$ is the smooth function in \eqref{H311Function}.  

The submanifold of $K_{311}$ within the invariant subset $\{\epsilon_{11}=0,q=\mu_1^{-1} e^{-\mu_1^{-1}}\}$, recall \eqref{q3New}, is a unique center manifold $W_{311}^{cu}$. In particular, its image under the coordinate transformation $(\mu_{11},\pi_1)\mapsto (y,z)$ defined by \eqref{finalBlowupK1}, \eqref{Psi31112}, \eqref{theta3Eqn} and \eqref{phi13xyz} produce $W^{cu}(Q^6)$  with the asymptotics in \lemmaref{Wcsu} for $y\gg 1$.  
\end{lemma}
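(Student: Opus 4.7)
The plan is to prove the lemma in three steps: (a) use the standard center manifold theorem to produce $K_{311}$; (b) compute its leading Taylor expansion by matching the invariance equation order by order; and (c) intersect with the invariant subset, push through the chain of coordinate changes, and verify the asymptotics agree with \eqref{WcuA}.

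For (a), one checks directly that $(\pi_{11},\mu_1,q,\epsilon_{11})=(1,0,0,0)$ is an equilibrium of \eqref{ReducedM1ManifoldK1Pi11}: $\tilde H_{311}(1,0,0,0)=-\xi+\xi=0$ and all other right-hand sides carry a vanishing factor. Linearising, the Jacobian is upper-triangular with diagonal $(-\xi,0,0,0)$, so there is one stable direction (along $\pi_{11}$) and three center directions $(\mu_1,q,\epsilon_{11})$. The classical center manifold theorem then yields a locally invariant $C^k$ three-dimensional graph $\pi_{11}=G_{311}(\mu_1,q,\epsilon_{11})$ with $G_{311}(0,0,0)=1$, equipped with a smooth one-dimensional stable foliation.

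For (b), substituting $\pi_{11}=G_{311}$ into the invariance relation
\begin{equation*}
(\partial_{\mu_1}G_{311})\dot\mu_1+(\partial_q G_{311})\dot q+(\partial_{\epsilon_{11}}G_{311})\dot\epsilon_{11}=G_{311}\bigl(\tilde H_{311}+G_{311}^2 q\mu_1(1-\mu_1 q)\bigr)
\end{equation*}
and restricting to $\{q=0\}$, where $\dot q$ and $\dot\epsilon_{11}$ vanish, collapses this to the algebraic equation $\tilde H_{311}(G_{311},\mu_1,0,\epsilon_{11})=0$, which solves to give the term $\epsilon_{11}\mu_1^2 h_{311}(0,\mu_1,0,\epsilon_{11})$. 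Writing $G_{311}=1+Aq+O(q^2)$ at $\epsilon_{11}=0$ and collecting coefficients of $q^1$ using $\dot q=-q\tilde H_{311}(1-\mu_1)$, $\dot\mu_1=-\mu_1^2\tilde H_{311}$, together with the cubic term $G_{311}^2 q\mu_1$, one obtains $A=(\alpha+(\xi+1)\mu_1)/\xi$, which is the linear-in-$q$ part of the stated expansion; the $O(\mu_1^2\epsilon_{11}+\mu_1^2+q)$ remainder follows by iterating one further order.

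For (c), the set $\{\epsilon_{11}=0\}$ is invariant by inspection of \eqref{ReducedM1ManifoldK1Pi11}, and $\{q=\mu_1^{-1}e^{-1/\mu_1}\}$ is invariant because it is the defining relation \eqref{q3New} of the extended phase space. Thus $W_{311}^{cu}:=K_{311}\cap\{\epsilon_{11}=0,\,q=\mu_1^{-1}e^{-1/\mu_1}\}$ is a one-dimensional invariant curve through $(1,0,0,0)$. Composing $\pi_1=q\pi_{11}$ from \eqref{finalBlowupK1} with \eqref{Psi31112}, \eqref{theta3Eqn} and \eqref{phi13xyz} yields $y=1/(\mu_1\pi_1)=e^{1/\mu_1}/G_{311}$ and $z=-1/\mu_1$ along $W_{311}^{cu}$. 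Taking logarithms, $1/\mu_1=\log y+\log G_{311}$; substituting $q=(\mu_1 y G_{311})^{-1}$ into the expansion gives $\log G_{311}=\alpha/(\xi\mu_1 y)(1+o(1))$, and since $\mu_1^{-1}=\log y\,(1+o(1))$, this produces $z=-\log y\,(1+\alpha/(\xi y))+o((\log y)/y)$ as $y\to\infty$, matching \eqref{WcuA}.

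The principal subtlety is the word \emph{unique}. Classical center manifolds are only unique modulo functions flat at the origin in $(\mu_1,q,\epsilon_{11})$; however, since $q=\mu_1^{-1}e^{-1/\mu_1}$ is itself flat in $\mu_1$, any two choices of $K_{311}$ restrict to graphs agreeing to all orders along the invariant subset. Pointwise uniqueness of the trajectory $W_{311}^{cu}$ will then follow by reducing \eqref{ReducedM1ManifoldK1Pi11} to a scalar equation on the two-dimensional invariant submanifold and observing that only one orbit there carries the asymptotic series computed above.
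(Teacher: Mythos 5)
Your route coincides with the one the paper (only) sketches: partial hyperbolicity of $(\pi_{11},\mu_1,q,\epsilon_{11})=(1,0,0,0)$ with spectrum $(-\xi,0,0,0)$, standard center manifold theory for $K_{311}$, the expansion of $G_{311}$ from the invariance equation, and the chain of coordinate changes giving $y=e^{1/\mu_1}/G_{311}$, $z=-1/\mu_1$ on the constrained set, hence the asymptotics \eqref{WcuA}. Your computations check out: the $q$-coefficient $A=(\alpha+(\xi+1)\mu_1)/\xi$ agrees with the stated expansion, and the matching with \lemmaref{Wcsu} is done correctly. Two small remarks on step (b): restricting the invariance relation to $\{q=0\}$ actually yields $\tilde H_{311}(G_{311},\mu_1,0,\epsilon_{11})\bigl(G_{311}+\mu_1^2\partial_{\mu_1}G_{311}\bigr)=0$, so you need the (harmless) observation that the second factor is close to $1$ — or, more simply, that all equilibria near the origin lie in every center manifold; and solving $\tilde H_{311}=0$ at $q=0$ produces the $\epsilon_{11}\mu_1^2$-term with an extra factor $\xi^{-1}$ relative to the displayed $G_{311}$, which you have inherited from the statement rather than from your own computation.

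The genuine gap is the final uniqueness step. You correctly note that two center manifolds only agree to all orders, so flatness of $q=\mu_1^{-1}e^{-1/\mu_1}$ does not give pointwise equality of the restrictions; but your proposed repair, that ``only one orbit there carries the asymptotic series,'' is not a valid criterion: on the attracting side of a semi-hyperbolic point there are infinitely many center manifolds, all consisting of orbits sharing the same asymptotic series, so carrying the series cannot by itself single out an orbit. What makes uniqueness true here is the direction of the reduced flow: within the invariant set $\{\epsilon_{11}=0,\,q=\mu_1^{-1}e^{-\mu_1^{-1}}\}$ the center dynamics is $\dot\mu_1=\mu_1^2\left(1+\mathcal O(\mu_1)\right)>0$ for $\mu_1>0$, cf. \eqref{ReducedM11ManifoldK1}, i.e.\ the flow is expelled from the equilibrium along the center direction, while the transverse ($\pi_{11}$-)direction contracts with eigenvalue $-\xi$ in the time scale of \eqref{ReducedM1ManifoldK1Pi11}. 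In the planar reduction on this set, any orbit off the center manifold separates like $Ce^{\xi/\mu_1}$ and leaves every neighborhood as $\mu_1\to 0^+$, so exactly one orbit over $\mu_1\in(0,\mu_1^*]$ stays near the equilibrium; this one-sided saddle-node argument gives pointwise uniqueness of $W^{cu}_{311}$, and the identification with $W^{cu}(Q^6)$ then follows from the uniqueness statement in \lemmaref{Wcsu} once the asymptotics are matched. Supply this argument (or, after matching the asymptotics, simply invoke the uniqueness of the unstable set of $Q^6$ in \lemmaref{Wcsu}) and your proof is complete.
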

See \figref{gamma10}. 
\begin{figure}[h!]
\begin{center}
{\includegraphics[width=.995\textwidth]{./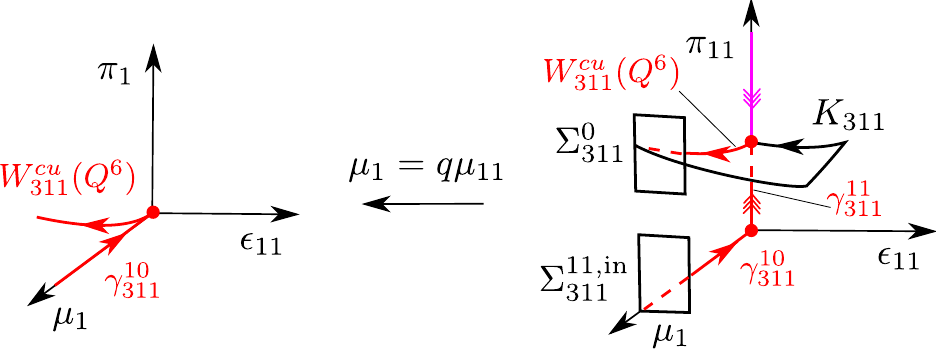}}
\end{center}
\caption{Illustration of the result in \lemmaref{Wcu311Q6}. Using the blowup \eqref{finalBlowupK1} we gain hyperbolicity and obtain an attracting center manifold $K_{311}$, its $\epsilon_1=0$-limit is the unique center manifold $W^{cu}(Q^6)$ in \lemmaref{Wcsu}. The purple orbit is relevant for $\alpha>1$. }
\figlab{gamma10}
\end{figure}
Notice that the invariant graph \eqref{M11ManifoldK1} passes through the set of equilibria given as the graph $\pi_{11}=1+\epsilon_{11}\mu_1^2 h_{311}(0,\mu_1,0,\epsilon_{11})$ over $(\mu_1,\epsilon_{11})$ within $q=0$. On the center manifold \eqref{M11ManifoldK1}, we have
\begin{align*}
 \tilde H_{311}(\pi_{11},\mu_1,q,\epsilon_{11}) = - q\left(\mu_1+\mathcal O(\mu_1^2 \epsilon_{11}+\mu_1^2 + q)\right).
\end{align*}
Therefore, upon returning to the variables $(\pi_{11},\mu_1,\epsilon_{11})$ and the set defined by $q= \mu_1^{-1} e^{-\mu_1^{-1}}$, recall \eqref{q3New}, we have
\begin{align*}
 \tilde H_{311}(\pi_{11},\mu_1,q(\mu_1),\epsilon_{11}) = -\mu_1 q(\mu_1) \left(1+\mathcal O(\mu_1)\right),
\end{align*}
and hence obtain the following reduced problem on the center manifold 
\begin{align}
 \dot \mu_1 &=\mu_1^2 \left(1+\mathcal O(\mu_1)\right),\eqlab{ReducedM11ManifoldK1}\\
 \dot \epsilon_{11} &=-\epsilon_{11} \left(1+\mathcal O(\epsilon_{11}+\mu_1)\right),\nonumber
\end{align}
after division by $\mu_1 q$ on the right hand side. 

Consider the following sections
\begin{align*}
 \Sigma_{311}^{11,\text{\text{in}}} &= \{(x_{11},\pi_{11},\mu_1,\epsilon_{11})\vert \mu_1 = \nu,\,x_{11}\in [-\beta_1,\beta_1],\,\pi_{11} \in [0,\beta_2],\,\epsilon_{11}\in [0,\beta_3]\},\\
 \Sigma_{311}^0 &=\{(x_{11},\pi_{11},\mu_1,\epsilon_{11})\vert \mu_1 = \delta,\,x_{11}\in [-\beta_1,\beta_1],\,\pi_{11}-1 \in [-\beta_4,\beta_4],\,\epsilon_{11}\in [0,\beta_3]\},
\end{align*}
and consider the associated mapping $\Pi_{311}^{11}:\Sigma_{311}^{0}\rightarrow \Sigma_{311}^{11,\text{\text{out}}}$. Notice that $\mu_1=\delta$ in $\Sigma_{311}^{0}$ becomes $y=1/\delta$ in the original variables using \eqref{Psi31112} and \eqref{phi13xyz}, in agreement with $\Sigma^0$, see \eqref{Sigma0}. 
Setting 
\begin{align*}
 \gamma_{311}^{11} = \{(x_{11},\pi_{11},\mu_1,\epsilon_{11})\vert x_{11} = 0,\,\pi_{11}\in [0,1),\,\mu_1=0,\epsilon_{11}=0\},
\end{align*}
we can then describe $\Pi_{311}^{11}$ by following $\gamma_{311}^{10}$, $\gamma_{311}^{11}$ and $W_{311}^{cu}(Q^6)$. 
\begin{lemma}\lemmalab{Pi31111}
 $\Pi_{311}^{11}$ is well-defined for appropriately small $\delta>0$, $\nu>0$ and $\beta_i>0$, $i=1,\ldots,4$. In particular,
\begin{align*}
 \Pi_{311}^{11}(x_{11},\pi_{11},\nu,\epsilon_{11}) = (x_{11+}(x_{11},\pi_{11},\epsilon_{11}),\pi_{11+}(x_{11},\pi_{11},\epsilon_{11}),\delta,\epsilon_{11+}(x_{11},\pi_{11},\epsilon_{11})),
\end{align*}
with each coordinate function being $C^1$. In particular, these functions satisfy the following equalities
\begin{align*}
 x_{11+}(x_{11},\pi_{11},\epsilon_{11}) &= \xi \epsilon_{11+}(x_{11},\pi_{11},\epsilon_{11}) \delta e^{-\delta^{-1}} H_{311}(\pi_{11+}(x_{11},\pi_{11},\epsilon_{11}),\delta, \delta^{-1}e^{-\delta^{-1}},\epsilon_{11+}) \\
 &+\mathcal O(e^{-c\epsilon_{11}^{-1}e^{c/\log \pi_{11}^{-1}}}),\\
 \pi_{11+}(x_{11},\pi_{11},\epsilon_{11}) &=G_{311}(\epsilon_{11+}(x_{11},\pi_{11},\epsilon_{11}),\delta,\delta^{-1} e^{-\delta^{-1}})+\mathcal O(e^{-ce^{c/\log \pi_{10}^{-1}}}),\\
 \epsilon_{11+}(x_{11},\pi_{11},\epsilon_{11})&=e^{\delta^{-1}-\nu^{-1}}\pi_{11} \epsilon_{11}(1+\mathcal O(\epsilon_{11}+\delta e^{-\delta_1})). 
\end{align*}
for $c>0$ sufficiently small. 
\end{lemma}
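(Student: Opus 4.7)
The plan is to decompose the transition map $\Pi_{311}^{11}$ according to the successive invariant manifolds identified in the blowup: first contract onto the center manifold $M_{311}$ given by \eqref{M1ManifoldK1}; second, use the blowup \eqref{finalBlowupK1} to lift the degeneracy at the origin of \eqref{ReducedM1ManifoldK1}; third, contract onto the center manifold $K_{311}$ of \lemmaref{Wcu311Q6}; and finally integrate the reduced flow \eqref{ReducedM11ManifoldK1} on $K_{311}$ from $\mu_1=\nu$ up to $\mu_1=\delta$. Composing these four stages, together with the standard transverse estimates for the associated stable foliations, gives the stated formulas for $x_{11+},\pi_{11+},\epsilon_{11+}$.

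More concretely, I would first straighten the smooth $1D$ stable foliation of $M_{311}$ by a smooth change of coordinates $x_{11}\mapsto \tilde x_{11}$ that renders contraction toward $M_{311}$ linear at the rate $-\xi^{-1}$ in the fast time. This yields the first error term $\mathcal O(e^{-c\epsilon_{11}^{-1}e^{c/\log\pi_{11}^{-1}}})$ in $x_{11+}$: the exponent is the total elapsed \emph{original} time, which is dominated by $\mathcal O(\epsilon_{11}^{-1}e^{c/\log \pi_{11}^{-1}})$ because the slow reduced flow on $K_{311}$ proceeds on a time scale $q^{-1}=\mu_1 e^{\mu_1^{-1}}$, see \eqref{q3New}, while the blowup transit through $\gamma_{311}^{11}$ rescales time by the factor $\log\pi_{11}^{-1}/\xi$. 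On $M_{311}$, after the blowup \eqref{finalBlowupK1}, the system \eqref{ReducedM1ManifoldK1Pi11} has two partially hyperbolic equilibria on the $\pi_{11}$-axis: the origin has unstable eigenvalue $\xi$ along $\pi_{11}$, and the point $\pi_{11}=1$ has stable eigenvalue $-\xi$ along $\pi_{11}$. Standard hyperbolic theory (e.g.\ the Shilnikov/Exchange Lemma or a direct Gronwall estimate after $C^1$-linearization in $\pi_{11}$) transports a wedge of initial data at $\pi_{11}=\beta_2,\mu_1=\nu$ close to $\gamma_{311}^{11}$ onto a wedge exponentially close to $K_{311}$ at $\pi_{11}-1=\mathcal O(e^{-c\xi \tau})$ where $\tau\sim \xi^{-1}\log\pi_{11}^{-1}$ is the blowup time spent in transit; the attraction toward $K_{311}$ thus contributes the factor $e^{-ce^{c/\log\pi_{11}^{-1}}}$ in $\pi_{11+}$.

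Once on $K_{311}$, the computation is routine: the reduced flow \eqref{ReducedM11ManifoldK1} integrates easily, giving $\mu_1(\tau)$ going from $\nu$ to $\delta$ in finite $\tau$-time, and $\epsilon_{11}$ decaying as $\epsilon_{11}(\tau)=\epsilon_{11}(0)e^{-\tau}(1+o(1))$. Translating back to the original variables $(\mu_1,\pi_{11},\epsilon_{11},x_{11})$, using that $\pi_{11}$ is evolved from its initial value by the unstable flow along $\gamma_{311}^{11}$ and then clamped near $G_{311}$, one obtains the multiplicative factor $e^{\delta^{-1}-\nu^{-1}}\pi_{11}$ in $\epsilon_{11+}$: the $\pi_{11}$ factor captures the unstable expansion $\pi_{11}(\tau)=\pi_{11}(0)e^{\xi\tau}$ along $\gamma_{311}^{11}$ (since $\dot\epsilon_{11}/\epsilon_{11}$ is $\mathcal O(\pi_{11}^2\mu_1q)$ before the final chart's time rescaling, which converts the unstable expansion into a geometric factor), while the exponentials $e^{\delta^{-1}-\nu^{-1}}$ come from $\int \mu_1^{-2}\,d\mu_1$ along the slow flow. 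The formula for $x_{11+}$ then reads off from the graph $M_{311}$ at the output point, and the formula for $\pi_{11+}$ from the graph $K_{311}$, both up to the transverse contraction errors identified above.

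The main obstacle is the bookkeeping across time rescalings: the desingularization by $\epsilon_{11}\mu_1^2 q$ in \eqref{ReducedM1ManifoldK1} and then by $q$ in \eqref{ReducedM1ManifoldK1Pi11} multiplies the vector field by factors that are exponentially small in $\mu_1^{-1}$ (since $q=\mu_1^{-1}e^{-\mu_1^{-1}}$), so contraction rates that are $\mathcal O(1)$ in the blowup time become nested exponentials in the original time. Handling this requires carefully propagating Gronwall estimates through each of the four stages in the correct time variable, and I expect this accounting—rather than any single hyperbolic argument—to be where the proof demands the most care. The final $C^1$-dependence on initial conditions then follows from smooth (respectively $C^1$) dependence of the straightening transformations and of the center manifolds $M_{311}$ and $K_{311}$ on parameters.
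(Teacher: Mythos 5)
Your geometric skeleton coincides with the paper's: straighten the stable foliation of $M_{311}$ in \eqref{M1ManifoldK1}, desingularize, pass the partially hyperbolic origin of \eqref{ReducedM1ManifoldK1Pi11} after the blowup \eqref{finalBlowupK1}, then contract onto $K_{311}$ from \lemmaref{Wcu311Q6} and integrate the reduced flow \eqref{ReducedM11ManifoldK1} up to $\mu_1=\delta$. The paper carries out exactly these stages, via fiber straightenings, $C^1$-linearizations and explicit integration of the resulting model systems, and the exponential error estimates for $x_{11+}$ and $\pi_{11+}$ are indeed obtained much as you describe, from the fiber contraction rates accumulated over the (rescaled) transition times.

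There is, however, a genuine gap at the point you yourself flag as the hardest: the exact multiplicative formula for $\epsilon_{11+}$. The paper does not obtain it by propagating Gronwall estimates through the successive time rescalings; it reads it off from a conserved quantity. Since $\epsilon$ is constant for the extended system, and by \eqref{Psi31112}, \eqref{finalBlowupK1} and \eqref{q3New} one has $\epsilon=\pi_1\mu_1\epsilon_{11}=e^{-\mu_1^{-1}}\pi_{11}\epsilon_{11}$ in the present coordinates, evaluating this invariant at the entry section $\mu_1=\nu$ and the exit section $\mu_1=\delta$ gives exactly $\epsilon_{11+}=e^{\delta^{-1}-\nu^{-1}}\pi_{11+}^{-1}\pi_{11}\epsilon_{11}$; the stated expression then follows by solving for $\epsilon_{11+}$ with the implicit function theorem, using that $\pi_{11+}$ is $1$ up to small corrections determined by the graph of $K_{311}$. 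Your substitute---attributing the factor $\pi_{11}$ to the unstable expansion near the origin and the factor $e^{\delta^{-1}-\nu^{-1}}$ to $\int\mu_1^{-2}\,d\mu_1$ along the slow flow---does not close as stated: the slow-flow integral on $K_{311}$ only yields $e^{\delta^{-1}-\nu_{10}^{-1}}$, where $\nu_{10}$ is the trajectory-dependent value of $\mu_1$ at which the orbit reaches a neighborhood of $\pi_{11}=1$ (note that $\mu_1$ first \emph{decreases} during the passage near the origin, where the desingularized flow has $\dot\mu_1=-\mu_1^2$, before increasing on $K_{311}$), and relating $\nu_{10}$ back to $(\nu,\pi_{11})$ with relative accuracy $1+\mathcal O(\epsilon_{11}+\delta e^{-\delta^{-1}})$---in particular showing that the initial $\pi_{11}$ enters exactly linearly rather than up to unspecified multiplicative errors---is precisely the bookkeeping that the conservation law renders trivial and that your outline leaves open. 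Without invoking this invariant (or an equivalent exact relation), the nested exponential rescalings you describe make it doubtful that the sharp form of $\epsilon_{11+}$ can be recovered.
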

\begin{proof}
We consider the following system
\begin{align*}
 \dot x_{11} &=-x_{11}/\xi+\epsilon_{11}\mu_1^2 q\left(-\xi \pi_1 (1-\mu_1q)-q(-\xi +\pi_1 \mu_1 x_{11}-\alpha \pi_1)\right),\\
 \dot \pi_{11} &= \pi_{11} \left(\pi_{11}^2 \epsilon_{11}\mu_1^3 q^2(1-\mu_1 q)+x_{11}/\xi\right),\\
 \dot \mu_1 &=-\mu_1^2 x_{11}/\xi,\\
 \dot \epsilon_{11}&=-\pi_{11}^2 \mu_1^3 q^3 \epsilon_{11}^2 (1-\mu_1 q),
\end{align*}
with $q(\mu_1) = \mu_1^{-1} e^{-\mu_1^{-1}}$, 
obtained by substituting \eqref{finalBlowupK1} into \eqref{eqnsBlowupK1}. First, we straighten out the stable fibers of the center manifold \eqref{M1ManifoldK1} through a transformation of the form $(x_{11},\pi_{11},\mu_1,\epsilon_{11})\mapsto (\tilde \pi_{11},\tilde \mu_1) = (\pi_{11}(1+\mathcal O(x_{11})),\mu_1(1+\mathcal O(\mu_1 x_{11})))$. Dropping the tildes we then obtain \eqref{ReducedM1ManifoldK1Pi11} after division by $\epsilon_{11}\mu_1^2q(\mu_1)^2=\epsilon_{11}e^{-2\mu_1^{-1}}$ on the right hand side. We further divide the right hand side by 
$$\tilde H_{311}(\pi_{11},\mu_1,q,\epsilon_{11})+\pi_1^2 q\mu_1(1-\mu_1 q)\approx \xi,$$ such that 
\begin{align*}
  \dot \mu_1 &=-\mu_1^2 \frac{\tilde H_{311}(\pi_{11},\mu_1,q,\epsilon_{111})}{\tilde H_{311}(\pi_{11},\mu_1,q,\epsilon_{11})+\pi_1^2 q\mu_1(1-\mu_1 q)} ,\nonumber\\
 \dot \pi_{11}&=\pi_{11},\nonumber\\
 \dot{\epsilon}_{11}&=-\frac{\pi_{11}^2 \mu_1 q\epsilon_{11}(1-\mu_1 q)}{\tilde H_{311}(\pi_{11},\mu_1,q,\epsilon_{11})+\pi_1^2 q\mu_1(1-\mu_1 q)},\nonumber
\end{align*}
We then straighten out the unstable fibers of the local invariant manifold $\pi_{11}=0$ by a transformation of the form $(\pi_{11},\mu_1,\epsilon_{11})\mapsto (\tilde \mu_1,\tilde \epsilon_{11})=(\mu_1(1+\mathcal O(\mu_1 \pi_{11})),\epsilon_{11}(1+\mathcal O(e^{-\mu_1^{-1}} \pi_{11}^2))$ such that
 \begin{align*}
  \dot \pi_{11} &=\pi_{11},\\
  \dot \mu_1 &=-\mu_1^2,\\
  \dot \epsilon_{11}&=0,
 \end{align*}
upon dropping the tildes. Now, we integrate these equations from $\mu_1(0)=\nu$ to $\pi_{11}(T)=\delta$ using $\pi_{11}(0)\le \beta_2 \ll \delta$. This gives
\begin{align*}
 \mu_1(T) = \frac{\nu}{1+\nu T},
\end{align*}
for $T=\log (\pi_{11}(0)^{-1} \delta)$. Working our way backwards, we realise that the contraction along the stable fibers of the center manifold \eqref{M1ManifoldK1}, during this transition, is at least $\mathcal O(e^{-c/(\epsilon_{11}(0)\pi_{11}(0))})$ for some $c>0$ sufficiently small. 

Subsequently, from \eqref{ReducedM1ManifoldK1Pi11}, we then apply a finite time flow map up close to $\pi_{11}=1-\nu$. From here, we then straighten out the center manifold by a transformation of the form
\begin{align*}
 \pi_{11} = 1+\epsilon_{11} \mu_1^2 h_{311}(0,\mu_1,0,\epsilon_{11})+ q\left(\frac{\alpha}{\xi}+\frac{\xi+1}{\xi}\mu_1 +\mathcal O(\mu_1^2\epsilon_{11}+\mu_1^2+q)\right)+\tilde \pi_{11}.
\end{align*}
This gives
\begin{align*}
 \dot{\pi}_{11}&=-\xi \pi_{11},\\
 \dot \mu_1 &=\mu_1^2 \left(\mu_1 q +\mathcal O(\pi_{11}+\mu_1^2 q)\right),\\
 \dot \epsilon_{11} &= -(1+\mathcal O(\epsilon_{11} \mu_1^2 +q+\pi_{11}))\mu_1 q\epsilon_{11}.
\end{align*}
after a transformation of time and dropping the tilde. Now, we straighten out the stable fibers by a transformation of the form $(\pi_{11},\mu_1,\epsilon_{11})\mapsto (\tilde \mu_1,\tilde \epsilon_{11}) = (\mu_1(1+\mathcal O(\mu_1\pi_{11})),\epsilon_{11}(1+\mathcal O(e^{-\mu_1^{-1}} \pi_{11})))$. This gives
\begin{align*}
 \dot{\pi}_{11}&=-\xi \pi_{11},\\
 \dot \mu_1 &=\mu_1^3 q \left(1 +\mathcal O(\mu_1)\right),\\
 \dot \epsilon_{11} &= -(1+\mathcal O(\mu_1))\mu_1 q\epsilon_{11},
\end{align*}
upon dropping the tildes. $\pi_{11}$ decouples from this system. We therefore consider the $(\mu_1,\epsilon_{11})$ system. Dividing the right hand side by $\mu_1 q (1+\mathcal O(\mu_1^2))$, and applying a transformation of the form $(\epsilon_{11},\mu_1)\mapsto \tilde \mu_1 = \mu_1 (1+\mathcal O(\mu_1))$ gives 
\begin{align*}
\dot \mu_1 &=\mu_1^2 ,\\
 \dot \epsilon_{11} &= -\epsilon_{11}
 \end{align*}
upon dropping the tildes. We then integrate this system from $\mu_1(0)=\nu_{10}$ to $\mu_1(T)=\delta$ taking $\nu_{10}\ll \delta$. This gives
\begin{align*}
 \epsilon_{11}(T) = e^{-T}\epsilon_{11}(0),
\end{align*}
with $T=\frac{1}{\nu_{10}}\left(1-\nu_{10}/\delta\right)$. Therefore 
\begin{align*}
 \epsilon_{11}(T) = e^{-\nu_{10}^{-1}\left(1-\nu_{10}/\delta\right)} \epsilon_{11}(0). 
\end{align*}
Working our way backwards, we realise that the contracting along the stable fibers of the center manifold \eqref{M11ManifoldK1} under this transition is at least $\mathcal O(e^{-c e^{1/(2\nu_{10})}})$. Similarly, the contraction along the stable fibers of the center manifold $K_{311}$, see \eqref{M1ManifoldK1}, is at least $\mathcal O(e^{-c\epsilon_{10}^{-1} e^{2\nu_{10}^{-1}}})$. Both constants $c$ here are sufficiently small. Now, we combine these estimates to obtain the desired result. In particular, the expression for $\epsilon_{11+}$ follows from the conservation of $\epsilon=e^{-\mu_1} \pi_{11} \epsilon_{11}$. Therefore
\begin{align*}
 \epsilon_{11+} = e^{\delta^{-1}-\nu^{-1}} \pi_{11+}^{-1} \pi_{11}\epsilon_{11}.
\end{align*}
Here $\pi_{11+} = 1+\mathcal O(\epsilon_{11+}\mu_1^2+\mu_1e^{-\mu_1})$. We therefore solve this equation for $\epsilon_{11+}$ using the implicit function theorem.

\end{proof}
\section{Discussion}\seclab{Discussion}
To prove \thmref{mainThm}, we applied the method in \cite{kristiansen2017a}, developed by the present author, to gain hyperbolicity where this is lost due to exponential decay.  
In \cite{kristiansen2017a}, this method is mainly used on toy examples and the present analysis therefore provides the most important application of this method to obtain rigorous result in singular perturbed systems with this special loss of hyperbolicity. In ongoing work, I use similar methods to show a similar result to \thmref{mainThm} for the spring-block model with the Dietrich friction law:
\begin{align*}
 \dot x&=(1+\alpha)(e^{-x/(1+\alpha)}-e^z),\\
 \dot y &=e^z-1,\\
 \epsilon \dot z &=-e^{-z} \left(y+\frac{x+z}{\xi}\right).
\end{align*}

From the results in the present paper we deduce the following interesting consequences: Firstly, in \cite{erickson2008a} chaos is observed in numerical computations of \eqref{system} through a period doubling cascade of the relaxation oscillation studied in the present manuscript. A corollary of our results is that this chaos is an $\epsilon=\mathcal O(1)$ phenomenon. It is not persistent as $\epsilon\rightarrow 0$. 

Secondly: Let $(x(t),y(t),z(t))\in \Gamma_\epsilon$. Then for $\alpha\le 1$, $z(t)$ attains its minimum close to $W^{cu}(Q^6)$, see \eqref{WcuA} with $y\gg 1$. On the other hand, for $\alpha>1$, the minimum of $z(t)$ occurs at a smaller value, near the line $z=\frac{\xi(\alpha-1)}{2\alpha} y$ on $C$ with $y\gg 1$. This follows from \eqref{gamma91} and the statements proceeding it, see also \appref{alphaGE1}. There is therefore a transition in how the minimum of $z(t)$ depends upon $\epsilon$ (and $\alpha$) when $\alpha$ crosses $\alpha=1$. We illustrate this further in the bifurcation diagram in \figref{bif} using $\text{min}\,z$ as a measure of the amplitude for $\xi=0.5$ and three different values of $\epsilon$: $\epsilon=0.01$ (full line), $\epsilon=0.001$ (dotted line), $\epsilon=0.0001$ (dash-dotted line). These diagrams were computed using AUTO. In this diagram, we see that the limit cycles are born in Hopf bifurcations near $\alpha=0.5$. The amplitudes increase rapidly due to the underlying Hamiltonian structure, recall \eqref{yzHam}, see also \cite{bossolini2017a}. Subsequently they flatten out. This is where the connection to the relaxation oscillations, described in \thmref{mainThm}, occurs. Between $\alpha\approx 0.5 - 1.1$ the increase in amplitude is more moderate, like $\text{min}\,z\sim \log \alpha$. Examples of limit cycles are shown in \figref{xyz}. Beyond this interval of $\alpha$-values, the amplitudes increase linearly in $\alpha$: $\text{min}\,z\sim \alpha$. 

\begin{figure}[h!]
\begin{center}
{\includegraphics[width=.75\textwidth]{./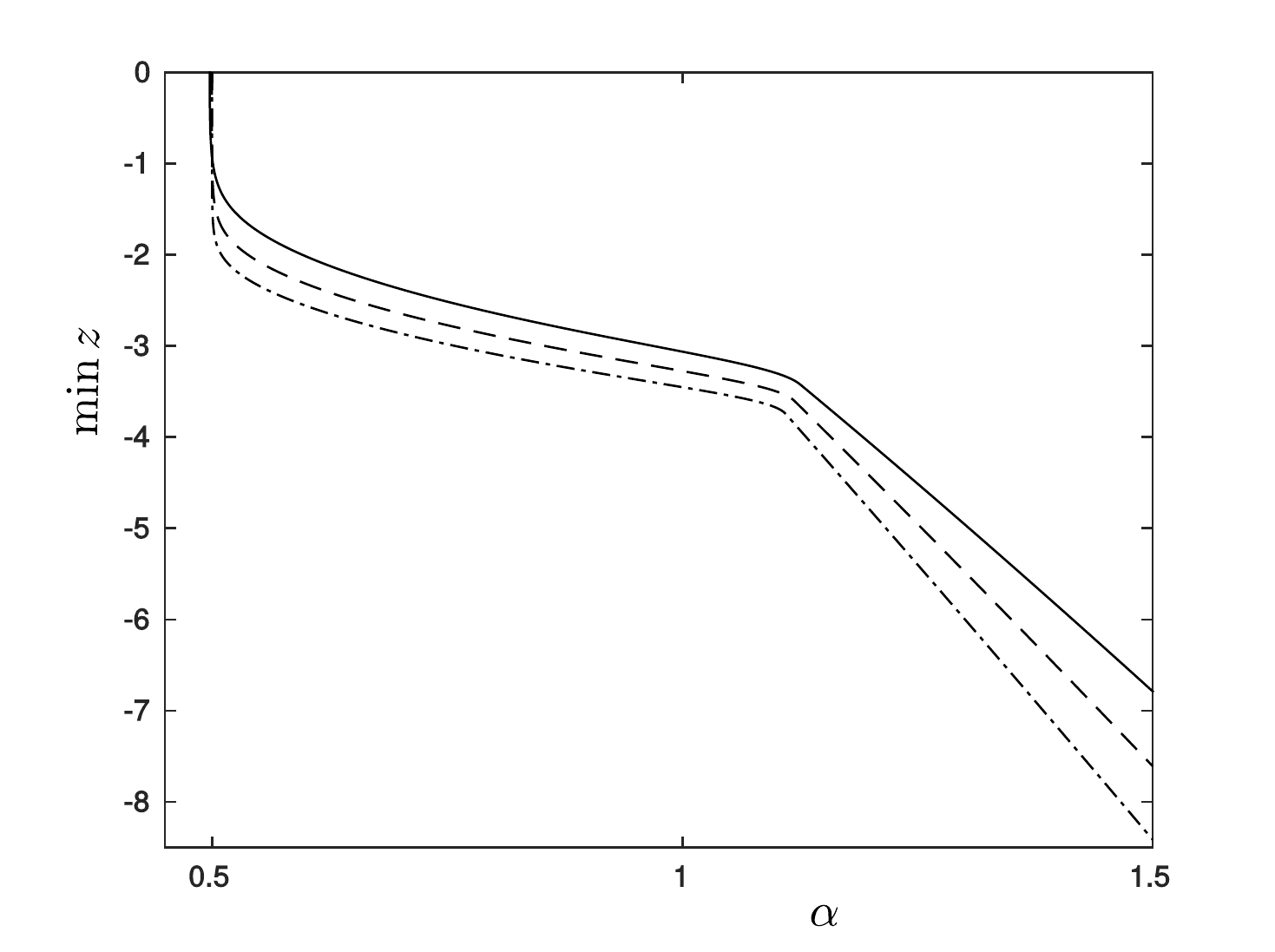}}
\end{center}
\caption{Bifurcation diagram of limit cycles for $\xi=0.5$ and three different values of $\epsilon$: $\epsilon=0.01$ (full line), $\epsilon=0.001$ (dotted line), $\epsilon=0.0001$ (dash-dotted line), using $\text{min}\,z$ as a measure of the amplitude. Around $\alpha\approx 1$, we see a dramatic change  in $\text{min}\,z$. This transition is captured by the method of the paper. }
\figlab{bif}
\end{figure}

In \cite{bossolini2017a} it was conjectured that the relaxation oscillations and the local limit cycles near the Hopf bifurcation belong to the same family of stable limit cycles for all $0<\epsilon\ll 1$, as exemplified in \figref{bif} for particular values of $\epsilon$. I believe that this result can be proven using the methods in the present paper, but it requires a detailed description near $Q^3$ where the transition from small to big oscillations occur. I have not yet pursued such an analysis. On a related matter, we highlight that, as a consequence of our approach, $\Gamma_\epsilon$ attracts a large set of initial conditions. In fact, $\Gamma_\epsilon$ attracts all initial conditions in $K\backslash U$ where $U$ is a small neighborhood of $K\cap W^{cs}(Q^3)$, recall \thmref{mainThm}. A detailed analysis near $Q^3$ may in fact reveal that $\Gamma_\epsilon$ attracts all points in $K\backslash \{0\}$. 
\bibliography{refs}

\begin{thebibliography}{10}

\bibitem{bossolini2017a}
E.~Bossolini, M.~Br{\o}ns, and K.~U. Kristiansen.
\newblock Singular limit analysis of a model for earthquake faulting.
\newblock {\em Nonlinearity}, 30(7):2805--2834, 2017.

\bibitem{bro2}
M.~Br{\o}ns.
\newblock {Canard explosion of limit cycles in templator models of
  self-replication mechanisms}.
\newblock {\em Journal of Chemical Physics}, 134(144105), 2011.

\bibitem{dieterich1978a}
J.H. Dieterich.
\newblock Time-dependent friction and mechanics of stick-slip.
\newblock {\em Pure and Applied Geophysics}, 116(4-5):790--806, 1978.

\bibitem{dieterich1979a}
J.H. Dieterich.
\newblock Modeling of rock friction .1. experimental results and constitutive
  equations.
\newblock {\em Journal of Geophysical Research}, 84(NB5):2161--2168, 1979.

\bibitem{dumortier_1996}
F.~Dumortier and R.~Roussarie.
\newblock Canard cycles and center manifolds.
\newblock {\em Mem. Amer. Math. Soc.}, 121:1--96, 1996.

\bibitem{erickson2008a}
B.~Erickson, B.~Birnir, and D.~Lavallee.
\newblock A model for aperiodicity in earthquakes.
\newblock {\em Nonlinear Processes in Geophysics}, 15(1):1--12, 2008.

\bibitem{feeny1998a}
B.~Feeny, A.~Guran, N.~Hinrichs, and K.~Popp.
\newblock A historical review on dry friction and stick-slip phenomena.
\newblock {\em Applied Mechanics Reviews}, 51(5):321--341, 1998.

\bibitem{fen1}
N.~Fenichel.
\newblock Persistence and smoothness of invariant manifolds for flows.
\newblock {\em Indiana University Mathematics Journal}, 21:193--226, 1971.

\bibitem{fen2}
N.~Fenichel.
\newblock Asymptotic stability with rate conditions.
\newblock {\em Indiana University Mathematics Journal}, 23:1109--1137, 1974.

\bibitem{fen3}
N.~Fenichel.
\newblock Geometric singular perturbation theory for ordinary differential
  equations.
\newblock {\em J. Diff. Eq.}, 31:53--98, 1979.

\bibitem{jones_1995}
C.K.R.T. Jones.
\newblock {\em Geometric Singular Perturbation Theory, Lecture Notes in
  Mathematics, Dynamical Systems (Montecatini Terme)}.
\newblock Springer, Berlin, 1995.

\bibitem{Gucwa2009783}
I.~Kosiuk and P.~Szmolyan.
\newblock Geometric singular perturbation analysis of an autocatalator model.
\newblock {\em Discrete and Continuous Dynamical Systems - Series S},
  2(4):783--806, 2009.

\bibitem{kosiuk2015a}
I.~Kosiuk and P.~Szmolyan.
\newblock {Geometric analysis of the Goldbeter minimal model for the embryonic
  cell cycle}.
\newblock {\em {Journal of Mathematical Biology}}, 2015.

\bibitem{krihog2}
{K. Uldall} Kristiansen and {S. J.} Hogan.
\newblock Regularizations of two-fold bifurcations in planar piecewise smooth
  systems using blowup.
\newblock {\em SIAM Journal on Applied Dynamical Systems}, 14(4):1731--1786,
  2015.

\bibitem{kristiansen2018a}
K.~Uldall Kristiansen and S.~J. Hogan.
\newblock Resolution of the piecewise smooth visible-invisible two-fold
  singularity in $\mathbb {R}^3$ using regularization and blowup.
\newblock {\em Journal of Nonlinear Science}, 2018.

\bibitem{kristiansen2017a}
K.U. Kristiansen.
\newblock {Blowup for flat slow manifolds}.
\newblock {\em Nonlinearity}, 30(5):2138--2184, 2017.

\bibitem{kristiansen2019}
K.U. Kristiansen and P.~Szmolyan.
\newblock Relaxation oscillations in substrate-depletion oscillators.
\newblock {\em in preparation}, 2019.

\bibitem{krupa_extending_2001}
M.~Krupa and P.~Szmolyan.
\newblock Extending geometric singular perturbation theory to nonhyperbolic
  points - fold and canard points in two dimensions.
\newblock {\em {SIAM} Journal on Mathematical Analysis}, 33(2):286--314, 2001.

\bibitem{krupa_relaxation_2001}
M.~Krupa and P.~Szmolyan.
\newblock Relaxation oscillation and canard explosion.
\newblock {\em Journal of Differential Equations}, 174(2):312--368, 2001.

\bibitem{kuehn2015a}
Christian Kuehn and Peter Szmolyan.
\newblock Multiscale geometry of the olsen model and non-classical relaxation
  oscillations.
\newblock {\em Journal of Nonlinear Science}, 25(3):583--629, 2015.

\bibitem{olsson1998a}
H.~Olsson, K.~J. Astrom, C.~Canudas de~Wit, M.~Gafvert, and P.~Lischinsky.
\newblock Friction models and friction compensation.
\newblock {\em European Journal of Control}, 4(3):176--195, 1998.

\bibitem{putelat2015a}
T.~Putelat and J.~H.~P. Dawes.
\newblock Steady and transient sliding under rate-and-state friction.
\newblock {\em Journal of the Mechanics and Physics of Solids}, 78:70--93,
  2015.

\bibitem{putelat2017a}
T.~Putelat, J.~H.~P. Dawes, and A.~R. Champneys.
\newblock A phase-plane analysis of localized frictional waves.
\newblock {\em Proceedings of the Royal Society A-mathematical Physical and
  Engineering Sciences}, 473(2203), 2017.

\bibitem{putelat08}
T.~Putelat, J.R. Willis, and J.H.P. Dawes.
\newblock On the seismic cycle seen as a relaxation oscillation.
\newblock {\em Philosophical Magazine}, 88(28-29):3219--3243, 2008.

\bibitem{rice2001a}
J.R. Rice, N.~Lapusta, and K.~Ranjith.
\newblock Rate and state dependent friction and the stability of sliding
  between elastically deformable solids.
\newblock {\em Journal of the Mechanics and Physics of Solids},
  49(9):1865--1898, 2001.

\bibitem{ruina1983a}
A.~Ruina.
\newblock Slip instability and state variable friction laws.
\newblock {\em Journal of Geophysical Research}, 88(NB12):359--370, 1983.

\bibitem{Tyson2003}
John~J. Tyson, Katherine~C. Chen, and Bela Novak.
\newblock {Sniffers, buzzers, toggles and blinkers: Dynamics of regulatory and
  signaling pathways in the cell}.
\newblock {\em Current Opinion in Cell Biology}, 15(2):221--231, 2003.

\bibitem{woodhouse2015a}
J.~Woodhouse, T.~Putelat, and A.~McKay.
\newblock Are there reliable constitutive laws for dynamic friction?
\newblock {\em Philosophical Transactions of the Royal Society A-mathematical
  Physical and Engineering Sciences}, 373(2051):20140401, 2015.

\end{thebibliography}
\bibliographystyle{plain}
\clearpage 
\appendix

\section{Case $\alpha\ge 1$} \applab{alphaGE1}
First, we describe $\alpha>1$. Since the details in chart $\phi_3$, in particular the proof of \lemmaref{Pi17} is unchanged, we will work in the chart $\phi_1$ only. We then consider the chart $(\bar w=1,\bar q=1)_{11}$, recall \eqref{Psi111app}. This gives the following equations:
\begin{align*}
\dot x &=- \epsilon_1 \theta_1w_1\left(\theta_1w_1 x F(w_1^{-1})+(x+(1+\alpha)\theta_1)\right),\nonumber\\
\dot \theta_1 &=-\theta_1 w_1 \left(\epsilon_1 \theta_1^2w_1 F(w_1^{-1})+\left(1+\frac{x+\theta_1}{\xi}\right)\right),\nonumber\\
\dot w_1 &=w_1^2 \left(1+\frac{x+\theta_1}{\xi}\right),\nonumber\\
\dot \epsilon_1 &=-2\epsilon_1 \left(1+\frac{x+\theta_1}{\xi}\right).
\end{align*}
$r_1$ decouples as usual and shall therefore be ignored. In this chart $\gamma^7$ becomes
\begin{align*}
 \gamma_{11}^7 = \left\{(x,\theta_1,w_1,\epsilon_1)\vert x=-\frac{\xi}{2\alpha}(1+\alpha),\,\theta_1\in \left(0,\frac{\xi}{2\alpha}\right],\,w_1=\epsilon_1=0\right\},
\end{align*}
for $\alpha>1$, see e.g. \eqref{gamma17loc}. It is contained within the invariant manifold $\epsilon_1=0$. By desingularization through division by $w_1$ within this set, we obtain 
\begin{align*}
 \dot x &=0,\\
 \dot \theta_1 &= -\theta_1 \left(\epsilon_1 \theta_1^2w_1 F(w_1^{-1})+\left(1+\frac{x+\theta_1}{\xi}\right)\right),\\
 \dot w_1 &=w_1 \left(1+\frac{x+\theta_1}{\xi}\right).
\end{align*}
The $x$-axis is therefore a line of equilibria. $\gamma_{11}^7$ is asymptotic to $(x,\theta_1,w_1) = (-\xi(1+\alpha)/(2\alpha),0,0)$ within this set, following the associated stable manifold. Notice here that 
\begin{align}
  \left(1+\frac{x+\theta_1}{\xi}\right) = \frac{\alpha-1}{2\alpha}>0,\eqlab{hallohere}
\end{align}
for $x=-\xi(1+\alpha)/(2\alpha)$ and $\theta_1=0$, by assumption. As usual, we can track a small neighborhood of $\gamma_{11}^7$ near $\theta_1=\text{const}.>0$ up to $w_1=\text{const}.>0$ in a $C^1$-fashion by following the unstable manifold 
\begin{align*}
 \gamma_{11}^8 = \left\{(x,\theta_1,w_1,\epsilon_1)\vert x=-\frac{\xi}{2\alpha}(1+\alpha),\,\theta_1=0,\,w_1\ge 0,\,\epsilon_1=0\right\}.
\end{align*}
In fact, the result is almost identical to \lemmaref{Pi1118}. We therefore skip the details. 

Next, recall that $\epsilon=e^{-2w_1^{-1}}\epsilon_1$, see \eqref{q1app}, so at $w_1=\text{const}.$ we have $\epsilon_1\sim \epsilon$. We can therefore transform the result in $(\bar z=1,\bar q=1)_{11}$ into the chart $(\bar w=1)_2$, see \eqref{K1Barw1}, using that $z_2=-w_1^{-1}$. The system is a regular perturbation problem in this $(\bar w=1)_2$-chart. In particular, along 
\begin{align*}
 \gamma_2^8 = \left\{(x,\theta_2,z_2,\epsilon)\vert x=-\frac{\xi}{2\alpha}(1+\alpha),\,\theta_2=0,\,z_2\in \mathbb R,\,\epsilon=0\right\},
\end{align*}
$z_2$ is decreasing. This brings us into the chart $(\bar z=-1)_3$ using the coordinate transformation $w_3=z_2^{-1}$. The equations in this chart are given in \eqref{K3BarZN1}. The $x$-axis is again a line of equilibria for this system and $\gamma_3^8$ is asymptotic to the point $(x,\theta_3,w_3,\epsilon) = (-\xi(1+\alpha)/(2\alpha),0,0,0)$ within this line by following the associated stable manifold. We can (again) track a small neighborhood of $\gamma_{3}^8$ near $w_3=\text{const}.>0$ up to $\theta_3=\text{const}.>0$ in a $C^1$-fashion by following the unstable manifold
\begin{align*}
 \gamma_3^9 =\left\{(x,\theta_3,w_3,\epsilon)\vert x=-\frac{\xi}{2\alpha}(1+\alpha),\,\theta_3=\left[0,\frac{\xi(\alpha-1)}{2\alpha}\right),\,w_3=0,\,\epsilon=0\right\}.
\end{align*}
The result is almost identical to \lemmaref{Pi1118}. We skip the details again.
Here $\gamma_3^9$ is asymptotic to a point $(x,\theta_3,w_3) = (-\xi(1+\alpha)/(2\alpha),{\xi}(\alpha-1)/({2\alpha}),0)$ on $C_\infty:\,x=-\xi-\theta_3,\,w_3=0$, which is normally hyperbolic in this chart. Following \lemmaref{Wcsu}, see also \cite{bossolini2017a} and \figref{poincareReduced}(c), we obtain an orbit $\gamma_{3}^{10}$ of the slow flow on $C_\infty$ 
\begin{align*}
 \gamma_3^{10} = \left\{(x,\theta_3,w_3,\epsilon)\vert x=-\xi+\theta_3, \theta_3 \in \left(0,\frac{\xi(\alpha-1)}{2\alpha}\right],\,w_3=0,\,\epsilon=0\right\},
\end{align*}
along which $\theta_3$ is decreasing. Notice in particular, that for $\alpha>1$ the point $(x,\theta_3,w_3) = (-\xi(1+\alpha)/(2\alpha),{\xi}/{2\alpha}(\alpha-1),0)$ is always contained  between $Q^6$ and the unstable node $Q^7$, the latter having coordinates $x=0,\,\theta_3=\xi,\,w_3=0$ in this chart. $\gamma_3^{10}$ therefore brings us into the chart $(\overline z=-1,\overline \theta=1,\overline w_3=1)_{311}$ where the analysis in \secref{sechere311} is valid. See \figref{gamma10} where $\gamma_3^{10}$ is shown in purple. This completes the (sketch of) proof for $\alpha>1$. We illustrate the singular segments in \figref{gammaK1New}.

\begin{figure}[h!]
\begin{center}
{\includegraphics[width=.495\textwidth]{./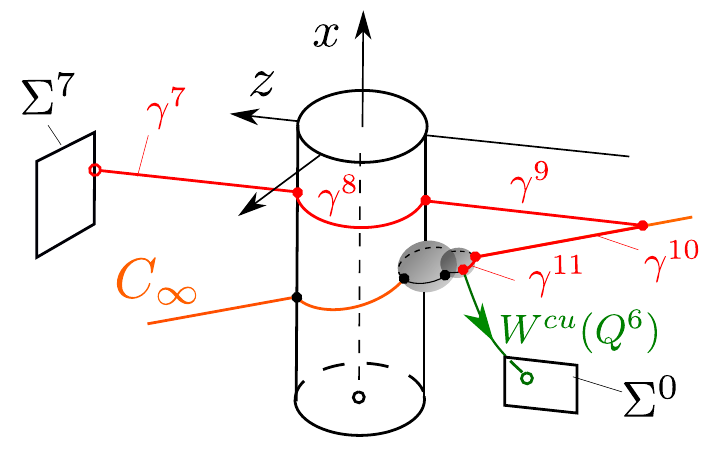}}
\end{center}
\caption{Singular orbit segments of the blowup in $\phi_1$ for $\alpha>1$.} 
\figlab{gammaK1New}
\end{figure}

Up until now our approach is not uniform in $\alpha$. For $\alpha>1$ for example, the approach breaks down at $\alpha=1$ since the condition in \eqref{hallohere} is violated (the bracket vanishes). To capture this, we may follow the approach in \secref{seceqns12111}, see \figref{gamma8}. Here both $\alpha<1$ and $\alpha>1$ are visible (red and purple in \figref{gamma8}). However, the $w_{11}$-axis in \figref{gamma8} is degenerate. To obtain results uniform in $\alpha$ we therefore blowup this axis by introducing polar coordinates in the $(\rho_1,\theta_1)$-plane. The details are pretty standard so we also leave these out of the manuscript for simplicity. 

 \end{document}